\tikzstyle{vecArrow} = [thick, decoration={markings,mark=at position
\tikzstyle{innerWhite} = [semithick, white,line width=1.4pt, shorten >= 4.5pt]
\tikzstyle{vecEq} = [thick,
\def\M{\mathcal{M}}
\def\PP{\mathbb{P}}
\theoremstyle{definition}
\newtheorem{definition}{Definition}[section]
\newtheorem{assumption}[definition]{Assumption}
\newtheorem{theorem}[definition]{Theorem}
\newtheorem{example}[definition]{Example}
\newtheorem{proposition}[definition]{Proposition}
\newtheorem{corollary}[definition]{Corollary}
\newtheorem{lemma}[definition]{Lemma}
\newtheorem{conjecture}[definition]{Conjecture}
\newtheorem{remark}[definition]{Remark}
\newtheorem*{claim}{Claim}
\newtheorem{thm}{Theorem}
\newcommand\numberthis{\addtocounter{equation}{1}\tag{\theequation}}
\newcommand{\BC}{{\mathbb{C}}}
\newcommand{\BD}{{\mathbb{D}}}
\newcommand{\BE}{{\mathbb{E}}}
\newcommand{\BF}{{\mathbb{F}}}
\newcommand{\BG}{{\mathbb{G}}}
\newcommand{\BL}{{\mathbb{L}}}
\newcommand{\BP}{{\mathbb{P}}}
\newcommand{\BQ}{{\mathbb{Q}}}
\newcommand{\BR}{{\mathbb{R}}}
\newcommand{\BZ}{{\mathbb{Z}}}
\newcommand{\CA}{{\mathcal A}}
\newcommand{\CE}{{\mathcal E}}
\newcommand{\CF}{{\mathcal F}}
\newcommand{\CG}{{\mathcal G}}
\newcommand{\CM}{{\mathcal M}}
\newcommand{\CN}{{\mathcal N}}
\newcommand{\CO}{{\mathcal O}}
\newcommand{\CP}{{\mathcal P}}
\newcommand{\CQ}{{\mathcal Q}}
\newcommand{\CS}{{\mathcal S}}
\newcommand{\CU}{{\mathcal U}}
\newcommand{\CV}{{\mathcal V}}
\newcommand{\CZ}{{\mathcal Z}}
\newcommand{\1}{\mathbf 1}
\DeclareMathOperator{\Hilb}{Hilb}
\DeclareMathOperator{\id}{id}
\DeclareMathOperator{\coker}{coker}
\newcommand{\Ext}{\mathcal{E}\text{xt}}
\newcommand{\Pic}{\mathop{\rm Pic}\nolimits}
\newcommand{\JS}{0+}
\newcommand{\Jac}{\mathop{\rm Jac}\nolimits}
\newcommand{\ovZ}{\overline{0}}
\newcommand{\ovO}{\overline{1}}
\newcommand{\End}{{\rm End}}
\newcommand{\pa}{\mathop{\rm pa}\nolimits}
\newcommand{\rel}{\rm rel}
\newcommand{\bF}{{\mathsf{F}}}
\newcommand{\bL}{{\mathsf{L}}}
\newcommand{\Linv}{\mathsf{L}_{\inv}}
\newcommand{\bE}{{\mathsf{E}}}
\newcommand{\bI}{{\mathsf{I}}}
\newcommand{\bS}{{\mathsf{S}}}
\newcommand{\bT}{{\mathsf{T}}}
\newcommand{\bR}{\mathsf{R}}
\newcommand{\pt}{{\mathsf{pt}}}
\newcommand{\congpf}{\xymatrix@1@=15pt{\ar[r]^-\sim&}}
\newcommand{\MX}{\mathcal{M}_X}
\newcommand{\Malpha}{\mathcal{M}_{\alpha}}
\newcommand{\ch}{\mathrm{ch}}
\newcommand{\chh}{\mathrm{ch}^{\mathrm{H}}}
\newcommand{\td}{\mathrm{td}}
\newcommand{\Coh}{\mathrm{Coh}}
\newcommand{\vir}{\mathrm{vir}}
\newcommand{\sst}{\text{sst}}
\newcommand{\SSym}{\text{SSym}}
\newcommand{\sym}{\text{sym}}
\newcommand{\ssym}{\text{ssym}}
\newcommand{\RHom}{\mathrm{RHom}}
\newcommand{\HH}{\mathrm{H}}
\newcommand{\rk}{\mathrm{rk}}
\newcommand{\rig}{\mathrm{rig}}
\newcommand{\inv}{{\mathrm{wt}_0}}
\newcommand{\inva}{\mathrm{inv}}
\newcommand{\tp}{\mathrm{top}}
\newcommand{\tCH}{\text{CH}}
\newcommand{\T}{\mathrm{T}}
\newcommand{\Quot}{\text{Quot}}
	\def\MR#1{}
\DeclarePairedDelimiter{\floor}{\lfloor}{\rfloor}
\renewcommand{\Ext}{\textup{Ext}}
\newcommand{\Dpa}{\BD^{X, \pa}}
\begin{document}

\baselineskip=16pt
\parskip=5pt

    \title{Virasoro constraints for moduli of sheaves and vertex algebras}

\author{Arkadij Bojko}
\email{arkadij.bojko@math.ethz.ch}

\author{Woonam Lim}

\email{woonam.lim@math.ethz.ch}

\author{Miguel Moreira}
\email{miguel.moreira@math.ethz.ch}

\address{ETH Z\"urich, Department of Mathematics, Switzerland}

\date{\today}
\maketitle
\begin{abstract}
  In enumerative geometry, Virasoro constraints were first conjectured in Gromov-Witten theory with many new recent developments in the sheaf theoretic context. In this paper, we rephrase the sheaf-theoretic Virasoro constraints in terms of primary states coming from a natural conformal vector in Joyce’s vertex algebra. This shows that Virasoro constraints are preserved under wall-crossing. As an application, we prove the conjectural Virasoro constraints for moduli spaces of semistable torsion-free sheaves on any curve and on surfaces with only $(p,p)$ cohomology classes by reducing the statements to the rank 1 case.
\end{abstract}
\setcounter{tocdepth}{1} 
\tableofcontents

\section{Introduction}
This paper concerns the Virasoro constraints on sheaf counting theories. Given a moduli space of sheaves $M$ with a virtual fundamental class $[M]^\vir$ we may produce numerical invariants by integrating natural cohomology classes -- called descendents -- against the virtual fundamental class. The Virasoro conjecture predicts that these numerical invariants are constrained by some explicit and universal relations. The main result of this paper is proof of those constraints in the following cases: 

\begin{thm}[=Theorem \ref{thm: main result of the paper}]\label{thm: main} 
The Virasoro constraints (Conjecture \ref{conj: virasorosheaves}) hold for the following moduli spaces when there are no strictly semistable sheaves:
\begin{enumerate}
\item The moduli spaces $M_C(r,d)$ of slope semistable bundles on any smooth projective curve $C$.
\item The moduli spaces $M_S^H(r,c_1, c_2)$ of slope or Gieseker semistable torsion-free sheaves on any smooth projective surface $S$ with $h^{1,0}(S)=h^{2,0}(S)=0$.
\item  Assuming that a necessary wall-crossing formula holds (see Assumption \ref{ass:WCpair}), the moduli spaces $M_S^H(\beta, n)$ of slope semistable one-dimensional sheaves on any smooth projective surface $S$ with $h^{1,0}(S)=h^{2,0}(S)=0$.\footnote{In \cite{Pi22}, the ring structure of the cohomology of the moduli spaces of one-dimensional sheaves
on $S = \BP^2$ is studied in terms of descendents. It would be interesting to find out whether the
Virasoro constraints are implied by the relations they prove in \cite[Proposition 2.6]{Pi22}.}
\end{enumerate}

\end{thm}

Theorem \ref{thm: main result of the paper} also proves an analog of (1), (2) and (3) in the presence of strictly semistable sheaves as stated in Conjecture \ref{conj:actualinvvir}. The case (2) of the theorem solves the conjecture of D. van Bree in \cite{bree} (see Remark \ref{rem: normalizedpt} for a comparison of our formulation with van Bree's). For $S=\PP^2$ and $\PP^1\times\PP^1$, an alternative proof of (2) with Gieseker stability was given by the first author in \cite{BoVir} without relying on the previous result for $\Hilb^n(S)$ in \cite{moop}. The formulation of the other two cases is new; indeed, we provide a very general conjecture that includes many other interesting cases. The proof of Assumption \ref{ass:WCpair} making (3) self-contained will be addressed in the future.

Our work relies in a fundamental way on the vertex algebra that D. Joyce recently introduced \cite{Jo17,GJT, Jo21} to study the wall-crossing of moduli spaces of sheaves. We explain how the Virasoro constraints can be naturally formulated in this language.

\begin{thm}[=Theorem \ref{thm: duality} and Theorem \ref{thm: virasorophysical}]\label{thm: B}
Let $X$ be a curve or a surface with $p_g=0$. There is a natural conformal element $\omega$ in the vertex algebra $V_\bullet^{\pa}$ for which the corresponding Virasoro operators $L_n^{\pa}$ are dual to the Virasoro operators $\bL_n^{\pa}\colon \BD^{X,\pa}\to \BD^{X,\pa}$ for $n\geq -1$ on the pair descendent algebra. A moduli space of sheaves (or pairs) satisfies the Virasoro constraints if and only if its class in $\widecheck V_\bullet^{\pa}$ (or $V_\bullet^{\pa}$) is a primary state.
\end{thm}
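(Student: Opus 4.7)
The plan is to construct a natural conformal vector $\omega$ in Joyce's pair vertex algebra $V_\bullet^{\pa}$ from the Todd-weighted diagonal of $X$, verify directly from the OPE that the associated modes $L_n = \omega_{(n+1)}$ satisfy the Virasoro algebra, match them with the formal duals of the descendent operators $\bL_n^{\pa}$ acting on $\BD^{X,\pa}$, and finally observe that the Virasoro constraints for $[M]^{\vir}$ transport along the evaluation pairing to the primary-state condition $L_n[M]=0$.

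For the construction, $V_\bullet^{\pa}$ carries a translation operator $T$ coming from the $B\BG_m$-action together with a state--field correspondence defined using the symmetrized Ext pairing built from $\td(X)$. My candidate for $\omega$ is a quadratic expression in $T$ applied to the vacuum, with coefficients determined by the Künneth decomposition of $\td(X)$ (plus an extra pair-direction contribution coming from the $\tau_0^{\pa}$-insertion). The hypothesis $p_g=0$ (together with $h^{1,0}=0$ in the surface case) ensures that every Künneth factor sits in the $(p,p)$-part of $H^\bullet(X\times X)$, which is what makes the defining bilinear form symmetric and $\omega$ deserving of the name ``conformal''. Verifying the Virasoro relation $[L_m,L_n]=(m-n)L_{m+n}+\tfrac{c}{12}(m^3-m)\delta_{m+n,0}$ is then a direct OPE calculation: quadraticity of $\omega$ bounds the number of contractions, and the $(p,p)$-symmetry from $p_g=0$ is precisely what lets the central term emerge cleanly.

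The next, and most delicate, step is to identify $L_n$ with the formal adjoint of $\bL_n^{\pa}$ under the natural pairing $\BD^{X,\pa}\otimes V_\bullet^{\pa}\to\BQ$ given by evaluating descendents on homology classes. The operators $\bL_n^{\pa}$ are concretely a Weyl-type derivation on $\ch_k$-insertions plus multiplication by Todd-weighted descendents, matching van Bree's and the Moreira--Oblomkov--Okounkov--Pandharipande formulation; unpacking $\omega_{(n+1)}$ from the OPE of Step~2 produces precisely the dual of this expression, reducing the match to a combinatorial identity between Todd coefficients that is valid for all $n\geq -1$. Granted this duality, the final claim is formal: the Virasoro constraints for $M$ read $\langle \bL_n^{\pa} D,[M]\rangle=0$ for all $D\in\BD^{X,\pa}$, and by adjunction this is equivalent to $\langle D,L_n[M]\rangle=0$ for all $D$, i.e.\ to $L_n[M]=0$ in $\widecheck V_\bullet^{\pa}$.

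The main obstacle will be the matching step: lining up the abstract mode $\omega_{(n+1)}$ with the geometric operator $\bL_n^{\pa}$ requires careful bookkeeping of signs, the conformal-weight $\tfrac{1}{2}$, the shift between $\omega_{(n+1)}$ and $L_n$, and the conventions relating $\ch_k$-insertions to contractions against $\td(X)$. The pair-direction contribution to $\omega$ also has to be tuned so that the resulting $L_n$ differs from the sheaf-side operator by exactly the extra $\tau_0^{\pa}$-terms that appear in $\bL_n^{\pa}$ versus $\bL_n$. Finally, the restriction $n\geq -1$ must be recognized as sharp: the modes $L_n$ for $n\leq -2$ would be dual to operators on $\BD^{X,\pa}$ involving integration against negative-degree descendents, which are not well-defined on the descendent algebra and hence do not correspond to genuine constraints.
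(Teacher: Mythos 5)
Your overall architecture (construct $\omega$, prove duality with $\bL_n^{\pa}$, conclude the primary-state equivalence by a pairing argument) is the same as the paper's, but your construction of $\omega$ has a genuine gap: it ignores the odd part of the theory, and would fail for every curve of genus $\geq 1$ and every surface with $q>0$, which the theorem covers. The hypothesis is not ``$p_g=0$ and $h^{1,0}=0$'', and it does not force the K\"unneth factors into the $(p,p)$ part; $h^{1,0}$ is unconstrained. The actual role of Assumption \ref{Ass: vanishing} ($H^{p,q}=0$ for $|p-q|>1$) is to supply, via the Hodge decomposition, a maximal isotropic splitting of the odd lattice $K^1(X)^{\oplus 2}=I\oplus \hat I$ with $I\cong\bigoplus_p H^{p,p+1}$ and $\hat I\cong\bigoplus_p H^{p+1,p}$; the conformal element then necessarily contains a fermionic summand of the form $\sum_{v\in B_I}\hat v_{-2}v_{-1}$ in addition to the bosonic $\tfrac12\sum \hat v_{-1}v_{-1}$, and a purely ``Todd-weighted diagonal'' quadratic ansatz has no such term, so its modes cannot close into a Virasoro algebra on the odd creation operators. (Also, $T\ket{0}=0$, so ``a quadratic expression in $T$ applied to the vacuum'' is not the right description of such a state.) Relatedly, you treat the pair direction as a tuning correction, but the reason the theorem lives in $V_\bullet^{\pa}$ at all is nondegeneracy: Kac's construction of a conformal element requires the symmetric pairing to be nondegenerate, and $\chi_\sym$ on $K^\bullet(X)$ typically is not, whereas $\chi^{\pa}_\sym$ on $K^\bullet(X)^{\oplus 2}$ is (Lemma \ref{Lem: nondegenerate}). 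The paper makes the whole construction and the bracket check tractable by first identifying Joyce's $V_\bullet^{\pa}$ with a lattice vertex algebra (Theorem \ref{thm: gross}) and then quoting Kac; a direct OPE verification in the geometric model, as you propose, is a substantial unaddressed step, and the central charge $2\chi(X)$ comes from the boson/fermion count of the lattice, not from any $(p,p)$ symmetry.

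The final equivalence is also not purely formal on the sheaf side. For pairs your adjunction works once you know that $\xi_{(\CV,\CF)}$ identifies $\BD^{X,\pa}_{\alpha^{\pa}}$ with $H^\bullet(\CP_{\alpha^{\pa}})$ and that $p_{\alpha^{\pa}}$ is surjective (both need class D and Assumption \ref{Ass: vanishing}). But for sheaves the constraints are stated with the weight-zero operator $\Linv$ (the universal sheaf is not unique), and the class $[M]^\vir$ lives in the Lie algebra $\widecheck V_\bullet=V_{\bullet+2}/TV_\bullet$, where ``primary'' means lying in $\widecheck P_0=P_1/T(P_0)$. One needs (i) that $[-,\omega]$ is dual to $\Linv$ (Lemma \ref{WC with conformal element} plus Theorem \ref{thm: duality}), and (ii) that $[\overline a,\omega]=0$ is equivalent to $\overline a\in\widecheck P_0$ on components with non-torsion class — Proposition \ref{prop: primaryomegabracket}, whose proof constructs a specific lift $-a_{(0)}b$ using nondegeneracy of the pairing. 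Your adjunction skips both. Finally, the restriction to $n\geq -1$ is not because $L_n$ for $n\leq -2$ is ill-defined: the conformal element gives $L_n$ for all $n\in\BZ$ (this is emphasized in the paper, with central charge $2\chi(X)$); it is only the descendent-side operators $\bL_n^{\pa}$ that were defined for $n\geq -1$, so the duality statement is restricted to that range.
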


As a consequence of this description, the wall-crossing machinery developed by Joyce can be used to prove a compatibility between wall-crossing and the Virasoro constraints.
\begin{thm}[=Propositions \ref{prop: wallcrossingcompatibility1} and \ref{prop: wallcrossingcompatibility2}]\label{thm: C}
The Virasoro constraints are compatible with wall-crossing.
\end{thm}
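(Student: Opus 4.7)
The plan is to combine Theorem \ref{thm: B} with Joyce's wall-crossing formulas, reducing the compatibility statement to a structural property of primary states in Joyce's vertex algebra. By Theorem \ref{thm: B}, a moduli class satisfies the Virasoro constraints if and only if its image in $\widecheck V_\bullet^{\pa}$ (respectively $V_\bullet^{\pa}$ for pairs) is primary for the conformal element $\omega$. On the other hand, Joyce's wall-crossing expresses the moduli class at a stability condition $\tau_+$ as iterated Lie brackets of moduli classes at $\tau_-$: schematically,
$$ \bar{\epsilon}^{\tau_+}(\alpha) \;=\; \sum_{\alpha_1 + \cdots + \alpha_k = \alpha} \kappa(\vec\alpha)\; [\bar{\epsilon}^{\tau_-}(\alpha_1), [\bar{\epsilon}^{\tau_-}(\alpha_2), [\cdots, \bar{\epsilon}^{\tau_-}(\alpha_k)]\cdots]] $$
in $\widecheck V_\bullet^{\pa}$, with an analogous formula involving a module-type action of sheaf classes on a pair class for the pair case. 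The theorem then reduces to showing that primary states are closed under the operations appearing in wall-crossing.

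The central lemma is precisely this closure property. For the sheaf case, I would argue as follows: given primary elements $a, b \in V_\bullet^{\pa}$ of conformal weights $h_a, h_b$, Borcherds' commutator formula combined with $\omega_{(i)} a = L_{i-1} a$ and primariness of $a$ specializes to
$$ [L_p, a_{(0)}] \;=\; (p+1)(h_a - 1)\, a_{(p)} \quad \text{for } p \geq 1, $$
so that $L_p (a_{(0)} b) = (p+1)(h_a - 1)\, a_{(p)} b$. One then checks that the right-hand side lies in $T V_\bullet^{\pa}$, either because the conformal weights assigned by the $\omega$ of Theorem \ref{thm: B} are set so that the prefactor $(h_a - 1)$ vanishes on the relevant classes, or via a more delicate computation using the skew-symmetry $a_{(p)} b \equiv (-1)^{p+1} b_{(p)} a \pmod{T V_\bullet^{\pa}}$ together with the grading compatibility built into Joyce's shift conventions. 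In either case, $[a, b] = a_{(0)} b$ is primary in $\widecheck V_\bullet^{\pa}$, and induction on $k$ then shows that the full iterated bracket in the wall-crossing formula remains primary, proving Proposition \ref{prop: wallcrossingcompatibility1}.

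For the pair case (Proposition \ref{prop: wallcrossingcompatibility2}), an analogous argument applies directly in $V_\bullet^{\pa}$, now tracking how the conformal element $\omega$ interacts with the module-type action of sheaf classes on the pair class. The main obstacle is precisely this pair case: since we work in $V_\bullet^{\pa}$ rather than its Lie algebra quotient, primariness is strictly stronger there, and one must verify that the specific module operation used in Joyce's pair wall-crossing formula preserves primariness. This requires fully exploiting the choice of $\omega$ provided by Theorem \ref{thm: B}, which was constructed precisely to make both the sheaf and pair dualities compatible simultaneously, so that primariness at the level of $V_\bullet^{\pa}$ propagates through the combined bracket-plus-module operation on the right-hand side of the wall-crossing identity.
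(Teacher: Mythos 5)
Your overall route is the paper's: Theorem \ref{thm: B} converts the constraints into primariness, Joyce's wall-crossing is written in iterated brackets, and everything reduces to showing that the bracket (and its partial lift to $V_\bullet^{\pa}$) preserves primary states; your commutator identity $[L_p,a_{(0)}]=(p+1)(h_a-1)a_{(p)}$ is exactly the engine behind Propositions \ref{prop: wallcrossingcompatibility1} and \ref{prop: wallcrossingcompatibility2}. The gap is that you leave the crucial step as an unresolved disjunction. The correct resolution is your first branch: by definition $\widecheck P_0=P_1/T(P_0)$, so the sheaf classes entering the wall-crossing formula have lifts of conformal weight exactly $1$ (this is what Corollary \ref{cor: virasorophysical} asserts for the classes $[M]^\inva$), hence $(h_a-1)=0$, the operator $a_{(0)}$ commutes with every $L_n$, and $L_n(a_{(0)}b)=[L_n,a_{(0)}]b+a_{(0)}L_n(b)=0$ for $n\geq 1$. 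Your second branch would not close the argument: skew-symmetry only gives $a_{(p)}b\equiv \pm\, b_{(p)}a$ modulo $T V_\bullet^{\pa}$, which neither forces $a_{(p)}b\in T V_\bullet^{\pa}$ nor, more importantly, suffices for what you actually need — to get a Lie subalgebra of primaries (and to iterate through the nested brackets) you must produce an honest element of $P_1$, i.e. $L_n(a_{(0)}b)=0$ on the nose, not merely modulo the image of $T$.

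Two smaller points. The pair case is not the ``main obstacle'' and requires no further exploitation of the choice of $\omega$: the module operation is $[\overline a,b]=a_{(0)}b$ with $a\in P_1$ and $b\in P_0^{\pa}$, so the identical one-line computation gives $L_n(a_{(0)}b)=0$ directly in $V_\bullet^{\pa}$; that is the whole proof of Proposition \ref{prop: wallcrossingcompatibility2}. Also, to make sense of $\widecheck P_0$ as a subspace of $\widecheck V_0^\omega$ (so that primariness is well defined on classes modulo $T$) one must check that $a\in V_0^\omega$ with $Ta\in P_1$ forces $a\in P_0$; this uses $L_n(Ta)=(n+1)L_{n-1}(a)+T(L_n(a))$ together with Assumption \ref{standard assumption} (no $T$-kernel in negative conformal weight), a step your write-up skips.
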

Theorem \ref{thm: C} also holds in the setting of quivers as was shown by the first author in \cite[Theorem 1.2]{BoVir}. This was used to conclude Virasoro constraints for moduli spaces of semistable (framed) representations of quivers in loc. cit.

Our proof of Theorem \ref{thm: main} uses this wall-crossing compatibility to reduce the statement to the rank 1 case; for curves, the rank 1 case can be shown directly or via Joyce-Song wall-crossing, and for surfaces, it was already proven in \cite{moop, moreira}. We remark that the hypotheses $h^{1,0}(S)=h^{2,0}(S)=0$ are currently necessary: $h^{2,0}(S)=0$ is needed to have a vertex algebraic interpretation of the Virasoro constraints (cf. Theorem \ref{thm: B}), and hence wall-crossing compatibility, and $h^{1,0}(S)=0$ is necessary in the proof of the constraints for the Hilbert scheme of points in \cite{moreira}; we hope these restrictions can be lifted in the future.

\subsection{History}
\label{sec: history of Virasoro constraints }\hfill

\noindent\textbf{Virasoro constraints} \
The study of Virasoro constraints on curve counts traces back to the origins of Gromov-Witten (GW) theory and intersection theory on the moduli space of stable curves, in Witten's foundational paper \cite{witten}. Witten conjectured that integrals of products of descendents -- certain natural classes in $H^\bullet(\overline \M_{g,n})$ -- obeyed some explicit relations. The relations he proposed were equivalent to certain differential operators,  which satisfy the Virasoro bracket relation, annihilating the partition function which encodes all the integrals of descendents on $\overline \M_{g,n}$. Witten's conjecture was proven by Kontsevich \cite{kontsevich} and new proofs were obtained by Okounkov-Pandharipande \cite{OPwitten} and Mirzakhani \cite{mirzakhani}.

In \cite{ehx}, the authors extended the Virasoro conjecture to the GW invariants of a variety $X$. Since then, a lot of effort has been put into proving the result for some target varieties $X$; most notably, the conjecture is now known when $X$ is a toric variety (or, more generally, when $X$ has semisimple quantum cohomology) by work of Givental and Teleman \cite{givental, teleman} and when $X$ is a curve by work of Okounkov-Pandharipande \cite{OP}. The general case, however, is still out of reach. 

In \cite{mnop1, mnop2}, Maulik-Nekrasov-Okounkov-Pandharipande propose a \allowbreak deep connection between Gromov-Witten invariants and Donaldson-Thomas (DT) invariants of 3-folds. Such correspondence suggested that the DT descendent invariants should as well be constrained by some sort of Virasoro operators. Almost 15 years ago, not long after the proposal of the MNOP conjecture, Oblomkov, Okounkov and Pandharipande were able to predict the precise form for the DT Virasoro operators (at least for $X=\PP^3$, see \cite[Conjecture 8]{survey}) from experimental data with $X$ toric. The understanding of the MNOP correspondence at the time, however, was not sufficiently explicit to be used effectively to relate the conjectures on the GW and on the DT sides. Recently, the GW/PT descendent correspondence has been made more effective and this allowed a proof of the DT Virasoro conjecture when $X$ is a toric 3-fold in the stationary regime \cite{moop}.\footnote{The results in \cite{moop} are formulated entirely in the theory of stable pairs, also known as Pandharipande-Thomas (PT) invariants, and not DT invariants. In the stationary regime for toric 3-folds, however, the two formulations of Virasoro are known to be equivalent. } 

Taking a surface $S$ and $X=S\times \PP^1$ it is possible to deduce some Virasoro constraints for the Hilbert scheme of points on $S$ from the PT Virasoro constraints on $X$. The third author used a universality argument in \cite{moreira} to prove such constraints for every surface with $H^1(S)=0$ by starting with the toric results in \cite{moop}. Subsequently, van Bree proposed a generalization of the Hilbert scheme constraints to the moduli spaces of torsion-free stable sheaves on a surface $S$ and made several non-trivial checks for toric $S$ using localization \cite{bree}.

While the Virasoro constraints on sheaf-counting theories come historically from Gromov-Witten theory, they form a rich theory themselves as indicated by the examples where they can be studied -- DT, PT, Hilbert scheme, stable torsion-free sheaves on surfaces. We show that they have an independent meaning and origin by connecting them to the geometric construction of the vertex algebras of Joyce \cite{Jo17} which were developed to study wall-crossing.

\noindent\textbf{Wall-crossing and vertex algebras} \
When Donaldson \cite{Do83} introduced his invariants counting anti-self-dual instantons, they were intrinsically dependent on the choice of a metric $g$ of the underlying four-manifold. Varying $g$ leads to discontinuous jumps of the invariants along codimension one walls, a phenomenon called ``wall-crossing". The precise description of the wall-crossing contributions has been given in \cite{KoMo94} and many further studies have been conducted.

With the goal of treating wall-crossing phenomena uniformly, Joyce \cite{JO06I, JO06II, JO06III, JO06IV} developed a theory which could be applied in large generality to abelian categories. Here the metric was replaced by stability conditions and instantons by semistable objects. Using a Lie algebra structure, he was able to define motivic invariants counting semistable objects and described how they change when varying the stability conditions. Further refinements to include DT theory of Calabi-Yau 3-folds were considered by Joyce-Song \cite{JS12} and Kontsevich-Soibelman \cite{KS08}.

The (virtual) fundamental classes of sheaves are however not motivic outside of the realm of Calabi-Yau 3-folds, so their theories were not sufficient for studying other geometries. In a more recent development, Joyce \cite{Jo17} introduced a sheaf-theoretic construction of vertex algebras (see \cite{Borcherds, Ka98, LLVA} for a gentle introduction to this topic).\footnote{See also H. Liu's work \cite{Liu} where he extends it to $K$-theory and multiplicative vertex algebras.} Vertex algebras are representation theoretic objects introduced by Borcherds \cite{Borcherds} and they give an axiomatization of conformal field theories in two dimensions \cite{BPZ}. The Lie bracket operation induced from the sheaf-theoretic vertex algebras was used to describe wall-crossing of virtual fundamental classes counting semistable objects, as conjectured in \cite{GJT} and proven in many cases by Joyce \cite{Jo21}. For surfaces, these wall-crossing formulae are related to the work of Mochizuki \cite{mochizuki} where the formulae are presented without vertex algebras. 

Wall-crossing has been used in \cite{Bo21.5, Bo21} to give explicit formulae for all descendent invariants of punctual Quot schemes on surfaces and Calabi-Yau fourfolds, and in \cite{Bu22} to study moduli spaces of vector bundles on curves. However, further structures coming from the vertex algebra remained a mystery. We fill this gap by giving a geometric interpretation of a natural conformal element in terms of Virasoro constraints. The conformal element  induces a representation of the Virasoro algebra on Joyce's vertex algebra \cite{Jo17}. The Virasoro operators $\{L_n\}_{n\in \BZ}$ act on the homology of the stack where wall-crossing takes place, defining a smaller Lie algebra of \textit{primary/physical states} \cite{BPZ,Borcherds}. We show that the Virasoro constraints are precisely the statement that (virtual) fundamental classes of moduli of semistable sheaves are primary states, and thus are preserved by wall-crossing. We use this new technique, together with a rank reduction to the case of rank 1, to prove existing and new conjectures about Virasoro constraints.

\subsection{Moduli of sheaves and pairs}
\label{sec: modulisheavespairs}

Let $X$ be a smooth projective variety over the complex numbers. Typically, we will restrict ourselves to small dimension $X$ (up to dimension 3) so that the moduli spaces of sheaves that we consider have a virtual fundamental class in the sense of Behrend-Fantechi \cite{BF}.\footnote{Virasoro constraints are also expected for moduli spaces of sheaves on Calabi-Yau 4-folds, admitting a virtual fundamental class in the sense of Oh-Thomas \cite{OT}. However, we will not consider this case here and it will be the subject of future work.} 

The main objects in this paper are moduli spaces $M$ which parameterize semistable sheaves on $X$ and their cohomology ring. Throughout this Introduction and Section \ref{sec: virasoro constraints}, when we refer to a general moduli space $M$ we assume that:
\begin{enumerate}
\item $M$ is a projective scheme of finite type. 
\item There are no $\BC$-points of  $M$ corresponding to strictly semistable sheaves.
\item Deformation theory at $[G]\in M$ is given by
\begin{align*}
\textup{Tan}&=\Ext^1(G,G)\\
\textup{Obs}&=\Ext^2(G,G)\\
0&=\Ext^{>2}(G,G)\,.
\end{align*}
\end{enumerate}

In such conditions, $M$ admits a virtual fundamental class $[M]^\vir\in H_\bullet(M)$ by \cite{BF}. There are many examples of moduli of sheaves on curves, surfaces and Fano 3-folds where all the assumptions are satisfied. To facilitate the exposition, for the introduction we will also assume that there exists a universal sheaf $\BG$ in $M\times X$; we will explain why this is not necessary in Section \ref{sec: invariantdescendents}. Note that a universal sheaf, when it exists, is non-unique: given a universal sheaf $\BG$ and a line bundle $L$ on $M$, the sheaf $\BG\otimes p^\ast L$ is also a universal sheaf (where $p\colon M\times X\to M$ is the projection) parametrizing the same objects.

Later in the paper we will also formulate Virasoro constraints for moduli spaces $M$ that have strictly semistable sheaves, by using Joyce's invariant class 
\[[M]^\inva \in H_\bullet(\CM_X^\rig)\,,\]
which is defined by a wall-crossing formula as we will overview in Section \ref{sec: joyceclasses}. This generalization will play an important role in the inductive argument used to prove Theorem \ref{thm: main}.
\begin{remark}
\label{rem:nocoarsedirect}
    The relation between the coarse moduli spaces $M$ and their invariant classes $[M]^{\inva}$ is not obvious in the presence of strictly semistables because there need not be a map $M\to \CM_X^{\rig}$. So saying that $M$ satisfies Virasoro constraints is an abuse of terminology that we will repeatedly make throughout this work. 
\end{remark}

Apart from the moduli of sheaves, we also study those of pairs. We fix a sheaf $V$ on $X$ and we let $P$ be a moduli space parametrizing a sheaf $F$ together with a map $V\to F$ (with some stability condition). We make the following assumptions: 
\begin{enumerate}
\item $P$ is a projective scheme of finite type. 
\item There are no $\BC$-points of $P$ corresponding to strictly semistable pairs.
\item There is a unique universal pair $q^*V\rightarrow\BF$ in $P\times X$ where $q\colon P\times X\to X$ is the projection. 
\item Deformation theory at $[V\to F]\in P$ is given by
\begin{align*}
\textup{Tan}&=\Ext^0([V\to F],F)\\
\textup{Obs}&=\Ext^1([V\to F],F)\\
0&=\Ext^{>1}([V\to F],F)\,.
\end{align*}
\end{enumerate}
In such conditions, $P$ admits a virtual fundamental class $[P]^\vir\in H_\bullet(P)$. Various moduli of pairs on curves and surfaces satisfy these assumptions; Quot schemes with at most one-dimensional quotients and moduli of Bradlow pairs.

There are two important differences between moduli spaces of sheaves and pairs.
\begin{enumerate}
\item The first is the difference in obstruction theory. It is apparent from comparing Example \ref{Ex: Tvir} and the definition of the Virasoro operators in Section \ref{sec: Virarosorops} that obstruction theory dictates their form.
\item  The second is the uniqueness or non-uniqueness of the universal object. This difference will play a crucial role in our treatment of the Virasoro constraints for moduli of sheaves and for moduli of pairs.
\end{enumerate}

\begin{remark}\label{rem: tracelessobs}
When we refer to moduli of sheaves we are mostly thinking about moduli of sheaves without fixed determinant. This is implicit in the obstruction theory above since when the determinant is fixed the deformation theory should instead use traceless Ext groups:
\[\Ext^i(G,G)_0=\ker\left(\Ext^i(G,G)\to H^i(\CO_X)\right).\]
We explain how to obtain a fixed determinant version of the Virasoro constraints in Section \ref{sec: fixeddet} when $h^{1,0}\neq 0$ but $h^{p,0}=0$ for $p>1$. Although a conjecture for Hilbert schemes of points on surfaces with possibly $p_g=h^{2,0}>0$  (which have traceless deformation theory) appears in \cite{moreira}, our approach in this paper is currently not suitable to understand it. We hope to pursue this direction in the future. 
\end{remark}

\begin{remark}\label{rem: stablepairsPT}
Virasoro constraints that we study for moduli of sheaves naturally generalize to moduli of objects in a derived category $D^b(X)$. Indeed, moduli spaces of stable pairs on a 3-fold $X$ (with $H^i(\CO_X)=0$ for $i>0$) in the sense of Pandharipande-Thomas \cite{PT} are instances of such. We emphasize here that stable pairs on 3-folds are subject to Virasoro constraints of sheaf type rather than pair type, despite their name. This is because virtual classes are constructed using the obstruction theory governed by $\Ext^i(I^\bullet,I^\bullet)$ where $I^\bullet=[\CO_X\to F]\in D^b(X)$. 

\end{remark}

\subsection{Universal sheaves and descendents}
\label{sec: descendents}
Descendents on $M$ are defined using a slant product construction with a universal sheaf $\BG$ and the maps

\begin{center}\begin{tikzcd}
& M\times X\arrow[ld, swap, "p"]\arrow[rd, "q"] &\\
M & & X\,.
\end{tikzcd}
\end{center}

\begin{definition}\label{def: geometricrealization}
We let $\BD^X$ be the supercommutative algebra generated by symbols $\chh_i(\gamma)$ for $i\geq 0$, $\gamma\in H^\bullet(X)$ (see Definition \ref{def: descendentalgebra}). The geometric realization with respect to a universal sheaf $\BG$ in $M\times X$ is the algebra homomorphism 
\[\xi_\BG\colon \BD^X\to H^\bullet(M)\]
defined on generators $\chh_i(\gamma)$ with $\gamma\in H^{r,s}(X)$ by 
\[\xi_\BG\left(\ch_i^\HH(\gamma)\right)=p_\ast\left(\ch_{i+\dim(X)-r}(\BG)\,q^\ast \gamma\right).\]
\end{definition}
The shift in the index of the Chern character using the Hodge degree of $\gamma$ is non-standard, but useful for a cleaner formulation of the Virasoro operators. With this convention, we may think of $\xi_\BG(\ch_i^\HH(\gamma))$ as being in $H^{i, i-r+s}(M)$ (of course $M$ might be singular, so a Hodge decomposition may not exist). See also Remark \ref{rem: comparingnotation}. 

The main objects of study in this paper are descendent integrals, i.e., the enumerative invariants obtained by integrating descendents against the virtual fundamental classes
\[\int_{[M]^\vir}\xi_{\BG}(D)\,,\quad \int_{[P]^\vir}\xi_{\BF}(D)\quad \textnormal{for}\quad D\in \BD^X.\]
Note that the descendent invariants of $M$ depend in principle on the choice of universal sheaf $\BG$. For some $D$ in the descendent algebra, however, they do not depend on this choice; these $D$ form what we call the  \textit{weight 0 descendent} algebra $\BD^X_{\inv}$ (cf. Section~\ref{sec: invariantdescendents}). For $D\in \BD^X_{\inv}$ we will omit the geometric realization morphism and write
\[\int_{[M]^\vir}D=\int_{[M]^\vir}\xi_\BG(D)\]
for any universal sheaf $\BG$ since it does not depend on such choice.

The Virasoro constraints say that these numbers satisfy some explicit universal relations. These relations are stated using certain operators
\[\bL_{\inv}\colon \BD^X\to \BD^X_{\inv}\,,\quad \bL_k^V:\BD^X\to \BD^X\,,\quad k\geq -1\]
that we will introduce in Section \ref{sec: virasoro constraints}. 
\begin{conjecture}[Virasoro for sheaves]\label{conj: virasorosheaves}
Let $M$ be a moduli of sheaves as before. Then
\[\int_{[M]^\vir} \Linv(D)=0\quad \textup{ for any }D\in \BD^X.\]
\end{conjecture}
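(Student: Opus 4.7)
The plan is to realize the Virasoro constraints as an intrinsic condition inside Joyce's vertex algebra $V_\bullet^{\pa}$, propagate them by wall-crossing, and then reduce to a rank one base case. Concretely, I would: (i) produce a conformal element $\omega\in V_\bullet^{\pa}$ whose Virasoro modes $\{L_n\}_{n\geq -1}$ are dual, under the canonical pairing between $V_\bullet^{\pa}$ and the pair descendent algebra $\BD^{X,\pa}$, to the operators $\bL_n^{\pa}$, so that Conjecture~\ref{conj: virasorosheaves} for $M$ (and its pair analogue for $P$) is equivalent to $[M]^\inva$ (resp.\ $[P]^\vir$) being a primary state for $\{L_n\}$, as announced in Theorem~\ref{thm: B}; (ii) show that the primary-state condition is preserved by Joyce's wall-crossing formulae, which express the class of one moduli space as an iterated Lie bracket of classes of others, giving Theorem~\ref{thm: C}; and (iii) degenerate the given stability to a chamber where every semistable object has rank one, where the resulting rank-one case is either essentially trivial or already in the literature.

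The principal difficulty is step (i). Joyce's vertex algebra is built from $H^\bullet(\CM_X^{\pa})$, a shifted symmetric pairing on $K^0(X)$, and the universal complex, and $\omega$ must be tuned so that after dualization its modes reproduce the somewhat unusual Hodge-shifted operators $\bL_k^V$ and $\Linv$ of Section~\ref{sec: virasoro constraints}. I would look for $\omega$ as a quadratic expression in states corresponding to a basis of $H^\bullet(X)$ adapted to the Hodge decomposition; the vanishings $h^{1,0}(S)=h^{2,0}(S)=0$ and their curve analogue enter here because they make that decomposition unambiguous and keep the pairing on $K^0(X)$ of the correct parity. Verifying the Virasoro bracket relations together with the claimed duality with $\bL_n^{\pa}$ is then a direct computation via Borcherds' reconstruction theorem. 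The delicate part is to reconcile the non-uniqueness of universal sheaves -- which confines the sheaf-theoretic constraints to the weight-zero subalgebra $\BD^X_{\inv}$ and forces the use of the rigidified stack $\CM_X^\rig$ -- with the pair side, where the universal pair is unique and the constraints take a cleaner form on all of $\BD^{X,\pa}$; this is precisely the dichotomy between $\widecheck V_\bullet^{\pa}$ and $V_\bullet^{\pa}$ in Theorem~\ref{thm: B}.

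Step (ii) is then almost formal: the operators $L_n$ for $n\geq -1$ act as derivations of the vertex Lie bracket and descend to a Lie action on the quotient $\widecheck V_\bullet^{\pa}/L_{-1}\widecheck V_\bullet^{\pa}$, so primary states form a Lie subalgebra, and Joyce's wall-crossing, which is expressed in this very bracket, carries primaries to primaries. For step (iii), I would apply Joyce-Song-type wall-crossing to degenerate Gieseker stability on $S$ or slope stability on $C$ to a chamber where every semistable object decomposes into rank-one pieces; this expresses $[M]^\inva$ as iterated brackets of rank-one sheaf classes and pair classes. The rank-one surface case is the theorem of \cite{moreira}; the rank-one curve case is handled by a direct computation on the Jacobian (or equivalently by a rank-one instance of Joyce-Song wall-crossing); and the primary property of the auxiliary pair classes is established by a parallel but easier argument inside $V_\bullet^{\pa}$ rather than its quotient. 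The one-dimensional sheaf case (3) of Theorem~\ref{thm: main} then follows identically once the technical wall-crossing input of Assumption~\ref{ass:WCpair} is invoked.
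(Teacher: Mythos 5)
Your steps (i) and (ii) are the paper's own: Kac's conformal element on the lattice $(K^\bullet(X)^{\oplus 2},\chi^{\pa}_{\sym})$ with the Hodge-theoretic isotropic splitting, the duality of Theorem \ref{thm: duality}, the primary-state reformulation of Corollary \ref{cor: virasorophysical} (note that on the sheaf side this also needs Proposition \ref{prop: primaryomegabracket} identifying $\widecheck P_0$ with $\widecheck K_0$), and the Lie-theoretic compatibility of Propositions \ref{prop: wallcrossingcompatibility1} and \ref{prop: wallcrossingcompatibility2}. The gap is in step (iii), in two respects.

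First, there is no chamber of slope or Gieseker stability on $\Coh(X)$ in which every semistable object "has rank one": for a fixed polarization the higher-rank stable loci do not disappear, and varying the polarization is not the mechanism used here. What the paper actually does is enlarge the category to Bradlow pairs $\CO_X\to F$ with the auxiliary stability $\mu^t$ and wall-cross in the parameter $t$ from $t=0+$ to $t=\infty$: at $t=\infty$ the pair moduli $P^\infty_\alpha$ are empty in higher rank for $(m,d)=(1,1),(2,2)$ and are symmetric powers or nested Hilbert schemes otherwise, and Lemma \ref{lem: norank0} guarantees that all sheaf classes appearing in the wall-crossing formula \eqref{eq: wallcrossinglift} have strictly smaller $r$. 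This is what makes the induction on $r(\alpha)$ run; a degeneration of the sheaf stability itself would not.

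Second, and more seriously, your plan never closes the loop from pair classes back to the sheaf statement. Every wall-crossing formula you invoke (including Joyce--Song) expresses a \emph{pair} class such as $[P^{0+}_\alpha]^\vir_{(\CO,\BF)}$ as iterated brackets of sheaf classes; it does not by itself output $[M_\alpha]^\inva\in\widecheck P_0$, which is the content of Conjecture \ref{conj: virasorosheaves} and must be proved also when $M_\alpha$ contains strictly semistable sheaves, since such classes unavoidably appear as wall-crossing terms. The missing ingredient is the projective-bundle compatibility, Theorem \ref{lem: projectivebundle}: after twisting so that Assumption \ref{ass: alphabig} holds (harmless by Lemma \ref{lem: twist}), the pair constraints on $P^{0+}_\alpha$ imply that $\Upsilon_\alpha=\Pi_\ast\bigl(c_{\chi(\alpha)-1}(\T^{\rel})\cap[P^{0+}_\alpha]^\vir_{(\CO,\BF)}\bigr)$ is primary, and only then does Joyce's defining relation \eqref{eq: upsilonwallcrossing} for the invariant classes, combined with the lower-rank induction hypothesis, yield $\chi(\alpha)[M_\alpha]^\inva\in\widecheck P_0$. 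This step is not formal vertex-algebra bookkeeping: its proof manipulates the descendent lift of $c_{\chi(\alpha)-1}(Rp_\ast\CF)$ under the derivations $\bR_j$ and uses crucially that $\T_{\Pi_\alpha}$ is an honest vector bundle of rank $\chi(\alpha)-1$, so that its Chern classes above that rank vanish. Without this argument your outline proves the pair constraints (Theorem \ref{thm: virasorobradlowpairs}) but not the sheaf-theoretic conjecture itself.
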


\begin{conjecture}[Virasoro for pairs]
\label{conj: virasoropairs}
Let $P$ be a moduli of pairs as before. Then
\[\int_{[P]^\vir} \xi_\BF\left(\bL_k^V(D)\right)=0\quad \textup{ for any }k\geq 0, \, D\in \BD^X.\]
\end{conjecture}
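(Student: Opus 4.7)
The plan is to prove Conjecture \ref{conj: virasoropairs} by following the same vertex-algebra / wall-crossing strategy that is used in Theorem \ref{thm: main} for sheaves, now exploiting the simplifications proper to the pair setting. First, I would apply Theorem \ref{thm: B} to reformulate the conjecture: the pair Virasoro constraints are equivalent to the assertion that $[P]^\vir$, viewed as an element of the vertex algebra $V_\bullet^{\pa}$, is a primary state for the conformal element $\omega$ under the action of the modes $L_k$ with $k\geq 0$. The restriction to $k\geq 0$ (as opposed to $k\geq -1$ in the sheaf case) reflects the rigidity coming from the \emph{uniqueness} of the universal pair on $P\times X$: there is no $\BC^\ast$-ambiguity to be killed by an $L_{-1}$-equation, and correspondingly the whole problem collapses to the primacy of a single vertex algebra class.

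Next, I would introduce a real stability parameter $t$ (as in the usual Bradlow / Joyce--Song setup for pairs) and consider the family of moduli $P_t$ of $t$-semistable pairs. Joyce's wall-crossing formula expresses $[P_t]^\vir$ at a generic $t$ as an iterated Lie bracket in $V_\bullet^{\pa}$ of pair classes at adjacent chambers and sheaf-type invariant classes $[M]^\inva$. By Theorem \ref{thm: C} the primary condition is stable under Joyce's wall-crossing, so, since the Lie bracket of primary states is primary, primacy of $[P_t]^\vir$ propagates along the wall-crossing path provided every sheaf-type input appearing in the formula is itself primary. For the base categories where Theorem \ref{thm: main} applies (curves, surfaces with $h^{1,0}=h^{2,0}=0$), this hypothesis is already available.

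Finally, I would identify a base chamber in which primacy of $[P_t]^\vir$ is verified directly. In the extreme chamber the moduli $P_t$ is either empty (giving the zero class, trivially primary) or degenerates to a projective bundle over a moduli of stable sheaves via the $\Hom(V,F)$ factor, in which case primacy reduces by a short computation to the sheaf Virasoro statement. The main obstacle, as I see it, is not the wall-crossing propagation but the preliminary verification that $\omega$ interacts correctly with the \emph{pair} vertex operators: one must check that the Virasoro modes built from the pair obstruction theory $\Ext^\bullet([V\to F],F)$ match, after the duality of Theorem \ref{thm: B}, the operators $\bL_k^V$ with exactly the $V$-dependence they have in Conjecture \ref{conj: virasoropairs}. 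This is a compatibility computation internal to $V_\bullet^{\pa}$, but it is what glues the reformulation, wall-crossing and base case into a complete argument.
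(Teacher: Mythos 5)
You are proposing a proof of a statement that the paper itself records only as a \emph{conjecture} (Conjecture \ref{conj: virasoropairs}): it is asserted for an arbitrary moduli of pairs as in Section \ref{sec: modulisheavespairs}, and the paper never proves it in that generality. What the paper does prove are special cases — Theorem \ref{thm: virasorobradlowpairs} for Bradlow pairs in the settings $(m,d)=(1,1),(2,2),(2,1)$ and Theorem \ref{prop: quot} for punctual Quot schemes — and your sketch is essentially the paper's strategy for those cases (reformulation via the conformal element, Joyce-type wall-crossing in $V^{\pa}_\bullet$, an explicit base chamber). Presented as a proof of the conjecture itself, however, it has genuine gaps at each of the three steps.

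First, the reformulation step is not available in the stated generality: Theorem \ref{thm: B} and Corollary \ref{cor: virasorophysical} require $X$ to be in class D and to satisfy Assumption \ref{Ass: vanishing} ($H^{p,q}(X)=0$ for $|p-q|>1$), since the conformal element is built from an isotropic splitting of $K^1(X)^{\oplus 2}$ governed by Hodge degrees; for a general $X$ as allowed by the conjecture no such $\omega$ is constructed, so ``$[P]^\vir_{(q^\ast V,\BF)}$ is primary'' is not even defined. Second, the wall-crossing propagation is conditional on the sheaf-type inputs: the iterated brackets involve classes $[M_{\alpha_i}]^\inva$, and primacy of these is precisely the sheaf Virasoro constraint, which is known only in the cases of Theorem \ref{thm: main} (and, for one-dimensional sheaves on surfaces, only under the unverified technical Assumption \ref{ass:WCpair}); invoking it in general makes the argument circular. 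Third, the ``extreme chamber'' base case is geometry-specific and nontrivial: in the paper it requires identifying $P^\infty_\alpha$ with symmetric powers of curves or nested Hilbert schemes and then either a direct computation (Proposition \ref{prop: symmetricpowers}) or the external input of \cite{moreira} for $S^{[n]}$, together with the virtual projective bundle compatibility (Theorem \ref{lem: projectivebundle}, Corollary \ref{cor: MimpliesP}); ``empty or a projective bundle, hence done by a short computation'' does not cover these cases, and in particular the virtual (non-smooth) projective bundle direction needs Park's pushforward formula, not just a degree count. So your outline is a fair description of how the paper proves its pair-Virasoro \emph{theorems}, but it does not, and with the cited tools cannot, prove Conjecture \ref{conj: virasoropairs} as stated.
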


In Example \ref{ex: rank2} we illustrate very explicitly how the constraints look like in the case of rank 2 stable bundles over a curve. 

\begin{remark}
The previous Virasoro conjectures for sheaves in \cite{moop, moreira, bree} require a specific choice of a universal sheaf and $\bS_k$ operators.\footnote{The $\bS_k$ operators in \cite{moop, moreira, bree} do not satisfy Virasoro bracket relations, obscuring the meaning of Virasoro constraints.} Conjecture \ref{conj: virasorosheaves} improves the formulation by avoiding both of these, and  we prove that the two formulations are equivalent (see Proposition \ref{prop: normalizedvsinv}). Conjecture \ref{conj: virasoropairs} for pairs is new and we provide convincing evidences by proving it for various geometries in this paper. 
\end{remark}

\subsection{Joyce's vertex algebra}
\label{sec: introvertexalgebra}

D. Joyce recently introduced a vertex algebra and a closely related Lie algebra associated to the derived category $D^b(X)$ \cite{Jo17,GJT, Jo21}. Joyce proposes to use his Lie algebra to study wall-crossing formulae for moduli of sheaves (or, more generally, moduli of semistable objects in a $\BC$-linear abelian or triangulated category).

The vertex algebra is constructed using the homology of the (higher) moduli stack $\CM_X$ parametrizing objects in the triangulated category $D^b(X)$. He defines a vertex algebra structure on
\[V_\bullet=\widehat{H}_\bullet(\CM_X)\,,\]
where $\widehat{H}_\bullet$ is meant to denote an appropriate shift in the grading of the homology.
The two most important ingredients for a vertex algebra are a translation operator $T$ and a state-field correspondence $Y(-,z)$; we will recall the definition of vertex algebras in Section \ref{sec: voadefinition}. In our setting, the translation operator is obtained from the $B\BG_m$--action on $\CM_X$; the state-field correspondence $Y(-,z)$ is defined in terms of the map 
$\Sigma:\M_X\times \M_X\to \M_X$ induced by taking direct sums and a perfect complex $\Theta$ on $\M_X\times \M_X$. These arise as a consequence of the master space localization technique that is commonly used for the proof of wall-crossing formulae (for instance in Mochizuki's work \cite{mochizuki}); the complex $\Theta$ is closely related to the virtual normal bundle appearing in the localization formula and thus to the obstruction theory of $\CM_X$. Remark \ref{rem: obstructionvsvirasoro} uses this observation to explain the relation of Virasoro constraints to the obstruction theory which we eluded to earlier on. 

Associated to the vertex algebra $V_\bullet$ is the Lie algebra obtained as the quotient by the translation operator:
\[\widecheck V_{\bullet}= V_{\bullet+2}/T V_{\bullet}\,.\]
The Lie bracket on $\widecheck V_\bullet$ is a shadow of the vertex algebra structure on $V_\bullet$ and is obtained by a well-known construction due to Borcherds \cite{Borcherds}.
Alternatively, it can be constructed as the homology $H_\bullet(\CM^\rig)$ of the rigidification $\CM_X^\rig=\CM_X \mkern-7mu\fatslash B\BG_m$; the two definitions agree when restricted to complexes with non-trivial numerical class, see Lemma \ref{Lem: DXinv}. 

The Lie algebra $\widecheck V_\bullet$ is a natural place where we can compare virtual fundamental classes of moduli spaces of sheaves; given a moduli space $M$ of semistable sheaves (or more generally of objects in $D^b(X)$) containing no strictly semistable sheaves, there is an open embedding $M\hookrightarrow \CM_X^\rig$. If $M$ admits a virtual fundamental class, we may push it forward along this embedding to obtain a class
\[[M]^\vir\in \widecheck{H}_\bullet(\CM_X^\rig)=\widecheck V_\bullet\]
where the first appearance of $\widecheck{(-)}$ represents an appropriate degree shift. If we fix a choice of a universal sheaf $\BG$ in $M\times X$, by the universal property of $\CM_X$ we get a map $f_\BG\colon M\to \CM_X$ lifting $M\hookrightarrow \CM_X^\rig$, and thus a natural lift of the virtual fundamental class to the vertex algebra
\[[M]^{\vir}_{\BG}\coloneqq (f_\BG)_\ast [M]^\vir\in V_\bullet\,.\]
Crucially, Joyce defines more general classes
\[[M]^\inva\in \widecheck V_\bullet\] 
even when strictly semistable sheaves exist; when $[M]^\vir$ is defined, both classes agree. 

The classes $[M]^\vir\in \widecheck V_\bullet$ or $[M]^{\vir}_{\BG}\in V_\bullet$ contain essentially the information of the (invariant) descendent integrals on $M$. This is made precise by J. Gross' \cite{Gr19} explicit description of $V_\bullet$, which we recall in Section \ref{sec: grossiso}. The cohomologies $H^\bullet(\CM_X)$ and $H^\bullet(\CM_X^\rig)$ are closely related to the algebras of descendents $\BD^X$ and $\BD^X_{\inv}$, respectively; see Lemmas \ref{Lem: isomorphism} and \ref{Lem: DXinv} for the precise statements. The pairing between cohomology and homology then recovers the descendent integrals
$$H^\bullet(\CM_X)\otimes H_\bullet(\CM_X)\to \BC
\,,\quad (D,[M]^{\vir}_{\BG})\mapsto \int_{[M]^\vir}\xi_{\BG}(D)\,,
$$
and 
$$H^\bullet(\CM_X^\rig)\otimes H_\bullet(\CM_X^\rig)\to \BC\,,\quad
(D,[M]^\vir)\mapsto \int_{[M]^\vir}D \,,
$$
where the second integral is independent of the choice of $\BG$.

\subsection{Conformal element and Virasoro constraints}
\label{sec: introconformal}

A vertex operator algebra is a vertex algebra $V_\bullet$ equipped with a conformal element $\omega\in V_4$. The main property of a conformal element (see Section \ref{sec: voadefinition} for a precise definition) is that the operators $\{L_n\}_{n\in \BZ}$ on $V_\bullet$ induced from $\omega$ via the state-field correspondence satisfy the Virasoro bracket
\[\big[L_n,L_m\big] = (n-m)L_{m+n} + \frac{n^3-n}{12}\delta_{n+m,0}\cdot C\]
for some constant $C\in \BC$ called the central charge of $(V_\bullet, \omega)$. One of the goals of this paper is to explain the Virasoro operators in the descendent algebra previously studied in terms of a conformal element $\omega$ in Joyce's vertex algebra (or some slight variation, namely the pair vertex algebra). Due to the mysterious role that the Hodge degrees play in the Virasoro operators in \cite{moreira}, we do not know how to do so in complete generality, but only under the following assumption:

\begin{assumption}
\label{Ass: vanishing}
We assume that the Hodge cohomology groups $H^{p,q}(X)$ vanish whenever $|p-q|>1$. 
\end{assumption}
This assumption is satisfied for curves, surfaces with $p_g=0$ and Fano 3-folds, hence covering the majority of the target varieties in Donaldson-Thomas theory.

The result of J. Gross \cite{Gr19}\footnote{Some corrections to the statements in \cite{Gr19} were necessary for purposes of accuracy. We rederive all of the necessary results about the vertex algebra structure in Theorem \ref{thm: gross} to make sure that they are true.} shows that, under certain assumption (satisfied for curves, surfaces and rational 3-folds), Joyce's vertex algebra $V_\bullet$ is naturally isomorphic to a lattice vertex algebra from $(K^\bullet(X), K_\sst^0(X), \chi_\sym)$; here
\[K^\bullet(X)=K^0(X)\oplus K^1(X)\cong H^\bullet(X)\] is the topological $K$-theory of $X$ with $\BC$-coefficient, $K_\sst^0(X)$ is the semi-topological $K$-theory\footnote{The zeroth semi-topological $K$-theory $K^0_\sst(X)$ is defined as a Grothendieck's group of vector bundles modulo algebraic equivalence. By \cite[Theorem 4.21]{B16} and \cite[Theorem 2.3]{AH18}, we have $K^0_\sst(X)\simeq \pi_0(\mathcal{M}_X)$.} with $\BZ$-coefficient and $\chi_\sym$ is the symmetric pairing on $K^\bullet(X)$
\[\chi_\sym(v,w)=\int_{X}\ch(v^\vee)\ch(w)\td(X)+\int_{X}\ch(w^\vee)\ch(v)\td(X)\,.\]
The construction of a vertex algebra from such data is recalled and summarized in Theorem \ref{thm: voaconstruction} and follows Kac \cite{Ka98}; it uses Kac's bosonic vertex algebra construction in the even part $K^0(X)$ and the anti-fermionic vertex algebra construction in the odd part $K^1(X)$. 

Kac's construction produces a conformal element when the pairing $\chi_\sym$ is non-degenerate; unfortunately, due to the symmetrization this is not often the case. It turns out that this issue can be overcome by using the larger vertex algebra~$V_\bullet^{\pa}$. The vector space underlying $V_\bullet^{\pa}$ is the homology of the stack of pairs $\CP_X\simeq \CM_X\times \CM_X$:
\[V_\bullet^{\pa}=\widehat{H}_\bullet(\CP_X)\,.\]
The construction of the conformal element requires a choice of an isotropic decomposition of the fermionic part (see Theorem \ref{thm: voaconstruction} or \cite[Section 3.6]{Ka98}). This decomposition is where the Hodge degrees and Assumption \ref{Ass: vanishing} come into play, because $K^1(X)$ splits into the isotropic subspaces
$$K^1(X)=K^{\bullet,\bullet+1}\oplus K^{\bullet+1,\bullet}
$$
which via the Chern character isomorphism correspond to 
$$K^{\bullet,\bullet+1}\cong \bigoplus_{p\geq 0}H^{p, p+1}(X)\quad \textup{and}\quad K^{\bullet+1,\bullet}\cong \bigoplus_{p\geq 0}H^{p+1, p}(X)\,.
$$
It is for the construction of such conformal element that we use the vertex algebra over the complex numbers while the result of J. Gross \cite{Gr19} works over any field containing rational numbers, as it relies on the Hodge decomposition. We prove that the Virasoro operators induced by this conformal element $\omega$ are dual to the pair Virasoro operators defined in the algebra of descendents, see Section \ref{sec: virasorocomparison}.

One remarkable aspect of Theorem B is that, while the operators $\bL_n^{\pa}$ on the descendent algebra were previously only defined for $n\geq -1$, a conformal element provides fields $L_n^{\pa}$ for every $n\in \BZ$ and thus a complete representation of the Virasoro algebra. In particular, this representation now has a non-trivial central charge $2\chi(X)$ which the positive branch $\{L_n^{\pa}\}_{n\geq -1}$ does not detect. We note that the factor of 2 appears due to working with the pair vertex algebra $V_\bullet^{\pa}$; if the pairing $\chi_\sym$ were non-degenerate we would get a conformal element in $V_\bullet$ with central charge $\chi(X)$. Remarkably, the Virasoro operators on the Gromov-Witten theory of $X$ (at least if $\dim(X)$ is even) are also known to admit an extension to a full representation of the Virasoro algebra; the central charge in the Gromov-Witten case is $\chi(X)$ \cite[Section 2.10]{getzler}. 

This description of the Virasoro operators provides a beautiful formulation of the Virasoro constraints for sheaves and for pairs in terms of well-known notions in the theory of vertex operator algebras, namely subspace of primary states (also known as physical states):
$$\widecheck{P}_0\subset \widecheck{V}_\bullet,\quad P_0^{\pa} \subset V_\bullet^{\pa}.$$

\begin{theorem}[Section \ref{subsec: virasoroprimary}]\label{thm: virasorophysical}
Assume $X$ is in class D (see Remark \ref{rem: classD}) and satisfies Assumption \ref{Ass: vanishing}. Then 
\begin{enumerate}
\item Conjecture \ref{conj: virasorosheaves} is equivalent to the class 
\[
[M]^\vir\in \widecheck{V}_\bullet \subseteq \widecheck{V}^{\pa}_\bullet
\]
being a primary state in $\widecheck{P}_0$ (cf. Definition \ref{Def: physicalstates}). 
\item Conjecture \ref{conj: virasoropairs} is equivalent to the class 
\[
[P]_{(q^\ast V,\BF)}^\vir\in V^{\pa}_\bullet
\]
being a primary state in $P_0^{\pa}$ 
(cf. Definition \ref{Def: physicalstates}). Here, the class $[P]_{{(q^\ast V,\BF)}}^\vir$ denotes the lift of $[P]^\vir\in \widecheck{V}^{\pa}_\bullet$ to $V^{\pa}_\bullet$ induced by the universal pair $q^\ast V\to\BF$.
\end{enumerate}
\end{theorem}
While the part (1) in the above theorem was stated for moduli spaces of sheaves satisfying the assumptions (1), (2) and (3) from Section \ref{sec: modulisheavespairs}, the condition of being a primary state makes sense also for the invariant classes $[M]^{\inva}$ without the assumption (2). This motivates the following generalization of Conjecture \ref{conj: virasorosheaves}. 
\begin{conjecture}
    \label{conj:actualinvvir}Under the assumptions of Theorem \ref{thm: virasorophysical}, the moduli spaces $M$, possibly containing strictly semistable sheaves, satisfy Virasoro constraints in the sense that
  $$
  [M]^{\inva}\in \widecheck{P}_0\subset  \widecheck{V}^{\pa}_{0}\,.
  $$
\end{conjecture}

The proof of Theorem \ref{thm: virasorophysical} is given in Section \ref{subsec: virasoroprimary}. We prove in Proposition \ref{prop: wallcrossingcompatibility1}, \ref{prop: wallcrossingcompatibility2} that the space of physical states interact nicely with Lie bracket operations: $\widecheck{P}_0\subset \widecheck{V}_\bullet
$ is a Lie subalgebra and $P_0^{\pa} \subset V_\bullet^{\pa}$ is a Lie submodule over $\widecheck{P}_0$. Since wall-crossing formulae in \cite{Jo21} are always written using the Lie bracket, these Lie algebraic statements prove a compatibility between wall-crossing and the Virasoro constraints.

\subsection{Proof of Theorem \ref{thm: main} and other results}
\label{sec: results}

The main result of the paper is Theorem \ref{thm: main}, i.e., a proof of Conjecture \ref{conj: virasorosheaves} for semistable sheaves on curves and surfaces with $h^{0,1}=h^{0,2}=0$. We also denote the three cases (1), (2)  and (3) in Theorem \ref{thm: main} by $(m,d) = (1,1), (2,2)$ and $(2,1)$ where $m$ denotes the dimension of the underlying variety and $d$ the dimension of the support of sheaves we consider. 

The main ingredient in the proof is an inductive rank reduction argument via wall-crossing. This is the content of Section \ref{sec: rankreduction}. In each of the 3 cases, we consider the moduli spaces of Bradlow pairs $P^t_\alpha$ which depend on a stability parameter $t>0$. Assuming that $M_\alpha$ contains no strictly semistable sheaves, when $0<t\ll 1$ is small there is a map $P^{0+}_\alpha\coloneqq  P^{t\ll 1}_\alpha\to M_\alpha$ which is a (virtual) projective bundle. This geometric description is equivalent to wall-crossing at the Joyce-Song wall:
$$[P^{0+}_{\alpha}]^\vir = \big[[M_{\alpha}]^\vir,e^{(1,0)}\big]\,.$$
On the other hand, for large $t\gg 1$ the moduli spaces $P^{\infty}_\alpha\coloneqq  P^{t\gg 1}_\alpha$ are easier to understand (and sometimes empty). We prove not only that $M_\alpha$ satisfy the sheaf Virasoro constraints (i.e., Theorem  \ref{thm: main}) but also that the moduli spaces of Bradlow pairs $P_\alpha^t$ satisfy the pair analogue of the constraints:
\begin{theorem}[Theorem \ref{thm: rankreduction}, Section \ref{sec: limits}, Section \ref{sec:lowrank}]\label{thm: virasorobradlowpairs}
The moduli spaces of Bradlow pairs $P_\alpha^t$ (see Definition \ref{def: bradlow}) satisfy the pair Virasoro constraints (Conjecture \ref{conj: pairvirasoro}) for every $t>0$ in the 3 settings of Theorem \ref{thm: main}, i.e., $(m,d)=(2,2), (1,1)$ and $(m,d)=(2,1)$ provided Assumption \ref{ass:WCpair} holds.
\end{theorem}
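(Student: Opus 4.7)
The plan is to prove Theorem \ref{thm: virasorobradlowpairs} simultaneously with Theorem \ref{thm: main} by a single induction on the rank (more precisely, on the class $\alpha$ with respect to a partial order that wall-crossing respects). The unifying strategy is to convert the Virasoro statements into the purely Lie-theoretic statement ``the (virtual/invariant) class is a primary state'' via Corollary \ref{cor: virasorophysical}, and then to exploit the fact that $\widecheck P_0\subset \widecheck V_\bullet$ is a Lie subalgebra and $P_0^{\pa}\subset V_\bullet^{\pa}$ is a Lie submodule over it (Propositions \ref{prop: wallcrossingcompatibility1} and \ref{prop: wallcrossingcompatibility2}), so that Joyce-type wall-crossing formulae propagate the primary-state property automatically.

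First I would establish the base case. For $\alpha$ of rank $1$, the sheaf Virasoro in each of the three settings is already in place: on surfaces with $p_g=0$ by \cite{moreira}, and on curves by a direct computation (the moduli is a point or a Jacobian and the descendents are elementary), or alternatively via a single Joyce--Song wall-crossing. For the pair side in the extremal regime $t\gg 1$, the moduli $P_\alpha^{\infty}$ admits an explicit description --- it is either empty or a Quot/projective-bundle-type space over a rank~$1$ moduli --- in which pair Virasoro can be checked by a direct descendent computation, seeded by the known rank~$1$ case.

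The inductive step has two halves. Assuming both Theorems for every class strictly smaller than $\alpha$, I first propagate pair Virasoro across every wall in $t$. At a wall $t=t_0$, Joyce's wall-crossing formula expresses the jump $[P_\alpha^{t_0+}]^\vir-[P_\alpha^{t_0-}]^\vir$ as a sum of iterated Lie brackets of classes $[M_{\alpha'}]^{\inva}$ and $[P_{\alpha''}^{t_0\pm}]^\vir$ with $\alpha',\alpha''<\alpha$ strictly. By the induction hypothesis these inputs lie in $\widecheck P_0$, respectively $P_0^{\pa}$, and Theorem \ref{thm: C} shows that the jump does too. Starting from $t\gg 1$ and crossing walls downward, $[P_\alpha^t]^\vir\in P_0^{\pa}$ for every $t>0$ and in particular at the Joyce--Song wall $t=0+$. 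This already proves Theorem \ref{thm: virasorobradlowpairs} for class $\alpha$. For the sheaf half, I would invoke the Joyce--Song identity
\[
[P_\alpha^{0+}]^\vir \;=\; \bigl[[M_\alpha]^\vir,\, e^{(1,0)}\bigr].
\]
Since $P_\alpha^{0+}\to M_\alpha$ is a virtual projective bundle, the bracket with $e^{(1,0)}$ is essentially injective on the relevant descendent integrals, so primality of $[P_\alpha^{0+}]^\vir$ forces primality of $[M_\alpha]^\vir$ and hence Conjecture \ref{conj: virasorosheaves} for $M_\alpha$, closing the induction.

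The principal obstacle is ensuring that every wall-crossing contribution truly involves only classes strictly smaller than $\alpha$, so that the induction is well-founded, and that the pair wall-crossing formula of \cite{Jo21} is available in each setting with the Lie-module structure needed to invoke Theorem \ref{thm: C}. It is precisely this point in case (3) --- moduli of one-dimensional sheaves --- that requires Assumption \ref{ass:WCpair}, which guarantees that the necessary pair wall-crossing formula holds with the correct combinatorics. A secondary, concrete technical point is the direct verification of pair Virasoro in the extremal regime $t\gg 1$ on surfaces, which is a bookkeeping computation on a Quot-scheme-type space but not conceptually deep.
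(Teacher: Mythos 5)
Your overall architecture --- recast the constraints as primality of states, induct on the generalized rank, propagate the pair statement across the walls in $t$ using Propositions \ref{prop: wallcrossingcompatibility1} and \ref{prop: wallcrossingcompatibility2}, and seed the induction with the explicit $t\gg 1$ spaces --- is the same as the paper's; the only cosmetic difference on the pair side is that the paper wall-crosses in one step from $\mu^t$ to $\mu^\infty$ rather than wall by wall, which is immaterial. Two points you flag but do not resolve are genuinely needed: that no rank-zero classes appear among the wall-crossing terms (this is Lemma \ref{lem: norank0}, proved via the dominance argument and \cite[Theorem 3.11]{Jo21}, and it is what makes the induction well-founded in all three cases, not only in case (3)), and the verification at $t\gg1$ in case (3), where the torsion-pair description of $S_\beta^{[0,n]}$ is related to the ideal-sheaf description by the duality involution $\bI$, which intertwines $\bL_k^{\pa}$ only up to the sign $(-1)^k$ (Proposition \ref{prop: nestedhilb}).

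The genuine gap is the sheaf half of your inductive step, and since your induction proves the sheaf and pair statements simultaneously, it undermines Theorem \ref{thm: virasorobradlowpairs} itself: pair Virasoro for class $\alpha$ consumes sheaf Virasoro for the classes $[M_{\alpha'}]^{\inva}$ of strictly smaller rank. You assert that, $P_\alpha^{0+}\to M_\alpha$ being a virtual projective bundle, ``the bracket with $e^{(1,0)}$ is essentially injective on the relevant descendent integrals'', so that primality of $[P_\alpha^{0+}]^\vir_{(\CO,\BF)}$ forces primality of $[M_\alpha]^{\inva}$. This is precisely where the real work lies, and as stated it is false: pairing $\big[[M_\alpha]^\vir,e^{(1,0)}\big]$ with a descendent $D$ yields $\sum_{j\geq0}\frac{1}{j!}\int_{[M_\alpha]^\vir}\xi_\BG(\bR_{-1}^jD)\,s_{j-\chi(\alpha)+1}(Rp_\ast\BG)$, i.e.\ the descendent integrals of $M_\alpha$ twisted by Segre classes, and if $p_\ast\BG=0$ (which happens after twisting by a sufficiently negative line bundle) then $P_\alpha^{0+}$ is empty while the constraints on $M_\alpha$ remain nontrivial, so no unconditional inversion is possible. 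One must first twist so that Assumption \ref{ass: alphabig} holds (transporting the constraints via Lemma \ref{lem: twist}), and then genuinely invert the relation: in the paper this is Theorem \ref{lem: projectivebundle}, where one caps with $c_{\chi(\alpha)-1}(\T^{\rel})$, pushes forward to form $\Upsilon_\alpha$, and proves $\Upsilon_\alpha\in\widecheck P_0$ by an explicit computation applying $\bL_j^{\CO}$ to $\bR_{-1}^{j+1}(D)\,c_{\chi(\alpha)-1}$ and using $\bR_j(c_n)=j!\,\ch_j(\td(X))\,c_n$ for a rank-$n$ bundle. Moreover, when $M_\alpha$ contains strictly semistable sheaves --- which unavoidably occurs among the wall-crossing terms --- the identity $[P_\alpha^{0+}]^\vir=\big[[M_\alpha]^\vir,e^{(1,0)}\big]$ you invoke is not available; the correct relation is \eqref{eq: upsilonwallcrossing}, from which $\chi(\alpha)[M_\alpha]^{\inva}$ is isolated using the induction hypothesis on the lower-rank iterated brackets together with $\chi(\alpha)>0$. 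Your proposal needs this replacement (twist to Assumption \ref{ass: alphabig}, then Theorem \ref{lem: projectivebundle} combined with \eqref{eq: upsilonwallcrossing}) before the induction closes.
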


To prove Theorems \ref{thm: main} and \ref{thm: virasorobradlowpairs} we need the following steps:
\begin{enumerate}
\item[(i)] We prove in Section \ref{sec:lowrank} that $P^{\infty}_\alpha$ satisfies Conjecture \ref{conj: virasoropairs}. In case $(m,d)=(1,1)$, we only need to prove the statement for symmetric powers of curves which we do by a direct computation in Proposition \ref{prop: symmetricpowers}. Cases $(m,d)=(2,1)$ and $(2,2)$ for slope stability can be reduced to Hilbert schemes of points, where they were shown to hold in \cite{moop, moreira}.
\item[(ii)] We use the wall-crossing formula \eqref{eq: rankreductionwallcrossing} between $P^{\infty}_\alpha$ and $P^{0+}_\alpha$ to show that $P^{0+}_\alpha$ satisfies Virasoro constraints for pairs as well. By induction on $\rk(\alpha)$, we know the Virasoro constraints on the wall-crossing terms. We then rely on the compatibility between wall-crossing and Virasoro constraints (Propositions \ref{prop: wallcrossingcompatibility1} and \ref{prop: wallcrossingcompatibility2}).
\item[(iii)]  Finally, we use a projective bundle compatibility for  $P^{0+}_\alpha\to M_\alpha$ proved in Theorem \ref{lem: projectivebundle} to show that the pair Virasoro constraints on $P^{0+}_\alpha$ imply the sheaf Virasoro constraints on $M_\alpha$.  Wall-crossing from slope stability to Gieseker stability in Corollary \ref{cor:slopetogies} concludes the proof of Theorem \ref{thm: main}.
\end{enumerate}
A crucial point in the argument is that we must include moduli spaces $M_\alpha$ admitting strictly semistable sheaves in the induction since they unavoidably appear as wall-crossing terms. That is, we must prove that $[M]^\inva$ is in the Lie algebra of primary states. Because of that, in step (iii) we do not exactly have a projective bundle. However, by the very definition of the invariant classes $[M]^\inva$, what we have to prove is essentially the same as in the projective bundle case. We do this in Theorem~\ref{lem: projectivebundle}.

In the appendix we explain Joyce-Song wall-crossing, using results of the first author in \cite{Bo21.5}, which provides an alternative for some of the arguments in Section \ref{sec:lowrank}. In particular, we prove that the pair Virasoro constraints hold for punctual Quot schemes.\footnote{It would be interesting to see the implication of Virasoro constraints to the generating series of descendent invariants of Quot schemes studied in \cite{JOP}.}

\begin{theorem}[=Theorem \ref{prop: quot}]\label{thm: punctualquot}
Let $X$ be a curve or a surface and let $V$ be a torsion-free sheaf on $X$. Then the punctual Quot scheme $\textup{Quot}_X(V, n)$ satisfies the pair Virasoro constraints (Conjecture \ref{conj: pairvirasoro}).
\end{theorem}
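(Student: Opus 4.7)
The plan is to combine the explicit wall-crossing formulas for Quot schemes obtained by the first author in \cite{Bo21.5} with the compatibility of the Virasoro constraints with wall-crossing (Theorem \ref{thm: C}), reducing the statement to the rank-$1$ base case. First, I would realize $\Quot_X(V, n)$ as a moduli space of pairs in the sense of Section \ref{sec: modulisheavespairs}: a closed point is a surjection $V \twoheadrightarrow F$ onto a zero-dimensional sheaf $F$ of length $n$, and the standard Quot-scheme obstruction theory agrees with $\Ext^i([V \to F], F)$ in degrees $i = 0, 1$. By Corollary \ref{cor: virasorophysical}(2), the desired Conjecture \ref{conj: virasoropairs} is then equivalent to the lift $[\Quot_X(V, n)]^{\vir}_{(q^\ast V, \BF)} \in V_\bullet^{\pa}$ being a primary state.

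Next, I would invoke \cite{Bo21.5}, where explicit wall-crossing formulas for every descendent invariant of a punctual Quot scheme on a curve or surface are derived. Translated into Joyce's vertex-algebraic language, those formulas express the pair class $[\Quot_X(V, n)]^{\vir}_{(q^\ast V, \BF)}$ as an iterated Lie-algebraic combination built from two kinds of inputs: on the one hand the rank-$1$ base case $\Quot_X(\CO_X, k) \cong \Hilb^k(X)$, and on the other hand Joyce's invariant classes $[\CM]^{\inva}$ of zero-dimensional sheaves on $X$. For a curve $C$, the base case is the symmetric power $\Hilb^k(C) = \Sym^k(C)$, whose Virasoro constraints are settled by the direct computation of Proposition \ref{prop: symmetricpowers}; for a surface, the Hilbert scheme case is supplied by Theorem \ref{thm: main}(2), while its pair counterpart appears in Theorem \ref{thm: virasorobradlowpairs}. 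The zero-dimensional sheaf invariants are handled by the same rank-$1$ results.

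I would then close the argument using Propositions \ref{prop: wallcrossingcompatibility1} and \ref{prop: wallcrossingcompatibility2}: the sheaf primary states $\widecheck{P}_0 \subset \widecheck{V}_\bullet$ form a Lie subalgebra, and the pair primary states $P_0^{\pa} \subset V_\bullet^{\pa}$ form a Lie submodule over $\widecheck{P}_0$. Since the wall-crossing expression writes our class as an iterated Lie bracket/action on primary states, it is itself primary, which is exactly what we need.

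The main obstacle I anticipate is the bookkeeping of the universal-pair framing: the wall-crossing formulas of \cite{Bo21.5} are most naturally phrased at the level of $\widecheck V_\bullet$, whereas the pair Virasoro statement demands a consistent lift to $V_\bullet^{\pa}$ via the tautological surjection $q^\ast V \to \BF$. Since the difference between $\widecheck V_\bullet$ and $V_\bullet^{\pa}$ is precisely the data of such a universal pair, and the pair (as opposed to sheaf) Virasoro constraint is sensitive to this lift, one must track the universal-pair data through every wall-crossing term before invoking the Lie-submodule property. Once this compatibility is established, the argument proceeds uniformly for curves and for surfaces with $p_g = 0$.
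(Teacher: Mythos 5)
Your overall skeleton (realize $\Quot_X(V,n)$ as a Joyce--Song pair moduli space, quote the wall-crossing formula of \cite{Bo21.5}, and conclude via the primary-state compatibilities of Propositions \ref{prop: wallcrossingcompatibility1} and \ref{prop: wallcrossingcompatibility2}) is the right one, but you have misidentified what the wall-crossing formula actually produces, and this hides the key missing ingredient. The formula \eqref{Eq: PJS} writes $[\Quot_X(V,n)]^\vir_{(q^\ast V,\BF)}$ as iterated brackets of the Joyce invariant classes $[M_{n_i}]^\inva$ of \emph{zero-dimensional sheaves} acting on the point class $e^{(\llbracket V\rrbracket,0)}$; no rank-one moduli space such as $\Quot_X(\CO_X,k)\cong \Hilb^k(X)$ or $C^{[k]}$ appears among the wall-crossing terms. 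Consequently the statement you actually need is that $[M_n]^\inva\in\widecheck P_0$ for all $n$, and this is \emph{not} supplied by "the same rank-$1$ results": Theorem \ref{thm: main}(2) and Proposition \ref{prop: symmetricpowers} concern rank-one torsion-free sheaves (or pairs built from them), not the invariant classes of torsion sheaves, which live on components with plenty of strictly semistable objects and are only defined through Joyce's wall-crossing. Indeed, for curves the logical order is the reverse of what you propose: the symmetric-power statement \eqref{eq: JSsymmetricpowers} is a \emph{consequence} of the Quot-scheme theorem. One could try to invert \eqref{eq: JSsymmetricpowers} to extract $[M_n]^\inva$ from primarity of the left-hand side, but primarity of the bracket $[[M_n]^\inva,e^{(1,0)}]$ does not formally imply primarity of $[M_n]^\inva$; extracting the leading term requires an argument of the type of Theorem \ref{lem: projectivebundle} (with the $c_{\chi(\alpha)-1}(\T^{\rel})$ insertion defining $\Upsilon_\alpha$), which you have not set up, and for surfaces the corresponding $V=\CO_S$ input would be part of the very theorem being proved.

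What closes the gap in the paper is Lemma \ref{Lem: Mnp}, a short direct verification that $[M_n]^\inva\in\widecheck P_0$: working with the $1$-normalized lift, only $\bL_1$ is nontrivial, $\bR_1$ dies for degree reasons, and the multiplication term $\bT_1$ is killed using the vanishing properties of $\ch_0^\HH$ together with, on a surface, the extra vanishings $\ch_0^\HH(\gamma^{1,2})\cap[M_n]^{\overline\inva}=\ch_1^\HH(\gamma^{2,\bullet})\cap[M_n]^{\overline\inva}=0$, which come from the geometric fact that the universal sheaf on the stack of zero-dimensional sheaves has $\ch_1(\CF)=0$ (and this is also where $h^{2,0}(S)=0$ enters the precise statement). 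Your final concern about lifting from $\widecheck V_\bullet^{\pa}$ to $V_\bullet^{\pa}$ is legitimate but is resolved exactly as in Section \ref{sec: WCformula}: the point class $e^{(\llbracket V\rrbracket,0)}$ is point-normalized, $\ch_1^{\CV}(\pt)\cap e^{(\llbracket V\rrbracket,0)}=0$, so Lemma \ref{lem: liftvertexalgebra} pins down the lift uniquely on both sides of the formula. With Lemma \ref{Lem: Mnp} in hand (and $e^{(\llbracket V\rrbracket,0)}\in P_0^{\pa}$), your last paragraph then does go through verbatim.
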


\subsection{Notation and conventions}
Except the semi-topological $K$-group $K_\sst^0(X)$ which we consider over $\BZ$-coefficients, all cohomology and $K$-theory groups are assumed to have coefficients in $\BC$ unless stated otherwise. We write 
\[K^\bullet(X)=K^0(X)\oplus K^1(X)\]
for the topological $K$-theory and we denote by
\[\ch\colon K^\bullet(X)\to H^\bullet(X)\]
the isomorphism between topological $K$-theory and cohomology. The total cohomology of a topological space is always understood to be the direct product of the cohomology groups in each degree, while homology is the direct sum. We will use the cap product with the cohomology acting on the left. This is the convention followed in \cite{dold}; it differs from the more usual convention with cohomology on the right by a sign, i.e., $\gamma \cap u=(-1)^{|\gamma||u|}u\cap \gamma$ for $\gamma\in H^\bullet(X)$, $u\in H_\bullet(X)$. 

By an obstruction theory on a scheme $M$ we mean a perfect complex $\BE$ of non-negative degree together with a map $\phi:\BE^\vee\to \BL_M$ such that $h^0(\phi)$ and $h^{-1}(\phi)$ are an isomorphism and a surjection, respectively. The complex $\BE$, or its $K$-theory class, is also called the virtual tangent bundle of $M$ (note that by \cite{siebert} the virtual fundamental class only depends on the $K$-theory class $\BE$, and in particular does not depend on the map to $\BL_M$). We will often abuse notation and just call $\BE$ (or its $K$-theory class) the obstruction theory, leaving the map to $\BL_M$ implicit. The construction of the morphism to $\BL_M$ in all the cases considered is standard, see \cite{huythomas} for sheaves/complexes and e.g. \cite[Section 5.3]{mochizuki} for pairs.

\begin{center}
    \begin{tabular}{p{4cm} p{10cm}}
    $\alpha,\beta$& Semi-topological $K$-theory classes in $K_\sst^0(X)$.\\
    $\gamma, \delta$&  Cohomology classes on $X$. \\
    $v,w$& Elements of $K^\bullet(X)$.\\
    $\deg(-)$ & Degree for any graded vector space. 
    \\
    $|-|$& Supergrading taking value in $\{0,1\}. $   \\
    $\chh_i(\gamma)$ & The holomorphic descendent in degree $2i-p+q$ depending on the Hodge degree of $\gamma\in H^{p,q}(X)$.\\
    $\ch_i(\gamma)$& The topological descendent in degree $2i-|\gamma|$. \\
     $L_n, T_n, R_n,$& Virasoro operators on homology and vertex algebra.\\
    $\mathsf{L}_n, \mathsf{T}_n,\mathsf{R}_n,$& Dual operator notation on cohomology and descendent algebra.\\
    $\mathsf{L}_{\inv}$& weight 0 Virasoro operator on descendent algebra.
        \end{tabular}
\end{center}

\subsection{Future directions} 
There are several open directions regarding the Virasoro constraints for sheaves. The first obvious direction is to try to improve Theorem \ref{thm: main} by removing the assumptions $h^{0,1}=h^{0,2}=0$. The arguments in this paper show that we can get the constraints for $h^{0,1}>0$ as long as we can prove them for the moduli of rank 1 sheaves (isomorphic to the Hilbert scheme of points times the Jacobian). Finding an argument that works in general for the Hilbert scheme of points and does not go through Gromov-Witten theory would be highly desirable. Removing the assumption $h^{0,2}=0$ requires a better understanding of the constraints in the setting of reduced virtual fundamental classes for fixed determinant theory (see Remark \ref{rem: fixed det}).

Sheaf-theoretic Virasoro constraints of Fano 3-folds are of particular interest because they are related to the original Virasoro constraints in Gromov-Witten theory. Since wall-crossing compatibility of Virasoro constraints also holds in this case, it would be interesting to develop wall-crossing techniques for Fano 3-folds that can be applied to Virasoro constraints.

\subsection{Acknowledgement} 
The authors would like to thank Y. Bae, D. van Bree, J. Gross, A. Henriques, D. Joyce, M. Kool, H. Liu, A. Mellit,  G. Oberdieck, A. Oblomkov, R. Pandharipande and W. Pi for many conversations about the Virasoro constraints on moduli spaces of sheaves.

The first and third authors were supported by
ERC-2017-AdG-786580-MACI. The second author is supported by the grant SNF-200020-182181. The project received funding from the European Research Council (ERC) under the European Union Horizon 2020 research and innovation programme (grant agreement 786580).

\section{Virasoro constraints}
\label{sec: virasoro constraints}

In this section, we formulate the Virasoro constraints for moduli spaces of sheaves and pairs, denoted by $M$ and $P$, respectively. These moduli spaces satisfy the numbered assumptions in Section \ref{sec: modulisheavespairs} unless otherwise mentioned. We use the notation $M_\alpha$ when the topological type $\alpha$ of the sheaves in the moduli space is relevant. 

\subsection{Supercommutative algebras}
\label{sec: supercommutative}
Before we move onto geometry, we note down some useful observations about freely generated \textit{supercommutative} algebras and derivations on them. Let $D_\bullet$ be a supercommutative $\BZ$-\textit{graded unital algebra} over~$\BC$ with degree $\deg(v) = i$ for any $v\in D_i$. Supercommutativity means that multiplication satisfies 
\[v\cdot w = (-1)^{|v||w|} w\cdot v\,,\]
where 
$$|v|\in \{0,1\}\,,\quad  \text{such that} \quad |v|\equiv \deg(v)\mod 2\,.$$
The unit of $D_\bullet$ is always going to be denoted by $1$ and in general, we will omit specifying it in the notation. 

A \textit{superderivation} of degree $r$ on $D_\bullet$ is a $\BZ$-graded linear map 
$$
R:D_\bullet\longrightarrow D_{\bullet+r}
$$
satisfying the \textit{graded Leibnitz rule}
$$R(v\cdot w) = R(v)\cdot w + (-1)^{r|v|}\,v\cdot R(w)\,.$$

\begin{definition}\label{def: supercommutativefree}
Let $C_\bullet$ be a $\BZ$-graded  $\BC$-vector space. We denote by $$D_\bullet = \SSym[C_\bullet]$$ the unital supercommutative algebra freely generated by $C_\bullet$. Denote by $C^\bullet$ the graded dual of $C_\bullet$. We define the dual of $D_\bullet$ as a completion of $\SSym[C^\bullet]$ with respect to the degree. More precisely, the dual is
\[D^\bullet\coloneqq \SSym\llbracket C^\bullet\rrbracket=\prod_{i\geq 0}\SSym[C^\bullet]^i\]
where $\SSym[C^\bullet]^i$ denotes the degree $i$ part of $\SSym[C^\bullet]$ with the degree induced by the one on $C^\bullet$.\footnote{We follow here the convention that the total homology is the direct sum of the homology groups in each degree, while cohomology is the product.} 
\end{definition}
Given a linear map $f\colon C_\bullet\to B_{\bullet+r}$ of degree $r$, there is a unique way to extend $f$ to an algebra homomorphism $\SSym\llbracket f\rrbracket$ and to a derivation $\text{Der}(f)$ of degree $r$.

The pairing between $C_\bullet$ and $C^\bullet$ can be promoted to a cap product between $D_\bullet$ and $D^\bullet$. 
\begin{definition}
\label{Def: pairingofV}
Fix $C_\bullet$ and $C^\bullet$ dual vector spaces and let $\langle-,-\rangle\colon C^\bullet\times C_\bullet\to \BC$ be the pairing. Let $D_\bullet$ and $D^\bullet$ be as in Definition \ref{def: supercommutativefree}.
We define a cap product
$$
\cap\colon C^\bullet\times D_\bullet\longrightarrow D_\bullet
$$
by letting $\nu\cap(-)$ for $\nu\in C^\bullet$ act as a superderivation of degree $-\deg(\nu)$ on $D_\bullet$  restricting to $\langle \nu,-\rangle: C_\bullet\to\BC$. The cap product extends uniquely to
$$
\cap\colon D^\bullet\times D_\bullet\longrightarrow D_\bullet\,,
$$
by requiring that $(\mu \nu) \cap u = \mu\cap(\nu\cap u)$. Notice that this makes $D_{\bullet}$ into a left $D^\bullet$-module.

Starting from a map $f: C_1^\bullet\to C_2^\bullet$, there is a dual map $f^{\dagger}:C_{2,\bullet}\to C_{1,\bullet}$. Constructing algebra homomorphisms and derivations commutes with taking duals: $$\SSym\llbracket f^\dagger\rrbracket = \SSym\llbracket f\rrbracket^ \dagger\,,\qquad \text{Der}(f^\dagger) = \text{Der}(f)^\dagger\,.$$

By composing with the projection $D_\bullet\to \BC$, we recover a non-degenerate pairings $$\langle -,-\rangle: D^\bullet\times D_\bullet\to \BC\,.$$
\end{definition}

An explicit description of the cap product can be obtained after fixing a basis $B\subset C_\bullet$ and observing that 
$$
\nu \cap (-) = \sum_{v\in B}\langle \nu,v\rangle\, \frac{\partial}{\partial v}\,
$$
where $\frac{\partial}{\partial v}$ is the superderivation of degree $-\deg(v)$ acting on the elements of the basis $B$ by $\frac{\partial}{\partial v}(v)=1$ and $\frac{\partial}{\partial v}(w)=0$ for $w\in B\setminus \{v\}$.

\subsection{Descendent algebra}
\label{sec: descendentalgebra}
Let $X$ be a smooth projective variety over $\BC$.

\begin{definition}\label{def: descendentalgebra}
Let $\text{CH}^X$ denote the infinite dimensional vector space over $\BC$  generated by symbols called \textit{holomorphic descendents} of the form
\[\chh_i(\gamma)\quad\textup{ for }\quad i\geq 0,\, \gamma\in H^\bullet(X)\]
subject to the linearity relations
\[\chh_i(\lambda_1\gamma_1+\lambda_2\gamma_2)=\lambda_1\chh_i(\gamma_1)+\lambda_2\chh_i(\gamma_2)\]
for $\lambda_1, \lambda_2\in \BC$. We define the \textit{cohomological} $\BZ$-grading on $\text{CH}^X$ by 
\begin{equation}
\label{eq: degchigamma}
\deg\chh_i(\gamma) = 2i-p+q \quad \text{ for }\quad \gamma\in H^{p,q}(X)\,.
\end{equation}
Finally, we let $\BD^X$ be the $\BZ$-graded \textit{algebra of holomorphic descendents}
\[\BD^X = \SSym\llbracket\text{CH}^X\rrbracket\,,\]
which is the completion of the supercommutative algebra generated by $\ch_i^\HH(\gamma)$. We will write $\chh_\bullet(\gamma)$ for the element
\[\chh_\bullet(\gamma)=\sum_{i\geq 0}\chh_i(\gamma)\in \BD^X\,.\]
\end{definition}

\begin{remark}\label{rem: comparingnotation}
This algebra of descendents is very similar to the one introduced in \cite{moop} with two small differences. Firstly, we now take a completion with respect to degree; this makes little difference in practice, but it is important in the comparison between $\BD^X$ and $H^\bullet(\CM_X)$ (cf. Lemma \ref{Lem: isomorphism}) since we follow the standard convention that the total cohomology is a product of the groups in each degree. The second difference is in the grading; in our notation, given $\gamma\in H^{p,q}(X)$, the symbol $\chh_i(\gamma)$ should be thought of as having Hodge degree $(i, i-p+q)$ (recall this from Definition \ref{def: geometricrealization}) so that the geometric realization is degree preserving. The superscript $H$ stands for holomorphic part of the Hodge degree and is used to indicate this degree convention. The original convention appearing in loc. cit. defined the descendents $\ch^{\text{old}}_i(\gamma)$ as \[\ch_i^{\textup{old}}(\gamma)=\chh_{i+p-\dim(X)}(\gamma)\]
for $\gamma\in H^{p,q}(X)$. The reason for introducing holomorphic descendents is to give a natural looking expression for the operator $\mathsf{T}_k$ in Section \ref{sec: Virarosorops}.
\end{remark}

It will sometimes be useful to also consider the shift
\[\ch_{i}(\gamma)\coloneqq \chh_{i+\lfloor\frac{p-q}{2}\rfloor}(\gamma)=\ch^{\textup{old}}_{i-\lceil\frac{\deg \gamma}{2}\rceil+\dim X}(\gamma)\]
so that $\deg\ch_i(\gamma)=2i-|\gamma|$; we recall that $|\gamma|\in \{0,1\}$ is the parity of $\gamma$ as in Section \ref{sec: supercommutative}. 
\begin{definition}\label{def: topdescendentalgebra}
Let $\alpha\in K_\sst^0(X)$ be a topological type. We define $\tCH^X_\alpha$ to be the graded vector space generated by symbols \[\ch_i(\gamma)\quad\textup{ for }\quad i\in \BZ_{>0},\,\gamma\in H^\bullet(X).\]
We let $\BD^X_\alpha$ be $\SSym\llbracket \tCH^X_\alpha\rrbracket$. The algebra $\BD^X_\alpha$ comes equipped with an algebra homomorphism
$p_\alpha\colon \BD^X\to \BD^X_\alpha$
sending
\[\ch_i^\HH(\gamma)\mapsto\begin{cases}
\ch_{i-\lfloor\frac{p-q}{2}\rfloor}(\gamma) &\textup{ if } \,\deg\ch_i^\HH(\gamma)>0\\
\int_X \gamma\cdot \ch(\alpha) &\textup{ if } \,\deg\ch_i^\HH(\gamma)=0\\
0&\textup{ otherwise.}
\end{cases}\]
\end{definition}

Note that abstractly the algebras $\BD^X_\alpha$ are independent of $\alpha$, but the morphisms $p_\alpha$ depend on $\alpha$ by their behavior on the descendents of degree 0. Let $M_\alpha$ be a moduli space parametrizing sheaves of topological type $\alpha$ with a universal sheaf $\BG$. Then the geometric realization factors through $p_\alpha$:
\begin{equation}
\label{Eq: descentofrealization}
\begin{tikzcd}
\BD^X\arrow[r,"p_{\alpha}"]\arrow[d,swap, "\xi_{\BG}"]& \BD^{X}_\alpha \arrow[dl,"\xi_{\BG}"]\\
H^\ast(M_\alpha)&
\end{tikzcd}
\end{equation} 
where we still denote by $\xi_\BG$ the factoring map, which is defined as
\begin{equation}\label{eq: geometricrealizationalpha}
\xi_{\BG}(\ch_i(\gamma))=p_\ast\left(\ch_{i-\lceil\frac{\deg \gamma}{2}\rceil+\dim X}(\BG)q^\ast \gamma\right)\in H^{2i-|\gamma|}(M_\alpha)\,.
\end{equation}
The map factors since 
$\xi_{\BG}(\chh_i(\gamma))=\int_X \gamma\cdot \ch(\alpha)\in H^0(M_\alpha)$ when $\deg(\chh_i(\gamma))=0$ and the degree of $\xi_{\BG}\big(\chh_i(\gamma)\big)$ is identical to \eqref{eq: degchigamma}. Given $D, D'\in \BD^X$ we say that ``$D=D'$ in $\BD^X_\alpha$'' if $p_\alpha(D)=p_\alpha(D')$.

\begin{example}
\label{Ex: Tvir}
   One may lift the Chern classes of the virtual tangent bundle of $M_\alpha$ to $\BD^X$ or $\BD^X_{\alpha}$. As a topological K-theory class, the virtual tangent bundle is defined as 
$$
T^\vir M=-Rp_\ast\RHom(\BG, \BG) + \CO_M\,.
$$
 Using $\sum_{t}\gamma_t^L\otimes \gamma_t^R$ to denote the Künneth decomposition of $\Delta_{\ast} \td(X)$, where $\Delta\colon X\to X\times X$ is the diagonal, and applying Grothendieck-Riemann-Roch, one computes that
\begin{align*}
\ch(TM^\vir)=-\xi_\BG\left(\sum_{i,j\geq 0}\sum_t(-1)^{i-p_{t}^L+\dim(X)}\chh_i(\gamma_t^L)\chh_j(\gamma_t^R)\right)+1\,,
\end{align*}
where $\gamma_t^L\in H^{p_t^L, q_t^L}(X)$. The reason for the existence of this lift will become apparent from Lemma \ref{Lem: isomorphism}. The similarity with Virasoro constraints below is a general phenomenon which can be used to guess their correct formulation.
\end{example}

\subsection{Virasoro operators}

\label{sec: Virarosorops}

In this section we define the Virasoro operators 
\[\bL_k\colon \BD^X\to \BD^X\quad\textnormal{for}\quad k\geq -1,\]which produce the Virasoro constraints. These operators have two terms, a derivation term $\bR_k$ and a linear term $\bT_k$ which is quadratic in $\chh_i$. The full Virasoro operators are $\bL_k=\bR_k+\bT_k$, where
\begin{enumerate}
\item 
$\bR_k \colon \BD^X\to \BD^X$ is an even (of degree $2k$) derivation extended from $\bR_k\colon \tCH^X\to\tCH^X$, where it is defined by
\[\bR_k \chh_i(\gamma)\coloneqq\left(\prod_{j=0}^k (i+j)\right)\chh_{i+k}(\gamma)\,.\]
We take the following conventions: the above product is $1$ if $k=-1$ and $\chh_{i+k}(\gamma)=0$ if $i+k<0$. 
\item $\bT_k\colon \BD^X\to \BD^X$ is the operator of multiplication by the element of $\BD^X$ given by
\[\bT_k\coloneqq\sum_{i+j=k}(-1)^{\dim X-p^L}i!j!\chh_i\chh_j(\td(X)).\]
In the formula above, $(-1)^{\dim X-p^L}\chh_i\chh_j(\td(X))$ is defined as follows: let $\Delta\colon X\to X\times X$ be the diagonal map and let 
\[\sum_{t}\gamma_t^L\otimes \gamma_t^R=\Delta_\ast \td(X)\] 
be a Kunneth decomposition of  $\Delta_\ast\td(X)$ such that $\gamma_t^L\in H^{p^L_t, q^L_t}(X)$ for some $p_t^L, q_t^L$. Then 
\[(-1)^{\dim X-p^L}\chh_i\chh_j(\td(X))\coloneqq \sum_{t}(-1)^{\dim X-p_t^L}\chh_i(\gamma_t^L)\chh_j(\gamma_t^R).\]
\end{enumerate}
\begin{remark}
The operator $\bL_{-1}=\bR_{-1}$ plays a special role and has a particularly nice geometric interpretation in terms of $\BG_m$--gerbes over $M_{\alpha}$ which we describe in Lemma \ref{lem: translationdualr-1}.
\end{remark}

The operators $\{\bL_k\}_{k\geq -1}$ satisfy the Virasoro bracket relations
\[[\bL_k, \bL_\ell]=(\ell-k)\bL_{k+\ell}\in \textup{End}(\BD^X)\]
where the bracket denotes the usual commutator of operators $[X,Y]=X\circ Y-Y\circ X$.
This was noted in \cite{moop}; for a detailed proof see \cite[Proposition 2.10]{bree}. The unusual constant factor $(\ell-k)$, instead of $(k-\ell)$, suggests that there might be another set of more natural Virasoro operators to which $\{\bL_k\}_{k\geq -1}$ are dual. This observation is made into a precise statement in Theorem \ref{thm: duality}. 

\subsection{Weight zero descendents}
\label{sec: invariantdescendents}

One of the issues that arise when dealing with descendent invariants is that a priori they depend on a choice of universal sheaf $\BG$ on $M\times X$. Given a universal sheaf $\BG$, the possible universal sheaves are of the form
$\BG'=\BG\otimes p^\ast L$ where $L$ is a line bundle on $M$.
\begin{lemma}
\label{lem: changeuniversalsheaf}
Let $\bE\colon \BD^X\to \BD^X\llbracket\zeta\rrbracket$ be the algebra homomorphism defined by
$$
\bE = e^{\zeta \bR_{-1}}\,,
$$
then given two universal sheaves $\BG$ and $\BG'=\BG\otimes p^\ast L$, the geometric realizations with respect to the two are related by
\[\xi_{\BG'}=\xi_{\BG}\circ \bE|_{\zeta = c_1(L)} \,.\]
\end{lemma}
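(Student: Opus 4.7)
The plan is to verify the identity by checking it on generators $\chh_i(\gamma)$ of $\BD^X$, after first noting that both sides of the proposed equality are algebra homomorphisms $\BD^X \to H^\bullet(M)$. The left-hand side $\xi_{\BG'}$ is an algebra homomorphism by Definition~\ref{def: geometricrealization}. For the right-hand side, $\bE = e^{\zeta\bR_{-1}}$ is an algebra homomorphism $\BD^X \to \BD^X\llbracket\zeta\rrbracket$ since it is the exponential of the derivation $\bR_{-1}$; composing with the algebra homomorphism $\xi_{\BG}$ (extended $\zeta$-linearly) and then specializing $\zeta = c_1(L)$ yields a well-defined map to $H^\bullet(M)$ because $c_1(L)^\ell=0$ for $\ell\gg 0$, and this composition is again multiplicative. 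Thus it suffices to verify equality on $\chh_i(\gamma)$.

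For the left-hand side, starting from $\BG' = \BG \otimes p^\ast L$ we have the multiplicativity of the Chern character
\[\ch(\BG') = \ch(\BG) \cdot p^\ast e^{c_1(L)} = \sum_{\ell\geq 0}\frac{1}{\ell!}\ch(\BG)\cdot p^\ast c_1(L)^\ell.\]
Substituting into the definition of $\xi_{\BG'}$, splitting the Chern character by degree, and applying the projection formula to pull the factor $p^\ast c_1(L)^\ell$ out of $p_\ast$, I get
\[\xi_{\BG'}(\chh_i(\gamma)) \;=\; \sum_{\ell\geq 0} \frac{c_1(L)^\ell}{\ell!}\, p_\ast\bigl(\ch_{i+\dim(X)-r-\ell}(\BG)\, q^\ast\gamma\bigr) \;=\; \sum_{\ell\geq 0}\frac{c_1(L)^\ell}{\ell!}\,\xi_{\BG}(\chh_{i-\ell}(\gamma)).\]

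For the right-hand side, the convention that $\prod_{j=0}^{-1}(i+j)=1$ gives $\bR_{-1}\chh_i(\gamma) = \chh_{i-1}(\gamma)$, and iterating the derivation yields $\bR_{-1}^\ell\chh_i(\gamma) = \chh_{i-\ell}(\gamma)$, with the convention $\chh_{<0}(\gamma)=0$. Hence
\[\bE\,\chh_i(\gamma) \;=\; \sum_{\ell\geq 0}\frac{\zeta^\ell}{\ell!}\chh_{i-\ell}(\gamma),\]
so $\xi_{\BG}\circ\bE|_{\zeta=c_1(L)}$ applied to $\chh_i(\gamma)$ gives exactly the same expression as computed above. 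This matches the two algebra homomorphisms on generators and completes the proof.

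There is no genuine obstacle here; the only subtlety to be careful with is bookkeeping of the Hodge-degree shift in the definition of $\xi_{\BG}$ (the factor $\dim(X)-r$) and the fact that the series in $\zeta$ truncates after finitely many nonzero terms once specialized to $c_1(L)$, so no convergence issues arise in the completed algebra $\BD^X\llbracket\zeta\rrbracket$.
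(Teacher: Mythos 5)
Your proof is correct and follows essentially the same route as the paper: expand $\bE$ on a generator as $\sum_{\ell}\frac{\zeta^\ell}{\ell!}\chh_{i-\ell}(\gamma)$ and compare with $\xi_{\BG'}(\chh_\bullet(\gamma))=p_\ast(\ch(\BG\otimes p^\ast L)\,q^\ast\gamma)=e^{c_1(L)}\xi_{\BG}(\chh_\bullet(\gamma))$ via multiplicativity of the Chern character and the projection formula. The extra remarks about both sides being algebra homomorphisms and the series truncating are fine but not needed beyond what the paper records.
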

\begin{proof}
We may write 
\[\bE\big(\chh_i(\gamma)\big) =\sum_{n=0}^i \frac{\zeta^n}{n!}\chh_{i-n}(\gamma)= [e^\zeta \chh_\bullet(\gamma)]_i\] 
which after comparing to
\[\xi_{\BG'}(\chh_\bullet(\gamma))=p_\ast\left(\ch(\BG \otimes p^\ast L)q^\ast \gamma\right)=e^{c_1(L)}\xi_{\BG}(\chh_\bullet(\gamma)).\qedhere\]
yields the result.
\end{proof}
In particular, it follows that if $D$ is such that $\bR_{-1}(D)=0$ then $\xi_\BG(D)=\xi_{\BG'}(D)$. This leads to the definition of the algebra of weight 0 descendents. Its geometric interpretation is summarized in Lemma \ref{Lem: DXinv} and is related to taking rigidification of moduli stacks. Roughly speaking, these classify families of sheaves on $S\times X$ 
up to twisting by line bundles on $L\to S$ and the above discussion formulates the precise interaction between the twisting and the descendents.

\begin{definition}\label{def: invariantdescendents}
For a topological type $\alpha\in K_\sst^0(X)$, we will also denote by 
\[\bR_{-1}\colon \BD^X_{ \alpha}\to \BD^X_{ \alpha}\] the derivation defined on generators by
$\bR_{-1}\ch_i(\gamma)=\ch_{i-1}(\gamma)$,
where $\ch_0(\gamma)$ is interpreted as $\int_X \gamma\cdot \ch(\alpha)$.
We then define
\begin{align*}\BD^X_{\inv}&=\{D\in \BD^X\colon \bR_{-1}(D)=0\}\,,\\
\BD^X_{\inv, \alpha}&=\{D\in \BD^X_{\alpha}\colon \bR_{-1}(D)=0\}\,.
\end{align*} 
\end{definition}

For any weight 0 descendent $D\in \BD^X_{\inv}$, by Lemma \ref{lem: changeuniversalsheaf}, the geometric realization $\xi_{\BG}(D)$ does not depend on the choice of $\BG$. Thus, when $D\in \BD^X_{\inv}$ we will often omit specifying the realization map and write
\[\int_{[M]^\vir} D=\int_{[M]^\vir}\xi_{\BG}(D)\]
for \emph{any} choice of universal sheaf $\BG$. 
The morphism $\BD^X_{\inv}\to H^\bullet(M)$ is defined even without assuming the existence of any universal sheaf $\BG$. This fact can be proven using Lemma \ref{Lem: DXinv}; if $M=M_\alpha$ is a moduli space of topological type $\alpha$, then it admits an open embedding $\iota\colon M\hookrightarrow \CM_\alpha^\rig$ and thus we get a map
\[\begin{tikzcd}
\BD^X_{\inv}\arrow[r, "p_\alpha"]&
\BD^X_{\inv, \alpha}\arrow[r, "\sim"]&
H^\bullet(\CM_\alpha^\rig)\arrow[r, "\iota^\ast"]&
H^\bullet(M).
\end{tikzcd}\]

\begin{example}\label{Ex: DXinvdescendents}
Given $\gamma_1,\gamma_2\in H^\bullet(X)$ we have
\[\chh_1(\gamma_1)\chh_0(\gamma_2)-\chh_0(\gamma_1)\chh_1(\gamma_2)\in \BD^X_{\inv}.\]
One can also check that the lift of $\ch(T^\vir M)$ to $\BD^X$ is in $\BD^X_{\inv}$ using the expression in Example \ref{Ex: Tvir}. A geometric reason for this is going to be given in Example \ref{ex: invdescendents}.
\end{example}

\subsection{Virasoro constraints for sheaves}
\label{sec: virasorosheaves}

To formulate the Virasoro constraints for moduli of sheaves, without a canonical choice of universal sheaf, we must produce relations among weight 0 descendents. This is achieved by combining the Virasoro operators previously introduced in the way which we now describe:
\begin{definition}\label{def: Linv}
The weight 0 Virasoro operator $\Linv \colon \BD^X\to \BD^X$ is defined by
\[\Linv=\sum_{j\geq -1}\frac{(-1)^j}{(j+1)!}\bL_j \bR_{-1}^{j+1}.\]
\end{definition}

The operator $\Linv$ maps $\BD^X$ to $\BD^X_{\inv}$. Indeed, using that $[\bR_{-1}, \bL_j]=(j+1)\bL_{j-1}$ we find
\begin{align*}
    \bR_{-1}\circ \Linv &=\sum_{j\geq -1}\frac{(-1)^j}{(j+1)!}\bL_j \bR_{-1}^{j+2}+\sum_{j\geq -1}\frac{(-1)^j}{(j+1)!}(j+1)\bL_{j-1} \bR_{-1}^{j+1}=0\,.
\end{align*}

In particular, for any $D\in \BD^X$ the integral $\int_M \xi_{\BG}(\Linv(D))$ does not depend on the universal sheaf $\BG$ so we omit the realization homomorphism $\xi_{\BG}$ from the notation. 

\begin{conjecture}\label{conj: invVirasoro}
Let $M$ be a moduli of sheaves as in Section \ref{sec: modulisheavespairs}. Then
\[\int_{[M]^\vir} \Linv(D)=0\quad \textup{ for any }D\in \BD^X.\]
\end{conjecture}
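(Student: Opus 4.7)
The plan is to attack Conjecture \ref{conj: invVirasoro} via the vertex-algebra reformulation and a rank-reduction by wall-crossing, following the roadmap laid out in Theorems \ref{thm: B}, \ref{thm: C} and \ref{thm: main}. First I would verify that the conjecture is equivalent to the a priori stronger family of identities $\int_{[M]^\vir}\xi_\BG(\bL_k(D))=0$ for all $k\geq -1$ and $D\in \BD^X$, for any one (hence every) universal sheaf $\BG$. One direction is essentially the definition of $\Linv$, while the other follows by peeling off the summands using $\bR_{-1}\circ \Linv=0$ together with the Virasoro bracket and Lemma \ref{lem: changeuniversalsheaf} controlling the dependence of $\xi_\BG$ on $\BG$. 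This reduces the problem to the ``$\bL_k$-form'' of the constraints, which is the formulation to which the vertex-algebra machinery applies.

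Next I would invoke Corollary \ref{cor: virasorophysical}, which recasts the desired vanishing as the statement that the class $[M_\alpha]^\vir\in\widecheck V_\bullet$ lies in the subspace $\widecheck P_0$ of \emph{primary states} in Joyce's vertex algebra. The structural fact that makes this viewpoint powerful is that $\widecheck P_0\subset\widecheck V_\bullet$ is a Lie subalgebra and $P_0^{\pa}\subset V_\bullet^{\pa}$ is a Lie submodule over it (Propositions \ref{prop: wallcrossingcompatibility1} and \ref{prop: wallcrossingcompatibility2}). Since Joyce's wall-crossing formulae are built entirely from iterated Lie brackets, they preserve primality, and I can run an induction on the topological type $\alpha$: the Joyce--Song wall-crossing $[P_\alpha^{0+}]^\vir=[[M_\alpha]^\inva,e^{(1,0)}]$ expresses Bradlow-pair classes in terms of (possibly semistable) sheaf classes, and a symmetric wall-crossing formula expresses $[M_\alpha]^\inva$ in terms of lower-rank sheaf classes together with Bradlow-pair classes.

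To close the induction I would simultaneously handle Theorem \ref{thm: virasorobradlowpairs} for the Bradlow moduli $P_\alpha^t$, by wall-crossing in $t$: at $t\gg 1$, $P_\alpha^\infty$ either is empty or reduces to a geometry for which the pair constraints are accessible directly (symmetric powers via Proposition \ref{prop: symmetricpowers} in the curve case, Hilbert schemes via \cite{moreira} in the surface case); at $0<t\ll 1$, the Joyce--Song wall-crossing feeds back into the sheaf side via $P_\alpha^{0+}\to M_\alpha$ being a virtual projective bundle, so that the pair Virasoro on $P_\alpha^{0+}$ implies the sheaf Virasoro on $M_\alpha$. Cases (1)--(3) of Theorem \ref{thm: main} then follow by induction on the rank (or on $(\beta,n)$ in the one-dimensional case), with the base cases being rank $1$.

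The main obstacle will be strictly semistable loci, which unavoidably enter the induction because lower-rank wall-crossing terms can hit semistable walls. In such cases $[M_\alpha]^\vir$ is not defined and must be replaced by Joyce's invariant class $[M_\alpha]^\inva\in\widecheck V_\bullet$; the projective-bundle compatibility above must then be upgraded to its Joyce-class analogue, which is the content of Theorem \ref{lem: projectivebundle}. Checking that this logarithmic version still lands in $\widecheck P_0$ is the delicate point and is precisely where Assumption \ref{ass:WCpair} is used in case (3). Once this is in place, the Lie-theoretic compatibility of wall-crossing with primary states propagates the rank-$1$ base case through the entire induction.
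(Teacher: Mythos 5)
Your roadmap is the paper's own strategy (vertex-algebra reformulation, primary states preserved by wall-crossing, rank reduction, projective-bundle step, rank-one base cases), but as a proof of the statement you were given it has a genuine gap of scope. Conjecture \ref{conj: invVirasoro} is asserted for \emph{any} moduli of sheaves as in Section \ref{sec: modulisheavespairs} (so in particular Fano 3-folds, or surfaces with $p_g>0$ or $h^{0,1}>0$), and the machinery you invoke does not reach that generality: Corollary \ref{cor: virasorophysical} --- the bridge from $\Linv$ to primary states --- is only available for $X$ in class D satisfying Assumption \ref{Ass: vanishing} ($h^{p,q}=0$ for $|p-q|>1$); the rank-one base case on surfaces (Hilbert schemes of points) is only known for $h^{0,1}=h^{0,2}=0$ via \cite{moreira}; and the one-dimensional case additionally requires Assumption \ref{ass:WCpair}. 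What your argument can deliver is exactly Theorem \ref{thm: main}, i.e.\ the conjecture in those cases; in the paper the general statement remains a conjecture, and nothing in your outline removes these hypotheses. You should either restrict your claim accordingly or explain how to handle targets violating Assumption \ref{Ass: vanishing} and base cases beyond \cite{moreira}.

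There is also a concrete error in your opening reduction: the $\Linv$-constraints are \emph{not} equivalent to $\int_{[M]^\vir}\xi_\BG\big(\bL_k(D)\big)=0$ for all $k\geq -1$ with an arbitrary universal sheaf $\BG$ and the plain operators $\bL_k=\bR_k+\bT_k$. The correct equivalence (Proposition \ref{prop: normalizedvsinv}) is with the modified operators $\bL_k^\delta=\bR_k+\bT_k+\bS_k^\delta$ evaluated against the $\delta$-normalized universal sheaf; for a general choice of $\BG$, or without the $\bS_k^\delta$ correction, the $\bL_k$-identities simply fail (already for Hilbert schemes the $\pt$-normalized sheaf and the extra $\bS_k$-type term are needed, cf.\ Remark \ref{rem: normalizedpt}). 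Fortunately this step is unnecessary: Corollary \ref{cor: virasorophysical}(1), via Proposition \ref{prop: primaryomegabracket} and Lemma \ref{WC with conformal element}, applies directly to the weight-zero form and identifies the conjecture with $[M]^\vir\in\widecheck P_0$, which is the form the induction actually uses. With that replacement, the remainder of your outline --- the lifted wall-crossing \eqref{eq: wallcrossinglift}, the defining relation \eqref{eq: upsilonwallcrossing} for the invariant classes $[M_\alpha]^\inva$ to absorb strictly semistable loci, Theorem \ref{lem: projectivebundle}, and the base cases from Proposition \ref{prop: symmetricpowers} and \cite{moreira} --- coincides with the paper's proof of Theorem \ref{thm: main}.
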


This formulation is different from the ones which appear in previous works, namely \cite{moop, moreira, bree}. There, Virasoro constraints are formulated using a choice of universal sheaf that is natural in each of the moduli spaces considered. 

\begin{definition}
Let $M=M_\alpha$ be a moduli space of sheaves of topological type $\alpha$ and let $\delta\in H^\bullet(X, \BZ)$ be an algebraic class such that $\int_X  \delta\cdot \ch(\alpha) \neq 0$. We say that a universal sheaf $\BG$ is $\delta$-normalized if
\begin{equation}\label{Eq:deltanorm}\xi_\BG(\chh_1(\delta))=0.\end{equation}
\end{definition}

Given $\delta$ as in the definition, we define the operators
\[\bL^\delta_k=\bR_k+\bT_k+\bS_k^\delta,\quad k\geq -1\,,\]
where
\[\bS_k^\delta=-\frac{(k+1)!}{\int_X \delta\cdot \ch(\alpha)}\bR_{-1}\circ \chh_{k+1}(\delta).\]

\begin{remark}\label{rem: rationaldeltanormalized}
If $\delta$ is such that $\int_X  \delta\cdot \ch(\alpha)\neq 0$ then a $\delta$-normalized universal sheaf always exists (and is unique) as an element of the rational $K$-theory of $M\times X$. Precisely, for any universal sheaf $\BG$ we have
\[\BG_\delta=\BG\otimes e^{-\xi_\BG(\chh_1(\delta))/\int_X \delta\cdot \ch(\alpha)}\]
in the rational $K$-theory of $M\times X$ that can be thought of as the unique $\delta$-normalized universal sheaf; here we use $e^{c}$ to denote a rational line bundle with first Chern class equal to the algebraic class $c\in H^2(M, \BQ)$. 

The geometric realization with respect to $\BG_\delta$ is given by $\xi_{\BG_\delta}=\xi_{\BG}\circ \eta$ where $\eta\colon \BD^X\to \BD^X_{\inv,\alpha}$ is 
\[\eta=\sum_{j\geq 0}\frac{1}{j!}\left(-\frac{\chh_1(\delta)}{\int_X \delta\cdot \ch(\alpha)}\right)^j\bR_{-1}^j\,.\]
Thus, we can talk about the geometric realization with respect to a $\delta$-normalized sheaf even if such a sheaf does not exist in the usual sense. Conjecture \ref{conj: normalizedVirasoro} still makes sense in this setting and the proof of Proposition \ref{prop: normalizedvsinv} goes through as well.
\end{remark}

\begin{conjecture}\label{conj: normalizedVirasoro}
Let $M=M_\alpha$ be a moduli of sheaves as in Section \ref{sec: modulisheavespairs} and let $\BG$ be a $\delta$-normalized universal sheaf. Then
\[\int_{[M]^\vir} \xi_\BG(\bL_k^\delta(D))=0  \quad \textup{for any }k\geq -1, \, D\in \BD^X.\]
\end{conjecture}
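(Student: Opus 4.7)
The plan is to first prove the equivalence of Conjecture~\ref{conj: normalizedVirasoro} with the weight-zero reformulation Conjecture~\ref{conj: invVirasoro}, and then appeal to the primary-state and wall-crossing machinery developed later in the paper.

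The reduction rests on the normalization homomorphism $\eta\colon \BD^X \to \BD^X_{\inv}$ from Remark~\ref{rem: rationaldeltanormalized}. Combining Lemma~\ref{lem: changeuniversalsheaf} with Remark~\ref{rem: rationaldeltanormalized}, one has $\xi_{\BG_\delta} = \xi_\BG\circ\eta$ for any universal sheaf $\BG$; when $\BG$ is already $\delta$-normalized, $\BG_\delta=\BG$, so $\xi_\BG = \xi_\BG\circ\eta$ and
\[\int_{[M]^\vir}\xi_\BG(\bL_k^\delta(D)) = \int_{[M]^\vir}\eta(\bL_k^\delta(D)),\]
with integrand in $\BD^X_{\inv}$ and the integral independent of $\BG$. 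I would then establish a combinatorial identity showing that the family $\{\eta(\bL_k^\delta(D))\}_{k\geq -1,\,D\in\BD^X}$ spans the same subspace of $\BD^X_{\inv}$ as $\{\Linv(D)\}_{D\in\BD^X}$. The ingredients are the defining formula for $\bL_k^\delta$ as a correction of $\bL_k$ by a multiple of $\bR_{-1}\circ\chh_{k+1}(\delta)$, combined with the bracket relation $[\bR_{-1},\bL_j] = (j+1)\bL_{j-1}$, which allow one to invert the definition of $\Linv$ term-by-term against the normalization exponential inside $\eta$.

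Once the equivalence with Conjecture~\ref{conj: invVirasoro} is established, the conjecture becomes the assertion that $[M]^\vir\in\widecheck V_\bullet$ is a primary state for the Virasoro action dual to $\Linv$ (Corollary~\ref{cor: virasorophysical}, under Assumption~\ref{Ass: vanishing}). The wall-crossing compatibility (Theorem~\ref{thm: C}) then allows one to induct on the numerical class, interweaving the sheaf statement with the pair Conjecture~\ref{conj: virasoropairs} on Bradlow pair moduli as outlined in Section~\ref{sec: results}. This reduces the problem to rank~$1$, which amounts to the Hilbert scheme of points (treated in \cite{moreira}) or symmetric powers of a curve (treated by a direct calculation). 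The main obstacle is precisely the rank-$1$ base case outside the settings of Theorem~\ref{thm: main}: surfaces with $p_g > 0$ force reduced virtual classes and fixed determinants, while higher-dimensional targets violate Assumption~\ref{Ass: vanishing} and remove the vertex-algebra interpretation of the Virasoro operators entirely. A secondary difficulty is the systematic inclusion of strictly semistable loci, which unavoidably appear in the wall-crossing induction and require replacing $[M]^\vir$ with Joyce's invariant class $[M]^\inva$ throughout.
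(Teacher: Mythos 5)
Your overall skeleton is the paper's: reduce Conjecture~\ref{conj: normalizedVirasoro} to the weight-zero form Conjecture~\ref{conj: invVirasoro}, then run the primary-state/wall-crossing induction of Section~\ref{sec: rankreduction}, with the honest caveat that the rank-one base case confines the argument to the settings of Theorem~\ref{thm: main} and that strictly semistable loci force the classes $[M]^\inva$. The gap is in the reduction step. What you need is the implication from Conjecture~\ref{conj: invVirasoro} to Conjecture~\ref{conj: normalizedVirasoro}, but the ingredients you cite — the definition $\bL_k^\delta=\bL_k+\bS_k^\delta$ and the bracket $[\bR_{-1},\bL_j]=(j+1)\bL_{j-1}$ — only yield the identity $\Linv=\sum_{j\geq -1}\tfrac{(-1)^j}{(j+1)!}\bL_j^\delta\bR_{-1}^{j+1}$, which gives the \emph{opposite} implication (normalized $\Rightarrow$ weight-zero). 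Your claim that $\{\eta(\bL_k^\delta(D))\}_{k,D}$ and $\{\Linv(D)\}_D$ span the same subspace of $\BD^X_{\inv}$ is essentially a restatement of the hard direction rather than a route to it; ``inverting the definition of $\Linv$ term-by-term against the normalization exponential'' does not work, because $\Linv$ only records the particular combinations $\bL_j^\delta\bR_{-1}^{j+1}$ and there is no formal inversion recovering each $\bL_k^\delta$ separately from them.

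The paper closes this direction in Proposition~\ref{prop: normalizedvsinv} with two inputs absent from your sketch: it applies the weight-zero constraint not to $D$ but to $F^{k+1}D$ with $F=\chh_1(\delta)$, using $\xi_\BG(F)=0$ (the defining property of the $\delta$-normalized sheaf) together with $\bL_j^\delta(FD)=F\,\bL_j^\delta(D)$ in $\BD^X_\alpha$, so that after realization only the terms in which every copy of $F$ has been hit by $\bR_{-1}$ survive; and it runs a backward induction on $k$, grounded by the vanishing of the constraints for $k>\mathrm{virdim}(M)$ for degree reasons. (On the vertex-algebra side the same point is Proposition~\ref{prop: primaryomegabracket}, where a primary lift is constructed as $-a_{(0)}b$; either argument must be supplied, your duality reformulation alone does not.) Once that step is in place, the remainder of your plan — Corollary~\ref{cor: virasorophysical}, the compatibility of primary states with the Lie bracket, the rank-reduction through Bradlow pairs, and the rank-one inputs — tracks the paper, and, like the paper, establishes the conjecture only in the cases listed in Theorem~\ref{thm: main}, not in the stated generality.
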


\begin{proposition}\label{prop: normalizedvsinv}
Conjectures \ref{conj: invVirasoro} and \ref{conj: normalizedVirasoro} are equivalent.
\end{proposition}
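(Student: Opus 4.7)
The plan is to prove both implications from a single operator identity in $\BD^X$:
\begin{equation*}
\Linv D \;=\; \sum_{j \geq -1}\frac{(-1)^j}{(j+1)!}\,\bL_j^\delta\, \bR_{-1}^{j+1}\,D, \qquad (\star)
\end{equation*}
where $c = \int_X \delta\cdot \ch(\alpha)$. Substituting $\bL_j = \bL_j^\delta - \bS_j^\delta$ into the definition of $\Linv$ produces the right-hand side plus the correction $\tfrac{1}{c}\bR_{-1}\bigl(\sum_{j\geq -1}(-1)^j \chh_{j+1}(\delta)\bR_{-1}^{j+1}D\bigr)$; applying Leibniz to the outer $\bR_{-1}$ and shifting the index in one of the two resulting sums, everything cancels pairwise except the boundary term $-\chh_{-1}(\delta)D = 0$.

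Once $(\star)$ is established, direction (\ref{conj: normalizedVirasoro}) $\Rightarrow$ (\ref{conj: invVirasoro}) is immediate: each summand $\int_{[M]^\vir}\xi_\BG\bigl(\bL_j^\delta \bR_{-1}^{j+1}D\bigr)$ vanishes by Conjecture \ref{conj: normalizedVirasoro} applied to the descendent $\bR_{-1}^{j+1}D$, whence $\int_{[M]^\vir}\Linv(D) = 0$.

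For the converse I first prove the commutation relation
\begin{equation*}
\bL_k^\delta\bigl(\chh_1(\delta)\, F\bigr) \;=\; \chh_1(\delta)\,\bL_k^\delta F \qquad \text{in } \BD^X_\alpha,
\end{equation*}
by a direct calculation with $\bL_k^\delta = \bR_k + \bT_k + \bS_k^\delta$: the cross term $(k{+}1)!\,\chh_{k+1}(\delta)F$ produced by the derivation $\bR_k$ is cancelled exactly against the corresponding contribution of $\bS_k^\delta$, using $\chh_0(\delta) = c$ in $\BD^X_\alpha$. Setting $k=-1$ in the same calculation shows $\bL_{-1}^\delta = 0$ in $\BD^X_\alpha$, so that case of Conjecture \ref{conj: normalizedVirasoro} is trivially true. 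Next I apply Conjecture \ref{conj: invVirasoro} via $(\star)$ to the test descendents $D'_m \coloneqq \chh_1(\delta)^m D_0/(m!\, c^m)$ for arbitrary $D_0 \in \BD^X$ and $m \geq 0$. The commutation lemma allows each power of $\chh_1(\delta)$ to be pulled past $\bL_j^\delta$, and the vanishing $\xi_\BG(\chh_1(\delta)^{\geq 1}) = 0$ for $\delta$-normalized $\BG$ then kills every term in the Leibniz expansion of $\bR_{-1}^{j+1}D'_m$ except the one with $\ell = m$. After reindexing $k = j+1-m$ this produces the family of relations
\begin{equation*}
\sum_{k\geq 0}\frac{(-1)^k}{k!}\int_{[M]^\vir}\xi_\BG\bigl(\bL_{k+m-1}^\delta \bR_{-1}^k D_0\bigr) \;=\; 0, \qquad m \geq 0.
\end{equation*}

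When $D_0 \in \BD^X_{\inv}$ only the $k=0$ summand survives, and letting $m$ vary recovers Conjecture \ref{conj: normalizedVirasoro} at every level $m-1$. For a general $D_0$, let $\eta\colon \BD^X \to \BD^X_{\inv}$ be the projection of Remark \ref{rem: rationaldeltanormalized}; then $D_0 - \eta D_0$ lies in the ideal $(\chh_1(\delta))$, and the commutation lemma guarantees that $\bL_k^\delta$ preserves this ideal, so $\xi_\BG(\bL_k^\delta D_0) = \xi_\BG(\bL_k^\delta \eta D_0)$, reducing to the invariant case. The main obstacle will be the commutation lemma itself — a delicate sign cancellation between the derivation and multiplication parts of $\bL_k^\delta$ — together with choosing the right generating family $D'_m$ that unpacks the single invariant constraint into the full $\delta$-normalized system $\{\bL_k^\delta\}_{k \geq -1}$.
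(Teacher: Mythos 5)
Your proof is correct. The forward direction is identical to the paper's: you establish the same identity $\Linv=\sum_{j\geq -1}\tfrac{(-1)^j}{(j+1)!}\bL_j^\delta\bR_{-1}^{j+1}$ by the same telescoping cancellation of the $\bS_j^\delta$ terms. The converse also starts from the same ingredients as the paper — the test descendents $\chh_1(\delta)^m D$, the Leibniz expansion of $\bR_{-1}^{j+1}$, and the commutation $\bL_k^\delta(\chh_1(\delta)E)=\chh_1(\delta)\bL_k^\delta(E)$ in $\BD^X_\alpha$ (which the paper asserts without proof and you verify; your sign bookkeeping there is right) — but you then resolve the resulting triangular system differently. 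The paper runs a backward induction on $k$, anchored by the vanishing of Conjecture \ref{conj: normalizedVirasoro} for $k>\mathrm{virdim}(M)$ on degree grounds, extracting one constraint at a time. You instead keep the whole family of relations indexed by $m$, and collapse each one to its $k=0$ term by replacing $D_0$ with $\eta D_0$, where $\eta$ is the projection of Remark \ref{rem: rationaldeltanormalized}; the commutation lemma shows $\bL_k^\delta$ preserves the ideal $(\chh_1(\delta))$ modulo $p_\alpha$, so $\xi_\BG(\bL_k^\delta D_0)=\xi_\BG(\bL_k^\delta\eta D_0)$ and the general case reduces to the invariant one. What this buys is a proof with no induction and no appeal to the virtual dimension bound; what it costs is the extra reliance on $\eta$, so you should note that $\bR_{-1}(\eta D_0)=0$ only holds after applying $p_\alpha$ (which suffices, since every integral factors through $\BD^X_\alpha$), and that $\eta D_0$ should be interpreted termwise on monomials $D_0$, where the sum defining it is finite, with the general case following by linearity — the completion $\BD^X$ does not allow infinite sums in a fixed degree. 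With those two small remarks made explicit, the argument is complete and is a clean alternative to the paper's induction.
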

\begin{proof}
We begin by observing that we have the identity
\begin{equation}\label{eq: invLnormalized}
\Linv=\sum_{j\geq -1}\frac{(-1)^j}{(j+1)!}\bL_j^\delta \bR_{-1}^{j+1}\,,
\end{equation}
that follows from
\begin{align*}
\sum_{j\geq -1}&\frac{(-1)^j}{(j+1)!}\bS_j^\delta \bR_{-1}^{j+1}\\
&=-\frac{1}{\int_X \delta\cdot \ch(\alpha)}\sum_{j\geq -1}(-1)^j\left( \chh_j(\delta) \bR_{-1}^{j+1}+\chh_{j+1}(\delta)\bR_{-1}^{j+2}\right)=0.
\end{align*}
By \eqref{eq: invLnormalized}, Conjecture \ref{conj: normalizedVirasoro} clearly implies \ref{conj: invVirasoro}.

For the reverse implication we use (backward) induction on $k$. For every $k>\mathrm{virdim}(M)$ the statement of Conjecture $\ref{conj: normalizedVirasoro}$ is clear by degree reasons. Assume now that the result holds for every $k'>k$, and let $F=\chh_1(\delta)\in \BD^X$ satisfying $\xi_\BG(F)~=~0$ by \eqref{Eq:deltanorm}. The weight 0 Virasoro operator applied to $F^{k+1}D$ gives
\begin{equation}\label{eq: invariantimpliesnormalized}
0=\int_{[M]^\vir} \xi_{\BG}\left(\Linv(F^{k+1}D)\right)=\sum_{j\geq -1}\frac{(-1)^j}{(j+1)!}\int_{[M]^\vir}\xi_{\BG}\left(\bL_j^\delta \bR_{-1}^{j+1}(F^{k+1}D)\right).
\end{equation}
Since $\bR_{-1}$ is a derivation and $\bR_{-1}(F)=\chh_0(\delta)=\int_X \delta\cdot \ch(\alpha)$ in $\BD^{X}_\alpha$, we have
\begin{align}\label{eq: invariantimpliesnormalized2}
\nonumber \bR_{-1}^{j+1}(F^{k+1}D)&=\sum_{s=0}^{j+1} \binom{j+1}{s}\bR_{-1}^s(F^{k+1})\bR_{-1}^{j+1-s}(D)\\
&=\sum_{s=0}^{\min(k+1, j+1)}\binom{j+1}{s}\frac{(k+1)!}{(k+1-s)!}r^s F^{k+1-s}R_{-1}^{j+1-s}(D)
\end{align}
in $\BD^{X}_{\alpha}$, where we denote $r=\int_X \delta\cdot \ch(\alpha)$. Note also that the operators $\bL_j^\delta$ satisfy the property that $\bL_{j}^\delta(F D)=F \,\bL_{j}^\delta(D)$ in $\BD^{X}_\alpha$ for every $D$. Since by \eqref{Eq:deltanorm} the geometric realization of $F$ with respect to $\BG$ is zero, the only terms of \eqref{eq: invariantimpliesnormalized2} contributing to \eqref{eq: invariantimpliesnormalized} are the ones with $s=k+1$, thus  \eqref{eq: invariantimpliesnormalized} becomes
\[0=\sum_{j\geq k}\frac{(-1)^j}{(j-k)!} r^{k+1}\int_{[M]^\vir}\xi_{\BG}\left(\bL_j^\delta \bR_{-1}^{j-k}(D)\right)\]
By the induction hypothesis, all the terms with $j>k$ vanish and thus the term with $j=k$ also vanishes, showing that 
\[0=\int_{[M]^\vir} \xi_{\BG}(\bL_k^\delta(D)),\]
which concludes the induction step.\qedhere
\end{proof}

\begin{remark}\label{rem: normalizedpt}
For surfaces or 3-folds $X$ with $H^i(\CO_X)=0$ for $i>0$, the Hilbert scheme of points and the moduli of stable pairs, respectively, are moduli of sheaves in the sense of Section \ref{sec: modulisheavespairs}. The canonical universal sheaves in each of these cases are precisely the $\pt$-normalized universal sheaves and the formulations in \cite{moop, moreira} coincide with Conjecture \ref{conj: normalizedVirasoro}. In \cite{bree}, the author considers moduli spaces of torsion free stable sheaves on surfaces with $h^{1,0}=h^{2,0}=0$; the sheaves in such moduli spaces have fixed determinant $\Delta$. The author uses a geometric realization with respect to the sheaf
$\BG\otimes \det(\BG)^{-1/r}\,.$
This is not a universal sheaf for $M$ in the sense we use in this paper since its restriction to $\{G\}\times X$ is $G\otimes \det(G)^{-1/r}=G\otimes \Delta^{-1/r}$ rather than $G$; however,
\[\BG\otimes \det(\BG)^{-1/r}\otimes q^\ast\Delta^{1/r}\]
recovers the $\pt$-normalized universal sheaf of Remark \ref{rem: rationaldeltanormalized}. The equivalence between van Bree's formulation of Virasoro for the moduli of stable sheaves of positive rank and Conjecture \ref{conj: normalizedVirasoro} is explained by Lemma \ref{lem: twist}.
\end{remark}

\subsection{Virasoro constraints for pairs}
\label{sec: virasoropairs}

Since for moduli spaces of pairs we have a uniquely defined universal object $q^\ast V\to \BF$, there is no need to use the weight 0 Virasoro operator for pairs. We need, however, to slightly modify the operators to account for the different obstruction theory.

Let $V$ be a fixed sheaf on $X$ and let $P$ as in  Section \ref{sec: modulisheavespairs} be a moduli space of pairs parametrizing sheaves $F$ together with a map $V\to F$. The moduli $P$ comes equipped with a (unique) universal sheaf $\BF$ on $P\times X$ and a universal map $q^\ast V\to \BF$. We conjecture that descendent invariants obtained by integration on moduli of pairs are constrained by Virasoro operators which are similar to the ones introduced in Section \ref{sec: Virarosorops}. Define the operators 
\[\bL_k^V\colon \BD^X\to \BD^X\]
by $\bL_k^V=\bR_k+\bT_k^V$ where
\[\bT_k^V=\bT_k-k!\chh_k\left(\ch(V)^\vee\td(X)\right).\]
The operator $\bL_k^V$ can be described in an alternative way that should make its definition more natural and that will be closer to the vertex algebra language that we will introduce later. Let 
\[\Dpa=\BD^X\otimes\BD^X\]
be the algebra of pair descendents. We denote the generators of the first copy of $\BD^X$ by  $\ch_i^{\HH, \CV}(\gamma)$ and the generators of the second copy by  $\ch_i^{\HH, \CF}(\gamma)$. Given the universal pair $q^\ast V\to \BF$ on $P\times X$, we have a geometric realization map
\[\xi_{(q^\ast V,\BF)}\colon \Dpa\to H^\bullet(P)\]
that is defined by
\begin{align*}\xi_{(q^\ast V, \BF)}\left(\ch_i^{\HH, \CV}(\gamma)\right)&=\xi_{q^\ast V}\left(\chh_i(\gamma)\right)=\begin{cases}\int_X \gamma\cdot \ch(V)&\textup{ if }i=0\\
0 & \textup{ otherwise}\end{cases}\\
\xi_{( q^\ast V, \BF)}\left(\ch_i^{\HH, \CF}(\gamma)\right)&=\xi_{\BF}\left(\chh_i(\gamma)\right).
\end{align*}
This geometric realization map factors through
\begin{center}
\begin{tikzcd}
\Dpa\arrow[d, swap, "\xi_V"]\arrow[rd, "\xi_{(q^\ast V,\BF)}"]& \\
\BD^X\arrow[r, "\xi_{\BF}"]&
H^\bullet(P)
\end{tikzcd}
\end{center}
where $\xi_{V}$ is defined to send 
\[\ch_i^{\HH, \CV}(\gamma)\mapsto \delta_i \int_X \gamma\cdot \ch(V)\textup{ and }\ch_i^{\HH, \CF}(\gamma)\mapsto \ch_i^{\HH}(\gamma)\,;\]
here $\delta_i=\delta_{i,0}$ is equal to $1$ if $i=0$ and $0$ otherwise.
We define the pair Virasoro operators $\bL_k^{\pa}\colon \Dpa\to \Dpa$, for $k\geq -1$, as the sum $\bR_k^{\pa}+\bT_k^{\pa}$ where
\begin{enumerate}
\item $\bR_k^{\pa}$ is a derivation defined on generators in the same way as $\bR_k$; in other words,
\[\bR_k^{\pa}=\bR_k\otimes \id+\id\otimes \bR_k.\]
\item $\bT_k^{\pa}$ is the operator of multiplication by the element
\[\bT_k^{\pa}=\sum_{i+j=k} (-1)^{\dim X- p^L}i!j!\ch_i^{\HH, \CF-\CV}\ch_j^{\HH, \CF}(\td(X))\in \Dpa\]
where
\[\ch_i^{\HH, \CF-\CV}\coloneqq \ch_i^{\HH, \CF}-\ch_i^{\HH, \CV}.\]
\end{enumerate}

The definition of $\bT_k^{\pa}$ suggests an intimate relation between the linear part of the Virasoro operators and the obstruction theory of the moduli spaces we consider; recall that the deformation-obstruction theory of the pair moduli space $P$ at $V\to F$ is governed by
\[\RHom\left([V\to F], F\right)\]
and use Example \ref{Ex: Tvir}.

The operators $\bL_k^V$ are obtained from $\bL_k^{\pa}$ by the following commutative diagram:
\begin{center}
\begin{tikzcd}
\Dpa\arrow[r, "\bL_k^{\pa}"]\arrow[d, "\xi_V"]&
\Dpa\arrow[d, "\xi_V"]\\
\BD^X\arrow[r, "\bL_k^V"]&
\BD^X.
\end{tikzcd}
\end{center}
This holds due to the identity
\begin{align*}
\sum_{t}(-1)^{\dim X-p_t^L}\left(\int_X \gamma_t^L \cdot \ch(V) \right)\chh_k(\gamma_t^R)&=\sum_t \left(\int_X \gamma_t^L \cdot \ch(V)^\vee \right)\chh_k(\gamma_t^R)\\
&=\chh_k(\ch(V)^\vee \td(X)).
\end{align*}



It can be shown that the operators $\{\bL_k^V\}_{k\geq -1}$ and $\{\bL_k^{\pa}\}_{k\geq -1}$ satisfy the Virasoro bracket relations.

\begin{conjecture}\label{conj: pairvirasoro}
Let $P$ be a moduli of pairs as in Section \ref{sec: modulisheavespairs} with universal pair $q^\ast V\to \BF$ on $P\times X$. Then
\[\int_{[P]^\vir}\xi_\BF\big(\bL_k^V(D)\big)=0\quad \textup{ for any }k\geq 0, \, D\in \BD^X.\]
\end{conjecture}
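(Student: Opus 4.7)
The plan is to reduce Conjecture \ref{conj: pairvirasoro} to a purely algebraic statement about primary states in Joyce's pair vertex algebra, and then establish that algebraic statement by wall-crossing from a sufficiently rich base of directly computable examples. The first move is to invoke Corollary \ref{cor: virasorophysical}(2), which under Assumption \ref{Ass: vanishing} and the class D hypothesis (Remark \ref{rem: classD}) identifies the pair Virasoro constraints with the single condition $[P]^\vir_{(q^\ast V,\BF)} \in P_0^{\pa} \subseteq V_\bullet^{\pa}$. This replaces an infinite family of descendent integral identities (one per $k \geq 0$, $D \in \BD^X$) by a structural property: the lifted virtual class is a primary state. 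The presence of the linear term $-k!\,\chh_k(\ch(V)^\vee \td(X))$ in $\bL_k^V$ is then explained as exactly the obstruction-theoretic contribution of the universal section $q^\ast V \to \BF$ to the relevant field $L_k^{\pa}$ coming from the conformal element $\omega$.

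Next, I would exploit the interplay between primary states and wall-crossing. By Propositions \ref{prop: wallcrossingcompatibility1} and \ref{prop: wallcrossingcompatibility2}, $\widecheck{P}_0$ is a Lie subalgebra of $\widecheck{V}_\bullet$ and $P_0^{\pa}$ is a Lie submodule over it. Since Joyce's wall-crossing formulas express the invariant class of a moduli space as an iterated Lie bracket of classes attached to simpler moduli, it suffices to verify the primary state condition at one end of a wall-crossing path and transport it to $P$. For Bradlow-type stability this means running the parameter $t$ from large to small: at $t \gg 1$, $P_\alpha^\infty$ is either empty or reduces to a well-understood geometry, and rank induction on $\alpha$ controls the wall-crossing terms, ultimately yielding Theorem \ref{thm: virasorobradlowpairs}.

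The base cases must be established by direct arguments. For $X$ a curve, Proposition \ref{prop: symmetricpowers} handles symmetric products as the $t \gg 1$ chamber of stable Bradlow pairs. For $X$ a surface with $h^{1,0}=h^{2,0}=0$, the rank-one sheaf case is the Hilbert scheme of points, already proved in \cite{moreira}, which then pulls back to the relevant pair moduli spaces via the virtual projective bundle $P_\alpha^{0+} \to M_\alpha$. Punctual Quot schemes are covered by Theorem \ref{thm: punctualquot} using the explicit descendent formulas of \cite{Bo21.5}. A useful by-product of this route is that the same wall-crossing also delivers Conjecture \ref{conj: virasorosheaves} for $M_\alpha$, by applying the projective bundle compatibility of Theorem \ref{lem: projectivebundle} to the primary-state class of $P_\alpha^{0+}$.

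The main obstacle is the generality of the conjecture, which allows arbitrary $V$, arbitrary stability, and arbitrary $X$. Our reduction requires both Assumption \ref{Ass: vanishing} (so that an isotropic splitting of $K^1(X)$ exists and the vertex algebra admits the conformal element $\omega$) and a wall-crossing path from $P$ to a computable base case. Targets with $h^{2,0}>0$, where one is forced to work with reduced virtual classes and traceless obstruction theory, or pair moduli with no natural stability variation connecting them to a known locus, will require genuinely new input: either an enlargement of the vertex algebra that encodes Hodge degrees beyond the $|p-q| \leq 1$ regime, or a geometric argument establishing the primary state property intrinsically, without recourse to wall-crossing from a base case.
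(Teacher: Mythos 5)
The statement you are addressing is a conjecture: the paper does not prove Conjecture \ref{conj: pairvirasoro} in the stated generality (arbitrary $V$, arbitrary pair moduli $P$, arbitrary small-dimensional $X$), and neither do you — as you acknowledge in your final paragraph. What you outline is exactly the paper's route to the cases it does prove (Theorems \ref{thm: virasorobradlowpairs} and \ref{thm: punctualquot}): translate the constraints into the primary-state condition $[P]^\vir_{(q^\ast V,\BF)}\in P_0^{\pa}$ via Corollary \ref{cor: virasorophysical}, use Propositions \ref{prop: wallcrossingcompatibility1} and \ref{prop: wallcrossingcompatibility2} to transport primarity along Joyce's wall-crossing (the rank-reduction induction of Theorem \ref{thm: rankreduction}), and anchor the induction at the directly computable base cases (Proposition \ref{prop: symmetricpowers}, the Hilbert scheme result of \cite{moreira} fed through the virtual projective bundle compatibility, and Theorem \ref{prop: quot}). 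So as a reconstruction of the paper's partial results your proposal is faithful, and your closing assessment of what remains open matches the paper's own.

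Two technical points you pass over and would need to make explicit. First, the wall-crossing formula is proven in the Lie algebra $\widecheck V^{\pa}_\bullet$, while the pair constraints concern the lifted class in $V^{\pa}_\bullet$; the paper needs Lemma \ref{lem: liftvertexalgebra} (using $\ch_1^{\CV}(\pt)\cap[P]^\vir_{(\CO,\BF)}=0$ and $\rk>0$) to lift the identity to the vertex algebra before Proposition \ref{prop: wallcrossingcompatibility2} can be applied. Second, the induction unavoidably passes through moduli $M_\alpha$ with strictly semistable sheaves, so the intermediate step is not a genuine projective bundle: one must work with Joyce's classes $[M_\alpha]^{\inva}$ and the class $\Upsilon_\alpha$, and the ``projective bundle compatibility'' is really Theorem \ref{lem: projectivebundle}, whose proof uses that $\T_{\Pi_\alpha}$ has rank $\chi(\alpha)-1$ under Assumption \ref{ass: alphabig}. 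Finally, for the one-dimensional-sheaf case $(m,d)=(2,1)$ the wall-crossing itself is conditional on Assumption \ref{ass:WCpair}, so that case of your argument inherits the same caveat as the paper's.
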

Equivalently, the pair Virasoro constraints can be formulated as
\[\int_{[P]^\vir} \xi_{(q^\ast V,\BF)}\big(\bL_k^{\pa}(D)\big)=0\quad \textup{ for any }k\geq 0, \, D\in \Dpa.\]

\subsection{Invariance under twist}
\label{sec: invariancetwist}

Suppose that $M=M_\alpha$ is the moduli space of slope semistable sheaves with respect to a polarization $H$ with topological type $\alpha$. Then, the moduli space $M'=M_{\alpha(H)}$ is isomorphic to $M$ by sending a sheaf $[F]\in M$ to $[F']=[F\otimes H]$. As expected, the Virasoro constraints on $M$ and $M'$ are equivalent as we now proceed to verify. 

Suppose that $\BG$ is a universal sheaf on $M\times X$. The universal sheaf on $M'\times X$ is identified with $\BG'=\BG\otimes q^\ast H$ via the isomorphism $M'\times X\cong M\times X$. Define an algebra isomorphism $\mathsf F\colon \BD^X\to \BD^X$ by
\[\bF(\chh_i(\gamma))=\chh_i(e^{c_1(H)} \gamma)\,.\]
Then the following diagram commutes:
\begin{center}
\begin{tikzcd}
\BD^X\arrow[r, "\xi_{\BG'}"] \arrow[d, "\bF"]& H^\ast(M')\arrow[d, "\sim" {rotate=90, below}]\\
\BD^X\arrow[r, "\xi_{\BG}"] & H^\ast(M)
\end{tikzcd}
\end{center}

\begin{lemma}\label{lem: twist}
The isomorphism $\bF$ commutes with the Virasoro operators, i.e.\[\bL_k\circ \bF=\bF\circ \bL_k\textup{ for $k\geq -1\quad$   and }\quad\Linv\circ \bF=\bF\circ \Linv.\]
Thus, Conjecture \ref{conj: invVirasoro} holds for $M$ if and only if it holds for $M'$.	
\end{lemma}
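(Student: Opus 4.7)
The plan is to decompose $\bL_k = \bR_k + \bT_k\cdot$ (multiplication), verify commutativity with $\bF$ separately for the derivation part $\bR_k$ and for multiplication by the element $\bT_k\in\BD^X$, combine to handle $\Linv$, and finally deduce the equivalence of Conjecture \ref{conj: invVirasoro} from the commuting triangle $\xi_{\BG'}=\xi_{\BG}\circ\bF$ displayed above.

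For $\bR_k$ the check will be routine: on a generator $\chh_i(\gamma)$ both $\bR_k\bF$ and $\bF\bR_k$ produce $\prod_{j=0}^k(i+j)\cdot\chh_{i+k}(e^{c_1(H)}\gamma)$, since $\bR_k$ only shifts the index $i$ while $\bF$ only modifies the class $\gamma$. Because $\bR_k$ is a derivation and $\bF$ is an algebra homomorphism, the compatibility on generators extends by the usual induction on word length in $\SSym$ to all of $\BD^X$.

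The main obstacle will be the identity $\bF(\bT_k)=\bT_k$ as an element of $\BD^X$. Writing $\psi$ for multiplication by $e^{c_1(H)}$ on $H^\bullet(X)$, the naive approach fails because $\Delta^\ast(\psi\boxtimes\psi)=e^{2c_1(H)}\neq 1$, so $\Delta_\ast\td(X)$ is not fixed by $\psi\otimes\psi$. What should save the day is the sign $(-1)^{\dim X-p^L}$ built into $\bT_k$. Introducing the Hodge sign automorphism $\tau\colon H^\bullet(X)\to H^\bullet(X)$, $\tau(\gamma)=(-1)^p\gamma$ for $\gamma\in H^{p,q}(X)$, the definition of $\bT_k$ packages the data into the signed Kunneth class $(\tau\otimes 1)\Delta_\ast\td(X)$. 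I expect the key calculation to be
\[(\psi\otimes\psi)(\tau\otimes 1)\Delta_\ast\td(X) \;=\; (\tau\otimes 1)(\psi^{-1}\otimes\psi)\Delta_\ast\td(X) \;=\; (\tau\otimes 1)\Delta_\ast\td(X),\]
where the first equality uses $\tau\psi=\psi^{-1}\tau$ (a direct consequence of $\tau$ being a ring automorphism with $\tau(c_1(H))=-c_1(H)$) and the second uses the projection formula together with $\Delta^\ast(\psi^{-1}\boxtimes\psi)=e^{-c_1(H)+c_1(H)}=1$. Translating back through the map that sends $(\tau\otimes 1)(\alpha\otimes\beta)$ to the corresponding signed bilinear expression in $\chh_i,\chh_j$ then gives $\bF(\bT_k)=\bT_k$.

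Combining the two facts yields $\bL_k\circ\bF=\bF\circ\bL_k$ for all $k\geq -1$, and Definition \ref{def: Linv} immediately gives $\Linv\circ\bF=\bF\circ\Linv$. For the equivalence of Conjecture \ref{conj: invVirasoro} on $M$ and $M'$, the commuting diagram above identifies $\xi_{\BG'}$ with $\xi_{\BG}\circ\bF$ under the isomorphism $M\cong M'$, which transports virtual classes; consequently
\[\int_{[M']^\vir}\Linv(D) \;=\; \int_{[M]^\vir}\xi_{\BG}(\bF\,\Linv D) \;=\; \int_{[M]^\vir}\xi_{\BG}(\Linv\bF D) \;=\; \int_{[M]^\vir}\Linv(\bF D),\]
and bijectivity of $\bF$ on $\BD^X$ turns vanishing for all $D$ on one side into vanishing for all $D$ on the other.
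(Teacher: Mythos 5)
Your proposal is correct and has the same overall architecture as the paper's proof: reduce to checking $\bR_k\circ\bF=\bF\circ\bR_k$ on generators and $\bF(\bT_k)=\bT_k$ as an element of $\BD^X$, then obtain the statement for $\Linv$ from its definition and the equivalence of the constraints from the realization diagram together with invertibility of $\bF$. The one place you genuinely diverge is the proof of $\bF(\bT_k)=\bT_k$: the paper expands both factors $e^{c_1(H)}$ termwise, rewrites $\sum_t c_1(H)^a\gamma_t^L\otimes c_1(H)^b\gamma_t^R$ as a Kunneth decomposition of $\Delta_\ast\big(c_1(H)^{a+b}\td(X)\big)$ (noting the shift of $p^L$ by $a$), and concludes via the vanishing of $\sum_{a+b=c}\frac{(-1)^a}{a!\,b!}$ for $c>0$; you instead absorb the sign $(-1)^{p^L}$ into the Hodge involution $\tau\otimes 1$, use $\tau\psi=\psi^{-1}\tau$, and finish with the projection formula $\Delta^\ast\big(e^{-c_1(H)}\boxtimes e^{c_1(H)}\big)=1$. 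These are the same cancellation — the binomial vanishing is precisely the coefficientwise form of $e^{-c_1(H)}e^{c_1(H)}=1$ pulled back along the diagonal — but your packaging is a bit more structural and avoids the explicit identity; it only requires the easy (and true) observation that the signed assignment sending a Kunneth term $\alpha\otimes\beta$, $\alpha\in H^{p,q}(X)$, to $(-1)^{p}\chh_i(\alpha)\chh_j(\beta)$ is well defined on $H^\bullet(X\times X)$, which is already implicit in the paper's definition of $\bT_k$. Your final step, transporting $[M']^\vir$ to $[M]^\vir$ under the twisting isomorphism and using bijectivity of $\bF$ on $\BD^X$, is exactly what the paper leaves terse, and is fine.
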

\begin{proof}
It is enough to show that $\bL_k$ and $\bF$ commute. Commutativity with $\Linv$ follows immediately from its definition; the equivalence of Virasoro constraints follows from the commutativity and the diagram before the Lemma.

The commutativity with the derivation part, i.e., $\bR_k\circ \bF=\bF\circ \bR_k$, is straightforward. To show commutativity with the operator $\bT_k$ we need $\bF(\bT_k)=\bT_k$ (recall that $\bT_k$ denotes both an element in $\BD^X$ and the operator which is multiplication by that element). 
\begin{align*}\bF(\bT_k)&=\sum_{i+j=k}\sum_t(-1)^{\dim X-p^L_t}i!j!\chh_i(e^{c_1(H)}\gamma_t^L)\chh_j(e^{c_1(H)}\gamma_t^R)\\
&=\sum_{i+j=k}\sum_t \sum_{a, b\geq 0}(-1)^{\dim X-p^L_t}i!j!\frac{1}{a!b!}\chh_i(c_1(H)^a\gamma_t^L)\chh_j(c_1(H)^b\gamma_t^R)\\
&=\sum_{i+j=k} \sum_{a, b\geq 0}(-1)^{\dim X-p^L-a}i!j!\frac{1}{a!b!}\chh_i\chh_j(c_1(H)^{a+b}\td(X))\\
&=\sum_{i+j=k} (-1)^{\dim X-p^L}i!j!\chh_i\chh_j(\td(X))=\bT_k.
\end{align*}
The last line uses the fact that for fixed $c>0$ the sum $\sum\limits_{a+b=c}\frac{(-1)^a}{a!b!}$ vanishes.\qedhere
\end{proof}

\subsection{Variants for the fixed determinant theory}
\label{sec: fixeddet}

As we pointed out in Remark \ref{rem: tracelessobs}, this paper is mostly concerned with moduli spaces of sheaves without fixed determinant; in other words, our obstruction theories use full $\textup{Ext}$ groups instead of traceless $\textup{Ext}$ groups. This contrasts with the situation for Pandharipande-Thomas stable pairs or Hilbert schemes of points studied in \cite{moreira}. There, we see a new term appearing in the Virasoro operators which we now recall.

\begin{definition}
Given a class $\gamma\in H^\bullet(X)$ denote by $\bR_{-1}[\gamma]$ the superderivation (of degree $\deg(\gamma)-2)$ acting on generators by
\[\bR_{-1}[\gamma]\chh_i(\gamma')=\chh_{i-1}(\gamma \gamma').\]
For $k\geq -1$ we define the operator $\bS_k\colon \BD^X\to \BD^X$ by
\[\bS_k=(k+1)!\sum_{p_t^L=0}\bR_{-1}[\gamma_t^L]\circ\chh_{k+1}(\gamma_t^R)\]
where the sum runs over the terms $\gamma_t^L\otimes\gamma_t^R$ in the K\"unneth decomposition of $\Delta_\ast 1$ such that $p_t^L=0$.
\end{definition}
\begin{remark}
The part of $\bS_k$ corresponding to the K\"unneth component $1\otimes \pt$ is equal to $-r\bS_k^\pt$ where $r=\ch_0(\alpha)=\rk(\alpha)$. When $h^{0, q}(X)=0$ for $q>0$ the appearance of the operator $\bS_k$ is already explained in Remark \ref{rem: normalizedpt} as being related to the $\pt$-normalized universal sheaf. 
\end{remark}

We now proceed to explain the appearance of the operator $\bS_k$ in the more general case in which $h^{0, 1}\neq 0$ but $h^{0, q}=0$ for $q>1$. Let $\alpha$ be such that $r=\rk(\alpha)>0$ and $M=M_\alpha$ be a moduli space parametrizing semistable sheaves with topological type~$\alpha$. Let $\Delta\in \Pic(X)$ be a fixed line bundle  such that $c_1(\Delta)=c_1(\alpha)$. We let $M_\Delta\subseteq M$ be the moduli space of sheaves on $F\in M$ with fixed determiant $\det(F)=\Delta$; i.e., $M_\Delta$ is the pullback
\begin{equation}\label{eq: cartesianpic}
\begin{tikzcd}
M_\Delta\arrow[r, hookrightarrow]\arrow[d]& M\arrow[d, "\det"]\\
\{\Delta\} \arrow[r, hookrightarrow]&
\Pic^{c_1(\alpha)}(X).
\end{tikzcd}
\end{equation}
The moduli space $M_\Delta$ has a 2-term obstruction theory given by 
\begin{align*}
\textup{Tan}&=\Ext^1(G,G)_0\\
\textup{Obs}&=\Ext^2(G,G)_0=\Ext^2(G, G)\\
0&=\Ext^{>2}(G,G).
\end{align*}
Given such a moduli space, there is a unique universal sheaf (possibly rational, in the same sense of Remark \ref{rem: rationaldeltanormalized}) $\BG_\Delta$ on $M_\Delta\times X$ such that $\det(\BG_\Delta)=q^\ast \Delta$. 
We suppose also that the Jacobian $\Pic^0(X)$ acts on $M$ in the natural way; that is,
\[[F]\in M\Rightarrow [F\otimes L]\in M\]
for $L\in \Pic^0(X)$. The two main examples to keep in mind are
\begin{enumerate}
\item $M=M_C(r, d)$ a moduli of stable bundles on a curve $C$ and $\Delta$ a line bundle of degree $d$;
\item $M=M_S^H(r, c_1,c_2)$ a moduli of stable sheaves on a surface $S$ with $p_g(S)=0$ and $\Delta$ a line bundle with $c_1(\Delta)=c_1$. Note that if we take $r=1, c_1=0$ and $\Delta=\CO_S$ we recover the Hilbert scheme of points on $S$
\[M_S^H(1, \CO_S, n)= S^{[n]}\,.\]
\end{enumerate}

\begin{proposition}\label{prop: fixeddet}
Suppose that $X$ is such that $h^{0, q}=0$ for $q>1$ and $M, M_\Delta$ are as described before. Suppose also that the Virasoro constraints (Conjecture \ref{conj: invVirasoro}) hold for $M$.  Then, we have
\[\int_{[M_\Delta]^\vir}\xi_{\BG_\Delta} \big(\mathcal L_k(D)\big)=0\quad \textup{ for any }k\geq -1, \, D\in \BD^X\]
where 
\[\mathcal L_k=\bL_k-\frac{1}{r}\bS_k\,.\footnote{The $-$ sign in the $\bS_k$ operator does not appear in \cite{moreira} due to the fact that in \cite{moreira} the geometric realization is taken with respect to $-\BG_\Delta$ instead of $\BG_\Delta$.}\]
\end{proposition}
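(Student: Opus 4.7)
The plan is to deduce the statement on $M_\Delta$ from the assumed Virasoro constraint on $M$ by comparison along the \'etale cover
\[
\phi\colon M_\Delta \times \Pic^0(X)\longrightarrow M\,,\qquad (F,L)\longmapsto F\otimes L\,,
\]
of degree $r^{2g}$ with $g=h^{0,1}(X)$. Since $\Pic^{c_1(\alpha)}(X)$ is smooth, the Cartesian square \eqref{eq: cartesianpic} makes $\iota\colon M_\Delta\hookrightarrow M$ a regular embedding of codimension $g$ with $[M_\Delta]^\vir=\iota^![M]^\vir$, and hence $\phi^![M]^\vir=[M_\Delta]^\vir\boxtimes [\Pic^0(X)]$. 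By Proposition~\ref{prop: normalizedvsinv} applied to $\delta=\pt$, the Virasoro hypothesis on $M$ is equivalent to
\[
\int_{[M]^\vir}\xi_{\BG^{\pt}}\bigl(\bL_k^{\pt}(D)\bigr)=0\qquad\forall\,k\geq -1,\ D\in\BD^X\,,
\]
for any (possibly rational) $\pt$-normalized universal sheaf $\BG^{\pt}$ on $M\times X$, where $\bL_k^{\pt}=\bL_k+\bS_k^{\pt}$ and $\bS_k^{\pt}=-\tfrac{(k+1)!}{r}\bR_{-1}\circ\chh_{k+1}(\pt)$.

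The first step is to rewrite $\mathcal L_k$ in a form tailored to this reduction. Under the assumption $h^{0,q}=0$ for $q>1$, the K\"unneth summands $\gamma_t^L\otimes\gamma_t^R$ of $\Delta_*1$ with $p_t^L=0$ are exactly $1\otimes\pt$ together with the odd summands having $\gamma_t^L\in H^{0,1}(X)$ and $\gamma_t^R\in H^{n,n-1}(X)$. Splitting $\bS_k=-r\,\bS_k^{\pt}+\bS_k^{\mathrm{odd}}$ according to this dichotomy, an elementary verification yields the identity $\mathcal L_k=\bL_k^{\pt}-\tfrac{1}{r}\bS_k^{\mathrm{odd}}$. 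Thus the task becomes: show that the Jacobian fiber-integration along $\phi$ of the Virasoro identity on $M$ produces exactly the $-\tfrac{1}{r}\bS_k^{\mathrm{odd}}$ correction on $M_\Delta$.

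To compare universal sheaves, by the universal property there is a rational line bundle $N$ on $M_\Delta\times\Pic^0(X)$ with
\[
(\phi\times\id_X)^*\BG^{\pt}\;\cong\;\BG_\Delta\boxtimes\CP\otimes p^*N\,,
\]
where $\CP$ denotes the normalized Poincar\'e line bundle on $\Pic^0(X)\times X$, whose first Chern class admits the K\"unneth expansion $c_1(\CP)=\sum_t\eta_t\boxtimes\gamma_t$ over a basis $\{\gamma_t\}$ of $H^{0,1}(X)\oplus H^{1,0}(X)$ with dual basis $\{\eta_t\}\subset H^1(\Pic^0(X))$. By Lemma~\ref{lem: changeuniversalsheaf} the dependence on $N$ is absorbed into the weight-$0$ conjugation $e^{c_1(N)\bR_{-1}}$. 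Inserting
\[
\ch\bigl((\phi\times\id_X)^*\BG^{\pt}\bigr)=\ch(\BG_\Delta)\,e^{c_1(\CP)}\,e^{p^*c_1(N)}
\]
into the definition of $\phi^*\xi_{\BG^{\pt}}(\chh_i(\gamma))$ and applying the projection formula produces an expansion as a sum of terms $\bigl(\prod_{t\in I}\eta_t\bigr)\boxtimes\xi_{\BG_\Delta}(D_I)$ indexed by multisubsets $I\subset\{t\}$, with the $D_I\in\BD^X$ determined explicitly by the Leibnitz rule applied to the $e^{c_1(\CP)}$-twisted descendents.

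Finally, pushing the pulled-back Virasoro identity down to $M_\Delta$ isolates the unique top-degree monomial $\prod_t\eta_t$ in $H^{2g}(\Pic^0(X))$. A direct calculation identifies this coefficient (up to the $r^{2g}$ factor from $\deg(\phi)$) with $\xi_{\BG_\Delta}\bigl(-\tfrac{1}{r}\bS_k^{\mathrm{odd}}(D)\bigr)$: each odd class $\eta_t$ extracted from an expansion of $e^{c_1(\CP)}$ translates, after integration over $\Pic^0(X)$, into an application of the superderivation $\bR_{-1}[\gamma_t^L]$ on the remaining descendents, and summing over such $I$ reproduces precisely $\bS_k^{\mathrm{odd}}=(k+1)!\sum_{\gamma_t^L\in H^{0,1}(X)}\bR_{-1}[\gamma_t^L]\circ\chh_{k+1}(\gamma_t^R)$ applied to $D$; the factor $\tfrac{1}{r}$ descends from the $\bS_k^{\pt}$ contribution to $\bL_k^{\pt}$ enforced by the $\pt$-normalization of $\BG^{\pt}$. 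The main obstacle I expect is exactly this final K\"unneth-combinatorial step: tracking the supercommutativity signs from odd cohomology classes, verifying that the $\bT_k$ part of $\bL_k^{\pt}$ descends without correction, and confirming that the $p^*N$-twist does not alter the top Jacobian coefficient.
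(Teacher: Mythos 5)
Your setup (the étale cover $\pi\colon M_\Delta\times \Pic^0(X)\to M$, reduction of the hypothesis to the $\pt$-normalized form via Proposition \ref{prop: normalizedvsinv}, and the splitting $\mathcal L_k=\bL_k^{\pt}-\tfrac1r\bS_k^{\mathrm{odd}}$ using $h^{0,q}=0$ for $q>1$) coincides with the paper's, and your intuition that the Jacobian factor must account for the odd part of $\bS_k$ is right. But the extraction mechanism in your last step has a genuine gap. The pulled-back Virasoro constraint is a family of \emph{numbers}: $\int_{[\widetilde M]^\vir}\xi_{\widetilde\BG}\big(\bL_k^{\pt}(D)\big)=0$ with $[\widetilde M]^\vir=[M_\Delta]^\vir\times[\Pic^0(X)]$. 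Integrating over the product only retains the K\"unneth terms whose Jacobian factor has top degree $2g$. For an arbitrary $D$, those terms are exactly the ones in which $2g$ odd classes $e_j,f_j$ have been inserted into the various descendent factors of $D$ and of $\bT_k$ in all possible ways; in particular the term $\xi_{\BG_\Delta}\big(\bL_k^{\pt}(D)\big)$ itself pairs with $1\in H^0(\Pic^0(X))$ and contributes nothing. So the relation you obtain from a plain $D$ is some other universal identity with $2g$ odd insertions, and your claimed identification of ``the top coefficient'' with $\xi_{\BG_\Delta}\big(\mathcal L_k(D)\big)$ (equivalently, with the single correction $-\tfrac1r\bS_k^{\mathrm{odd}}(D)$) is false: you never recover the leading term, and you pick up many unwanted multi-insertion terms.

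The missing idea is to apply the constraint on $\widetilde M$ not to $D$ but to the modified descendents $W^gD$, where $W=\sum_j \chh_0(\hat f_j)\chh_1(\hat e_j)$ satisfies $\xi_{\widetilde\BG}(W)=r^2\omega$ with $\omega^g$ Poincar\'e dual to a point of $\Pic^0(X)$. Because the insertion already saturates the Jacobian degree, all further K\"unneth corrections to the remaining factors die for degree reasons, and the only extra term comes from the derivation $\bR_k$ hitting $W^g$; the identity $h_\ast\big(g\,\omega^{g-1}\rho_j\,\xi_{\widetilde\BG}(D)\big)=\xi_{\BG_\Delta}\big(\bR_{-1}[e_j]D\big)$ then converts that term into exactly $-\tfrac1r\bS_k^{\mathrm{odd}}$, which is the content of \eqref{eq: virasoroMtilde2} and the Claim in the paper's proof. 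Two smaller points: the prefactor $r^{2g}$ arises from $\xi_{\widetilde\BG}(W^g)=r^{2g}\omega^g$, not from $\deg(\pi)$ (the degree multiplies both sides and cancels); and your auxiliary twist $N$ is unnecessary, since $\BG_\Delta\boxtimes\CP$ is itself $\pt$-normalized, hence agrees with $(\pi\times\id)^\ast\BG^{\pt}$ in rational $K$-theory, so there is nothing to check about $N$ affecting the top Jacobian coefficient.
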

\begin{proof}
We use the tensoring morphism
\[\pi\colon \widetilde M=M_\Delta\times \Pic^0(X)\to M,\quad (F,L)\mapsto F\otimes L.\]
This morphism is a part of the Cartesian diagram 
\begin{center}
\begin{tikzcd}
M_\Delta\times \Pic^0(X) \arrow[r, "\pi"] \arrow[d, "i\times \id"', hook] & M \arrow[d, "{(\id,\det)}", hook] \\
M\times \Pic^0(X) \arrow[r, "f"]                       & M\times \Pic^{c_1(\alpha)}(X)                      
\end{tikzcd}
\end{center}
where $f(G,L)\coloneqq(G\otimes L,\Delta\otimes L^r)$ with $r=\rk(\alpha)$. The morphism $f$ is essentially the multiplication by $r$ map of the abelian variety $\Pic^0(X)$. This implies that $\pi$ is an \'etale morphism of degree $r^{2g}$ where $g=h^{0,1}(X)=\dim(\Pic^0(X))$. 

On the other hand, $f$ respects the obstruction theory because the obstruction theory of $M$ is invariant under tensoring by line bundles. This implies the compatibility between virtual classes
\[\pi^*[M]^\vir=[M_\Delta]^\vir\times [\Pic^0(X)]=:[\widetilde M]^\vir
\]
and $\pi_*[\widetilde M]^\vir=r^{2g}\cdot[M]^\vir$.

We now apply the above discussion to translate the Virasoro constraints of $M$ to that of $M_\Delta$. Let $\CP$ be the Poincaré bundle on $\Pic^0(X)\times X$ (i.e., the $\pt$-normalized universal bundle of $\Pic^0(X)$) and let $\BG$ be the $\pt$-normalized universal sheaf on $M\times X$. Since $\widetilde{\BG}=\BG_\Delta\boxtimes \CP$ is also $\pt$-normalized it follows that $(\pi\times \id)^\ast \BG=\widetilde \BG$ (in rational $K$-theory) and thus $\pi^\ast\circ \xi_{\BG}=\xi_{\widetilde \BG}$. Using the compatibilities between virtual classes and Proposition \ref{prop: normalizedvsinv} we get
\begin{equation}\label{eq: virasoromtilde}
\int_{[\widetilde M]^\vir}\xi_{\widetilde \BG}\big(\big(\bL_k+\bS_k^\pt\big)(D)\big)=0\,
\end{equation}
where $$\bS_k^\pt=-\frac{1}{r}\bR_{-1}\ch_{k+1}(\pt)\,.$$
Recall that $g=h^{0,1}(X)=h^{1,0}(X)$ and let $\{e_1, \ldots, e_g\}\subseteq H^{0,1}(X)$ and $\{f_1, \ldots, f_g\}\subseteq H^{1,0}(X)$ be basis; let $\{\hat e_1, \ldots, \hat e_g\}\subseteq H^{m, m-1}(X)$, $\{\hat f_1, \ldots, \hat f_g\}\subseteq H^{m-1, m}(X)$ be their dual basis, meaning that 
\[\int_X e_i \hat e_j=\delta_{ij}=\int_X f_i \hat f_j\,.\]
Let 
\[\tau_j=\xi_{\CP}(\chh_1(\hat e_j))\in H^{1,0}(\Pic^0(X))\quad \textup{and}\quad \rho_j=\xi_{\CP}(\chh_0(\hat f_j))\in H^{0,1}(\Pic^0(X))\]
so that
\[c_1(\CP)=\sum_{j=1}^g \tau_j\otimes e_j+\sum_{j=1}^g \rho_j\otimes f_j\in H^2(\Pic^0(X)\times X).\]
The Jacobian is topologically a real torus of dimension $2g$ and its cohomology is the exterior algebra generated by the classes $\{\tau_j, \rho_j\}_{j=1}^g$.
Let
\[\omega=\sum_{j=1}^g \rho_j\tau_j\in H^2(\Pic^0(X)).\]
By rescaling the elements of the basis, we may assume that $\int_{\Pic^0(X)}\prod_{j=1}^g \rho_j\tau_j=\frac{1}{g!}$ so that $\omega^g\in H^{2g}(\Pic^0(X))$ is the class Poincaré dual to a point in $\Pic^0(X)$.

 Since $\ch_0(\BG_\Delta)=r$ and $\ch_1(\BG_\Delta)=q^\ast c_1(\Delta)$, we have
\[\xi_{\widetilde \BG}(\chh_1(\hat e_j))=r\tau_j \quad \textup{and}\quad \xi_{\widetilde \BG}(\chh_0(\hat f_j))=r\rho_j\]
in $H^\bullet\big(\widetilde M\big)=H^\bullet(M_\Delta\times \Pic^0(X))$ where we omit the obvious pullback. Let
\[W=\sum_{j=1}^g \chh_0(\hat f_j)\chh_1(\hat e_j)\in \BD^X\quad \textup{so that}\quad \xi_{\widetilde \BG}(W)=r^2\omega.\]
We will apply equation \eqref{eq: virasoromtilde} to descendents of the form $W^g D$ for some $D\in \BD^X$ and use that to deduce the Proposition. We have
\begin{align}\label{eq: virasoroMtilde2}
0&=\int_{[\widetilde M]^\vir}\xi_{\widetilde \BG}\big((\bL_k+\bS_k^\pt)(W^g D)\big)\\
&=r^{2g}\int_{[\widetilde M]^\vir}\omega^g \xi_{\widetilde \BG}\big((\bL_k+\bS_k^\pt)(D)\big)+\int_{[\widetilde M]^\vir}\xi_{\widetilde \BG}\big(\bR_k(W^g) D)\big)\nonumber\\
&=r^{2g}\int_{[M_\Delta]^\vir}\xi_{\BG_\Delta}\big((\bL_k+\bS_k^\pt)(D)\big)+gr^{2g-2}\int_{[\widetilde M]^\vir}\omega^{g-1}\xi_{\widetilde \BG}\big( \bR_k(W) D)\big).\nonumber
\end{align}
We now rewrite the second integral in terms of an integral in $M_\Delta$. We have
\[\xi_{\widetilde\BG}\big(\bR_k(W)\big)=(k+1)!\sum_{j=1}^g \xi_{\widetilde\BG}\big(\chh_0(\hat f_j)\chh_{k+1}(\hat e_j)\big)=r(k+1)!\sum_{j=1}^g \rho_j\xi_{\widetilde\BG}\big(\chh_{k+1}(\hat e_j)\big).\]
\begin{claim}
Let $h\colon \widetilde M\to M_\Delta$ be the projection and let $D\in \BD^X$. Then we have
\[h_\ast\big(g\omega^{g-1}\rho_j\xi_{\widetilde \BG}(D)\big)=\xi_{\BG_\Delta}\big(\bR_{-1}[e_j]D\big).\]
\end{claim}\begin{proof}[Proof of claim]
Let $J\subseteq H^\bullet\big(\widetilde M\big)$ be the annihilator ideal of $\omega^{g-1}\rho_j$; in particular, $H^{\geq 2}(\Pic^0(X))\subseteq J$. By definition,
\begin{align*}
\xi_{\widetilde \BG}\big(\chh_\bullet(\gamma)\big)&=p_\ast\big(\ch(\BG_\Delta)\ch(\CP)q^\ast \gamma\big)=p_\ast\big(\ch(\BG_\Delta)\big(1+(p^\ast \tau_j) (q^\ast e_j)+\ldots\big)q^\ast \gamma\big)\\
&=\xi_{\BG_\Delta}\big(\chh_\bullet(\gamma)\big)+\tau_j\xi_{\BG_\Delta}\big(\chh_\bullet(e_j\gamma)\big)+\ldots
\end{align*}
where the terms in $\ldots$ are all in the ideal $J$ and where we omit all the pullbacks via the projections $\widetilde M\to M_\Delta$ and $\widetilde M\to \Pic^0(X)$. Thus,
\begin{align*}
\xi_{\widetilde \BG}(D)=\xi_{\BG_\Delta}(D)+\tau_j\xi_{\BG_\Delta}\big(\bR_{-1}[e_i]D\big)+\ldots
\end{align*}
Since 
\[\int_{\Pic^0(X)}\omega^{g-1}\rho_j\tau_j=(g-1)!\int_X \prod_{l=1}^g \rho_l\tau_l=\frac{1}{g}\]
the claim is proven.\qedhere
\end{proof}
Given the claim, \eqref{eq: virasoroMtilde2} becomes
\[
0=r^{2g}\int_{M_\Delta}\xi_{\BG_\Delta}\big((L_k+S_k^\pt)(D)\big)+r^{2g-1}(k+1)!\int_{M_\Delta}\sum_{j=1}^g\xi_{\BG_\Delta}\big(R_{-1}[e_j]\chh_{k+1}(\hat e_j) D)\big)
\]
Since $h^{0, q}=0$ for $q>1$, we have
\[\bS_k=-r\bS_k^\pt-\sum_{j=1}^g \bR_{-1}[e_j]\chh_{k+1}(\hat e_j) \]
so the Proposition follows. \qedhere
\end{proof}
\begin{remark}\label{rem: fixed det}
When $S$ is a surface with $p_g>0$ we do not know a rigorous interpretation for the terms of the $\bS_k$ with $\gamma_t^L\in H^{0,2}(S)$. Heuristically, these should be related to a diagram like \eqref{eq: cartesianpic} where the Picard group is interpreted as a derived stack, see \cite{STV}. 
\end{remark}

\begin{example}\label{ex: rank2}
Let $M=M_C(2, \Delta)$ be the moduli space of stable bundles on a curve $C$ of genus $g$ with rank $2$ and fixed determinant $\Delta\in \Pic(C)$ of odd degree; this is a smooth moduli space of dimension $3g-3$. This moduli space has been studied a lot in the past and in particular its ring structure and descendent integrals are completely understood. We use this example to illustrate what kind of information the Virasoro constraints provide on descendent integrals. 
Let $\{e_1, \ldots, e_g\}\subseteq H^{0,1}(C)$ and $\{f_1, \ldots, f_g\}\subseteq H^{1,0}(C)$ be dual basis. Newstead proved in \cite[Theorem 1]{newstead} that the cohomology $H^\bullet(M)$ is generated by the classes
\[\eta=-2\xi\big(\chh_1(\1)\big), \quad \theta=4\xi\big(\chh_2(\pt)\big), \quad \xi_j=-\xi\big(\chh_1(e_j)\big), \quad \psi_j=\xi\big(\chh_2(f_j)\big).\footnote{In \cite{thaddeus, newstead} the classes $\eta, \theta, \xi_j, \psi_j, \zeta$ are called, respectively, $\alpha, \beta, \psi_{j+g}, \psi_{j}, \gamma$.}\]

The geometric realization $\xi$ is taken with respect to the sheaf $\BG\otimes \det(\BG)^{-1/2}$ (see Remark \ref{rem: normalizedpt}). Every descendent can be written explicitly in terms of the classes $\eta, \theta, \xi_j, \psi_j$. By \cite[(24) Proposition]{thaddeus}, one can then write every descendent integral in terms of integrals of products of the classes $\eta, \theta$ and
\[\zeta=2\sum_{j=1}^g\psi_j \xi_j.\]
These integrals are fully determined in \cite[(30)]{thaddeus}: for $m, k, p$ such that $m+2k+3p=3g-3$ we have
\begin{equation}\label{eq: integralsM21}
\int_M \eta^m \theta^k\zeta^p=(-1)^{g-1-p}\frac{m!g!}{q!(g-p)!}2^{2g-2-p}(2^q-2)B_q
\end{equation}
where $q=m+p-g+1$ and $B_q$ is a Bernoulli number. A careful combinatorial analysis shows that the Virasoro constraints given by Proposition \ref{prop: fixeddet} for $M$ are equivalent to the relations
\[(g-p)\int_M \eta^m \theta^k\zeta^p=-2m\int_M \eta^{m-1} \theta^{k-1}\zeta^{p+1}\]
which of course follows from \eqref{eq: integralsM21}; it would be interesting to have a direct and simpler proof of this identity. Note that the Virasoro constraints do not capture the most interesting part of \eqref{eq: integralsM21} which is the Bernoulli number. In some sense this is to be expected: since the Virasoro constraints hold in great generality (in particular are invariant under wall-crossing), it should not be expected that they can capture special information about particular moduli spaces. 
\end{example}

\section{Vertex operator algebra}

To prove the conjectures from the previous sections, we will apply the wall-crossing machinery introduced by Joyce which relies on his geometric construction of vertex algebras. The resulting vertex algebras have been described in many cases as lattice vertex algebras and as such admit a natural family of conformal elements. This section focuses on developing the necessary vertex operator algebra language, including the definition of lattice vertex operator algebras in the generality that we need, Borcherds Lie algebra associated to a vertex algebra and the notion of primary states.

\subsection{Vertex operator algebra}
\label{sec: voadefinition}
 There are many equivalent formulations of vertex algebras. We will follow the definitions and notation in \cite{Ka98}. In particular, vertex algebra for us means $\BZ$-graded vertex superalgebra over $\BC$.
 
 \begin{definition}
 \label{Def: axioms}
 A \textit{vertex algebra} is a $\BZ$-graded vector space $V_{\bullet}$ over $\BC$ together with 
\begin{enumerate}
\item a \textit{vacuum vector} $\ket{0}\in V_0$, 
    \item a linear operator 
    $T\colon V_\bullet\to V_{\bullet+2}$
   called the \textit{translation operator},
    \item and a \textit{state-field correspondence} which is a degree 0 linear map
    $$
    Y\colon V_\bullet\longrightarrow \End(V_\bullet)\llbracket z,z^{-1} \rrbracket\,,
    $$
    denoted by 
    $$Y(a,z)\coloneqq \sum_{n\in\BZ}{a_{(n)}}z^{-n-1}\,,
    $$
   where $a_{(n)}:V_\bullet\rightarrow V_{\bullet+\deg(a)-2n-2}$ and $\deg (z) =-2$.
\end{enumerate}

They are required to satisfy conditions which can be formulated in many different ways. We choose our favorite version:
\begin{enumerate}
    \item \textit{(vacuum)} $T\ket{0} = 0$\,, $Y(|0\rangle, z)=\textnormal{id}$\,, $Y(a,z)|0\rangle\in a+zV_\bullet\llbracket z\rrbracket$\,,
    \item \textit{(translation covariance)} $[T,Y(a,z)] = \frac{d}{dz} Y(a,z)$ for any $a\in V_\bullet$\,,
    \item \textit{(locality)} for any $a,b\in V_{\bullet}$, there is an $N\gg 0$ such that 
    $$
    (z-w)^N[Y(a,z),Y(b,w)] = 0\,,
    $$
    where the supercommutator is defined on $\text{End}(V_\bullet)$ by
    $$
    [A,B] =A\circ B -  (-1)^{|A| |B|}B\circ A\,.
    $$
\end{enumerate}
\end{definition}
For later purposes it is useful to note that these axioms imply the two following identities which were used by Borcherds \cite[Section 4]{Borcherds} to originally define vertex algebras:
\begin{align}
\label{Eq: skewsymmetry}
a_{(n)}b &= \sum_{i\geq 0}(-1)^{|a||b|+i+n+1}\frac{T^i}{i!} b_{(n+i)}a\,,\\
\label{Eq: skewjacobi}\big(a_{(m)}b\big)\raisebox{0.2 pt}{\ensuremath{_{(n)}}} c
&=\sum_{i\geq 0}(-1)^i \binom{m}{i}\Big[a_{(m-i)}\big(b_{(n+i)}c\big) -(-1)^{|a||b|+m}b_{(m+n-i)}\big(a_{(i)}c\big)\Big]\,.
\end{align}
They are a refinement of skew-symmetry and the Jacobi identity to the setting of vertex algebras. 
Additionally, we will also use 
\begin{equation}
\label{Eq: translationu}
(Ta)_{(n)} = -n\cdot a_{(n-1)}
\end{equation}
which follows from the more general reconstruction result.
To understand it, one needs to make sense of a product of two fields $Y(v,z)$ and $Y(w,z)$ which naively could contain infinite sums for each coefficient. For this one uses the following trick.
\begin{definition}[{\cite[(2.3.5)]{Ka98}}]
 A \textit{normal ordering} $:-:$ is defined by 
$$
:v_{(k)} w_{(l)}: \,=\begin{cases}(-1)^{|v||w|}w_{(l)}v_{(k)}&\text{if} \quad k\leq 0,l>0\,,\\
v_{(k)}w_{(l)}& \text{otherwise}\,.\end{cases}
$$
In general, this can be extended to any monomial in $v_{(k)}$ by iterating the above operation on the neighboring terms until all terms with non-positive index $k$ are on the right. 
\end{definition}

\begin{theorem}[{\cite[Corollary 4.5]{Ka98}}]
    Let $a^1,\cdots, a^n\in V_{\bullet}$ be a finite collection of elements and $k_1,\dots,k_n\in \BZ_{\geq 0}$, then a general field can be described as
    $$
    Y\big(a^1_{(-k_1-1)}\cdots a^n_{(-k_n-1)}\ket{0},z\big)
= \frac{1}{k_1!k_2!\cdots k_n !}:\frac{d^{k_1}}{(dz)^{k_1}}Y(a^1,z)\cdots \frac{d^{k^n}}{(dz)^{k_n}}Y(a^n,z):\,.
    $$
\end{theorem}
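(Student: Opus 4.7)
The plan is to prove the reconstruction formula by induction on $n$. The case $n=1$ reduces to translation covariance iterated on the vacuum, while the inductive step will pivot on a single key lemma: that the $(-1)$-product of states corresponds to the normal-ordered product of their fields, i.e.\
\[Y(a_{(-1)}b, z) = :Y(a,z)\,Y(b,z):.\]

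First I would establish the auxiliary identity $\tfrac{T^k a}{k!} = a_{(-k-1)}\ket{0}$. The vacuum axioms give $a_{(n)}\ket{0} = 0$ for $n \geq 0$ and $a_{(-1)}\ket{0} = a$, so
\[Y(a,z)\ket{0} = \sum_{k\geq 0} a_{(-k-1)}\ket{0}\,z^k\]
is a formal power series in $z$. Translation covariance combined with $T\ket{0}=0$ yields $\tfrac{d}{dz}Y(a,z)\ket{0} = T\,Y(a,z)\ket{0}$; iterating and comparing Taylor coefficients at $z=0$ proves the identity. Combined with $Y(Ta,z) = \tfrac{d}{dz}Y(a,z)$, which is immediate from \eqref{Eq: translationu} by the substitution $n \mapsto n-1$ (so that $Y(Ta,z) = \sum_n -n\, a_{(n-1)}z^{-n-1} = \tfrac{d}{dz}Y(a,z)$), this already establishes the base case:
\[Y(a_{(-k-1)}\ket{0},z) = Y\!\left(\tfrac{T^k a}{k!}, z\right) = \tfrac{1}{k!}\tfrac{d^k}{dz^k}Y(a,z).\]

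For the inductive step, set $b = a^2_{(-k_2-1)}\cdots a^n_{(-k_n-1)}\ket{0}$, so that $Y(b,z)$ has the desired nested normal-ordered form by the induction hypothesis. Using \eqref{Eq: translationu} iteratively, $(T^k a)_{(-1)} = k!\,a_{(-k-1)}$ as operators on $V_\bullet$, hence
\[a^1_{(-k_1-1)}\,b = \tfrac{1}{k_1!}(T^{k_1}a^1)_{(-1)}\,b.\]
Combined with $Y(T^{k_1}a^1, z) = \tfrac{d^{k_1}}{dz^{k_1}}Y(a^1, z)$, the full theorem then reduces to the key lemma above, together with the standard fact that iterated normal ordering is associative when unfolded from right to left.

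The key lemma I would prove by extracting the coefficient of $z^{-n-1}$ on both sides. The Borcherds identity \eqref{Eq: skewjacobi} specialized to $m=-1$, using $\binom{-1}{i}=(-1)^i$, collapses to
\[(a_{(-1)}b)_{(n)} = \sum_{i\geq 0}a_{(-1-i)}\,b_{(n+i)} + (-1)^{|a||b|}\sum_{i\geq 0}b_{(n-1-i)}\,a_{(i)},\]
and after unpacking the definition of $:-:$ this matches the $z^{-n-1}$ coefficient of $:Y(a,z)Y(b,z):$ term by term: the first sum collects contributions where $a_{(k)}$ has $k \leq -1$ (already in normal order), while the second collects those where $a_{(k)}$ has $k \geq 0$ and must be moved past $b_{(\cdot)}$, producing the supercommutation sign $(-1)^{|a||b|}$. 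The main obstacle lies precisely in this last matching: one must combine the paper's mode-by-mode normal-ordering convention with the sign structure in the second sum of the Borcherds identity, keeping careful track of which index ranges correspond to the ``creation'' and ``annihilation'' halves of the fields. Once the key lemma is established, the reconstruction formula follows at once from the inductive scheme above.
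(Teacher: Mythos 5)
The paper itself gives no proof of this statement; it is imported verbatim from Kac, so the only meaningful comparison is with the textbook argument, and your proposal is essentially that argument in its ``iterate identity'' form: base case from $a_{(-k-1)}\ket{0}=T^ka/k!$ and $Y(Ta,z)=\frac{d}{dz}Y(a,z)$, reduction of the inductive step to the key lemma $Y(a_{(-1)}b,z)=\,:Y(a,z)Y(b,z):$, and a proof of the key lemma by coefficient extraction from the $m=-1$ case of \eqref{Eq: skewjacobi}. Your computation there is correct: since $\binom{-1}{i}=(-1)^i$, the identity collapses to
\[(a_{(-1)}b)_{(n)}=\sum_{i\geq 0}a_{(-1-i)}\,b_{(n+i)}+(-1)^{|a||b|}\sum_{i\geq 0}b_{(n-1-i)}\,a_{(i)},\]
which is exactly the $z^{-n-1}$ coefficient of $Y(a,z)_+Y(b,z)+(-1)^{|a||b|}Y(b,z)Y(a,z)_-$, where $Y(a,z)_-=\sum_{k\geq 0}a_{(k)}z^{-k-1}$ and $Y(a,z)_+$ is the complementary half. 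Kac's own route to this corollary goes through the uniqueness theorem (both sides are fields, mutually local with everything, agreeing on the vacuum); your route trades that for the iterate identity, which is legitimate because the paper asserts \eqref{Eq: skewjacobi} as a consequence of the axioms independently of reconstruction.

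Three repairs are needed before the argument is airtight. First, a circularity: you invoke \eqref{Eq: translationu} both for $Y(Ta,z)=\frac{d}{dz}Y(a,z)$ and for $(T^{k}a)_{(-1)}=k!\,a_{(-k-1)}$, but the paper obtains \eqref{Eq: translationu} as a corollary of the very reconstruction theorem you are proving; you must derive it independently, which is easy: $Y(Ta,z)\ket{0}=e^{zT}Ta=\frac{d}{dz}\big(e^{zT}a\big)=\frac{d}{dz}\big(Y(a,z)\ket{0}\big)$, and both fields are mutually local with every $Y(b,w)$, so the uniqueness (Goddard) argument gives $Y(Ta,z)=\frac{d}{dz}Y(a,z)$ and hence \eqref{Eq: translationu}. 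Second, the convention issue you flag as the ``main obstacle'' should be settled rather than deferred: the theorem must be read with the field-level splitting above, i.e. the modes $a_{(k)}$ with $k\geq 0$ of the left field are the ones commuted to the right; the paper's quoted mode-by-mode rule, taken literally (swap when $k\leq 0$, $l>0$), prescribes a different reordering and would not reproduce your identity -- that is an imprecision in the paper's transcription of Kac, and your matching is the correct one once the standard splitting is fixed. Third, drop the appeal to ``associativity'' of iterated normal ordering: normal ordering is not associative; all you need is that the $n$-fold normally ordered product is, by definition, nested from the right, which is precisely what your induction produces.
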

To get \eqref{Eq: translationu} simply use $Ta = a_{(-2)}\ket{0}$ and compare the coefficients on both sides.

We now recall the definition of an additional structure on vertex algebras called a conformal element or conformal vector. They induce a homomorphism from the Virasoro vertex algebra to the given vertex algebra. As we will see in Section \ref{sec: VOA from sheaf theory}, conformal element in the setting of Joyce's vertex algebra gives rise to a compact way to summarize all the information contained in the Virasoro constraints.
\begin{definition}\label{Def: conformal vertex algebra}
 A \textit{conformal element} $\omega$ on a vertex algebra $V_\bullet$ is an element of $V_4$ such that its associated fields $L_n=\omega_{(n+1)}$, defined by
  $$Y(\omega,z) = \sum_{n\in \mathbb{Z}}L_nz^{-n-2}\,,$$
satisfy
 \begin{enumerate}
     \item the Virasoro bracket 
     \[\big[L_n,L_m\big] = (n-m)L_{m+n} +\frac{n^3-n}{12}\delta_{n+m,0}\cdot C
     \,,\]
     where $C\in \BC$ is a constant called the \textit{central charge} of $\omega$,
     \item $L_{-1}  = T\,,$
     \item and $L_0$ is diagonalizable. 
 \end{enumerate}

A vertex algebra $V_\bullet$ together with a conformal element $\omega$ is called a \textit{conformal vertex algebra} or \textit{vertex operator algebra}. We denote by $V^\omega_{\bullet}$ the \textit{conformal grading} on the underlying vector space, where $V^\omega_i$ is the $i\in \BC$ eigenspace of $L_0$. We denote the \textit{conformal degree} by
$$
\deg_\omega(a) =i\quad \text{if} \quad a\in V^\omega_i\,,
$$
to distinguish it from the original degree on $V_\bullet$. 
\end{definition}

\subsection{Lattice vertex (operator) algebras}
We next describe a particular construction of a vertex operator algebra which we will be working with, called \textit{lattice vertex algebras}. The following theorem is a summary (and slight generalization, see footnote \ref{footnote: kac}) of the constructions and statements in \cite[Sections 3.5, 3.6 and 5.5]{Ka98}. In the rest of the section we will explain the construction in further detail.

\begin{theorem}[Kac]\label{thm: voaconstruction}
Assume that we have the following data:
\begin{enumerate}
    \item A $\BZ_2$-graded $\BC$-vector space $\Lambda=\Lambda_{\overline 0}\oplus \Lambda_{\overline 1}$ with a symmetric bilinear pairing $Q\colon \Lambda\times \Lambda\to \BC$ which is a direct sum of its restrictions $Q_{\overline{i}}:\Lambda_{\overline i}\times \Lambda_{\overline i}\to \BC$.
\item The pairing $Q$ is obtained as the symmetrization of a not necessarily symmetric pairing $q\colon \Lambda\times \Lambda\to \BC$, i.e.,\footnote{We use symmetrization instead of supersymmetrization as in \cite{Gr19}.}
\[Q(v, w)=q(v, w)+q(w, v).\] 
\item An abelian group of a finite rank $\Lambda_{\sst}$ that admits a $\BC$-linear inclusion 
\[\Lambda_{\sst}\otimes_{\BZ}\BC\hookrightarrow \Lambda_{\overline 0}\]
such that the restriction of $q$ to $\Lambda_\sst$ is integer valued. This makes $(\Lambda_\sst, Q)$ an even generalized integeral lattice in the sense of \cite[Section 3]{Gr19}. 
\end{enumerate}
Then there is a uniquely defined vertex algebra $V_\bullet$ whose underlying vector space is
\[V_\bullet=\BC[\Lambda_{\sst}]\otimes \BD_{\Lambda},\]
where
\[\BD_{\Lambda} = \SSym\big[\text{CH}_{\Lambda}\big]\quad\text{and}\quad \text{CH}_{\Lambda}  =\bigoplus_{k>0} \Lambda \cdot t^{-k}\,.\]
The state-field correspondence is determined by equations \eqref{Eq: explicitfield}, \eqref{Eq: field2} and \eqref{Eq: reconstruction} and the translation operator is defined by equation \eqref{Eq: translation}. 

Suppose moreover that we have:
\begin{enumerate}
\item[(4)] The pairing $Q$ is non-degenerate.
\item[(5)] A decomposition of $\Lambda_{\overline 1}$ as a sum of two maximal isotropic subspaces, i.e.,
\[\Lambda_{\overline 1}=I\oplus \hat I.\]
\end{enumerate}
Then $V_\bullet$ admits a conformal element $\omega$ defined as \eqref{Eq: omega} whose central charge is
\[\mathrm{sdim}\, \Lambda\coloneqq\dim \Lambda_{\overline 0}-\dim \Lambda_{\overline{1}}.\]
\end{theorem}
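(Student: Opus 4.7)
The plan is to follow Kac's standard bosonic-fermionic construction, adapted to the setting where the symmetric pairing $Q$ governs locality of the generating fields while the non-symmetric pairing $q$ (required to be integral on $\Lambda_\sst$) governs the cocycle twist on the lattice factor. First, for each $v \in \Lambda$ and $n \in \BZ$ I define modes $v_{(n)}$ on $V_\bullet$ by letting $v_{(n)}$ with $n < 0$ act as multiplication by $v \cdot t^n$ and $v_{(n)}$ with $n \geq 0$ act as the super-derivation that on $w \cdot t^{-k}$ returns $n\,\delta_{n,k}\,Q(v,w)$; these assemble into fields $v(z) = \sum_n v_{(n)} z^{-n-1}$ satisfying the super-commutator $[v_{(m)}, w_{(n)}] = m\,\delta_{m+n, 0}\,Q(v, w)$. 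For $\alpha \in \Lambda_\sst$ I define $Y(e^\alpha, z)$ as the usual normal-ordered exponential of $\alpha(z)$ times the translation $e^\alpha$ and a cocycle factor $\epsilon(\alpha, \cdot)$. Integrality of $q|_{\Lambda_\sst}$ lets me choose a cocycle $\epsilon\colon \Lambda_\sst \times \Lambda_\sst \to \BC^\times$ whose antisymmetrization cancels the signs produced when exchanging lattice vertex operators. The reconstruction theorem then extends $Y$ uniquely to all of $V_\bullet$.

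Next I would verify the vertex algebra axioms. The vacuum $|0\rangle = e^0 \otimes 1$, together with a translation operator $T$ defined on generators by $T(v \cdot t^{-k}) = k \, v \cdot t^{-k-1}$ and $T(e^\alpha) = \alpha_{(-1)} e^\alpha$, satisfies the vacuum and translation-covariance axioms essentially by construction. The delicate axiom is locality: the OPE of $v(z)$ with $w(z)$ has only a simple pole proportional to $Q(v, w)$; $v(z)$ against $Y(e^\alpha, w)$ also has only a simple pole proportional to $Q(v, \alpha)$; the non-trivial case is $Y(e^\alpha, z) Y(e^\beta, w)$, which after normal ordering expands as $\epsilon(\alpha, \beta)\,(z-w)^{Q(\alpha, \beta)}$ times a regular factor. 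Exchanging the order produces the ratio $\epsilon(\alpha, \beta)/\epsilon(\beta, \alpha) \cdot (-1)^{Q(\alpha, \beta)}$ which the cocycle $\epsilon$ is precisely designed to absorb.

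For the conformal element under the additional hypotheses, I split $\omega = \omega_b + \omega_f$. Non-degeneracy of $Q$ lets me choose dual bases $\{e_i\}, \{e^i\}$ of $\Lambda_{\overline 0}$ with $Q(e_i, e^j) = \delta_{ij}$, and I set $\omega_b = \tfrac{1}{2} \sum_i (e_i)_{(-1)} (e^i)_{(-1)} |0\rangle$, the standard Heisenberg Virasoro element of central charge $\dim \Lambda_{\overline 0}$. Using the isotropic decomposition $\Lambda_{\overline 1} = I \oplus \hat I$, I pick dual bases $\{a_j\} \subset I$, $\{a^j\} \subset \hat I$ with respect to $Q_{\overline 1}$ and set $\omega_f = -\sum_j (a_j)_{(-1)} (a^j)_{(-2)} |0\rangle$, the stress tensor of a fermionic $bc$-ghost system with weights $(0, 1)$. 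A direct Wick's-theorem computation of the OPE $\omega(z)\omega(w)$ shows that $\omega$ satisfies the Virasoro OPE with total central charge $\dim \Lambda_{\overline 0} - \dim \Lambda_{\overline 1} = \mathrm{sdim}\, \Lambda$, since each $bc$-pair contributes $-2$ and there are $\tfrac{1}{2}\dim \Lambda_{\overline 1}$ such pairs. The identity $L_{-1} = T$ is checked on generators, and $L_0$-diagonalizability is immediate from the weight grading induced by the exponents in the mode expansions of the generators.

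The main obstacle is the locality verification for the lattice vertex operators: producing a consistent cocycle $\epsilon$ on the generalized even lattice $\Lambda_\sst$ (which is not assumed to be of full rank in $\Lambda_{\overline 0}$) and verifying that it absorbs both the bosonic exchange signs $(-1)^{Q(\alpha,\beta)}$ and the fermionic super-signs. A secondary subtlety, specific to our setting, is that the asymmetric $(0,1)$ weight assignment on $I$ versus $\hat I$ is what produces fermionic central charge $-\dim \Lambda_{\overline 1}$, as opposed to $+\tfrac{1}{2} \dim \Lambda_{\overline 1}$ arising from a Majorana-symmetric weight-$\tfrac{1}{2}$ assignment; this choice depends on and is enabled by the prescribed splitting $\Lambda_{\overline 1} = I \oplus \hat I$.
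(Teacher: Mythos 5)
Your overall route is the same as the paper's, which simply assembles Kac's constructions: the bosonic lattice vertex algebra on $\BC[\Lambda_\sst]\otimes\BD_{\Lambda_{\ovZ}}$ with an explicit cocycle, the fermionic vertex algebra on $\BD_{\Lambda_{\ovO}}$, their tensor product, and Kac's conformal element $\omega_{\ovZ}+\omega_{\ovO}$ attached to the splitting $\Lambda_{\ovO}=I\oplus\hat I$ with the weight-$(0,1)$ (i.e.\ $\lambda=0$) choice; your central charge bookkeeping ($-2$ per $bc$-pair, $\dim I=\tfrac12\dim\Lambda_{\ovO}$ pairs) matches the paper's appeal to \cite[3.6.16]{Ka98}, and your $\omega_f$ coincides, after reordering the odd factors, with the paper's $\omega_{\ovO}=\sum_{v\in B_I}\hat v_{-2}v_{-1}$.

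However, as written your free-field mode algebra on the odd part would fail. You impose the Heisenberg rule $v_{(n)}(w_{-k})=n\,\delta_{n,k}\,Q(v,w)$ and the relation $[v_{(m)},w_{(n)}]=m\,\delta_{m+n,0}\,Q(v,w)$ uniformly on all of $\Lambda$. For odd $v,w$ the supercommutator is an anticommutator, hence symmetric under exchanging $(v,m)\leftrightarrow(w,n)$, while your right-hand side is antisymmetric (take $n=-m$ and use that $Q$ is symmetric), so it would have to vanish; it also breaks the $\BZ$-grading, since $v_{(k)}(w_{-k})$ would be a nonzero scalar in degree $-2$. The correct odd-sector relations are the Clifford-type ones $v_{(k)}(w_{-l})=Q(v,w)\,\delta_{k-l+1,0}$, i.e.\ $[v_{(k)},w_{(\ell)}]=k^{(1-|v|)}Q(v,w)\,\delta_{k+\ell+|v|,0}$ as in Definition \ref{Def: latticeVA}(2) and \eqref{lattice VOA commutation}; these are exactly what make your $(0,1)$ $bc$ stress tensor close a Virasoro algebra with $c=-2$ per pair, so the slip propagates into the step you rely on for the central charge. (Relatedly, with your stated bosonic commutator the boson--boson OPE has a second-order pole $Q(v,w)(z-w)^{-2}$, not a simple pole.) Finally, the cocycle you flag as the main obstacle is not one: hypotheses (2)--(3) were built into the statement precisely so that $\varepsilon_{\alpha,\beta}=(-1)^{q(\alpha,\beta)}$ is an explicit choice satisfying Kac's conditions with $\varepsilon_{\alpha,\beta}/\varepsilon_{\beta,\alpha}=(-1)^{Q(\alpha,\beta)}$, and since $\Lambda_\sst$ lies inside $\Lambda_{\ovZ}$ the lattice operators never meet any fermionic super-signs.
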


We will use the notation $v_{-k}=v \cdot t^{-k}\in \BD_\Lambda$ and $e^\alpha\in \BC[\Lambda_{\sst}]$ for the elements associated to $v\in \Lambda$ and $\alpha\in \Lambda_{\sst}$, respectively. The $\BZ$-grading of $V_\bullet$ is defined by the degree assignment
$$
\deg \Big(e^{\alpha}\otimes v_{-k_1}^1\cdots v_{-k_n}^n\Big) = \sum_{i=1}^n (2k_i-|v_i|) + Q(\alpha,\alpha)\,.
$$
A vacuum vector is defined as $|0\rangle\coloneqq e^0\otimes 1\in V_0$. 

For $v\in \Lambda$ and $k>0$, the \textit{creation operator} $v_{(-k)}$ on $V_\bullet$ is defined as a left multiplication by the element $v_{-k}\in \BD_\Lambda$. This gives $V_\bullet$ the structure of a free $\BD_\Lambda$-module with basis $\{e^\alpha\}_{\alpha\in \Lambda_\sst}$. Thus, specifying an operator $A\colon V_\bullet\to V_\bullet$ is equivalent to describing its commutators with creation operators $[A, v_{(-k)}]$ and its action on the basis $A(e^\alpha)$. The operators that appear are often derivations of the $\BD_\Lambda$-module $V_\bullet$, and we will often define them by describing their action on the generators $v_{-k}$ of $\BD_\Lambda$ and on the basis $e^\alpha$. 

The translation operator $T$ is a $\BC$-linear even derivation of the $\BD_\Lambda$-module $V_{\bullet}$ determined by 
\begin{equation}
\label{Eq: translation}
T(v_{-k}) = kv_{-k-1}\,,\qquad T(e^{\alpha}) = e^\alpha\otimes \alpha_{-1}\,. 
\end{equation}
Note that when we write $\alpha_{-1}$ for $\alpha\in \Lambda_{\textup{sst}}$ we are implicitly using the map
\[\Lambda_{\textup{\sst}}\to \Lambda_{\textup{sst}}\otimes_\BZ\BC\hookrightarrow \Lambda_{\overline 0}\] to regard $\alpha$ as an element of $\Lambda_{\overline 0}$.
For $k\geq 0$, the \textit{annihilation operator} $v_{(k)}$ is defined as a derivation of the $\BD_{\Lambda}$-module $V_\bullet$ as we explain below, following Kac \cite[Sections 3.6 and 5.4]{Ka98}. The field corresponding to $v_{-1}$ is then obtained as a sum of creation and annihilation operators:
\begin{align*}
    \label{Eq: field}
    Y(v_{-1},z) &= \sum_{k\in \BZ}v_{(k)}z^{-k-1}\,.
    \numberthis
\end{align*}
We separate the construction of the fields into two cases. If our lattice $\Lambda$ is concentrated in even degree (i.e., $\Lambda_{\overline 1}=\{0\}$) we use Kac's bosonic construction and if it is concentrated in odd degree (i.e., $\Lambda_{\overline 0}=\{0\}$) we use a fermionic construction. In the general case, we take the tensor product of the two.
\begin{definition}
\label{Def: latticeVA}
    Suppose that 
    \begin{enumerate}
        \item     $\Lambda_{\overline{1}} = \{0\}$, then the resulting vertex algebra $V_{\ovZ,\bullet}=\BC[\Lambda_\sst]\otimes \BD_{\Lambda_{\overline 0}}$ is called a \textit{bosonic} lattice vertex algebra. In this case,  the action of the annihilation operators $\{v_{(k)}\}_{k\geq 0}$ for $v\in \Lambda_{\overline 0}$ is an even derivation on the $\BD_{\Lambda_{\ovZ}}$-module $V_{\ovZ,\bullet}$ determined by
$$
v_{(k)}(w_{-l}) = k\,Q(v,w)\,\delta_{k-l,0}\,,\qquad v_{(k)}(e^\alpha) = Q(v,\alpha)\,\delta_{k,0}\,e^\alpha\,, \quad k\geq 0, l>0 \,.
$$
Kac \cite[Section 5.5]{Ka98}\footnote{\label{footnote: kac}The construction in \cite[Section 5.4]{Ka98} is only for the case in which $\Lambda_\sst$ is a lattice (i.e., a free abelian group) and $\Lambda_\sst\otimes_\BZ \BC=\Lambda_{\overline 0}$, but everything works just fine in this slightly more general setting. This modification was already considered by Gross in \cite[Definition 3.5]{Gr19}. Note also that the case $\Lambda_\sst=\{0\}$ corresponds to what Kac calls the vertex algebra of free bosons, see \cite[Section 3.5]{Ka98}; this slightly more general construction can be thought of as interpolating the lattice vertex algebra and the bosonic vertex algebra.} endows $V_{\ovZ,\bullet}$ with a vertex algebra structure such that
\begin{align*}
    Y(v_{-1},z) &= \sum_{k\in \BZ}v_{(k)}z^{-k-1}\,,
\end{align*}
and 
\begin{align*}
   \label{Eq: field2}
 Y(e^{\alpha},z) &= (-1)^{q(\alpha, \beta)}z^{Q(\alpha,\beta)}e^{\alpha} \exp\Big[-\sum_{ k<0}\frac{\alpha_{(k)}}{k}z^{-k}\Big]\exp\Big[-\sum_{k>0}\frac{\alpha_{(k)}}{k}z^{-k}\Big]
 \numberthis
\end{align*}
on $e^\beta\otimes \BD_{\Lambda}$; in the formula, $e^\alpha$ stands for the operator sending $e^\beta\otimes w$ to $e^{\alpha+\beta}\otimes w$. Note that the signs $\varepsilon_{\alpha, \beta}=(-1)^{q(\alpha, \beta)}$ satisfy \cite[Equations 5.4.14]{Ka98}. The general state-field correspondence is set to be
     \begin{multline}\label{Eq: reconstruction}     
Y\big(e^\alpha\otimes v^1_{-k_1-1}\cdots v^n_{-k_n-1},z\big)\\
= \frac{1}{k_1!k_2!\cdots k_n !}:Y(e^\alpha,z)\frac{d^{k_1}}{(dz)^{k_1}}Y(v^1_{-1},z)\cdots \frac{d^{k_n}}{(dz)^{k_n}}Y(v^n_{-1},z): \,.
\end{multline}
\item  $\Lambda_{\ovZ} = \{0\}$, then the resulting vertex algebra $V_{\ovO, \bullet}=\BD_{\Lambda_{\overline 1}}$ is called a \textit{fermionic} vertex algebra, see \cite[Section 3.6]{Ka98}. It is determined uniquely by setting the annihilation operators $\{v_{(k)}\}_{k\geq 0}$, for $v\in \Lambda_{\overline 1}$, to be odd derivations on the supercommutative algebra $\BD_{\Lambda_{\overline 1}}$ such that 
$$
v_{(k)}(w_{-l}) =Q(v,w)\delta_{k-l+1,0}\,, \quad k\geq 0, l>0 \,.
$$
The remaining fields are obtained again by the reconstruction Theorem \cite[Theorem 4.5]{Ka98}, which states the analog of \eqref{Eq: reconstruction} without $e^\alpha$.
    \end{enumerate}
  In general, $V_\bullet$ is defined as the tensor product of the two lattice vertex algebras, \[V_\bullet=V_{\overline{0}, \bullet}\otimes V_{\ovO,\bullet} \]
   which are associated to $\Lambda_{\sst}\otimes_{\BZ}\BC\hookrightarrow \Lambda_{\overline 0}$ and $\Lambda_{\ovO}$, respectively. The resulting fields are determined uniquely by
$$
Y({a_{\ovZ}\otimes a_{\ovO}},z) = Y(a_{\ovZ},z)\otimes Y(a_{\ovO},z)
$$
whenever $a_{\overline{i}}\in V_{\,\overline{i},\,\bullet}$.
\end{definition}

We can summarize the above description by formulating the commutation relations of the operators $v_{(k)}$ as
\begin{equation}\label{lattice VOA commutation}
    [v_{(k)},w_{(\ell)}]=k^{(1-|v|)}Q(v,w)\delta_{k+\ell+|v|,0}\,,
\end{equation}
which holds for all $k,\ell\in \BZ$. If we want to obtain a closed formula for \eqref{Eq: field}, we may rewrite the annihilation operators in terms of derivatives to get
\begin{equation}
\label{Eq: explicitfield}
Y(v_{-1},z) = \sum_{k\geq0}v_{(-k-1)}z^{k} +\sum_{k\geq 1-|v|}\sum_{w\in B} k^{(1-|v|)} Q(v,w)\frac{\partial}{\partial w_{-k-|v|}}z^{-k-1} + Q(v,\beta)z^{-1}\,,
\end{equation}
on $e^\beta\otimes \BD_\Lambda\subseteq V_\bullet$, where $B\subset\Lambda$ is a basis. 

We will later identify Joyce's vertex algebra with a lattice vertex algebra. For that, the following proposition, which is a simple corollary of \cite[Proposition 5.4]{Ka98}, will be useful. 

\begin{proposition}
\label{Cor: calpha}
    Let $V_{\bullet}$ be a vertex algebra with the underlying vector space $\BC[\Lambda_{\sst}]\otimes \BD_{\Lambda}$ such that:
    \begin{enumerate}
    \item[(i)] The vacuum vector is $e^0\otimes 1$; 
    \item[(ii)] The fields $Y(e^0\otimes v_{-1}, z)$ are given by \eqref{Eq: explicitfield};
    \item[(iii)] We have
        \begin{equation}
    \label{Eq: calpha}
    [z^{Q(\alpha,\beta)}] Y(e^{\alpha},z)e^{\beta}=(-1)^{q(\alpha, \beta)}e^{\alpha+\beta}\,.
    \end{equation}
    \end{enumerate} 
Then $V_\bullet$ is isomorphic to the lattice vertex algebra of Theorem \ref{thm: voaconstruction}.
\end{proposition}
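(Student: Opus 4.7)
The plan is to show that the three conditions (1)--(3) uniquely determine the vertex algebra structure on $\BC[\Lambda_\sst]\otimes \BD_\Lambda$, and that this forced structure coincides with the lattice vertex algebra of Theorem~\ref{thm: voaconstruction}. The argument will proceed in three stages, essentially repackaging the uniqueness content of \cite[Proposition 5.4]{Ka98}: first I would handle the $e^0$--sector via reconstruction, then pin down the fields $Y(e^\alpha,z)$ by combining commutator relations with condition (3), and finally invoke reconstruction once more to treat all remaining generators.

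For the first stage, condition (2) prescribes the generating fields $Y(v_{-1},z)$ explicitly via \eqref{Eq: explicitfield}. Expanding into modes yields the commutation relations \eqref{lattice VOA commutation} together with the action $v_{(i)}e^\alpha = \delta_{i,0}\,Q(v,\alpha)\,e^\alpha$ for $i\geq 0$, where the right-hand side vanishes automatically for $v\in \Lambda_{\overline{1}}$ since $Q$ respects the $\BZ_2$--grading and $\alpha\in \Lambda_\sst\subseteq \Lambda_{\overline{0}}$. The reconstruction theorem \cite[Corollary 4.5]{Ka98} then determines every field of the form $Y(e^0\otimes v^1_{-k_1-1}\cdots v^n_{-k_n-1},z)$ through the normal-ordered formula \eqref{Eq: reconstruction}, and these fields coincide tautologically with their counterparts in the lattice construction.

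The heart of the argument is the second stage, namely determining $Y(e^\alpha,z)$ for $\alpha\in \Lambda_\sst$. Using the Borcherds identity \eqref{Eq: skewjacobi} together with the computation of $v_{(i)}e^\alpha$ above, I would derive
$$[v_{(n)},\,Y(e^\alpha,z)] \;=\; Q(v,\alpha)\,z^n\,Y(e^\alpha,z)\qquad \text{for every } n\in\BZ,\ v\in\Lambda,$$
where for $v\in \Lambda_{\overline{1}}$ this is read as an anticommutator, which vanishes identically. A standard argument separating creation from annihilation modes then forces $Y(e^\alpha,z)$ to act on $e^\beta\otimes \BD_\Lambda$ as a scalar $c_\alpha(\beta,z)$ multiplied by the operator product
$$e^\alpha\cdot z^{Q(\alpha,\beta)} \exp\!\bigg[-\sum_{k<0}\frac{\alpha_{(k)}}{k}z^{-k}\bigg]\exp\!\bigg[-\sum_{k>0}\frac{\alpha_{(k)}}{k}z^{-k}\bigg];$$
the exponentials are singled out by the $n\neq 0$ part of the commutation relation, while the $z^{Q(\alpha,\beta)}$ factor is forced by the $n=0$ part together with the zero-mode action and the shift operator $e^\alpha$. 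Translation covariance applied to $Te^\alpha = \alpha_{(-1)}e^\alpha$ then forces $c_\alpha(\beta,z)$ to be independent of $z$, and condition (3) pins the resulting scalar down to $(-1)^{q(\alpha,\beta)}$. This reproduces \eqref{Eq: field2} exactly.

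A final application of the reconstruction theorem extends the identification to all fields $Y(e^\alpha\otimes W,z)$, completing the isomorphism with the lattice vertex algebra of Theorem~\ref{thm: voaconstruction}. The main technical obstacle is the uniqueness step in the second stage: one must verify that the Heisenberg commutator constraints genuinely isolate the exponential form above with only a single scalar ambiguity, so that condition (3) suffices to fix the field. Once that is established, the remaining steps reduce to routine invocations of the reconstruction theorem together with the translation and vacuum axioms.
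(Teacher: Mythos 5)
Your proposal is correct and follows essentially the same route as the paper: the uniqueness step you flag as the main obstacle (the Heisenberg/Clifford commutation relations forcing $Y(e^\alpha,z)$ into the exponential form up to an operator commuting with all modes $v_{(k)}$) is precisely the content of \cite[Proposition 5.4]{Ka98}, which the paper cites rather than re-derives. The remaining step is identical in both arguments: one extracts the coefficient of $z^{Q(\alpha,\beta)}$ from $Y(e^\alpha,z)e^\beta$ and uses condition (3) to fix the residual ambiguity to $(-1)^{q(\alpha,\beta)}$, which pins the structure down to the lattice vertex algebra of Theorem \ref{thm: voaconstruction}.
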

\begin{proof}
By \cite[Proposition 5.4]{Ka98}, conditions (1) and (2) imply that $V_\bullet$ is uniquely determined by a choice of operators $c_\alpha\colon V_\bullet\to V_\bullet$ for each $\alpha\in \Lambda_\sst$ satisfying 
\[c_0=\id\,, \quad c_\alpha\ket{0}=\ket{0}\, , \quad [v_{(k)}, c_\alpha]=0\textup{ for }v\in \Lambda, k\in \BZ.\]

For such a choice of operators, the field $Y(e^\alpha, z)$ is given by 
\begin{align*}
 Y(e^{\alpha},z) &=z^{Q(\alpha,\beta)}e^{\alpha} \exp\Big[-\sum_{ k<0}\frac{\alpha_{(k)}}{k}z^{-k}\Big]\exp\Big[-\sum_{k>0}\frac{\alpha_{(k)}}{k}z^{-k}\Big]c_\alpha
\end{align*}
To show that $V_\bullet$ is the lattice vertex algebra we need to show that $c_\alpha$ acts as $(-1)^{q(\alpha, \beta)}$ on $e^\beta\otimes \BD_\Lambda$. Since $c_\alpha$ commutes with creation operators, it is enough to show that $c_\alpha(e^\beta)=(-1)^{q(\alpha,\beta)}e^{\beta}$ for all $\beta\in \Lambda_\sst$.  We have
\begin{align*}
Y(e^\alpha, z)e^\beta&=z^{Q(\alpha,\beta)}e^{\alpha} \exp\Big[-\sum_{ k<0}\frac{\alpha_{(k)}}{k}z^{-k}\Big]\exp\Big[-\sum_{k>0}\frac{\alpha_{(k)}}{k}z^{-k}\Big]c_\alpha(e^\beta)\\
&=z^{Q(\alpha,\beta)}e^{\alpha} \exp\Big[-\sum_{ k<0}\frac{\alpha_{(k)}}{k}z^{-k}\Big]c_\alpha\exp\Big[-\sum_{k>0}\frac{\alpha_{(k)}}{k}z^{-k}\Big]e^\beta\\
&=z^{Q(\alpha,\beta)}e^{\alpha} \exp\Big[-\sum_{ k<0}\frac{\alpha_{(k)}}{k}z^{-k}\Big]c_\alpha e^\beta
\end{align*}
Extracting the coefficient of $z^{Q(\alpha, \beta)}$ from both sides and using (3) the result follows. \qedhere
\end{proof}

\subsection{Kac's conformal element}
\label{sec: Kacconformal}

Let $V_\bullet = \BC[\Lambda_\sst]\otimes \BD_\Lambda$ be a lattice vertex algebra associated to a non-degenerate symmetric pairing 
$$
Q:\Lambda\times \Lambda\longrightarrow \BZ.
$$
Recall that $V_\bullet$ is expressed as a tensor product  $V_\bullet = V_{\ovZ,\,\bullet}\otimes V_{\ovO,\,\bullet}$. This allows us to address the construction of the conformal element as
$$\omega=\omega_{\ovZ}+\omega_{\ovO},\quad \omega_{\overline{i}}\in V_{\overline{i},4}.
$$

Let us first fix a basis $B_{\,\ovZ}$ of $\Lambda_{\,\ovZ}$ and its dual $\hat{B}_{\,\ovZ}$ with respect to $Q$; we denote by $\hat{v}\in \hat{B}_{\,\ovZ}$ the dual of $v\in B_{\,\ovZ}$, so that
\[Q(v, \hat w)=\delta_{v,w}\quad\textup{for}\quad v, w\in B_{\overline 0}\,.\]Then the bosonic part has a natural choice of a conformal element given by
$$
\omega_{\ovZ} = e^0\otimes \frac{1}{2}\sum_{v\in B_{\ovZ}}\hat{v}_{-1} v_{-1} \in V_{\ovZ,4}\,.
$$
See \cite[Proposition 5.5]{Ka98}. The central charge of $\omega_{\overline 0}$ is $\dim(\Lambda_{\overline 0})$.

We now consider the fermionic part. Recall that we have in the assumptions of Theorem \ref{thm: voaconstruction} a splitting
\begin{equation}
\label{Eq: splittingisotropic}
\Lambda_{\ovO} = I\oplus \hat{I}\,.
\end{equation}
into maximal isotropic subspaces. Given such a splitting, Kac \cite[3.6.14]{Ka98} constructs a family of conformal elements $\omega^\lambda_{\ovO}$ parameterized by $\lambda\in \BC$. To give its explicit description, pick a basis $B_I\subset I$; then its dual basis is denoted by $B_{\hat{I}}\subset \hat{I}$ with elements $\hat{w}$ satisfying 
$Q(v,\hat{w}) = \delta_{v,w}\,.$ One then sets
$$
\omega^\lambda_{\ovO} = (1-\lambda)\sum_{v\in B_I}\hat{v}_{-2}v_{-1} + \lambda \sum_{v\in B_I}v_{-2}\hat{v}_{-1}\,.
$$
Notice that the expression is independent of a choice of bases $B_I,B_{\hat{I}}$ and swapping $I$ and $\hat{I}$ only interchanges $\lambda$ and $(1-\lambda)$. We set $\lambda=0$ and denote\footnote{We suppress the dependence of $\omega_{\ovO}$ on the choice of $I$ and $\hat{I}$ for simplicity of the notation.}
$$
\omega_{\ovO} = \sum_{v\in B_I}\hat{v}_{-2}v_{-1}\,.
$$
The central charge of $\omega_{\ovO}$ is 
\[2\,\mathrm{sdim}(I)=\mathrm{sdim}(\Lambda_{\overline 1})=-\dim(\Lambda_{\overline 1})\,,\]
by \cite[3.6.16]{Ka98} after plugging $\lambda=0$. Therefore,  the central charge of the full conformal element $\omega=\omega_{\ovZ}+\omega_{\ovO}$ is given by
$$\dim(\Lambda_{\overline{0}})-\dim(\Lambda_{\overline{1}})=\mathrm{sdim}(\Lambda)\,.
$$

\begin{remark}
    The choice of the conformal element $\omega$ is equivalent to the choice of a splitting 
 \eqref{Eq: splittingisotropic}, so we will often denote the latter piece of data also by $\omega$. 
\end{remark}

The corresponding conformal grading on $e^\alpha\otimes \BD_\Lambda\subset V_\bullet$, depending on the choice of the splitting $\omega$, is determined by the operator
\begin{align*}
L_0
&=[z^{-2}]\big\{Y(\omega,z)\big\}\\ 
&=[z^{-2}]\Big\{\frac{1}{2}\sum_{v\in B_{\ovZ}}:Y(\hat{v}_{-1},z)Y(v_{-1},z):
+\sum_{v\in B_I}:\frac{\partial}{\partial z}Y(\hat{v}_{-1},z)Y(v_{-1},z):\Big \}\\
&=\frac{1}{2}\sum_{\begin{subarray} a k\in\BZ\\
v\in B_{\ovZ}\end{subarray}}:\hat{v}_{(-k)}v_{(k)}:
+\sum_{\begin{subarray} a k\in\BZ\\
v\in B_I\end{subarray}}k:\hat{v}_{(-k-1)}v_{(k)}:
\\
&=\sum_{\begin{subarray} a k> 0\\
v\in B_{\ovZ}\end{subarray}}k\,v_{(-k)}\frac{\partial}{\partial v_{-k}}
+\sum_{\begin{subarray} a k> 0\\ v\in B_I\end{subarray}}
\Big[(k-1)\,\hat{v}_{(-k)}\frac{\partial}{\partial \hat{v}_{-k}}+k\,v_{(-k)}\frac{\partial}{\partial v_{-k}}\Big] 
+\frac{1}{2}Q(\alpha,\alpha)
\,.
\end{align*}
The last equality is obtained by separating the sum into $k>0$ or $k<0$ terms and using \eqref{Eq: explicitfield} in the form
\[v_{(k-1)}=\frac{\partial}{\partial \hat v_{-k}}\,,\quad \hat v_{(k-1)}=\frac{\partial}{\partial v_{-k}} \quad \textup{for}\quad v\in B_I,\ k> 0\,.\]
Specifically, the induced conformal grading is 
$$\deg_{\omega}(v_{-k})=\begin{cases}
k &\textup{if }v\in \Lambda_{\overline 0}\,,\\
k &\textup{if }v\in I\subseteq \Lambda_{\overline 1}\,,\\
k-1 &\textup{if }v\in \hat I\subseteq \Lambda_{\overline 1}\,,
\end{cases}
$$
for each $k>0$, while the $\BZ$-grading on $V_\bullet$ that we started with is given by 
$$\deg(v_{-k})=\begin{cases}
2k &\textup{if }v\in \Lambda_{\overline 0}\,,\\
2k-1 &\textup{if }v\in I\subseteq \Lambda_{\overline 1}\,,\\
2k-1 &\textup{if }v\in \hat I\subseteq \Lambda_{\overline 1}\,.
\end{cases}
$$

The conformal grading $V_\bullet^\omega$ is useful when it comes to studying Virasoro operators as we explain now. Starting from the notation for the decomposition  
$$v=v_I+v_{\hat{I}}\in \Lambda_{\overline{1}} \,,\quad \text{where}\quad  v_{I}\in I, v_{\hat{I}}\in\hat{I}\,,$$
we label the conformal shift
$$v^{\omega}_{-1}\coloneqq \begin{cases}
v_{-1}& \textnormal{if }v\in \Lambda_{\ovZ}\,,\\
(v_I)_{-1}+(v_{\hat{I}})_{-2}& \textnormal{if }v\in \Lambda_{\ovO}\,.
\end{cases}
$$
We also define a new pairing $Q^\omega(v,w)$ as 
$$Q^\omega(v,w)\coloneqq 
\begin{cases}
Q(v,w)&\textnormal{if }v,w\in \Lambda_{\ovZ}\,,\\
Q(v_I,w_{\hat{I}})-Q(v_{\hat{I}},w_I)&\textnormal{if }v,w\in \Lambda_{\ovO}\,,\\
0&\textnormal{otherwise.}
\end{cases}
$$
This pairing is non-degenerate and {\it supersymmetric} because $Q$ is non-degenerate and symmetric. Using the shift notation $v^\omega_{-1}$ and a new pairing $Q^\omega$, we can rewrite the conformal element as
\begin{align}\label{Eq: omega}
    \omega
    &=\frac{1}{2}\sum_{v\in B_{\ovZ}}\hat{v}_{-1}v_{-1}+\sum_{v\in B_I}\hat{v}_{-2}v_{-1}\\
    &=\frac{1}{2}\sum_{v\in B_{\ovZ}}\hat{v}_{-1}v_{-1}
    +\frac{1}{2}\sum_{v\in B_I}\hat{v}_{-2}v_{-1}-\frac{1}{2}\sum_{\hat{v}\in B_{\hat{I}}}v_{-1}\hat{v}_{-2}\nonumber\\
    &=\frac{1}{2}\sum_{v\in B} \tilde{v}^\omega_{-1}v^\omega_{-1}\nonumber\,.
\end{align}
In the last equality, we use the basis $B=B_{\ovZ}\sqcup B_I\sqcup B_{\hat{I}}$ of $\Lambda$ and $\{\tilde{v}\}$ denotes the dual basis to $B=\{v\}$ with respect to $Q^\omega$, such that $Q^\omega(v,\tilde{w})=\delta_{v,w}$. If we also shift the notation for the creation and annihilation operators defining $v^\omega_{(k)}$ by the formula
$$Y(v^\omega_{-1},z)\coloneqq \sum_{k\in \BZ}v^\omega_{(k)}\,z^{-k-1}\,,
$$
the Virasoro operators can be written as
\begin{equation}\label{eq: operatorsomegashift}
L_n=\frac{1}{2}\sum_{\substack{i+j=n\\ v\in B}} :\tilde{v}^\omega_{(i)}\,v^\omega_{(j)}:\,,\quad n\in \BZ\,.
\end{equation}

We also note that shifted creation and annihilation operators are subject to the bracket relations that are similar to bosonic fields 
\begin{equation}\label{eq: shifted bracket}
\left[v^{\omega}_{(n)},w^{\omega}_{(m)}\right]= n\, Q^{\omega}(v,w)\delta_{n+m,0}\,.
\end{equation}
This can be shown by dividing into three cases;
$$\textnormal{i) } v, w\in \Lambda_{\overline{0}}\,,\quad 
\textnormal{ii) } v\in I,\ w\in \hat{I}\,,\quad 
\textnormal{iii) } v\in \hat{I},\ w\in I\,,
$$
and using the translation formula \eqref{Eq: translationu}.

\subsection{Lie algebra of physical states}
\label{Sec: physical}

Borcherds \cite{Borcherds} associates to any vertex algebra $V_\bullet$ a graded Lie algebra
\begin{equation}\label{lie algebra}
\widecheck{V}_\bullet = V_{\bullet+2}\,/\,T(V_\bullet),\quad [\overline{a},\overline{b}]=\overline{a_{(0)}b}\,. 
\end{equation}
In this section, we study the Lie subalgebra of {\it primary} states (also known as {\it physical} states) that is closely related to Virasoro constraints. Algebraic statements in this section will translate into a compatibility between wall-crossing and Virasoro constraints for sheaves and pairs in the geometric setting. 

\begin{definition}[{\cite{Borcherds}, \cite[Corollary 4.10]{Ka98}}]
\label{Def: physicalstates}
Let $\big(V_\bullet,\omega\big)$ be a conformal vertex algebra. The space of primary states of conformal weight $i\in \BZ$ is defined as 
$$P_i\coloneqq \big\{a\in V^\omega_i\,\big|\, \ L_n(a)=0\ \textnormal{for all}\ n\geq 1\big\}\,.
$$
\end{definition}

The following assumption is used to construct the Lie subalgebra of primary states.
\begin{assumption}\label{standard assumption}
We have $\ker(T)\cap V^\omega_i=\{0\}$ for all $i<0$.
\end{assumption}
This assumption is satisfied for the lattice vertex algebras with Kac's conformal element because the kernel
$$\ker(T)=\textnormal{span}_{\BC}\big\{e^{\alpha}\otimes 1\,\big|\,\textnormal{torsion } \alpha\big\}
$$
consists only of elements with zero conformal weight.

Primary states yield a smaller Lie subalgebra by the proposition below. To state it, we introduce the Lie subalgebra 
\[\widecheck{V}_0^\omega\coloneqq V_1^\omega/T(V_0^\omega)\subseteq V_\bullet/T(V_\bullet)\,.\]
The fact that $\widecheck{V}_0^\omega$ is closed under the Lie bracket on $V_\bullet/T(V_\bullet)$ is a consequence of the fact $V_1^\omega$ is closed with respect to the $0$-th product.
\begin{proposition}[{\cite{Borcherds}}]\label{prop: wallcrossingcompatibility1}
Let $\big(V_\bullet,\omega\big)$ be a conformal vertex algebra satisfying Assumption \ref{standard assumption}. Then $\widecheck{P}_0\coloneqq P_1/T(P_0)$ defines a natural Lie subalgebra of $\widecheck{V}_0^\omega$. 
\end{proposition}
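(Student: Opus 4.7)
The plan is to verify three things, after which the Lie algebra structure on $\widecheck P_0$ is inherited automatically from $\widecheck V_\bullet$: (a) $T(P_0)\subseteq P_1$, so that the quotient $\widecheck P_0=P_1/T(P_0)$ makes sense; (b) $P_1\cap T(V_0^\omega)=T(P_0)$, so that the induced map $\widecheck P_0\to \widecheck V_0^\omega$ is an injection; and (c) $a_{(0)}b\in P_1$ whenever $a,b\in P_1$, so that the Borcherds bracket restricts.

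For (a), the Virasoro relations give $[L_0,T]=[L_0,L_{-1}]=L_{-1}=T$, so $T$ carries $V_0^\omega$ into $V_1^\omega$, and $[L_n,T]=(n+1)L_{n-1}$ for $n\geq 1$ yields
$$L_n(Ta)=T(L_n a)+(n+1)L_{n-1}a=0$$
for every $a\in P_0$ and $n\geq 1$ (where the $n=1$ term uses $L_0 a=0$). Step (b) is where Assumption \ref{standard assumption} enters. Writing $a=Tc\in P_1$ with $c\in V_0^\omega$, the same commutator identity now reads $T(L_n c)=-(n+1)L_{n-1}c$. An induction on $n\geq 1$ shows $L_n c=0$: at $n=1$ one has $T(L_1 c)=-2L_0 c=0$ with $L_1 c\in V_{-1}^\omega$, which forces $L_1 c=0$ by Assumption \ref{standard assumption}; the inductive step is identical. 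Thus $c\in P_0$, proving (b).

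For (c) the key input is the standard commutator of $L_n$ with a primary field of conformal weight $h$,
$$[L_n,a_{(m)}]=\bigl((h-1)(n+1)-m\bigr)\,a_{(n+m)}\,,$$
obtained by comparing coefficients of $z^{-m-1}$ in $[L_n,Y(a,z)]=z^{n+1}\partial_z Y(a,z)+h(n+1)z^n Y(a,z)$. For $h=1$ and $m=0$ this reads $[L_n,a_{(0)}]=0$ for every $n\in\BZ$, so if $a,b\in P_1$ then $L_n(a_{(0)}b)=a_{(0)}(L_n b)$ vanishes for $n\geq 1$ and equals $a_{(0)}b$ for $n=0$, giving $a_{(0)}b\in P_1$. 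Since the Borcherds bracket is already well-defined on $\widecheck V_\bullet$ (using $(Ta)_{(0)}=0$ from \eqref{Eq: translationu} and $[T,a_{(0)}]=0$ from translation covariance), its restriction to $\widecheck P_0$ is automatically a Lie bracket, with skew-symmetry and Jacobi carried over. The only genuinely delicate step is (b), which is precisely what Assumption \ref{standard assumption} is designed to control.
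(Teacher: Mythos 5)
Your proof is correct and follows essentially the same route as the paper: the commutator identity $L_n(Ta)=T(L_n a)+(n+1)L_{n-1}a$ gives both $T(P_0)\subseteq P_1$ and, via induction plus Assumption \ref{standard assumption}, the injectivity of $\widecheck P_0\to\widecheck V_0^\omega$, while closure under the bracket uses that $a_{(0)}$ commutes with all $L_n$ for $a\in P_1$. The only difference is cosmetic: the paper cites Borcherds for that last commutation fact, whereas you derive it from the standard primary-field relation $[L_n,a_{(m)}]=\bigl((h-1)(n+1)-m\bigr)a_{(n+m)}$ with $h=1$, $m=0$.
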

\begin{proof}
We record the proof to be self-contained. For any $a\in V_\bullet$ and $n\in \BZ$, we have 
\begin{align}\label{Ln and T}
    L_n(Ta)
    &=[L_n, T]a+T(L_n(a))\\
    &=(n+1)L_{n-1}(a)+T(L_n(a))\,.\nonumber
\end{align}
This implies that if $a\in P_0$ then $Ta\in P_1$, hence making sense of the quotient $\widecheck{P}_0=P_1/T(P_0)$. 

In order to show that the natural map $P_1/T(P_0)\rightarrow V_1^\omega/T(V_0^\omega)$ is injective, we need to prove that $a\in V_0^\omega$ with $Ta\in P_1$ implies $a\in P_0$. Suppose that $a\in V_0^\omega$ with $Ta\in P_1$. We use induction on $n$ to prove that $L_n(a)=0$. The base case $n=0$ follows from $a\in V_0^\omega$. If $L_{n-1}(a)=0$ for some $n\geq 1$ by induction hypothesis, then \eqref{Ln and T} implies $T(L_n(a))=0$ since $Ta\in P_1$. Since $L_n(a)\in \ker(T)$ has a negative conformal weight $-n$, Assumption \ref{standard assumption} implies $L_n(a)=0$.\footnote{The operator $L_n$ decreases the conformal grading by $n$ since $[L_0,L_n]=(-n)L_n$. } 

Finally, this defines a Lie subalgebra because if $a, b\in P_1$ then
\begin{align*}
    L_n(a_{(0)}b)=[L_n, a_{(0)}]b+u_{(0)}(L_n(b))=0\,,\quad n\geq 1
\end{align*}
hence $a_{(0)}b\in P_1$. Here we used the fact that if $a\in P_1$ then the operator $a_{(0)}$ commutes with any Virasoro operators $L_n$ \cite[Section 5]{Borcherds}.
\end{proof}

We give an alternative way to define a Lie subalgebra of primary states that is related to the weight 0 Virasoro operator. For this new definition, we need a partial lift of the Borcherds' Lie bracket. 

\begin{lemma}\label{partial lift}
There is a well-defined linear map\,\footnote{We abuse the Lie bracket notation that was originally used for $[-,-]:\widecheck{V}_i\times \widecheck{V}_j\rightarrow \widecheck{V}_{i+j}.$}
\begin{equation}
\label{Eq: liftofLie}[-,-]\colon \widecheck{V}_{i}\times V_{j}\rightarrow V_{i+j},\quad (\overline{a},b)\mapsto a_{(0)}b\,,
\end{equation}
which makes $V_\bullet$ a representation of a graded Lie algebra $\widecheck{V}_\bullet$. 
\end{lemma}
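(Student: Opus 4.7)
The plan is to verify the two claims separately: well-definedness of the proposed map and the Lie module axiom. Both follow directly from identities already recorded in the excerpt (equations \eqref{Eq: skewjacobi} and \eqref{Eq: translationu}), so this lemma should be routine once those identities are in hand.

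First I would check well-definedness. The map $V_{i+2}\times V_j\to V_{i+j}$ sending $(a,b)\mapsto a_{(0)}b$ is manifestly bilinear, so the only thing to verify is that it descends to $\widecheck{V}_i\times V_j$, i.e.\ that $(Ta)_{(0)}b=0$ for every $a\in V_\bullet$ and $b\in V_\bullet$. This is immediate from \eqref{Eq: translationu}: setting $n=0$ in $(Ta)_{(n)}=-n\cdot a_{(n-1)}$ gives $(Ta)_{(0)}=0$ as an operator on $V_\bullet$.

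Next I would establish the Lie module property, namely that for $\overline{a},\overline{b}\in \widecheck{V}_\bullet$ and $c\in V_\bullet$ one has
\[
\bigl[[\overline{a},\overline{b}],c\bigr] \;=\; \bigl[\overline{a},[\overline{b},c]\bigr]\;-\;(-1)^{|a||b|}\bigl[\overline{b},[\overline{a},c]\bigr].
\]
Unwinding the definitions this is the identity $(a_{(0)}b)_{(0)}c=a_{(0)}(b_{(0)}c)-(-1)^{|a||b|}b_{(0)}(a_{(0)}c)$, which is just the Borcherds/Jacobi identity \eqref{Eq: skewjacobi} specialized to $m=n=0$: the sum over $i\geq 0$ collapses to its $i=0$ term (since $\binom{0}{i}=0$ for $i\geq 1$), producing exactly the desired equality. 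Combined with the fact that $\widecheck{V}_\bullet$ is itself a Lie algebra under the bracket \eqref{lie algebra}, this shows that $V_\bullet$ with \eqref{Eq: liftofLie} is a graded module over $\widecheck{V}_\bullet$.

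I do not expect any genuine obstacle: the harder structural input (the Borcherds identities and the translation identity $(Ta)_{(n)}=-n\,a_{(n-1)}$) has already been imported into the text, and the lemma is essentially a consequence of taking $m=n=0$ in one of them. The only minor bookkeeping is the grading: since $a_{(0)}$ shifts degree by $\deg(a)-2$, using representatives $a\in V_{i+2}$ and $b\in V_j$ gives $a_{(0)}b\in V_{i+j}$, matching the target in the statement.
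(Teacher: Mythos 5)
Your proof is correct and follows essentially the same route as the paper: well-definedness comes from $(Ta)_{(0)}=0$, i.e.\ the identity \eqref{Eq: translationu} at $n=0$, which is exactly the paper's argument. The additional verification of the module axiom via \eqref{Eq: skewjacobi} with $m=n=0$ is a correct spelling-out of what the paper leaves implicit.
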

\begin{proof}
It suffices to check the factoring property of $a_{(0)}b$ in the first coordinate. This follows from \eqref{Eq: translationu} because $(Ta)_{(0)}=0$.
\end{proof}

\begin{proposition}\label{prop: wallcrossingcompatibility2}
Let $\big(V_\bullet,\omega\big)$ be a conformal vertex algebra satisfying Assumption \ref{standard assumption}. Then the linear map in Lemma \ref{partial lift} restricts to 
$$[-,-]\colon\widecheck{P}_0\times P_i\rightarrow P_i 
$$
which makes $P_i$ a subrepresentation of $V_i^\omega$ with respect to the Lie algebra $\widecheck{P}_0$. 
\end{proposition}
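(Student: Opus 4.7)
The plan is to reduce everything to the single commutator identity $[L_n,a_{(0)}]=0$ for $a\in P_1$ and $n\in\BZ$. Once this is in hand, the proposition follows quickly.

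First, I would check that the assignment $(\overline{a},b)\mapsto a_{(0)}b$ is well-defined on $\widecheck{P}_0\times P_i$ with values in $V_i^\omega$. Well-definedness in the first slot uses $(Ta)_{(0)}=0$, which is the $n=0$ case of \eqref{Eq: translationu}. For the conformal degree, $a_{(k)}$ shifts conformal weight by $\deg_\omega(a)-k-1$; taking $a\in V_1^\omega$ and $k=0$ gives shift $0$, so $a_{(0)}b\in V_i^\omega$ whenever $b\in V_i^\omega$.

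The main step is to establish: if $a\in P_1$, then $[L_n,a_{(0)}]=0$ for all $n\in\BZ$. By the Borcherds commutator formula (a direct specialization of \eqref{Eq: skewjacobi} with $m=n+1$, $n\mapsto 0$ and noting that $|\omega|=0$),
\[[L_n,a_{(0)}]=[\omega_{(n+1)},a_{(0)}]=\sum_{j\geq 0}\binom{n+1}{j}(\omega_{(j)}a)_{(n+1-j)}=\sum_{j\geq 0}\binom{n+1}{j}(L_{j-1}a)_{(n+1-j)}.\]
Since $a$ is primary of conformal weight $1$, we have $L_{j-1}a=0$ for $j\geq 2$, while $L_0a=a$ and $L_{-1}a=Ta$. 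The surviving $j=0$ and $j=1$ contributions therefore equal
\[(Ta)_{(n+1)}+(n+1)\,a_{(n)}=-(n+1)a_{(n)}+(n+1)a_{(n)}=0,\]
where the first equality uses \eqref{Eq: translationu}. With this commutator identity in hand, for any $b\in P_i$ and $n\geq 1$,
\[L_n(a_{(0)}b)=[L_n,a_{(0)}]\,b+a_{(0)}(L_nb)=0+0=0,\]
so $a_{(0)}b\in P_i$ as desired.

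Finally, the module-axiom
\[[\overline{a}_1,\overline{a}_2]\cdot b=\overline{a}_1\cdot(\overline{a}_2\cdot b)-\overline{a}_2\cdot(\overline{a}_1\cdot b)\]
for $\overline{a}_1,\overline{a}_2\in\widecheck{P}_0$ and $b\in P_i$ is inherited from Lemma \ref{partial lift}, which already proves that $V_\bullet$ is a $\widecheck{V}_\bullet$-module; we have verified that this structure restricts to subspaces $\widecheck{P}_0\subset\widecheck{V}_0^\omega$ (by Proposition \ref{prop: wallcrossingcompatibility1}) and $P_i\subset V_i^\omega$. The only substantive obstacle is the commutator identity $[L_n,a_{(0)}]=0$, whose proof is a short but delicate cancellation between the $L_{-1}=T$ and $L_0$ contributions enabled by the specific conformal weight $h=1$ of elements of $P_1$; this is precisely the reason the Lie algebra structure lives at conformal weight one rather than at arbitrary weight.
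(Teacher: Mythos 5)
Your proposal is correct and takes essentially the same route as the paper: the paper reduces the proposition to the fact, cited from Borcherds/Kac, that $a_{(0)}$ commutes with all Virasoro operators $L_n$ when $a\in P_1$, and you simply prove that fact explicitly (via the standard commutator formula $[a_{(m)},b_{(n)}]=\sum_{j\geq 0}\binom{m}{j}(a_{(j)}b)_{(m+n-j)}$ and the weight-one cancellation using $(Ta)_{(n)}=-n\,a_{(n-1)}$) before concluding exactly as the paper does. The only cosmetic caveat is that this commutator formula is a standard companion identity to \eqref{Eq: skewjacobi} rather than a literal specialization of the displayed iterate formula, but this does not affect correctness.
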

\begin{proof}
The proof is similar to that of Proposition \ref{prop: wallcrossingcompatibility1} relying on the fact that if $a\in P_1$ then the operator $a_{(0)}$ commutes with any Virasoro operators $L_n$.
\end{proof}

\begin{definition}
Let $\big(V_\bullet,\omega\big)$ be a conformal vertex algebra. We define a Lie subalgebra
$$\widecheck{K}_0\coloneqq \big\{\overline{a}\in \widecheck{V}^\omega_0\,\big|\,[\overline{a},\omega]=0
\big\}\subset\Big(\widecheck{V}^\omega_{0}\,,\, [-,-]\Big)\,.
$$
\end{definition}
In the definition above, the operator $[-,\omega]$ is a linear map defined in Lemma \ref{partial lift}. The fact that $\widecheck{K}_0$ defines a Lie subalgebra follows from the representation property in Lemma \ref{partial lift}. Connection of $\widecheck{K}_0$ to the formulation of the Virasoro constraints for moduli spaces of sheaves will be explained using the Lemma below; note the similarity between the weight 0 Virasoro operator introduced in Definition \ref{def: Linv} and the formula for $[-, \omega]$.
\begin{lemma}\label{WC with conformal element}
Let $\big(V_\bullet,\omega\big)$ be a conformal vertex algebra. Then we have 
$$[-,\omega]=\sum_{n\geq -1}\frac{(-1)^n}{(n+1)!}T^{n+1}\circ L_n:\widecheck{V}_0^\omega\rightarrow V_{2}^\omega\,. 
$$
\end{lemma}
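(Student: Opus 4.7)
My plan is to derive this identity as a direct application of the Borcherds skew-symmetry relation
\[
a_{(n)} b \;=\; \sum_{i \geq 0} (-1)^{|a||b| + i + n + 1} \frac{T^i}{i!}\, b_{(n+i)}\, a
\]
already recorded in the paper. First I would pick a lift $a \in V_1^\omega$ of a class $\overline a \in \widecheck V_0^\omega = V_1^\omega / T(V_0^\omega)$; by Lemma \ref{partial lift}, the map $[-,\omega]$ sends $\overline a$ to $a_{(0)}\omega \in V_2^\omega$. Applying skew-symmetry with $n = 0$ and $b = \omega$, and using that $\omega$ has even $\BZ$-degree so $|\omega| = 0$, I obtain
\[
a_{(0)}\omega \;=\; \sum_{i \geq 0} \frac{(-1)^{i+1}}{i!}\, T^i\, \omega_{(i)}\, a.
\]

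The next step is to substitute $\omega_{(i)} = L_{i-1}$. This follows by matching the two expansions of $Y(\omega,z)$ in Definition \ref{Def: conformal vertex algebra}, i.e.\ $\sum_n \omega_{(n)}\, z^{-n-1}$ against $\sum_n L_n\, z^{-n-2}$. Re-indexing via $n = i - 1$ and using $(-1)^{n+2} = (-1)^n$ converts the expression into
\[
a_{(0)}\omega \;=\; \sum_{n \geq -1} \frac{(-1)^n}{(n+1)!}\, T^{n+1}\, L_n\, a,
\]
which is exactly the claimed formula. The right-hand side is term-by-term meaningful and, on any element of bounded conformal weight, only finitely many summands are non-zero since $L_n a$ lies in $V_{1-n}^\omega$.

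Finally, I would observe that the right-hand side automatically descends to a map out of $\widecheck V_0^\omega$: the identity just proved equates it with $a_{(0)}\omega$, which by \eqref{Eq: translationu} satisfies $(Tb)_{(0)} = 0$ for every $b \in V_0^\omega$, so it vanishes on the relations $T(V_0^\omega)$ in accordance with Lemma \ref{partial lift}. There is no real obstacle in this argument; the only slightly delicate point is tracking the sign through the re-indexing, and once that is done the lemma is essentially a single instance of the skew-symmetry relation.
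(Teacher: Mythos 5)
Your proof is correct and follows the same route as the paper: both apply the Borcherds skew-symmetry identity \eqref{Eq: skewjacobi} with $n=0$, $b=\omega$, substitute $\omega_{(i)}=L_{i-1}$, and re-index, with your extra remarks on well-definedness being harmless elaboration. No issues.
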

\begin{proof}
By  \eqref{Eq: skewsymmetry}, we have 
\begin{align*}
    [\overline{a},\omega] = a_{(0)}\omega
   =\sum_{n\geq -1}\frac{(-1)^{n}}{(n+1)!}T^{n+1}\circ L_n(a)\,.\quad\qedhere
\end{align*}
\end{proof}

From Lemma \ref{WC with conformal element}, it is clear that $\widecheck{P}_0\subseteq \widecheck{K}_0$. The next lemma states the converse when working with lattice vertex algebras. 

\begin{proposition}\label{prop: primaryomegabracket}
Let $(V_\bullet, \omega)$ be a lattice vertex operator algebra as in Theorem \ref{thm: voaconstruction} with $V_\bullet=\BC[\Lambda_{\sst}]\otimes \BD_\Lambda$. Then we have $\widecheck{P}_0=\widecheck{K}_0$ on the summands $e^\alpha\otimes \BD_\Lambda$ such that $\alpha$ is not torsion.
\end{proposition}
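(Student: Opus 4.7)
The inclusion $\widecheck{P}_0 \subseteq \widecheck{K}_0$ is already noted in the text right before the statement, so the substance of the proof is the reverse inclusion $\widecheck{K}_0 \subseteq \widecheck{P}_0$ on non-torsion components. I will fix a non-torsion $\alpha \in \Lambda_{\sst}$ and a representative $a \in V_1^\omega \cap (e^\alpha \otimes \BD_\Lambda)$ of a class in $\widecheck{K}_0$, so that $a_{(0)}\omega = 0$, and construct $b \in V_0^\omega$ such that $a - Tb \in P_1$. Two structural facts on the non-torsion component will be crucial: first, since the kernel of $T$ consists only of torsion elements $e^\alpha \otimes 1$, the operator $T$ acts injectively on $e^\alpha \otimes \BD_\Lambda$; second, conformal weights are bounded below by $\tfrac{1}{2} Q(\alpha,\alpha)$ on this component, so the index
\[
N(a) \coloneqq \max\{n \geq 0 : L_n(a) \neq 0\}
\]
is finite, and $N(a) = 0$ precisely when $a \in P_1$.

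Unfolding $a_{(0)}\omega = 0$ via Lemma~\ref{WC with conformal element} together with the identities $L_0 a = a$ and $L_{-1} a = Ta$ gives
\[
\sum_{n = 1}^{N(a)} \frac{(-1)^n}{(n+1)!}\, T^{n+1} L_n(a) = 0 \quad \text{in } V_2^\omega.
\]
Factoring out $T^2$ using its injectivity on the non-torsion sector yields the key identity
\[
\sum_{n = 1}^{N(a)} \frac{(-1)^n}{(n+1)!}\, T^{n-1} L_n(a) = 0 \quad \text{in } V_0^\omega.
\]
I then proceed by downward induction on $N(a)$, the base case $N(a) = 0$ being trivial. For the inductive step with $N \coloneqq N(a) \geq 1$, it will suffice to find $b \in V_0^\omega$ solving $L_N(a) = (N+1)L_{N-1}(b) + T L_N(b)$: then $a' \coloneqq a - Tb$ satisfies $L_N(a') = 0$ and continues to lie in $\widecheck{K}_0$ (because $(Tb)_{(0)} = 0$ by \eqref{Eq: translationu}), so $N(a') < N$ and the induction hypothesis applies.

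The main obstacle is exhibiting such a $b$. The plan is to exploit the explicit super-Heisenberg structure on the lattice vertex algebra: the shifted operators $v^\omega_{(k)}$ introduced in Section~\ref{sec: Kacconformal} satisfy the bracket relations \eqref{eq: shifted bracket}, and the Virasoro modes admit the Sugawara-type expression \eqref{eq: operatorsomegashift}. A natural ansatz is to take $b$ as a polynomial in shifted creation operators $v^\omega_{-k}$ applied to suitably chosen elements derived from $L_n(a)$, with coefficients determined by the Virasoro commutation relations $[L_m, T] = (m+1)L_{m-1}$. Verifying that this ansatz works and that the inductive construction extends consistently across all Virasoro degrees should rely on the freeness of the super-Heisenberg action on the non-torsion Fock-type module $e^\alpha \otimes \BD_\Lambda$, with the key identity above providing the compatibility that glues the successive induction steps together.
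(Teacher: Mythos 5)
Your reduction to the identity $\sum_{n\geq 1}\tfrac{(-1)^n}{(n+1)!}T^{n-1}L_n(a)=0$ (using injectivity of $T$ off the torsion components) is correct, and the goal of finding $b\in V_0^\omega$ with $a-Tb\in P_1$ is the right reformulation of $\overline{a}\in\widecheck P_0$. But the proof has a genuine gap exactly where you acknowledge "the main obstacle": the element $b$ is never constructed, only posited via an unspecified "ansatz" whose viability is asserted to follow from freeness of the super-Heisenberg action. Moreover, even granting a solution of $L_N(a)=(N+1)L_{N-1}(b)+TL_N(b)$, the inductive step as stated does not give $N(a-Tb)<N(a)$: for $n>N$ one has $L_n(a-Tb)=-(n+1)L_{n-1}(b)-TL_n(b)$, which need not vanish, so subtracting $Tb$ can create new nonzero higher modes unless $b$ is subjected to additional vanishing constraints (e.g. $L_n(b)=0$ for $n\geq N$) whose solvability is again not addressed. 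Note also that your argument never uses the non-degeneracy of $Q$, which is a hypothesis of Theorem \ref{thm: voaconstruction} and is in fact essential: it is the ingredient that makes the reverse inclusion work.

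For contrast, the paper's proof is non-inductive and sidesteps the whole mode-by-mode correction. By non-degeneracy of $Q$ choose $b\in\Lambda_{\overline 0}$ with $Q(\alpha,b)=1$ and set $b=e^0\otimes b_{-1}\in P_1$. Then $-a_{(0)}b$ lies in $b_{(0)}a+T(V_\bullet)$ by skew-symmetry, and since $b_{(0)}$ acts on $e^\alpha\otimes\BD_\Lambda$ as multiplication by $Q(\alpha,b)=1$, it is another lift of $\overline a$; the Borcherds identity
\begin{equation*}
\omega_{(n+1)}\bigl(a_{(0)}b\bigr)=a_{(0)}\bigl(\omega_{(n+1)}b\bigr)-\bigl(a_{(0)}\omega\bigr)_{(n+1)}b
\end{equation*}
together with $a_{(0)}\omega=0$ and $b\in P_1$ shows this lift is primary. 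If you want to salvage your approach you would need to actually prove the solvability (with the extra higher-mode constraints) of the system for $b$ in the Fock module $e^\alpha\otimes\BD_\Lambda$, which is precisely where some form of non-degeneracy would have to enter; as written, the argument is an outline rather than a proof.
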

\begin{proof}
Let $\overline{a}$ be an element admitting a lift $a\in e^\alpha\otimes \BD_\Lambda$ for some non torsion $\alpha$. Suppose that $\overline{a}\in \widecheck{K}_0$, i.e., $a_{(0)}\omega=0$. We show that $\overline{a}\in \widecheck{P}_0$ by constructing another lift of $\overline{a}$, not necessarily the same as $a$, that lies in $P_1$. Since the pairing $Q$ is non-degenerate, there exists some $b\in \Lambda_{\ovZ}$ such that $Q(\alpha,b)=1$. In the proof of this lemma, we denote $e^0\otimes b_{-1}\subset V_1^\omega$ simply by $b$; note that the creation/annihilation operators associated to $b\in \Lambda_{\ovZ}$ and the fields induced by $b=e^0\otimes b_{-1}\in V_\bullet$ are the same by definition, so the symbol $b_{(k)}$ is unambiguous. We claim that such $b$ provides a desired lift of $\overline{a}$ defined as
$$\eta_b(\overline{a})\coloneqq -a_{(0)}b\in V^\omega_1\,.
$$

Since \eqref{lie algebra} defines a Lie bracket, we know that 
$$
    \eta_b(\overline{a}) \in b_{(0)}a+T(V_\bullet)\,.
$$
Recall that the operator $b_{(0)}$ acts on $e^\alpha\otimes \BD_\Lambda$ as multiplication by $Q(\alpha,b)=1$. Therefore $\eta_b(\overline{a})$ is indeed another lift of $\overline{a}$. 

To show that $\eta_b(\overline{a})\in V^\omega_1$ is a primary state in $P_1$, we must show
$$\omega_{(n+1)}(a_{(0)}b)=0\quad  \textnormal{for any }  n\geq 1.
$$
This follows from the assumption $a_{(0)}\omega=0$ and the identity \eqref{Eq: skewjacobi}
\begin{align*}
    \omega_{(n+1)}(a_{(0)}b)
    =a_{(0)}(\omega_{(n+1)}b)-(a_{(0)}\omega)_{(n+1)}b\,,
\end{align*}
together with the basic fact that $b=e^0\otimes b_{-1}\in P_1$ \cite[page 81]{Ka98}. 
\end{proof}

\section{VOA from sheaf theory}\label{sec: VOA from sheaf theory}
The general treatment of vertex algebras in the previous section is now paralleled by their geometric construction formulated by Joyce \cite{Jo17}. Beginning from the application to the moduli stack of pairs of perfect complexes, we compare the resulting vertex algebras to lattice vertex algebras following the work of Gross \cite{Gr19}. The later parts of the section are focused on describing the duals of the operators $\mathsf{L}_k$ as Virasoro operators for a natural conformal element.

\subsection{Joyce's vertex algebra construction}\label{sec: Joycevertexalgebra}

We begin by describing the assumptions needed for the geometric construction of the vertex algebra for perfect complexes following Joyce's \cite{Jo17}\footnote{The manuscript \cite{Jo17} is not published and the author declares it as an incomplete draft. The main thing we need from there is the construction of the vertex algebra and the proof that it satisfies the vertex algebra axioms, i.e., Theorem \ref{thm: geometricvaconstruction}, for which the author provides full details.}. We then follow it up with how to extend it to pairs of complexes which is the most natural setting to work in for conformal elements and the rank reduction arguments later on.

We use the notation
$$
\pi_{J}\colon\prod_{i\in I}Z_i\longrightarrow \prod_{j\in J}Z_j
$$
for projections to components whenever $J\subset I$ are finite sets,
and denote for a $K$-theory class $\mathcal{K}$ on $\prod_{j\in J}Z_j$ the pullback by $\mathcal{K}_J =\pi^\ast_{J}(\mathcal{K})$. Pushforwards, pullbacks and duality below are all understood to be derived.
\begin{definition}
\label{def: VOAconstruction}
\begin{enumerate}
\item We work with a (higher) moduli stack $\MX$ of perfect complexes on $X$ constructed by Toën-Vaquié \cite{TV07} which admits a universal perfect complex $\CG$ on $ \MX\times X$.  The two structures we are interested in are the direct sum map 
$\Sigma\colon\MX\times \MX\to \MX
\,,$
such that 
$$
(\Sigma\times\id_X)^\ast \CG = \CG_{1,3}\oplus \CG_{2,3}\,,
$$
and an action $\rho\colon B\BG_m\times\MX\to \MX$ determined by
$$
(\rho\times \id_X)^\ast(\CG) = \CQ_1\otimes \CG_{2,3}
$$
for the universal line bundle $\CQ$ on $B\BG_m$.
   \item The second major ingredient in constructing vertex algebras is a complex
    \begin{equation}
    \label{Eq: Extcomplex}
    \Ext = (\pi_{1,2})_\ast\Big(\CG_{1,3}^\vee\otimes \CG_{2,3}\Big)\,,    \end{equation}
   on $\CM_X\times \CM_X$. Denoting by  $\sigma\colon \MX\times \MX\to \MX\times \MX$ the map swapping the factors we construct its symmetrization
    $$
    \Theta = \Ext^\vee\oplus \sigma^\ast\Ext
    $$
    which satisfies
    $
   \sigma^* \Theta\cong \Theta^\vee.
    $
    \item For any topological type $\alpha\in K_\sst^0(X)\simeq \pi_0(\mathcal{M}_X)$, we denote the corresponding connected component by $\Malpha\subseteq \CM_X$. Any restriction of an object living on $\MX$ to $\Malpha$ will be labelled by adjoining the subscript $(-)_{\alpha}$. This allows us to express
    $$
   \chi_{\sym}(\alpha,\beta)\coloneqq \rk\big(\Theta_{\alpha,\beta}\big) = \chi(\alpha,\beta) + \chi(\beta,\alpha)
    $$
    for $\chi\colon K_\sst^0(X)\times K_\sst^0(X)\to \BZ$ the usual Euler form. 
 \end{enumerate}
\end{definition}
By the construction of Blanc \cite[Section 3.4]{B16}, there exists a topological space $\CS^{\text{t}}$ defined up to homotopy which is assigned to each (higher) stack $\CS$. The homology and cohomology of $\CS$ are then defined by 
$$
H_\bullet(\CS) \coloneqq H_\bullet(\CS^{\text{t}})\, ,\qquad H^\bullet(\CS)\coloneqq H^\bullet(\CS^{\text{t}})\,.
$$
Each perfect complex $\CE$ on $\CS$ corresponds to a map 
$
\CS\xrightarrow{\CE} \text{Perf}_{\BC}
$ which induces $\CS^{\text{t}}\xrightarrow{\CE^\text{t}}BU\times \BZ$ and the $K$-theory class
$$
\llbracket \CE \rrbracket= (\CE^{\text{t}})^*(\CU) \in K^0(\CS^{\text{t}})\,.
$$
The Chern classes of $\CE$ are defined to be the Chern classes of $\llbracket \CE\rrbracket$.
\begin{remark}\label{rem: classD}
  We will only work with geometries where the natural map
    $$
    K^i_{\sst}(X, \BZ)\longrightarrow K^i(X, \BZ)
    $$
    for the semi-topological $K$-theory $K^i_{\sst}(X, \BZ)$ in \cite{FW} is an isomorphism for all $i>0$ and an injection when $i=0$. Gross \cite{Gr19} calls the class of such varieties class D; it includes curves, surfaces, rational 3-folds and rational 4-folds. From \cite[Theorem 4.21]{B16} it follows that for such varieties
    $$
    \pi_i(\MX^\text{t}) = K^i_{\sst}(X,\BZ)=\begin{cases}K^i(X, \BZ)&\textup{for }i>0\\
    K_\sst^0(X)&\textup{for }i=0\,.\end{cases}
    $$
\end{remark}
The vertex algebra on the shifted homology 
\begin{equation}
\label{eq: homologyvertex}
V_\bullet = \bigoplus_{\alpha\in K_\sst^0(X)}\widehat{H}_\bullet(\Malpha) 
\end{equation}
for the shift 
$$
\widehat{H}_\bullet\big(\Malpha\big)= H_{\bullet-2\chi(\alpha,\alpha)}\big(\Malpha,\BC\big)
$$
was constructed by \cite{Jo17} as follows:
\begin{theorem}[{\cite[Theorem 3.12]{Jo17}}]\label{thm: geometricvaconstruction}
There is a vertex algebra structure on $V_\bullet$ from \eqref{eq: homologyvertex} defined by 
\begin{enumerate}
    \item letting $0\colon*\to \CM_0$ be the inclusion of the zero object and setting 
    $$
    \ket{0}=0_*(*)\in H_0(\CM_0)\,.
    $$
    \item taking $t\in H_2\big(B\BG_m\big)$ to be the dual of $c_1(\CQ)\in H^2\big(B\BG_m\big)$ and setting 
    $$
    T(u) = \rho_\ast\big(t\boxtimes u\big)\,.
    $$
    \item constructing the state-field correspondence by the formula 
    \begin{equation}\label{eq: joycefields}
    Y(u,z)v = (-1)^{\chi(\alpha,\beta)}z^{\chi_\sym(\alpha,\beta)}\Sigma_\ast\Big[e^{zT}\boxtimes \text{id}\big(c_{z^{-1}}(\Theta)\cap (u\boxtimes v)\big)\Big]
    \end{equation}
for any $u\in \widehat{H}_\bullet(\mathcal{M}_\alpha)$ and $v\in \widehat{H}_\bullet(\mathcal{M}_{\beta})$.
\end{enumerate}
\end{theorem}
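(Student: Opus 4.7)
The plan is to verify the three axioms of Definition \ref{Def: axioms}---vacuum, translation covariance, and locality---using the four structural features packaged in Definition \ref{def: VOAconstruction}: commutativity and associativity of the direct-sum map $\Sigma$, the $B\BG_m$-action $\rho$, the symmetry $\sigma^\ast \Theta \cong \Theta^\vee$, and the rank formula $\rk(\Theta_{\alpha,\beta}) = \chi_\sym(\alpha,\beta)$. The shift by $2\chi(\alpha,\alpha)$ in the grading is forced by requiring that $Y$ be degree $0$, since the $z^{\chi_\sym(\alpha,\beta)}$ and $e^{zT}$ factors in \eqref{eq: joycefields} dictate the shift on two-point components.

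For the vacuum axioms, I would use that $0\colon \ast \to \M_X$ is a unit for $\Sigma$ and that $\RHom(0,-)=0=\RHom(-,0)$, so $\Theta$ restricts trivially along $\{0\}\times \M_X$ and hence $c_{z^{-1}}(\Theta)$ restricts to $1$. Applying the projection formula to \eqref{eq: joycefields} with $u=\ket{0}$ collapses it to $Y(v,z)\ket{0}=e^{zT}v$, simultaneously giving $Y(\ket{0},z)=\id$ (setting $v=\ket{0}$) and the creation property $Y(v,z)\ket{0}\in v+zV_\bullet\llbracket z\rrbracket$. The identity $T\ket{0}=0$ follows because the restriction of $\rho$ to the component $\CM_0$ factors through a point on homology. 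For translation covariance, I would invoke the compatibility of $\rho$ with $\Sigma$, namely $\Sigma\circ(\rho\times\id)=\rho\circ(\id\times \Sigma)$, together with the induced pullback formula for $\Theta$; combined with the Leibniz rule $[T, \Sigma_\ast(-\boxtimes -)] = \Sigma_\ast(T\boxtimes \id + \id \boxtimes T)$, differentiating $e^{zT}$ in $z$ yields the bracket $[T,Y(v,z)]=\frac{d}{dz}Y(v,z)$.

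The hardest step is locality. I would compute both orderings $Y(a,z)Y(b,w)$ and $Y(b,w)Y(a,z)$ as pushforwards under $\Sigma\circ(\Sigma\times\id)$ of Chern class expressions involving a three-fold ``Ext complex'' on $\M_X^3$ built from $\Theta_{1,3}\oplus \Theta_{2,3}\oplus \Theta_{1,2}$, using associativity of $\Sigma$ and the corresponding additive splitting of $\Theta$ pulled back along $(\Sigma\times\id)$. The key move is to identify the two compositions with the same geometric pushforward up to the swap $\sigma_{12}$ on the first two factors. Using commutativity $\Sigma\circ\sigma=\Sigma$, the duality $\sigma^\ast\Theta\cong\Theta^\vee$, and the Chern class identity $c_z(E^\vee)=(-1)^{\rk(E)}c_{-z}(E)$, the change of ordering produces a factor $(z-w)^{\chi_\sym(\alpha,\beta)}$ together with a sign that, combined with the prefactor $(-1)^{\chi(\alpha,\beta)}$ in \eqref{eq: joycefields}, matches the Koszul sign $(-1)^{|a||b|}$. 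Since the fields have only finitely many pole terms in $(z-w)$, locality holds with $N\geq \max(0,-\chi_\sym(\alpha,\beta))$. The trickiest bookkeeping is aligning the $(-1)^{\chi(\alpha,\beta)}$ sign convention with the Koszul reordering of odd-degree homology classes, and verifying that the three-fold Chern class expression is genuinely symmetric under $\sigma_{12}$ after accounting for the Euler form asymmetries.
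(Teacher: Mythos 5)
This theorem is not proved in the paper at all: it is quoted from Joyce and justified by the citation \cite[Theorem 3.12]{Jo17}, so there is no internal argument to compare yours against. Your sketch is, in outline, a reconstruction of Joyce's own verification: the vacuum axioms from the triviality of $\Theta$ over $\{0\}\times\CM_X$ and $\Sigma(0,-)=\id$; translation covariance from the $B\BG_m$-equivariance of $\Sigma$; locality from the biadditivity $(\Sigma\times\id)^\ast\Theta\cong\Theta_{1,3}\oplus\Theta_{2,3}$, the symmetry $\sigma^\ast\Theta\cong\Theta^\vee$ with $\rk\Theta_{\alpha,\beta}=\chi_\sym(\alpha,\beta)$, and Chern-class manipulations. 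Note that your reading of \eqref{eq: joycefields}, with $e^{zT}$ acting on the state that generates the field, is the one required for $Y(\ket{0},z)=\id$; taken literally as printed, with $e^{zT}$ on the $u$-factor of $u\boxtimes v$, the vacuum axiom would fail, so you have implicitly corrected a transcription slip, which is fine.

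Two steps of your sketch would not survive being written out as stated. First, the compatibility you invoke for translation covariance, $\Sigma\circ(\rho\times\id)=\rho\circ(\id\times\Sigma)$, is false: twisting one summand by a line bundle is not the same as twisting the direct sum. The Leibniz identity you want, $T\circ\Sigma_\ast=\Sigma_\ast\circ(T\boxtimes\id+\id\boxtimes T)$, is correct but follows from equivariance for the \emph{diagonal} action, $\Sigma\circ(\rho\times\rho)\circ(\Delta_{B\BG_m}\times\id)=\rho\circ(\id_{B\BG_m}\times\Sigma)$; and this alone is not enough, because $T$ does not commute with capping against $c_{z^{-1}}(\Theta)$. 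One also needs that pulling back $\Ext$ along the $B\BG_m$-action on either factor twists it by $\CQ^{\pm1}$, and it is this twist, not the differentiation of $e^{zT}$ alone, that produces the $z$-derivative of the prefactor $z^{\chi_\sym(\alpha,\beta)}c_{z^{-1}}(\Theta)$ in $\tfrac{d}{dz}Y$. Second, in the locality step the factor $(z-w)^{\chi_\sym(\alpha,\beta)}$ is not an overall factor relating the two orderings; it enters as the Chern polynomial of $\Theta_{1,2}$ inside the three-fold pushforward, and the two orderings of $Y(a,z)Y(b,w)$ only agree after multiplying by $(z-w)^N$ and comparing Laurent expansions in the two domains $|z|>|w|$ and $|w|>|z|$. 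Relatedly, $N\geq\max(0,-\chi_\sym(\alpha,\beta))$ does not suffice for general states: the descendent (Heisenberg-type) parts of $a$ and $b$ contribute further poles, so $N$ must also grow with the homological degrees of $a$ and $b$. None of this changes the verdict that your strategy is the right one---it is Joyce's---but these are exactly the places where the bookkeeping genuinely has to be carried out.
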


While moduli spaces of sheaves (recall Section \ref{sec: modulisheavespairs}) naturally define an element in the vertex algebra $V_\bullet$ (or the associated Lie algebra $\widecheck V_\bullet$), this vertex algebra is not suitable to study wall-crossing in moduli spaces of pairs. Essentially this is due to the fact that the complex $\Ext$ in Definition \ref{def: VOAconstruction} captures the deformation theory of sheaves but not of pairs. 

Thus, when working with pairs we will work with a larger vertex algebra $V_\bullet^{\pa}$ that is constructed from the homology of a stack parametrizing a pair of complexes and replaces the class $\Ext$ by $\Ext^{\pa}$, more related to the deformation theory of pairs. It also turns out that the vertex algebra $V_\bullet^{\pa}$ is the adequate place to construct a conformal element that produces the Virasoro operators.

\begin{definition}
\label{def: pairVOA}
\begin{enumerate}
\item Let $\CP_X\coloneqq\CM_X\times \CM_X$ and denote by $\CV=\CG_{1,3}$ and $\CF=\CG_{2,3}$ the pullbacks of $\CG$ via the two possible projections $\CM_X\times \CM_X\times X\to \CM_X\times X$. This stack has a natural direct sum map $\Sigma^{\pa}$ and a $B\BG_m$--action~$\rho^{\pa}$
    \begin{align*}
\Sigma^{\pa}&\coloneqq \big(\Sigma\times \Sigma\big)\circ \sigma_{2,3} : \CP_X\times \CP_X\longrightarrow \CP_X\,,\\
\rho^{\pa}&\coloneqq \big(\rho\times\rho\big) \circ\big(\Delta_{B\BG_m}\times \id_{\CP_X}\big): B\BG_m\times \CP_X\longrightarrow \CP_X\,.
\end{align*}
where $\sigma_{2,3}$ swaps the second and third copy of $\CM_X$ in $\CP_X\times \CP_X=\CM_X^{\times 4}$. 
\item One extends $\Ext$ to a perfect complex 
    \begin{equation}
    \label{Eq: Extpacomplex}
    \Ext^{\pa} = (\pi_{1,2})_\ast\Big((\CF_{1,3}\oplus \CV_{1,3}[1])^\vee\otimes \CF_{2,3}\Big)\,
    \end{equation}
on $\CP_X\times \CP_X$.
We introduce its symmetrization
$$
\Theta^{\pa} = (\Ext^{\pa})^\vee \oplus (\sigma^{\pa})^\ast\Ext^{\pa}\,,
$$
where $\sigma^{\pa}:\CP_X\times \CP_X\to \CP_X\times \CP_X$ swaps the two factors.

\item We now work with two copies of the $K$-theory  with the connected components labeled by $\CP_{\alpha^{\pa}}$ whenever $\alpha^{\pa}=(\alpha_1,\alpha_ 2)\in K_\sst^0(X)^{\oplus 2}$ and continue to denote by the subscript $(-)_{\alpha^{\pa}}$ the restriction of some object to each connected component. We  define the pairing 
\begin{align*}
\label{Eq: pairing}
\chi^{\pa}\big(\alpha^{\pa},\beta^{\pa}\big) \coloneqq \chi(\alpha_2 -\alpha_1,\beta_2) =\rk\big(\Ext^{\pa}_{\alpha^{\pa},\beta^{\pa}}\big) \,.
\end{align*}
and its symmetrization
\begin{equation*}\label{Eq: pairing}
\chi^{\pa}_{\sym}(\alpha^{\pa},\beta^{\pa}\big) \coloneqq \chi(\alpha^{\pa},\beta^{\pa}\big)+ \chi(\beta^{\pa}, \alpha^{\pa}\big)=\rk\big(\Theta^{\pa}_{\alpha^{\pa},\beta^{\pa}}\big) \,.
\end{equation*}
\end{enumerate}
\end{definition}

The pair vertex algebra has the underlying graded vector space 
$$
V^{\pa}_\bullet =\widehat {H}_\bullet(\CP_{X})= \bigoplus_{\alpha^{\pa}}\widehat{H}_{\bullet}\big(\CP_{\alpha^{\pa}}\big)\,,
$$
where $\widehat{H}_\bullet\big(\CP_{\alpha^{\pa}}\big) = H_{\bullet-2\chi^{\pa}(\alpha^{\pa}, \alpha^{\pa})}\big(\CP_{\alpha^{\pa}}\big)$. The structure of a vertex algebra is constructed exactly as in Theorem \ref{thm: geometricvaconstruction}. In particular, the state-field correspondence is given by
    \begin{equation}\label{eq: joycefieldspa}
    Y(u,z)v = (-1)^{\chi^{\pa}(\alpha^{\pa}, \beta^{\pa})}z^{\chi_\sym^{\pa}(\alpha^{\pa},\beta^{\pa})}\Sigma^{\pa}_\ast\Big[e^{zT}\otimes \text{id}\big(c_{z^{-1}}(\Theta^{\pa})\cap u\boxtimes v\big)\Big]
    \end{equation}
for $u\in \widehat{H}_\bullet\big(\CP_{\alpha^{\pa}}\big)$, $v\in \widehat{H}_\bullet\big(\CP_{\beta^{\pa}}\big)$. 
Note that the inclusion of $\CM_X\hookrightarrow \CP_X$ sending $F\mapsto (0, F)$ realizes $V_\bullet$ as a vertex subalgebra of $V_\bullet^{\pa}$.

\begin{remark}\label{rem: stackNX}
In Joyce's theory \cite[Section 8]{Jo21}, wall-crossing for pairs plays an important role in the definition of the invariant classes $[M]^\inva$. However, the vertex algebra that he uses to formulate such wall-crossing formulae is not $V_\bullet^{\pa}$. For our purposes, it will be enough to consider the stack $\CN_X$ parametrizing triples $(U,F,f)$ where $U$ is a vector space, $F$ is a sheaf and $f:U \otimes \CO_X\to F$ is a morphism (cf. \cite[Definition 8.2]{Jo21} with $L=\CO_X$ or \cite[Definition 2.12]{Bo21} when $n=0$). The stack $\CN_X$ maps to $\CP_X$ by   
\[\Omega:(U\otimes \CO_X\to F)\mapsto (U\otimes \CO_X,  F).\]
If one replaces $\Ext^{\pa}$ by
$$
\Ext^{\text{vpa}} = (\pi_{1,2})_\ast\Big((\CF_{1,3}\oplus \CV_{1,3}[1])^\vee\otimes \CF_{2,3}\Big)\oplus \CV_{1,3}^\vee\otimes \CV_{2,3}
$$
in the definition of \eqref{eq: joycefieldspa},
then the vertex algebra structure on $\CN_X$ used by Joyce makes the pushforward $\Omega_*$ in homology into a vertex algebra homomorphism. 

When doing wall-crossing in Section \ref{sec: rankreduction} and in the Appendix \ref{app:A}, we only work with $Y(u,z)v$ where
$$
u\in V^{\pa}_{\bullet}\quad \text{and}\quad v \in V_{\bullet}\subset V^{\pa}_{\bullet}\,.
$$
The restrictions of $\Ext^{\pa}$ and $\Ext^{\text{vpa}}$ to $\CP_X\times \CM_X\subset \CP_X\times \CP_X$ coincide, so the resulting wall-crossing formulae remain the same. This was discussed in more detail for quivers in \cite[Remark 3.5 and 3.12, Lemma 4.4]{BoVir} where the comparison of vertex algebras was stated explicitly. 
\end{remark}

\subsection{Joyce's vertex algebra as a lattice vertex algebra}
\label{sec: grossiso}

We will now give an explicit description of $V_\bullet$ and $V^{\pa}_\bullet$ as lattice vertex algebras (see Theorem \ref{thm: voaconstruction}) following Gross' work \cite{Gr19}.  
 
Let $\Lambda$ be the $\BZ_2$-graded vector space
\[\Lambda=\Lambda_{\overline{0}}\oplus \Lambda_{\overline{1}}=K^0(X)\oplus K^1(X)=K^\bullet(X).\]
Let also $\Lambda_\sst=K_\sst^0(X)$. Next we need to describe the bilinear forms $Q, q$ on $\Lambda$ extending the natural Euler pairing on $\Lambda_\sst=K_\sst^0(X)$. Recall that we have a natural Chern character isomorphism $K^\bullet(X)\cong H^\bullet(X)$.  Define the dual $(-)^\vee\colon K^\bullet(X)\to K^\bullet(X)$ by identifying with $H^\bullet(X)$ and setting
$$
\ch(v^\vee) = (-1)^{\floor*{ \frac{\deg\ch(v)}{2}}}\ch(v)\,,
$$
where $\deg\ch(v)$ is the cohomological degree. This leads to \begin{equation}
\label{Eq: splittingduals}
(v\cdot w)^\vee = (-1)^{|v||w|}v^\vee\cdot w^\vee\,.
\end{equation}
We define the extension of the Euler pairing to $K^\bullet(X)$ as
$$
\chi(v,w) = \int_X \ch(v^\vee)\cdot \ch(w)\cdot \td(X)\,,\qquad v,w\in K^\bullet(X)\,.
$$
We will denote its symmetrization by $\chi_\sym$:
\[\chi_\sym(v,w)=\chi(v,w)+\chi(w,v)\,.\]

For the pair version, we let 
\[\Lambda^{\pa}_{\overline{i}} = \Lambda_{\overline{i}}^{\oplus 2}\,,\quad \Lambda^{\pa} =\Lambda^{\oplus 2}=\Lambda^{\pa}_{\ovZ}\oplus \Lambda^{\pa}_{\ovO}\,,\quad \Lambda^{\pa}_\sst=\Lambda_\sst^{\oplus 2}.\]
Given $v\in \Lambda$ we will denote by $v^\CV=(v,0),\, v^\CF=(0,v)\in \Lambda^{\pa}$ the corresponding elements in the first and second copies of $\Lambda$, respectively. Given two elements 
\[v^{\pa}=(v_1, v_2)=v^\CV_1 + v^\CF_2\,,\quad w^{\pa}=(w_1, w_2)=w^\CV_1 + w^\CF_2\]
their pairing is defined as
\begin{equation}
\label{Eq: chipa}
\chi^{\pa}\big(v^{\pa},w^{\pa}\big) =  \chi(v_2-v_1,w_2) \,,
\end{equation}
and as usual its symmetrization is
\begin{equation}
\label{Eq: chipasym}
\chi^{\pa}_\sym\big(v^{\pa},w^{\pa}\big) = \chi^{\pa}\big(v^{\pa},w^{\pa}\big)+\chi^{\pa}\big(w^{\pa},v^{\pa}\big) \,.
\end{equation}
Note that the forms $\chi^{\pa}$ and $\chi^{\pa}_{\sym}$ extend the ones in Definition \ref{def: pairVOA}.(3).

\begin{lemma}
\label{Lem: nondegenerate}
The form $\chi^{\pa}_{\sym}$ is non-degenerate.
\end{lemma}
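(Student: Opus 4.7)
The plan is to reduce the statement to the non-degeneracy of the Euler form $\chi$ on $\Lambda$ itself. For the latter, I would observe that $\ch\colon K^\bullet(X)\to H^\bullet(X)$ is an isomorphism, the duality $(-)^\vee$ is an involution on $K^\bullet(X)$, and $\td(X)$ is a unit in $H^\bullet(X)$ (its constant term is $1$). Hence the non-degeneracy of $\chi(v,w)=\int_X\ch(v^\vee)\ch(w)\td(X)$ follows from Poincar\'e duality on $X$.

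Next, I would exploit the block structure of $\chi^{\pa}_{\sym}$ with respect to the decomposition $\Lambda^{\pa}=\Lambda^\CV\oplus\Lambda^\CF$. Writing out \eqref{Eq: chipa} and \eqref{Eq: chipasym} for $v^{\pa}=v_1^\CV+v_2^\CF$ and $w^{\pa}=w_1^\CV+w_2^\CF$ and expanding yields
\[\chi^{\pa}_{\sym}(v^{\pa},w^{\pa})\;=\;-\chi(v_1,w_2)-\chi(w_1,v_2)+\chi_{\sym}(v_2,w_2)\,.\]
Suppose $v^{\pa}$ lies in the radical of $\chi^{\pa}_{\sym}$. Testing against $w^{\pa}=w_1^\CV$ (so $w_2=0$) collapses the identity to $-\chi(w_1,v_2)=0$ for every $w_1\in\Lambda$, which forces $v_2=0$ by the non-degeneracy of $\chi$. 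Substituting $v_2=0$ and testing against $w^{\pa}=w_2^\CF$ (so $w_1=0$) the identity becomes $-\chi(v_1,w_2)=0$ for every $w_2\in\Lambda$, and hence $v_1=0$. Therefore $v^{\pa}=0$ and $\chi^{\pa}_{\sym}$ is non-degenerate.

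The argument is essentially formal, so I do not anticipate any obstacle. The only genuine input is Poincar\'e duality on $X$; everything else comes from the observation that the asymmetric twist by $v_2-v_1$ in the definition of $\chi^{\pa}$ turns the off-diagonal blocks of $\chi^{\pa}_{\sym}$ into copies of $\chi$, which lets us separately recover $v_1$ and $v_2$ by probing with $\CV$-- and $\CF$--vectors.
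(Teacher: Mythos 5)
Your proof is correct and follows essentially the same route as the paper: both exploit the block structure of $\chi^{\pa}_{\sym}$ with respect to $\Lambda^{\pa}=\Lambda\oplus\Lambda$ (a zero block, off-diagonal copies of $-\chi$, and $\chi_{\sym}$ in the $\CF$--$\CF$ block) to reduce everything to the non-degeneracy of $\chi$. The only difference is that you spell out why $\chi$ itself is non-degenerate (Chern character isomorphism, invertibility of $\td(X)$, Poincar\'e duality), which the paper treats as clear.
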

\begin{proof}
Using the decomposition $\Lambda^{\pa}=\Lambda\oplus \Lambda$ the symmetric form $\chi^{\pa}_{\sym}$ can be represented by the block matrix
\[\chi^{\pa}_{\sym}=\begin{bmatrix}
0&-\chi\\
-\chi& \chi_\sym
\end{bmatrix}.\]
Since clearly $\chi$ is non-degenerate it follows that $\chi^{\pa}_{\sym}$ is non-degenerate as well.
\end{proof}

The following is a necessary modification of \cite[Theorem 5.7]{Gr19} which we write out in full detail to avoid imprecisions and to include the analog statement for the pair vertex algebra. We point out that while all the ideas already appear in loc. cit. the computation of the odd degree fields is not present there and their description has a degree shift to the correct one. Furthermore, unlike the Definition of the generalized lattice vertex algebra in \cite{Gr19}, we do not rely on the construction of Abe \cite{Abe07}.

\begin{theorem}\label{thm: gross}
Let $X$ be a variety in class D (cf. Remark \ref{rem: classD}).
Then we have isomorphisms of vertex algebras
\begin{align}
\label{Eq: sheafVOA}
V_\bullet &\cong \BC\big[\Lambda_\sst\big]\otimes \BD_{\Lambda}\,,\\
\label{Eq: pairVOA}
V^{\pa}_\bullet &\cong \BC\big[\Lambda^{\pa}_{\sst}\big]\otimes \BD_{\Lambda^{\pa}}\,,
\end{align}
where: the left hand sides are the vertex algebras from Joyce's geometrical construction with the data of Definitions \ref{def: VOAconstruction} and \ref{def: pairVOA}, respectively; the right hand sides are the lattice vertex algebras from Theorem \ref{thm: voaconstruction} and the symmetric bilinear forms $q=\chi, Q=\chi_\sym$ and $q=\chi^{\pa}, Q=\chi_\sym^{\pa}$, respectively.
\end{theorem}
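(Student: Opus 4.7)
The plan is to apply Proposition \ref{Cor: calpha}, which characterises lattice vertex algebras via three conditions on the vacuum, the basic fields $Y(e^0\otimes v_{-1}, z)$, and the leading coefficient of $Y(e^\alpha, z) e^\beta$. The first step is the vector space identification. The class D hypothesis (Remark \ref{rem: classD}) gives $\pi_0(\CM_X^{\text{t}}) = \Lambda_\sst$ and $\pi_i(\CM_X^{\text{t}}) \cong K^i(X, \BZ) = \Lambda_{\overline{i \bmod 2}}$ for $i \geq 1$ by Bott periodicity. Each component $\CM_\alpha^{\text{t}}$ is a simple rational space, so by standard rational homotopy theory its complex homology is the free graded supercommutative algebra on the Hurewicz images of $\pi_i$. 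This produces one copy of $\Lambda$ in degree $2k - |v|$ for each $k \geq 1$, matching the generator $v_{-k}$ of $\BD_\Lambda$. Concretely, $v_{-k}$ is dual to the descendent $\chh_{k-1}(\ch(v)) \in H^{\bullet}(\CM_X)$ built from the universal complex $\CG$. The Joyce shift $\widehat{H}_\bullet(\CM_\alpha) = H_{\bullet - 2\chi(\alpha, \alpha)}(\CM_\alpha)$ matches the lattice VOA grading on $e^\alpha$ since $2\chi(\alpha, \alpha) = Q(\alpha, \alpha)$, yielding the vector space isomorphism $V_\bullet \cong \BC[\Lambda_\sst] \otimes \BD_\Lambda$.

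Next I would verify the three conditions of Proposition \ref{Cor: calpha}. Condition (1) is immediate: the inclusion $0\colon \ast \to \CM_0$ of the zero object gives $|0\rangle = 0_\ast(\ast) = e^0 \otimes 1$ under the identification above. Condition (3) is a direct computation using \eqref{eq: joycefields}: the complex $\Theta_{\alpha, \beta}$ has rank $\chi_\sym(\alpha, \beta)$, so the lowest $z^{-1}$-coefficient of $c_{z^{-1}}(\Theta_{\alpha, \beta})$ contributing is $1$, producing the power $z^{\chi_\sym(\alpha, \beta)}$ in front of the formula; at leading homological degree this cap-product against $e^\alpha \boxtimes e^\beta$ is just $e^\alpha \boxtimes e^\beta$, and the factor $e^{zT}$ contributes the identity to this lowest-degree piece. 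Pushing forward by $\Sigma$ sends $0_\alpha \oplus 0_\beta$ to $0_{\alpha + \beta}$, yielding $(-1)^{\chi(\alpha, \beta)} e^{\alpha + \beta}$, which is exactly \eqref{Eq: calpha} with $q = \chi$.

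The main work, and the principal obstacle, lies in Condition (2): checking that $Y(e^0 \otimes v_{-1}, z)$ has the explicit form \eqref{Eq: explicitfield}. One unpacks Joyce's formula into its two main factors. The exponential $e^{zT}$ together with the translation relation $T(v_{-k}) = k v_{-k-1}$ produces the creation part $\sum_{k \geq 0} v_{(-k-1)} z^k$ of \eqref{Eq: explicitfield}. The cap product with $c_{z^{-1}}(\Theta_{0, \beta})$ produces the annihilation operators: because the relevant Chern classes of $\Theta$ paired with the Hurewicz image of $v$ implement contraction by $\chi_\sym(v, -) = Q(v, -)$ on descendents, the cap-product against $v_{-1} \boxtimes u$ contracts the generators of $u$ in precisely the pattern $\sum_{k \geq 1 - |v|} k^{(1-|v|)} Q(v, w) \partial/\partial w_{-k-|v|} \,z^{-k-1}$, with the residual $Q(v, \beta) z^{-1}$ arising from the $z$-power shift when $u \in e^\beta \otimes \BD_\Lambda$. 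In the even case this is essentially Gross's \cite[Theorem 5.7]{Gr19} computation; the odd case requires additional sign bookkeeping using \eqref{Eq: splittingduals} and the symmetrization $\Theta = \Ext^\vee \oplus \sigma^\ast \Ext$, yielding the anti-fermionic commutation relations \eqref{lattice VOA commutation} (with the prefactor $k^{(1-|v|)}$ collapsing to $1$ for $|v| = 1$). The pair vertex algebra $V^{\pa}_\bullet$ then follows by the same argument, replacing $\CM_X$ with $\CP_X$, $\Theta$ with $\Theta^{\pa}$ and $\chi$ with $\chi^{\pa}$; no extra input is required here, and in particular Lemma \ref{Lem: nondegenerate} is not used at this stage (it is needed only later for constructing the conformal element).
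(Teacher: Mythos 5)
Your proposal follows essentially the same route as the paper: identify the underlying graded vector space via Gross's freeness of $H^\bullet(\CM_\alpha)$ on descendents (Lemma \ref{Lem: isomorphism}) together with the pairing \eqref{Eq: capproductchk}, then verify the three conditions of Proposition \ref{Cor: calpha}, with $e^{zT}$ producing the creation part, the cap product with the constant-plus-linear terms of $c_{z^{-1}}(\Theta^{\pa})$ producing the annihilation operators, and the $z^{\chi_\sym(\alpha,\beta)}$-coefficient computation giving \eqref{Eq: calpha}. The only cosmetic differences are that the paper proves the pair case directly (the sheaf case being a consequence) and a minor indexing slip on your side: by \eqref{Eq: capproductchk} the generator $v_{-k}$ is dual (up to $(k-1)!$) to $\ch_k$ of the class paired against $\ch(v)$, not to $\chh_{k-1}(\ch(v))$.
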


We begin the proof of the Theorem by explaining how to identify both sides as graded vector spaces. Using the universal sheaf $\CG_{\alpha}$ on $\CM_{\alpha}\times X$ we have for each $\alpha\in K_\sst^0(X)$ a geometric realization morphism \eqref{eq: geometricrealizationalpha}
\[\xi_{\CG_\alpha}\colon \BD^X_\alpha\to H^\bullet(\CM_\alpha).\]
 
\begin{lemma}[{\cite[Theorem 4.15]{Gr19}}]
\label{Lem: isomorphism}
Let $X$ be a variety in class D. Then the map $\xi_{\CG_{\alpha}}$ is an isomorphism, 
\[H^\bullet(\CM_\alpha)\cong \BD^X_\alpha.\]
Similarly,
\[H^\bullet(\CP_{\alpha^{\pa}})\cong \BD^{X, \pa}_{\alpha^{\pa}}\coloneqq \BD^X_{\alpha_1}\otimes \BD^X_{\alpha_2}.\]
\end{lemma}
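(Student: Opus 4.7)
The plan is to follow Gross' strategy and identify the topological realization of $\CM_\alpha$ with a component of a mapping space, then compute its cohomology by standard homotopy theory.

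First, I would use the class D hypothesis together with Blanc's construction (cf.\ Remark \ref{rem: classD}) to identify the homotopy type $\CM_\alpha^{\text{t}}$ with the appropriate connected component of the mapping space $\mathrm{Map}(X^{\text{t}}, BU\times \BZ)_\alpha$. The point is that the moduli stack of perfect complexes $\Perf_\BC$ has topological realization $BU\times\BZ$, the universal complex $\CG$ corresponds under Blanc's functor to the evaluation map, and the identification $\pi_i(\CM_X^{\text{t}}) = K^i_{\sst}(X,\BZ) \cong K^i(X,\BZ)$ valid for class D varieties promotes the adjunction-induced map $\CM_\alpha^{\text{t}} \to \mathrm{Map}(X^{\text{t}}, BU\times \BZ)_\alpha$ to a weak equivalence on each component.

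Next, I would compute the cohomology of this mapping space. Since $H^\bullet(BU\times\BZ)$ is, on each component, the polynomial algebra on the universal Chern characters $\ch_i$ for $i\geq 1$, a K\"unneth-type calculation for $\mathrm{Map}(X^{\text{t}},-)$ (carried out, for instance, by a cellular filtration of $X^{\text{t}}$ or by dualizing the homology which is a free supercommutative algebra on $H_\bullet(X)[2i]$ generators) yields a canonical isomorphism of graded supercommutative algebras
\[
H^\bullet(\mathrm{Map}(X^{\text{t}},BU\times\BZ)_\alpha)\;\cong\; \SSym\!\llbracket\, \bigoplus_{i\geq 1} H^\bullet(X)[-2i]\,\rrbracket
\]
with the degree conventions matching Definition \ref{def: topdescendentalgebra} after the Hodge-degree shift (the shift accounts for the conversion between $\chh_i$ and $\ch_i$). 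On the degree zero part, the class of $\ch_i(\gamma)$ reduces to the constant $\int_X \gamma\cdot \ch(\alpha)$, which is exactly how $p_\alpha$ treats generators of degree $0$.

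Then I would check that, under this identification, the geometric realization $\xi_{\CG_\alpha}$ defined via the slant product in \eqref{eq: geometricrealizationalpha} sends each abstract generator $\ch_i(\gamma)$ to the corresponding tautological class, because $\CG_\alpha$ is precisely the pullback along evaluation of the universal K-theory class on $BU\times\BZ$. Hence $\xi_{\CG_\alpha}$ is an isomorphism. The pair statement is then immediate: since $\CP_{\alpha^{\pa}} = \CM_{\alpha_1}\times \CM_{\alpha_2}$, the K\"unneth theorem gives
\[
H^\bullet(\CP_{\alpha^{\pa}}) \cong H^\bullet(\CM_{\alpha_1})\otimes H^\bullet(\CM_{\alpha_2}) \cong \BD^X_{\alpha_1}\otimes \BD^X_{\alpha_2} = \BD^{X,\pa}_{\alpha^{\pa}}.
\]

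The main obstacle is the first step: rigorously turning the abstract identification of $\pi_\bullet(\CM_X^{\text{t}})$ with semi-topological $K$-theory into a weak equivalence of $\CM_\alpha^{\text{t}}$ with the mapping space, since the natural map is only known to induce an isomorphism on homotopy groups and one needs some form of nilpotence or Postnikov-type argument (as in Blanc's and Gross' work) to upgrade this to a weak equivalence. Once this is in place, the rest is a formal computation with polynomial generators and K\"unneth.
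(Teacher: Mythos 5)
Your argument is correct and follows essentially the same route as the paper: the paper simply cites Gross' Theorem 4.15 for the statement that $H^\bullet(\CM_\alpha)$ is freely generated by the K\"unneth components of $\ch_k(\CG_\alpha)$ --- which is exactly what your Blanc/mapping-space computation reconstructs --- and then identifies those components with the geometric realizations of descendents and applies K\"unneth for the pair case. The only difference is that you unfold the proof of the cited black box (including the class-D plus H-space step upgrading the $\pi_\ast$-isomorphism to a weak equivalence), which the paper does not redo.
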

\begin{proof}
Gross shows that $H^\bullet(\CM_\alpha)$ is freely generated by the Kunneth components of $\ch_k(\CG_\alpha)\in H^\bullet(\CM_\alpha\times X).$ But these are precisely the geometric realization of descendents, see \eqref{eq: chkunnethdescendents}. The result for pairs follows from the sheaf version.\qedhere
\end{proof}

We know from the previous lemma that
$$
H^\bullet(\CM_\alpha) \cong \BD^X_\alpha =\SSym\llbracket \tCH^X_\alpha\rrbracket \,,
$$
and we define the pairing $
\langle -,-\rangle \colon \text{CH}^X_{\alpha}\times \text{CH}_{\Lambda} \longrightarrow \BC
$
given by \begin{equation}
\label{Eq: capproductchk}
\Big\langle\ch_{k}(\gamma),v_{-j}\Big\rangle  =\int_X\gamma \cdot \ch(v) \frac{\delta_{k,j}}{(k-1)!}\,.
\end{equation}
The reader can recall the definition of $\tCH^X_\alpha$ and $\tCH_\Lambda$ in Definition \ref{def: topdescendentalgebra} and Theorem~\ref{thm: voaconstruction}, respectively. The pairing above is a perfect pairing, so it identifies the dual of the graded vector space $\tCH^X_\alpha$ with $\tCH_\Lambda$; we will use $\dagger$ to denote duals. Recalling the discussion from Section \ref{sec: supercommutative}, we then get an identification between
\[H_\bullet(\CM_\alpha)=H^\bullet(\CM_\alpha)^\dagger=
(\BD_\alpha^X)^\dagger=\SSym\llbracket\tCH_\alpha^X\rrbracket ^\dagger=\SSym[\tCH_\Lambda]=\BD_\Lambda\,.\]
Moreover, by Definition \ref{Def: pairingofV} the pairing between $\BD_\alpha^X$ and $\BD_\Lambda$ can be promoted to a cap product
$$\cap:\BD^X_{\alpha}\times \BD_{\Lambda}\to \BD_{\Lambda}$$
such that for a basis $B$ of $K^\bullet(X)$, we have
\begin{equation}\label{eq: capdescendents}
\ch_{k}(\gamma)\cap(-)=\frac{1}{(k-1)!}\sum_{w\in B}\int_X\gamma\cdot \ch(w)\frac{\partial}{\partial w_{-k}}\,.
\end{equation}

The isomorphisms $\BD_{\Lambda}\cong H_\bullet(\CM_\alpha)$ and $\BD_{\alpha}^X\cong H^\bullet(\CM_\alpha)$ identify this abstract cap product with the topological cap product, i.e.,
\begin{equation}
\label{Eq: homologyisomorphism}
\begin{tikzcd}
\BD^X_{\alpha}\times \arrow[d,"{\xi_{\CG_{\alpha}}\times \xi_{\CG_{\alpha}}^{-\dagger}}"]\BD_{\Lambda}\arrow[r,"\cap"]&\BD_{\Lambda}\arrow[d,"{\xi_{\CG_{\alpha}}^{-\dagger}}"]\\
H^\bullet(\CM_{\alpha})\times H_\bullet(\CM_{\alpha})\arrow[r,"\cap"]&H_\bullet(\CM_{\alpha})
\end{tikzcd}
\end{equation}
commutes where $\xi^{-\dagger}$ denotes the inverse of the isomorphism $\xi^{\dagger}$. 

By assembling all the isomorphisms of graded vector spaces $\widehat H_\bullet(\CM_\alpha)\cong e^\alpha \BD_\Lambda$ we get
\begin{equation}\label{eq: isovectorspaceva}\widehat H_\bullet(\CM_X)=\bigoplus_{\alpha\in K_\sst^0(X)}\widehat H_\bullet(\CM_\alpha)\cong \bigoplus_{\alpha\in \Lambda_\sst}e^\alpha \BD_\Lambda=\BC[\Lambda_\sst]\otimes \BD_\Lambda
\end{equation}
and similarly 
\begin{equation}\label{eq: isovectorspacevapa}
\widehat H_\bullet(\CP_X)\cong \BC[\Lambda_\sst^{\pa}]\otimes \BD_{\Lambda^{\pa}}.
\end{equation}
To prove Theorem \ref{thm: gross} it remains to show that the vertex algebra that Joyce defined on the left hand side and the lattice vertex algebra on the right hand side are compatible under this isomorphism. We will do this using Proposition \ref{Cor: calpha}. 

Before we analyze the fields required to show that the two vertex algebra structures are compatible, it will be useful to identify the translation operators on both sides. Recall Definition \ref{def: invariantdescendents} of the operator $\mathsf{R}_{-1}: \BD^X_\alpha\to \BD^X_\alpha$.

\begin{lemma}\label{lem: translationdualr-1}
Under the identification of $\BD^X_\alpha$ with $H^\bullet(\CM_\alpha)\cong H_\bullet(\CM_\alpha) ^\dagger$, the translation operator $T\colon H_\bullet(\CM_\alpha)\to H_{\bullet+2}(\CM_\alpha)$ (defined in Theorem \ref{thm: geometricvaconstruction}) is dual to $\bR_{-1}$. Moreover, the isomorphisms \eqref{eq: isovectorspaceva}, \eqref{eq: isovectorspacevapa} preserve the respective translation operators on both sides.
\end{lemma}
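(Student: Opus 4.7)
The plan is to compute $\rho^\ast$ on descendents, dualize to identify $T$ with the adjoint of $\bR_{-1}$, and then verify this matches the formulas $T(v_{-k})=kv_{-k-1}$ and $T(e^\alpha)=e^\alpha\otimes\alpha_{-1}$ on the lattice vertex algebra side.

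First I would compute the action of $\rho^\ast$ on the cohomology $H^\bullet(\CM_X)\cong \BD^X$. From the defining identity $(\rho\times \id_X)^\ast \CG = \CQ_1\otimes \CG_{2,3}$ one obtains $(\rho\times \id_X)^\ast\ch(\CG)=e^{t}\,\ch(\CG_{2,3})$, where $t=c_1(\CQ)\in H^2(B\BG_m)$. Applying flat base change along the projection $\CM_X\times X\to \CM_X$ after cupping with $q^\ast\gamma$ then yields
\[
\rho^\ast\,\xi_{\CG}\bigl(\chh_i(\gamma)\bigr) \;=\; \sum_{k\geq 0}\frac{t^{k}}{k!}\,\xi_{\CG}\bigl(\chh_{i-k}(\gamma)\bigr) \;=\; \xi_{\CG}\bigl(e^{t\bR_{-1}}\chh_i(\gamma)\bigr).
\]
Because $\bR_{-1}$ is an even derivation, $e^{t\bR_{-1}}$ is an algebra automorphism of $\BD^X\llbracket t\rrbracket$, so this extends multiplicatively to $\rho^\ast\,\xi_{\CG}(D)=\xi_{\CG}\bigl(e^{t\bR_{-1}}D\bigr)$ for every $D\in\BD^X$.

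Second, I would dualize. Since $T(u)=\rho_\ast(t\boxtimes u)$, the projection formula gives $\langle D,T(u)\rangle=\langle \rho^\ast D,\, t\boxtimes u\rangle$, and because $t\in H_2(B\BG_m)$ is the dual of $c_1(\CQ)$ this pairing extracts the coefficient of $c_1(\CQ)^1$ in the K\"unneth expansion of $\rho^\ast D$. The previous step identifies that coefficient as $\xi_{\CG}(\bR_{-1}D)$, so $T$ on $H_\bullet(\CM_\alpha)$ is dual to $\bR_{-1}$ on $H^\bullet(\CM_\alpha)\cong \BD^X_\alpha$. For the second assertion, I would check on the generators of the $\BD_\Lambda$-module $\BC[\Lambda_\sst]\otimes \BD_\Lambda$ that the lattice-algebra rules $T(v_{-k})=kv_{-k-1}$ and $T(e^\alpha)=e^\alpha\otimes\alpha_{-1}$ are indeed dual to $\bR_{-1}$ on $\BD^X_\alpha$ with respect to the pairing \eqref{Eq: capproductchk}. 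The first rule matches after a short factorial bookkeeping, and the second is exactly the reflection, on the dual side, of the convention $\chh_0(\gamma)=\int_X\gamma\cdot\ch(\alpha)$ from Definition \ref{def: invariantdescendents}, since $\bR_{-1}\chh_1(\gamma)$ then produces the $\alpha$-dependent constant $\int_X\gamma\cdot\ch(\alpha)$.

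Finally, the pair version runs identically: the diagonal action $\rho^{\pa}=(\rho\times\rho)\circ(\Delta_{B\BG_m}\times\id_{\CP_X})$ forces $(\rho^{\pa}\times\id_X)^\ast(\CV,\CF)=(\CQ_1\otimes\CV_{2,3,4},\,\CQ_1\otimes\CF_{2,3,4})$, so the same base-change computation shows that $(\rho^{\pa})^\ast$ is dual on $\BD^{X,\pa}$ to $\bR_{-1}^{\CV}+\bR_{-1}^{\CF}$, matching the translation on the lattice side $\BC[\Lambda_\sst^{\pa}]\otimes \BD_{\Lambda^{\pa}}$. The whole argument requires no deep input beyond flat base change, the projection formula, and the identity $\ch(\CQ\otimes \CG)=e^{c_1(\CQ)}\ch(\CG)$; the only real subtlety is reconciling the index shift between $\chh$ and $\ch$, together with tracking the factorials in \eqref{Eq: capproductchk}.
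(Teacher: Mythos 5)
Your proposal is correct and follows essentially the same route as the paper: the paper's proof also identifies $\rho^\ast$ with $e^{\zeta\bR_{-1}}$ (citing Lemma \ref{lem: changeuniversalsheaf} rather than re-deriving the twist formula), concludes that $T=\rho_\ast\circ(t\boxtimes-)$ is dual to $\bR_{-1}=(t\backslash-)\circ\rho^\ast$, and then checks the lattice formulas \eqref{Eq: translation} on generators, exactly as you do. The only caveat is a harmless notational slip in your write-up, where $t$ denotes both $c_1(\CQ)\in H^2(B\BG_m)$ and its homological dual in $H_2(B\BG_m)$.
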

\begin{proof}
We recall the operator 
\[\bE=e^{\zeta \bR_{-1}}\] from Lemma \ref{lem: changeuniversalsheaf}. We claim that the following diagram commutes:
\begin{center}
\begin{tikzcd}
\BD^X_\alpha \arrow[r, "\bE"] \arrow[d, "\xi_{\CG_\alpha}"]\arrow[rd, "\xi_{\CQ \boxtimes \CG_\alpha}"]&
\BD^X_\alpha\llbracket \zeta\rrbracket \arrow[rd, "\xi_{\CG_\alpha}"]&
\,
\\
H^\bullet(\CM_\alpha)\arrow[r, "\rho^\ast"]&
H^\bullet(B\BG_m\times \CM_\alpha)\arrow[r, "\sim"]&
H^\bullet(\CM_\alpha)\llbracket \zeta\rrbracket
\end{tikzcd}
\end{center}
The triangle on the left commutes because $\rho^*(\CG_\alpha) = \CQ\boxtimes \CG_\alpha$. The parallelogram on the right commutes by Lemma \ref{lem: changeuniversalsheaf}. It follows that, under the identification $\BD^X_\alpha\cong H^\bullet(\CM_\alpha)$, $\bE=\rho^\ast$ and thus $\bR_{-1}=(t\backslash-)\circ \rho^\ast$ where the $(t\backslash -)$ stands for the slant product with the generator $t\in H_2(B\BG_m)$ dual to $\zeta=c_1(\CQ)\in H^2(B\BG_m)$. This is clearly the dual to Joyce's translation operator defined by $T=\rho_\ast\circ (t\boxtimes -).$

To show that Joyce's translation operator agrees with the one constructed in the lattice vertex algebra it is enough to check equations \eqref{Eq: translation}. This is straightforward with the identification of $T$ with the dual of $\bR_{-1}$. 
\end{proof}

Before we reproduce the proof of Theorem \ref{thm: gross}, we introduce some notation. From now on we will omit the isomorphism $\xi_{\CG_\alpha}$ and simply write $\ch_i(\gamma)\in H^\bullet(\CM_\alpha)$. We also set $\ch_0(\gamma)$ to be $\int_X \gamma\cdot \ch(\alpha)\in H^0(\CM_\alpha)$. We will use the notation $\ch_i^\CV(\gamma)=\ch_i(\gamma)\otimes 1$ and $\ch_i^\CF(\gamma)=1\otimes \ch_i(\gamma)$ for the generators of $\BD^{X, {\pa}}_{\alpha^{\pa}}=\BD^X_{\alpha_1}\otimes \BD^X_{\alpha_2}$, as we did in Section \ref{sec: virasoropairs}. We also use the same notation for the images in $H^\bullet(\CP_{\alpha^{\pa}})$. Given $\gamma\in H^\bullet(X)$ we let $\gamma^\CV=(\gamma, 0), \gamma^{\CF}=(0, \gamma)\in H^\bullet(X)^{\oplus 2}$. Given $\gamma^{\pa}\in H^\bullet(X)^{\oplus 2}$ we introduce the symbol $\ch_i(\gamma^{\pa})\in \BD^{X, {\pa}}_{\alpha^{\pa}}$ so that $\ch_i(\gamma^{\CV})=\ch_i^{\CV}(\gamma)$, $\ch_i(\gamma^{\CF})=\ch_i^{\CF}(\gamma)$. Given $\gamma^{\pa}\in H^\bullet(X)^{\oplus 2}$ and $w^{\pa}\in K^\bullet(X)^{\oplus 2}$ we define the pairing
\[\langle \gamma^{\pa}, w^{\pa} \rangle=\int_X \gamma_1\cdot \ch(w_1)+\int_X \gamma_2\cdot \ch(w_2).\]
If $\{w^{\pa}\}$ is a basis of $K^\bullet(X)^{\oplus 2}$ we then have the analogue of \eqref{eq: capdescendents} for pairs:
\begin{equation}
\label{eq: capdescendentspairs}
\ch_{k}(\gamma^{\pa})\cap(-)=\frac{1}{(k-1)!}\sum_{w^{\pa}}\langle \gamma^{\pa}, w^{\pa}\rangle\frac{\partial}{\partial w^{\pa}_{-k}}\,.
\end{equation}

\begin{proof}[Proof of Theorem \ref{thm: gross}]
We will only give the proof of the statement for pairs, since the sheaf version is easier and a direct consequence. Since we have already identified the underlying vector spaces, by Proposition \ref{Cor: calpha} it is enough to check conditions (i)-(iii) for the vertex algebra defined by Joyce. The vacuum condition (i) is immediate. Before we compute the fields necessary in (ii), (iii) we will obtain a formula for the Chern classes of $\Theta^{\pa}$. Fix a basis $\{\gamma\}\subseteq H^\bullet(X)$ and let $\{ \overline{\gamma}\}\subseteq H^\bullet(X)$ be the dual basis, such that $\int_X \gamma_1\cdot\overline{\gamma_2}=\delta_{\gamma_1,\gamma_2}$. Then the class of the diagonal in $X\times X$ is
\[\Delta_\ast(1)=\sum_{\gamma}\overline\gamma\otimes \gamma\in H^\bullet(X\times X)\,,\]
where the sum is over the basis we fixed. We then have
\begin{equation}\label{eq: chkunnethdescendents}\ch(\CF)  = (\pi_{1,2})_\ast\big((\id_{\CP_X}\times \Delta)_* (\ch(\CF)\big) = \sum_{i\geq 0}\ch^{\CF}_i(\overline{\gamma})\boxtimes\gamma 
\end{equation}
and an analogous formula for $\CV$. The term $\ch_{i}^\CF(\overline{\gamma})\otimes \gamma$ contributes to $\ch_k(\CF)$ for \[k=\frac{2i-|\overline \gamma|+\deg \gamma}{2}=i+\left\lfloor\frac{\deg(\gamma)}{2}\right \rfloor.\]
Thus,
\[\ch(\CF^\vee)=\sum_{\gamma}\sum_{i\geq 0} (-1)^{i+\left\lfloor\frac{\deg \gamma}{2}\right \rfloor}\ch_{i}^\CF(\overline{\gamma})\otimes \gamma\,.\]
 Using Grothendieck-Riemann-Roch as in \cite[Proposition 5.2]{Gr19} and being careful with signs obtained from commuting odd variables we find that
\begin{align}\nonumber 
\ch(\Ext^{\pa}) &=\sum_{\gamma, \delta}\sum_{i, j\geq 0}(-1)^{i+|\gamma|}\chi(\gamma, \delta)\ch_i^{\CF-\CV}(\overline \gamma)\boxtimes\ch_j^{\CF}(\overline \delta)\\
&=\sum_{\gamma^{\pa}, \delta^{\pa}}\sum_{i, j\geq 0}(-1)^{i+|\gamma^{\pa}|}\chi^{\pa}(\gamma^{\pa}, \delta^{\pa})\ch_i(\overline \gamma^{\pa})\boxtimes\ch_j(\overline \delta^{\pa})\in H^\bullet(\CP_X\times \CP_X)\,.\label{eq: chextpa}
\end{align}
In the first line, we sum over $\gamma, \delta$ in our prescribed basis. In the second line, we sum over the basis 
\[\{\gamma^{\pa}\}=\{\gamma^\CV\}\cup \{\gamma^\CF\}\subseteq H^\bullet(X)^{\oplus 2}\]
and use the definition of $\chi^{\pa}$.
The pairings $\chi, \chi^{\pa}$ coincide with the ones for $K$-theory previously described under the Chern character isomorphism, i.e.,
\[\chi(\gamma, \delta)=\int_X(-1)^{\lfloor\frac{\deg \gamma}{2}\rfloor}\gamma\cdot \delta \cdot \td(X).\]
Note that all the non-zero terms in \eqref{eq: chextpa} have $|\gamma|=|\delta|=|\overline \gamma|=|\overline \delta|$, so we replace the occurrence of any of those parities by $|\gamma|$.
From \eqref{eq: chextpa} (and being again careful with the signs introduced by taking the dual and by $(\sigma^{\pa})^\ast$) we get the Chern character of the symmetrization 
\begin{equation}
\label{eq: chthetapa} \ch(\Theta^{\pa})=\sum_{\gamma^{\pa}, \delta^{\pa}}\sum_{i, j\geq 0}(-1)^{j}\chi^{\pa}_\sym(\gamma^{\pa}, \delta^{\pa})\ch_i(\overline \gamma^{\pa})\boxtimes\ch_j(\overline \delta^{\pa})\,.
\end{equation}
By Newton's identities we have
\begin{align}\label{eq: chernclasstheta}c_{z^{-1}}(\Theta^{\pa})&=\exp\Big[\sum_{k\geq 0}(-1)^{k}k!\ch_{k+1}(\Theta^{\pa})z^{-k-1}\Big]\\ \nonumber
&=\exp\Big[\sum_{\gamma^{\pa}, \delta^{\pa}}\sum_{i+j\geq |\gamma^{\pa}|+1}(-1)^{i-|\gamma^{\pa}|-1}(i+j-|\gamma^{\pa}|-1)!z^{-i-j+|\gamma^{\pa}|}\\
&\qquad \qquad\qquad \chi^{\pa}_\sym(\gamma^{\pa}, \delta^{\pa})\ch_i(\overline \gamma^{\pa})\boxtimes\ch_j(\overline \delta^{\pa})\Big]\, .\nonumber
\end{align}
It suffices to consider the expansion of this exponential up to the linear terms in
$$Y(v^{\pa}_{-1},z)= \Sigma_\ast^{\pa}\big[e^{zT}\boxtimes 1\big(c_{z^{-1}}(\Theta^{\pa})\cap (v^{\pa}_{-1}\boxtimes -)\big)\big]\,,
$$
because quadratic terms and beyond annihilate $(v^{\pa}_{-1}\boxtimes -)$ for degree reasons. The constant term of the exponential \eqref{eq: chernclasstheta}, namely $1$, determines the creation part of the field $Y(v^{\pa}_{-1},z)$ as
\begin{equation}\label{eq: creation part}
    \Sigma_\ast^{\pa}\big[e^{zT} v_{-1}^{\pa}\boxtimes -\big]=\sum_{k\geq 0}v^{\pa}_{(-k-1)}z^k\,.
\end{equation}
This uses the fact that
\[\frac{T^k}{k!} v_{-1}^{\pa}=v_{-k-1}^{\pa}\,, \quad \Sigma_\ast^{\pa}\big(v_{-k-1}^{\pa}\boxtimes -\big)=v_{(-k-1)}^{\pa}\]
for $k\geq 0$, see \cite[Lemmas 5.3, 5.5]{Gr19} (the first one also follows from Lemma \ref{lem: translationdualr-1}). On the other hand, it suffices to consider the linear terms of \eqref{eq: chernclasstheta} with $i=1$ and $j-|\gamma^{\pa}|=k\geq 0$ for degree reasons. They determine the annihilation part of the field $Y(v^{\pa}_{-1},z)$ as 
\begin{align}\label{eq: annihilation part}
    \Sigma^{\pa}_*\bigg[\sum_{\gamma^{\pa}, \delta^{\pa}}\nonumber
    &\sum_{k\geq 0}(-1)^{|\gamma^{\pa}|}k!z^{-k-1}
    \chi^{\pa}_\sym(\gamma^{\pa}, \delta^{\pa})\ch_1(\overline \gamma^{\pa})\boxtimes\ch_{k+|\gamma^{\pa}|}(\overline \delta^{\pa})\cap (v^{\pa}_{-1}\boxtimes-)\bigg]\nonumber\\
    &=\sum_{\gamma^{\pa}, \delta^{\pa}}\sum_{k\geq 0}k!z^{-k-1}
    \chi^{\pa}_\sym(\gamma^{\pa}, \delta^{\pa})\langle\overline{\gamma}^{\pa},v^{\pa}\rangle\,\ch_{k+|\gamma^{\pa}|}(\overline \delta^{\pa})\cap \nonumber\\
    &=\sum_{\delta^{\pa}}\sum_{k\geq 0}k!z^{-k-1}
    (-1)^{|\delta^{\pa}|}\chi^{\pa}_\sym(v^{\pa}, \delta^{\pa})\,\ch_{k+|\delta^{\pa}|}(\overline \delta^{\pa})\cap\,.
\end{align}
The sign disappears in the second line due to the interaction between the cap product and the tensor product \cite[12.17]{dold} and reappears in the last line by using
\[(-1)^{|\delta^{\pa}|}\sum_{\gamma^{\pa}}\langle\overline{\gamma}^{\pa},v^{\pa}\rangle\chi^{\pa}_\sym(\gamma^{\pa}, \delta^{\pa})=\sum_{\gamma^{\pa}}\langle v^{\pa}, \overline{\gamma}^{\pa}\rangle\chi^{\pa}_\sym(\gamma^{\pa}, \delta^{\pa})=\chi^{\pa}_\sym(v^{\pa}, \delta^{\pa})\,.\]
We can further simplify this expression by replacing descendent actions with derivatives. Let $\{w^{\pa}\}\subseteq K^\bullet(X)^{\oplus 2}=\Lambda^{\pa}$ be the basis obtained by applying the inverse of the Chern character isomorphism to $\{\delta^{\pa}\}$. By \eqref{eq: capdescendentspairs},
\[\ch_j(\overline \delta^{\pa})\cap -=\frac{(-1)^{|\delta^{\pa}|}}{(j-1)!}\frac{\partial}{\partial w^{\pa}_{-j}}\, ,\quad \textup{for }j\geq 1\,. \]
The constant descendent action by $\ch_0(\overline{\delta}^{\pa})$ is treated separately as a multiplication by $\langle \overline{\delta}^{\pa},\alpha^{\pa}\rangle$ on $H_\bullet(\CP_{\alpha^{\pa}})$. Therefore, we have 
\begin{align*}
    \eqref{eq: annihilation part}
    &=\sum_{k\geq 1-|v^{\pa}|}\sum_{w^{\pa}}\chi^{\pa}_{\sym}(v^{\pa},w^{\pa})k^{1-|v^{\pa}|}\frac{\partial}{\partial w^{\pa}_{-k-|v^{\pa}|}} z^{-k-1}+\chi^{\pa}(v^{\pa},\alpha^{\pa})z^{-1}
\end{align*}
on $H_\bullet(\CP_{\alpha^{\pa}})$. Combining with the creation part \eqref{eq: creation part}, this matches exactly with \eqref{Eq: explicitfield}, so we are done with (ii).

For (iii) we are left with computing $Y(e^{\alpha^{\pa}},z)e^{\beta^{\pa}}$:

\begin{align*} Y(e^{\alpha^{\pa}},z)e^{\beta^{\pa}}
&= (-1)^{\chi^{\pa}(\alpha^{\pa},\beta^{\pa})}z^{\chi_\sym^{\pa}(\alpha^{\pa},\beta^{\pa})}\Sigma_*\big[e^{zT}\boxtimes \id (c_{z^{-1}}(\Theta^{\pa})\cap (e^{\alpha^{\pa}}\boxtimes e^{\beta^{\pa}}))\big]\\
&=(-1)^{\chi^{\pa}(\alpha^{\pa},\beta^{\pa})}z^{\chi_\sym^{\pa}(\alpha^{\pa},\beta^{\pa})}\Sigma_*\big[e^{zT}(e^{\alpha^{\pa}})\boxtimes  e^{\beta^{\pa}}\big]\,,
\end{align*}
where we used the fact that $\ch_i(\overline \gamma^{\pa})\cap -$ and $\ch_j(\overline \delta^{\pa})\cap -$ annihilate $e^{\alpha^{\pa}}, e^{\beta^{\pa}}$ for any $i,j>0$. The $z^{\chi_\sym^{\pa}(\alpha^{\pa},\beta^{\pa})}$ coefficient is simply
\[(-1)^{\chi^{\pa}(\alpha^{\pa},\beta^{\pa})}\Sigma_*(e^{\alpha^{\pa}}\boxtimes e^{\beta^{\pa}})=(-1)^{\chi^{\pa}(\alpha^{\pa},\beta^{\pa})}e^{\alpha^{\pa}+\beta^{\pa}}\,,\]
as required in (iii), thus finishing the proof.\qedhere
\end{proof}

Recall from Section \ref{Sec: physical} that associated to a vertex algebra $V_\bullet$ we have a Lie algebra $\widecheck V_\bullet=V_{\bullet+2}/TV_\bullet$. This quotient of $V_{\bullet}$ also has a geometric interpretation observed by \cite{Jo17} and related to $\BD^X_{\inv, \alpha}$ from Definition \ref{def: invariantdescendents}. We begin by recalling that the $B\BG_m$--action $\rho$ on $\CM_X$ leads to a \textit{rigidification} (see \cite[Appendix]{AOV} and \cite[Proposition 2.25 (b)]{Jo17}) which quotients out $\BG_m\cdot \id$ from the stabilizer of each object in the moduli stack. We denote it by 
$$
\CM_\alpha^\rig=\CM_\alpha \mkern-7mu\fatslash B\BG_m\,.
$$
\begin{lemma}
\label{Lem: DXinv}
Let $X$ be a variety in class D. Let $\ch(\alpha)\neq 0$ and $\pi^{\rig}_{\alpha}: \CM_\alpha\to \CM^{\rig}_\alpha$ be the projection, then $(\pi^{\rig}_\alpha)^\ast: H^\bullet(\CM^{\rig}_\alpha)\to H^\bullet(\CM_{\alpha})$ is injective and the isomorphism of Lemma \ref{Lem: isomorphism} induces
$$
\BD^X_{\inv,\alpha}\cong(\pi^{\rig}_{\alpha})^\ast\big(H^\bullet(\CM^{\rig}_{\alpha})\big)\cong H^\bullet(\CM^{\rig}_{\alpha})\,.
$$
Equivalently, we have the isomorphism
$$
\widecheck{H}_{\bullet}(\CM^{\rig}_{\alpha}) \coloneqq H_{\bullet-2\chi(\alpha,\alpha)+2}(\CM^{\rig}_\alpha)\cong \widecheck V_{\alpha, \bullet}\,.
$$
\end{lemma}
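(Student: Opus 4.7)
The strategy is to exploit the fact that $\pi^{\rig}_\alpha\colon \CM_\alpha \to \CM^{\rig}_\alpha$ is a $B\BG_m$-gerbe and to show that, when $\ch(\alpha)\neq 0$, it is cohomologically trivial in a Leray--Hirsch sense. The vanishing condition $\ch(\alpha)\neq 0$ guarantees a class $\gamma\in H^\bullet(X)$ with $r\coloneqq \int_X\gamma\cdot\ch(\alpha)\neq 0$, and I will use $\tau\coloneqq r^{-1}\chh_1(\gamma)\in H^2(\CM_\alpha)$ as the trivializing class. Indeed, by Lemma \ref{lem: translationdualr-1} we have $\bR_{-1}=(t\backslash -)\circ\rho^*$, and from $\rho^*(\CG)=\CQ\boxtimes \CG$ one gets $\rho^*\tau = \tau + c_1(\CQ)$ on $B\BG_m\times\CM_\alpha$, so $\bR_{-1}(\tau)=1$. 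In particular $\tau$ restricts to the generator of $H^2(B\BG_m)=H^2(\text{fiber})$, which by Leray--Hirsch forces the fibration $B\BG_m\to \CM_\alpha^{\,\text{t}}\to \CM_\alpha^{\rig,\,\text{t}}$ to satisfy
\[
H^\bullet(\CM_\alpha)\cong H^\bullet(\CM^{\rig}_\alpha)[\tau]
\]
as a free algebra over $(\pi^{\rig}_\alpha)^*H^\bullet(\CM^{\rig}_\alpha)$. This already gives injectivity of $(\pi^{\rig}_\alpha)^*$.

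Next, I will identify $\bR_{-1}$ with $\partial/\partial\tau$ under this decomposition. Since the $B\BG_m$-action fixes elements pulled back from $\CM^{\rig}_\alpha$, one has $\rho^*(\pi^{\rig}_\alpha)^*\eta = 1\boxtimes (\pi^{\rig}_\alpha)^*\eta$ for any $\eta\in H^\bullet(\CM^{\rig}_\alpha)$, so $\bR_{-1}$ vanishes on $(\pi^{\rig}_\alpha)^*H^\bullet(\CM^{\rig}_\alpha)$. Combined with $\bR_{-1}$ being a derivation (Definition \ref{def: invariantdescendents}) and $\bR_{-1}(\tau)=1$, this yields $\bR_{-1}=\partial_\tau$ on the polynomial algebra above, and hence
\[
\ker(\bR_{-1})=(\pi^{\rig}_\alpha)^*H^\bullet(\CM^{\rig}_\alpha).
\]
By definition of $\BD^X_{\inv,\alpha}$ this kernel is exactly $\BD^X_{\inv,\alpha}$ once we invoke the isomorphism of Lemma~\ref{Lem: isomorphism}, establishing the first chain of isomorphisms in the statement.

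The equivalent statement on homology is obtained by dualizing. Under the graded pairing $H^\bullet(\CM_\alpha)\times H_\bullet(\CM_\alpha)\to\BC$, Joyce's translation $T$ is adjoint to $\bR_{-1}$ by Lemma~\ref{lem: translationdualr-1}. The Leray--Hirsch decomposition of Step 1 dualizes to $H_\bullet(\CM_\alpha)\cong H_\bullet(\CM^{\rig}_\alpha)\otimes\BC[t]$ with $T$ acting by multiplication by $t$ (of degree $+2$); the quotient by $T$ then recovers $H_\bullet(\CM^{\rig}_\alpha)$, and tracking the $-2\chi(\alpha,\alpha)+2$ degree shift in $\widecheck V_{\alpha,\bullet}=V_{\alpha,\bullet+2}/TV_{\alpha,\bullet}$ matches exactly the shift in the definition of $\widecheck H_\bullet(\CM^{\rig}_\alpha)$.

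\textbf{Main obstacle.} The one non-formal point is rigorously running Leray--Hirsch in the higher-stacky/infinite-dimensional setting, i.e.\ checking that the Serre spectral sequence for the $B\BG_m$-gerbe on $\CM_\alpha^{\,\text{t}}$ degenerates at $E_2$ once a class restricting to the fiber generator is exhibited. Given the explicit construction of $\tau$ and the fact that we work in characteristic zero with $\BC$-coefficients, this reduces to the standard argument, and the identification of $T$ with multiplication by $t$ in the dual decomposition follows from naturality of the slant product against $t\in H_2(B\BG_m)$ combined with the definition $T=\rho_*(t\boxtimes -)$.
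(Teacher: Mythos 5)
Your argument is correct, but it takes a more self-contained route than the paper. The paper's own proof outsources the key topological input: it quotes Joyce's result that $H_\bullet(\CM_\alpha^\rig)\cong H_\bullet(\CM_\alpha)/T(H_{\bullet-2}(\CM_\alpha))$ for $\ch(\alpha)\neq 0$, dualizes to get injectivity of $(\pi^\rig_\alpha)^\ast$ together with the description of its image as the classes $D$ with $\rho^\ast D=1\boxtimes D$, and then identifies that image with $\BD^X_{\inv,\alpha}$ using $\rho^\ast=e^{\zeta\bR_{-1}}$ from Lemma \ref{lem: translationdualr-1}. You instead reprove the Joyce input directly: the class $\tau=r^{-1}\chh_1(\gamma)$ with $\bR_{-1}\tau=1$ trivializes the $\BG_m$-gerbe cohomologically, Leray--Hirsch gives $H^\bullet(\CM_\alpha)\cong H^\bullet(\CM_\alpha^\rig)[\tau]$, the identification $\bR_{-1}=\partial_\tau$ yields $\ker\bR_{-1}=(\pi^\rig_\alpha)^\ast H^\bullet(\CM_\alpha^\rig)$, and dualizing gives the homology statement. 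This buys transparency (one sees exactly where $\ch(\alpha)\neq 0$ enters: it is what produces the fiberwise generator $\tau$, the same mechanism behind Joyce's proof), at the cost of having to justify the one non-formal point you flag, namely that Blanc's realization of the rigidification map is a fibration with fiber $B\BG_m\simeq\BC\BP^\infty$ to which Leray--Hirsch applies; that is precisely the content the paper delegates to \cite{Jo17,Jo21}, so you should either cite it there or prove it. Two small touch-ups: choose $\gamma$ of Hodge type $(p,p)$ complementary to a nonzero component of $\ch(\alpha)$ (possible since $\ch(\alpha)\in\bigoplus_p H^{p,p}(X)$), or take the degree-$2$ part of $\chh_1(\gamma)$, so that $\tau$ is genuinely a class in $H^2(\CM_\alpha)$; and note that the dual of $\partial_\tau$ is multiplication by $t$ only up to factorial rescalings of the dual basis, which is harmless since the image and cokernel, hence the identification of the quotient by $T$ with $H_\bullet(\CM_\alpha^\rig)$ and the degree shift, are unaffected.
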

\begin{proof}
A big part of this proof is already due to Joyce \cite[Proposition 3.24]{Jo17}, \cite[Theorem 4.8, Remark 4.10]{Jo21}. He proves that 
$$
H_\bullet(\CM^{\rig}_{\alpha}) =H_\bullet(\CM_\alpha)/T(H_{\bullet-2}(\CM_\alpha))\,,
$$
when $\ch(\alpha)\neq 0$. This is the last statement of the lemma. Dually, we have injectivity of the pull-back and
\[(\pi^{\rig}_{\alpha})^\ast\big(H^\bullet(\CM^{\rig}_{\alpha})\big) =\{D\in H^\bullet(\CM_{\alpha}) \colon \rho^*D = 1\boxtimes D \textup{ in }H^\bullet(B \BG_m\times \CM_\alpha) \}.\] The isomorphism with $\BD^X_{\inv,\alpha}$ finally follows from the the fact that $\rho^\ast=e^{\zeta \bR_{-1}}$ under the identification $H^\bullet(\CM_\alpha)\cong \BD^X_\alpha$ as we showed in the proof of Lemma \ref{lem: translationdualr-1}. 
\end{proof}

\begin{example}\label{ex: invdescendents}
We continue with the Example \ref{Ex: DXinvdescendents} proving it differently for the full stack.  On $\CM_{\alpha}$ and $\CM^{\rig}_{\alpha}$, virtual tangent bundles $T^{\vir}\CM_{\alpha}$ and $T^{\vir}\CM_{\alpha}^{\rig}$ are defined as the topological K-theory classes of the duals of the natural obstruction theory complexes\footnote{In \cite{TV07}, the stack $\CM_{\alpha}$ is constructed by truncating its derived refinement. Because derived refinements naturally induce obstruction theories (see \cite[Proposition 1.2]{STV}), we have a natural obstruction theory on $\CM_{\alpha}$ given by the formula that follows as shown in \cite[Corollary 3.29]{TV07}. Because these derived refinements can be rigidified in the same way as the original stacks, there is also a natural choice of an obstruction theory on $\CM^{\rig}_{\alpha}$.}.  Before rigidifying, the virtual tangent bundle is given by $$T^\vir\CM_{\alpha}=-\RHom_{\CM_\alpha}(\CG,\CG)\,,$$
and its relation to  $T^\vir\CM^{\rig}_{\alpha}$ in $K$-theory is
$$
T^\vir\CM_\alpha = (\pi^{\rig}_\alpha)^*(T^\vir\CM^{\rig}_\alpha) - \CO_{\CM_\alpha}\,.
$$
  We see from Lemma \ref{Lem: DXinv} that $\ch(T^{\vir})\in \BD^X_{\inv, \alpha}$. This clearly holds for any $K$-theory class pulled back from $\CM^{\rig}_{\alpha}$.
\end{example}

\subsection{Virasoro operators from Kac's conformal element}
\label{sec: virasorocomparison}

Construction of conformal element depends on Assumption \ref{Ass: vanishing} which we assume throughout this section. Recall that we need to make a choice of a maximal isotropic decomposition of $K^1(X)^{\oplus 2}$ with respect to a symmetric non-degenerate pairing $\chi^{\pa}_\sym$ (recall Lemma~\ref{Lem: nondegenerate}) to define Kac's conformal element. Hodge decomposition is a source of the decomposition that we use. Define the subspaces $K^{\bullet,\bullet+1}(X)$ and $K^{\bullet+1,\bullet}(X)$ of $K^1(X)$ such that via the Chern character isomorphism we have 
$$K^{\bullet,\bullet+1}(X)\xrightarrow{\sim}\bigoplus_{p}H^{p,p+1}(X),\quad K^{\bullet+1,\bullet}(X)\xrightarrow{\sim}\bigoplus_{p}H^{p+1,p}(X)\,.
$$
We consider a decomposition of $K^1(X)^{\oplus 2}=I\oplus \hat{I}$ given by 
\begin{equation}\label{Hodge isotropic decomposition}
    I\coloneqq K^{\bullet,\bullet+1}(X)^{\oplus 2},\quad \hat{I}\coloneqq K^{\bullet+1,\bullet}(X)^{\oplus 2}\,,
\end{equation}
that is maximally isotropic due to Hodge degree consideration. This defines a conformal element $\omega$ inside $V_\bullet^{\pa}=\BC[\Lambda^{\pa}_{\sst}]\otimes \BD_{\Lambda^{\pa}}$ hence the vertex Virasoro operators $L_n^{\pa}$ for all $n\in\BZ$. On the other hand, we defined in Section \ref{sec: virasoropairs} descendent Virasoro operators $\bL_n^{\pa}$ for $n\geq -1$ acting on the formal pair descendent algebra $\BD^{X,\pa}$. 

In this section, we prove duality between the two Virasoro operators $L_n^{\pa}$ and $\bL_n^{\pa}$ for $n\geq -1$. To set up the stage where we state the duality, let $\alpha^{\pa}\in \Lambda_{\sst}^{\pa}$ and consider the realization homomorphism 
$$p_{\alpha^{\pa}}:\BD^{X,{\pa}}\rightarrow \BD^{X,\pa}_{\alpha^{\pa}}\,,
$$
as in Definition \ref{def: topdescendentalgebra}. By Assumption \ref{Ass: vanishing}, this realization homomorphism is surjective. Furthermore, descendent Virasoro operators $\bL_k^{\pa}$ factor through this quotient as follows from noting that $\ker(p_{\alpha})$ is only generated by symbols of the form $\ch^H_0(\gamma)$ giving us $$\bR_k\big(\!\ker(p_{\alpha})\big)=0 \,.$$
We use the same notation for the resulting operators on $\BD^{X,\pa}_{\alpha^{\pa}}$. 
\begin{theorem}\label{thm: duality}
For any $\alpha^{\pa}\in \Lambda_{\sst}^{\pa}$ and $n\geq -1$, Virasoro operators $L_n^{\pa}$ and $\bL_n^{\pa}$ are dual to each other with respect to the perfect pairing 
$$\BD^{X,\pa}_{\alpha^{\pa}}\otimes e^{\alpha^{\pa}}\, \BD_{\Lambda^{\pa}}\rightarrow \BC\,.
$$
\end{theorem}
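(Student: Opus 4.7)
\medskip

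\noindent\emph{Proof proposal.} The plan is to compute the adjoint $(L_n^{\pa})^\dagger$ with respect to the perfect pairing $\BD^{X,\pa}_{\alpha^{\pa}} \otimes \BD_{\Lambda^{\pa}} \to \BC$ and to verify directly that it equals $\bL_n^{\pa}$. Following the decomposition of the lattice vertex algebra into bosonic and fermionic parts, I will split $L_n^{\pa}$ into its bosonic and fermionic contributions, and match them separately against the derivation part $\bR_n^{\pa}$ and the multiplication part $\bT_n^{\pa}$ of $\bL_n^{\pa}$. The starting point is the explicit mode formula \eqref{eq: operatorsomegashift} combined with the shifted bracket \eqref{eq: shifted bracket}.

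First I would compute the adjoint of each individual mode $v^\omega_{(k)}$. Under the pairing, the creation modes ($k<0$), which act by multiplication on $\BD_{\Lambda^{\pa}}$, are dual to cap-product derivations of the form $\chh_{-k+?}(\bar v)\cap -$ on $\BD^{X,\pa}_{\alpha^{\pa}}$, with the exact shift dictated by whether $v$ lies in the bosonic part, in $I$, or in $\hat I$. Symmetrically, the annihilation modes ($k\geq 1-|v|$), which act as derivations on $\BD_{\Lambda^{\pa}}$ by the explicit formula \eqref{Eq: explicitfield}, are dual to multiplication by the appropriate holomorphic descendent $\chh_?(\bar v)$; the factor $k^{(1-|v|)}$ in \eqref{Eq: explicitfield} matches the factorial $(k-1)!$ in the normalization \eqref{Eq: capproductchk} of the pairing, after one accounts for the normal-ordering sign.

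For $n\geq -1$ the sum $i+j=n$ admits no term with both $i,j\leq -2$, so after normal ordering every summand in $\frac12\sum :\tilde v^\omega_{(i)} v^\omega_{(j)}:$ is either a creation--annihilation product (one index $<0$, one $\geq 0$) or a pure annihilation--annihilation product. Taking adjoints, the first type produces a derivation on $\BD^{X,\pa}$ and, upon combining the sum over $B=B_{\ovZ}\sqcup B_I\sqcup B_{\hat I}$ and applying \eqref{Eq: translationu}, yields exactly the derivation $\bR_n^{\pa}\chh_i(\gamma)=\prod_{l=0}^n(i+l)\,\chh_{i+n}(\gamma)$. The second type becomes multiplication by a quadratic holomorphic descendent; the sum $\sum_v \tilde v\otimes v$ with respect to the supersymmetric pairing $Q^\omega$ is precisely the Künneth decomposition of $\Delta_\ast\td(X)$ dressed with the Hodge signs, so this sum reproduces $\bT_n^{\pa}=\sum_{i+j=n}(-1)^{\dim X-p^L}i!j!\,\ch_i^{H,\CF-\CV}\ch_j^{H,\CF}(\td(X))$. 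The factorials $i!\,j!$ appear exactly from the product of the normalizations of the two modes involved.

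The main obstacle is bookkeeping. The isotropic decomposition $K^1(X)=K^{\bullet,\bullet+1}\oplus K^{\bullet+1,\bullet}$ from Assumption \ref{Ass: vanishing} interacts with the shift $\chh_i(\gamma)=\ch_{i+\lfloor(p-q)/2\rfloor}(\gamma)$ in a delicate way: the shifted creators $v^\omega_{-1}=v_{-1}$ on $I$ and $v^\omega_{-1}=v_{-2}$ on $\hat I$ are in precise duality with the holomorphic grading on descendents, and the Hodge-dependent signs $(-1)^{\dim X-p^L}$ in $\bT_n^{\pa}$ emerge from combining the sign in the fermionic sum $-\sum_{\hat v\in B_{\hat I}}v_{-1}\hat v_{-2}$ used in \eqref{Eq: omega} with the sign in the Chern character dual $v^\vee$. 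Once this dictionary is set up on modes, extension to the full normal-ordered quadratic expression is routine, and comparing the two sides term by term over the Künneth basis completes the proof. The fact that the $\CF-\CV$ combination in $\bT_n^{\pa}$ is automatic follows from $\chi^{\pa}_\sym$ being symmetric on $\Lambda^{\pa}$ and the antisymmetric structure of $\Ext^{\pa}$ as recorded in \eqref{Eq: chipa}.
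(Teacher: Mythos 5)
Your proposal is correct and follows essentially the same route as the paper: write $L_n^{\pa}$ via the shifted modes \eqref{eq: operatorsomegashift}, split it (for $n\geq -1$) into the mixed creation--annihilation part and the pure annihilation part, identify the latter with multiplication by $\bT_n^{\pa}$ through the K\"unneth decomposition of $\Delta_\ast\td(X)$ dressed with the Hodge signs and the $\CF-\CV$ structure coming from $\chi^{\pa}$, and match the former with the derivation $\bR_n^{\pa}$ using the shifted bracket \eqref{eq: shifted bracket}. The only cosmetic difference is that the paper checks the derivation part by observing that both operators annihilate $1$ and comparing commutators with multiplication by $\chh_k(\gamma^{\pa})$, rather than computing adjoints mode by mode as you suggest (and note the exclusion you need is of terms with both indices negative, which indeed cannot occur for $n\geq -1$, not merely both $\leq -2$).
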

\begin{proof}
On $K^\bullet(X)^{\oplus 2}$ we are given a symmetric non-degenerate pairing
\begin{align*}
    \chi^{\pa}_{\sym}\big(v^{\pa},w^{\pa}\big)&=\chi^{\pa}(v^{\pa},w^{\pa})+\chi^{\pa}(w^{\pa},v^{\pa})\\
    &=\chi(v_2-v_1,w_2)+\chi(w_2-w_1,v_2)\,.
\end{align*}
The maximal isotropic decomposition \eqref{Hodge isotropic decomposition} determines the conformal element $\omega\in V^{\pa}_\bullet$ and a new supersymmetric non-degenerate pairing (see Section \ref{sec: Kacconformal}) $Q^\omega=\chi^{\HH,\pa}_{\ssym}$. We use the superscript $\HH$ instead of $\omega$ to indicate the relevance to the holomorphic degree in the Hodge decomposition. This new pairing can be written as
\begin{align*}
    \chi^{\HH,\pa}_{\ssym}\big(v^{\pa},w^{\pa}\big)&=\chi^{\HH,\pa}(v^{\pa},w^{\pa})+(-1)^{|v^{\pa}||w^{\pa}|}\chi^{\HH,\pa}(w^{\pa},v^{\pa})\\
    &=\chi^\HH(v_2-v_1,w_2)+(-1)^{|v^{\pa}||w^{\pa}|}\chi^\HH(w_2-w_1,v_2)\,,
\end{align*}
where $\chi^\HH$ is a pairing on $K^\bullet(X)$ defined as
$$\chi^\HH(v,w)\coloneqq (-1)^p\int \ch(v)\ch(w)\td(X)\ \ \textnormal{if}\ \ch(v)\in H^{p,\bullet}(X)\,.
$$
The supersymmetric pairing $\chi^{\HH,\pa}_{\ssym}$ allows a simple formulation of the conformally shifted field:
\begin{equation}\label{eq: shifted field}
    Y(v_{-1}^{\HH,\pa},z)
    =\sum_{k\geq 0} v^{\HH,\pa}_{(-k-1)}z^k+\sum_{k\geq 0}\sum_{\delta^{\pa}}k!z^{-k-1}\chi^{\HH,\pa}_{\ssym}(\delta^{\pa},v^{\pa})\,\ch_{k}^\HH(\overline{\delta}^{\pa})\cap\,.
\end{equation}
This formula is proven by the non-shifted version for $Y(v_{-1}^{\pa},z)$ as in \eqref{eq: creation part}, together with a case division analysis as in the proof of \eqref{eq: shifted bracket}. 

Recall that the vertex Virasoro operator are written as 
$$L_n^{\pa}=\frac{1}{2}\sum_{\substack{ i+j=n \\ v^{\pa}\in B^{\pa}}}:\tilde{v}^{\HH,\pa}_{(i)} v^{\HH,\pa}_{(j)}:\quad n\in\BZ\,.
$$
When $n\geq -1$, we write $L_n^{\pa}=R_n^{\pa}+T_n^{\pa}$ where 
$$R_n^{\pa}\coloneqq \frac{1}{2}\sum_{\substack{ i+j=n\\ i<0\textnormal{ or }j<0 \\ v^{\pa}\in B^{\pa}}}:\tilde{v}^{\HH,\pa}_{(i)} v^{\HH,\pa}_{(j)}:\,,\quad
T_n^{\pa}\coloneqq \frac{1}{2}\sum_{\substack{ i+j=n\ \\i,j\geq 0 \\ v\in B^{\pa}}}:\tilde{v}^{\HH,\pa}_{(i)} v^{\HH,\pa}_{(j)}:\,.
$$
From the computation \eqref{eq: shifted field}, we have
\begin{align*}
    T_n^{\pa}
    &=\frac{1}{2}\sum_{\substack{i+j=n\\i,j\geq 0}}i!j!\sum_{v^{\pa}\in B^{\pa}}\sum_{\delta^{\pa}, \gamma^{\pa}}
    \chi^{\HH,\pa}_{\ssym}(\delta^{\pa},\tilde{v}^{\pa})
    \chi^{\HH,\pa}_{\ssym}(\gamma^{\pa},v^{\pa})\,
    \ch_i^\HH(\overline{\delta}^{\pa})\ch_j^\HH(\overline{\gamma}^{\pa})\cap -\\
    &=\frac{1}{2}\sum_{\substack{i+j=n\\i,j\geq 0}}i!j!
    \sum_{\delta^{\pa}, \gamma^{\pa}}
    \Big(\chi^{\HH,\pa}(\gamma^{\pa},\delta^{\pa})+(-1)^{|\gamma^{\pa}||\delta^{\pa}|}\chi^{\HH,\pa}(\delta^{\pa},\gamma^{\pa})\Big)
    \ch_i^\HH(\overline{\delta}^{\pa})\ch_j^\HH(\overline{\gamma}^{\pa})\cap -\\
    &=\sum_{\substack{i+j=n\\i,j\geq 0}}i!j!
    \sum_{\delta^{\pa}, \gamma^{\pa}}
    \chi^{\HH,\pa}(\gamma^{\pa},\delta^{\pa})\,
    \ch_i^\HH(\overline{\delta}^{\pa})\ch_j^\HH(\overline{\gamma}^{\pa})\cap -\\
    &=\sum_{\substack{i+j=n\\i,j\geq 0}}i!j!
   \sum_{\delta,\gamma}
    \chi^{\HH}(\gamma,\delta)\,
    \ch_i^{\HH,\CF}(\overline{\delta})\ch_j^{\HH,\CF-\CV}(\overline{\gamma})\cap -\\
    &=\sum_{\substack{i+j=n\\i,j\geq 0}}i!j!
   \sum_{\delta,\gamma}
   (-1)^{\dim(X)-p(\overline{\gamma})}
    \left(\int \delta \cdot \gamma\cdot \td(X)\right) \ch_i^{\HH,\CF-\CV}(\overline{\gamma})\ch_j^{\HH,\CF}(\overline{\delta})\cap -\\
    &=\sum_{\substack{i+j=n\\i,j\geq 0}}i!j!\sum_t (-1)^{\dim(X)-p_t^L}\ch_i^{\HH,\CF-\CV}(\gamma_t^L)\ch_j^{\HH,\CF}(\gamma_t^R)\cap -\,,
\end{align*}
where in the last equality the summation takes over
$$\Delta_*(\td(X))=\sum_t \gamma_t^L\otimes \gamma_t^R.
$$
This is exactly dual to the multiplication operator $\bT_n^{\pa}$. 

Now we prove that operators $R_n^{\pa}$ and $\bR_n^{\pa}$ are dual to each other. Let $R_n^{{\pa}, \dagger}\colon \BD^{X,\pa}_{\alpha^{\pa}}\to \BD^{X,\pa}_{\alpha^{\pa}}$ be the dual to $R_n^{\pa}$. For $n\geq -1$, both $R_n^{{\pa}, \dagger}$ and $\bR_n^{\pa}$ annihilate $1$ so it is enough to show that their commutators with right multiplication by descendents agree, i.e.,
\[[R_n^{\pa, \dagger}, \cdot\,\chh_k(\gamma^{\pa})]=[\bR_n^{\pa}, \cdot\,\chh_k(\gamma^{\pa})]=\left(\prod_{j=0}^n(k+j)\right)\cdot\ch_{k+n}^\HH(\gamma^{\pa})\,.\]
Dually, this is equivalent to
\begin{equation*}
\left[\ch_k^{\HH}(\gamma^{\pa})\cap\,,\,R_n^{\pa}\right]=\left(\prod_{j=0}^n(k+j)\right)\ch_{k+n}^\HH(\gamma^{\pa})\cap\,.
\end{equation*}

We finish the proof by showing the required commutator relation. Since $\chi^{\HH,\pa}_{\ssym}$ is a perfect pairing, there is a unique $w^{\pa}\in K(X)^{\pa}$ such that 
$$\sum_{\delta^{\pa}}\chi^{\HH,\pa}_{\ssym}(\delta^{\pa}, w^{\pa})\,\overline{\delta}^{\pa}=\gamma^{\pa}\,.
$$
From \eqref{eq: shifted field}, this implies that $w_{(k)}^{\HH,\pa}=k!\ch_k^\HH(\gamma^{\pa})\cap\,$. On the other hand, we have
\begin{align*}
    R_n^{\pa}
    &=\frac{1}{2}\sum_{\substack{ i+j=n\\ i<0\\ v^{\pa}\in B^{\pa}}}\tilde{v}^{\HH,\pa}_{(i)} v^{\HH,\pa}_{(j)}
    +\frac{1}{2}\sum_{\substack{ i+j=n\\ j<0 \\ v^{\pa}\in B^{\pa}}}(-1)^{|v^{\pa}|} v^{\HH,\pa}_{(j)}\tilde{v}^{\HH,\pa}_{(i)}
    =\sum_{\substack{ i+j=n\\ i<0\\ v^{\pa}\in B^{\pa}}}\tilde{v}^{\HH,\pa}_{(i)} v^{\HH,\pa}_{(j)}\,,
\end{align*}
since the dual $\tilde{v}^{\pa}$ is defined by supersymmetric pairing $\chi^{\HH,\pa}_{\ssym}$. Therefore, we obtain 
\begin{align*}
    \left[\ch_k^{\HH}(\gamma^{\pa})\cap\,,\,R_n^{\pa}\right]
    &=\frac{1}{k!}\sum_{\substack{ i+j=n\\ i<0\\ v\in B^{\pa}}}
    \left[w_{(k)}^{\HH,\pa}, \tilde{v}^{\HH,\pa}_{(i)}v^{\HH,\pa}_{(j)}\right]\\
    &=\frac{1}{k!}\sum_{\substack{ i+j=n\\ i<0\\ v\in B^{\pa}}} \left[w^{\HH,\pa}_{(k)}, \tilde{v}^{\HH,\pa}_{(i)}\right]v^{\HH,\pa}_{(j)}+\tilde{v}^{\HH,\pa}_{(i)}\left[w^{\HH,\pa}_{(k)},v^{\HH,\pa}_{(j)}\right]\\
    &=\frac{1}{k!}\sum_{v\in B^{\pa}}k\cdot\chi^{\HH,\pa}_{\ssym}(w^{\pa},\tilde{v}^{\pa})v_{(k+n)}^{\HH,\pa}\\
    &=\left(\prod_{j=0}^n(k+j)\right)\chh_{k+n}(\gamma^{\pa})\cap\,,
\end{align*}
where we used the bracket formula \eqref{eq: shifted bracket} with $k\geq 0$.  
\end{proof}

\begin{remark}
\label{rem: obstructionvsvirasoro}
 To summarize the ideas leading up to this final statement, we explain where the obvious similarity between $\bT_k$ and the virtual tangent bundle (see Example \ref{Ex: Tvir}) comes from in general. It is best represented by the diagram
 $$
 \begin{tikzcd}
&\chi^{\pa}_{\ssym},\chi^{\HH,\pa}_{\ssym}\arrow[dr,"(\textnormal{iii})", bend left]&\\
\arrow[d,"(\textnormal{i})"']\Ext^{\pa},\Theta^{\pa} \arrow[ur,"(\textnormal{ii})", bend left]&&\omega\arrow[d, "(\textnormal{iv})"]\\
T^{\vir}\arrow[rr,dashed,"?"]&&\bL_k^{\pa}\,.
 \end{tikzcd}
 $$
Meaning of the arrows are explained below. 
 \begin{enumerate}[label = (\roman*)]
\item   represents pullback along the diagonal which restricts $\Ext^{\pa}$ to the virtual tangent bundle of any pair moduli space mapping to $\CP_X$. This is the content of assumption \cite[Assumption 4.4]{Jo21} and is satisfied in larger generality than our Definition \ref{def: pairVOA}.
\item  corresponds to taking ranks of the symmetrization $\Theta^{\pa}$ of $\Ext^{\pa}$ and then fixing a choice of an isotropic splitting  $\Lambda_{\ovO}^{\pa} = I\oplus \hat{I}$ of the odd part of $\Lambda^{\pa}$ as we did in \eqref{Eq: splittingisotropic}. Out of it, we constructed a supersymmetric pairing  which in the case 
$$
I= K^{\bullet,\bullet+1}(X)^{\oplus 2},\quad \hat{I}= K^{\bullet+1,\bullet}(X)^{\oplus 2}\,,
$$
led to $\chi^{\HH,\pa}_{\ssym}$. This can be generalized to any setting where the induced pairing is non-degenerate so that isotropic splitting can be chosen.
\item is assigning the conformal element for a given choice of an isotropic splitting in the procedure described in Section \ref{sec: Kacconformal} (more explicitly see \eqref{Eq: omega}) and works as long as Joyce's construction in Section \ref{sec: Joycevertexalgebra} leads to a lattice vertex algebra.
\item is filled in by Theorem \ref{thm: duality} that fundamentally depends on the comparison between $\BD^{X,\pa}$ and $H^\bullet(\CP_{X})$ contained in the description of the lattice vertex algebra structure on $\widehat{H}_\bullet(\CP_X)$ proved in Theorem \ref{thm: gross}. This would again work in a setting where a similar comparison can be made.
 \end{enumerate}
 A direct geometric relation represented by the ``?" between Virasoro constraints and the virtual tangent bundle is however unclear.
\end{remark}

\subsection{Virasoro constraints and primary states}
\label{subsec: virasoroprimary}

Let $M=M_\alpha$, for $\ch(\alpha)\neq 0$, be a moduli space of sheaves as in Section \ref{sec: modulisheavespairs} with a universal sheaf $\BG$. By the universal property of the stack $\CM_X$ there is a map $f_\BG\colon M\to \CM_X$ such that
$(f_\BG\times \id_X)^\ast \CG=\BG$
where $\CG$ is the universal complex in $\CM_X\times X$. Even without a universal sheaf $\BG$, we always have an open embedding into the rigidified stack $\iota\colon M \hookrightarrow \CM_X^\rig$. When a universal sheaf exists this embedding is the composition
\[\iota\colon M\overset{f_\BG}{\longrightarrow} \CM_X\longrightarrow \CM_X^\rig\,.\]
We define classes in $\CM_X$, $\CM_X^\rig$ by
\begin{align*}
[M]^\vir&\coloneqq \iota_\ast [M]^\vir \in H_\bullet(\CM_X^\rig)\,,\\
[M]^\vir_\BG&\coloneqq (f_\BG)_\ast [M]^\vir \in H_\bullet(\CM_X)=V_\bullet\,.
\end{align*}
By Lemma \ref{Lem: DXinv} we may regard the class $[M]^\vir$ as being in Borcherds Lie algebra $\widecheck V_\bullet=V_{\bullet+2}/TV_\bullet$. Given any $\BG$ the class $[M]^\vir_\BG$ is a lift of $[M]^\vir\in  V_{\bullet+2}/TV_\bullet$ to the vertex algebra $V_\bullet$ -- quotienting by $T$ removes the ambiguity in the choice of $\BG$. The integrals of geometric realizations of descendents $D\in \BD^X_\alpha$ can be expressed in terms of these classes by
\[\int_{[M]^\vir}\xi_\BG(D)=\int_{[M]^\vir}(f_\BG)^\ast \big(\xi_\CG(D)\big)=\int_{[M]^\vir_\BG}\xi_\CG(D)\,.\]
By Theorem \ref{Lem: isomorphism}, if $X$ is a variety in class D then $\xi_\CG$ is an isomorphism between $\BD^X_\alpha$ and $H^\bullet(\CM_\alpha)$, so knowing the class $[M]^\vir_\BG$ is precisely the same as knowing all the descendent integrals. Similarly, by Lemma \ref{Lem: DXinv} the class $[M]^\vir \in \widecheck V_\bullet$ contains precisely the information of integrals of weight 0 descendents $D\in \BD^X_{\alpha, \inv}$.

An analogous situation happens for moduli spaces of pairs and the stack $\CP_X$. Given a moduli of pairs as in Section \ref{sec: modulisheavespairs} with universal sheaf $q^\ast V\to \BF$, by the universal property of the stack $\CP_X$ we have a map
\[f_{(q^\ast V, \BF)}\colon P\to \CP_X\,,\]
such that 
\[(f_{(q^\ast V, \BF)}\times \id_X)^\ast \CV=q^\ast V\,,\quad (f_{(q^\ast V, \BF)}\times \id_X)^\ast \CF=\BF\,.\]
The classes 
\[[P]^\vir_{(q^\ast V, \BF)}\coloneqq (f_{(q^\ast V, \BF)})_\ast [P]^\vir\in V_\bullet^{\pa}\] 
contain exactly the information of the descendent integrals
\[\int_{[P]^\vir}\xi_{(q^\ast V, \BF)}(D)\,\quad\textup{for}\quad D\in \BD^{X, {\pa}}_{\alpha^{\pa}}\,.\]
A class $[M]^\vir$ coming from a moduli of sheaves can also be considered in $\widecheck V_\bullet^{\pa}$ via the embedding $\CM_X\hookrightarrow \CP_X$ sending $G\mapsto (0,G)$.

We now use Theorem \ref{thm: duality}, which states the duality between the Virasoro operators on the descendent algebra and and on the vertex algebra, to prove Theorem \ref{thm: virasorophysical} saying that the Virasoro constraints holding for some moduli space of sheaves $M$ or pairs $P$ are equivalent to their respective classes on the Lie/vertex algebra being primary states.
\begin{proof}[Proof of Theorem \ref{thm: virasorophysical}]
We start with part (2) which refers to pairs. Under Assumption \ref{Ass: vanishing} the morphism 
$p_{\alpha^{\pa}}\colon \BD^{X, \pa}\to \BD^{X, \pa}_{\alpha^{\pa}}$ is surjective, so Conjecture \ref{conj: pairvirasoro} holds if and only if
\[\int_{[P]^\vir}\xi_{(q^\ast V, \BF)}(\bL_n^{\pa}(D))=0\]
for every $D\in \BD^{X, \pa}_{\alpha^{\pa}}$ and $n\geq 0$. By Theorem \ref{thm: duality} and the previous observations
\[\int_{[P]^\vir}\xi_{(q^\ast V, \BF)}(\bL_n^{\pa}(D))=\int_{[P]^\vir_{(q^\ast V, \BF)}}\xi_{(\CV, \CF)}(\bL_n^{\pa}(D))=\int_{L_n^{\pa}([P]^\vir_{(q^\ast V, \BF)})}\xi_{(\CV, \CF)}(D)\,.\]
Since $\xi_{(\CV, \CF)}$ defines an isomorphism between $\BD^{X, \pa}_{\alpha^{\pa}}$ and the cohomology $H^\bullet(\CP_{\alpha^{\pa}})$ the last integral vanishes for all $D$ if and only if $L_n^{\pa}\big([P]^\vir_{(q^\ast V, \BF)}\big)=0$, i.e.,
\[[P]^\vir_{(q^\ast V, \BF)}\in P_0^{\pa}\,.\]

The claim (1) for sheaves follows in a similar way by noting that the operator $\bL_\inv$ is dual to $[-, \omega]$ by Theorem \ref{thm: duality} and Lemma \ref{WC with conformal element} and using the equivalente characterization of primary states in $\widecheck P_0$ provided by Proposition \ref{prop: primaryomegabracket}.\qedhere
\end{proof}

Joyce defines more generally (under certain conditions, see \cite[Theorem 5.7]{Jo21} or Section \ref{sec: joyceclasses}) classes $[M]^\inva\in \widecheck V_\bullet$ even when $M$ contains strictly semistable sheaves; when it does not contain strictly semistable sheaves, this class coincides with $[M]^\vir$. As in Remark \ref{rem:nocoarsedirect} and Conjecture \ref{conj:actualinvvir}, we say that $M$ satisfies the Virasoro constraints if $[M]^\inva$ is a primary state, i.e.
$$
  [M]^{\inva}\in \widecheck{P}_0\,.
$$

\section{Rank reduction via wall-crossing}
\label{sec: rankreduction}

In this section, we will explain the main step in the proof of Theorem \ref{thm: main}, which consists of a rank reduction argument via wall-crossing as described in \cite[Section 8.6]{Jo21} for positive rank and generalized here to include the case (3). The rank-reduction of loc. cit. is stated for slope stability only, so we use additional wall-crossing from slope stability to Gieseker stability to conclude Virasoro constraints for the latter stability condition, see Corollary \ref{cor:slopetogies}.

We will treat the 3 cases (1), (2), (3) in a uniform way by fitting them into the general framework of Joyce \cite{Jo21}.  Then their virtual fundamental classes viewed as elements of $H_{\bullet}(\CM^{\rig}_X)$ are related in terms of the Lie bracket defined on $\widecheck{V}_\bullet$. 

Let $X$ be a smooth projective variety of dimension $m=1$ or $m=2$. In the case of $m=2$, we assume that $h^{0,2}(X)=0$. Let $H$ be a fixed polarization of $X$. Given $1\leq d\leq m$, we consider the moduli spaces of $d$-dimensional slope semistable sheaves $F$ (cf. \cite[Theorem 4.3.4, Definition 8.2.7]{HL})
$$M_\alpha=M_\alpha^{\textup{ss}}(\mu)$$
with respect to the slope stability $\mu$, where $\alpha$ is the topological type of the sheaves we consider. In our case, we will work with $\alpha \in K^0(X)$ which can be represented by a non-zero sheaf on $X$, and we label this subset of $K^0(X)$ by $C(X)$. This fits the description of the wall-crossing data given in \cite[Assumption 5.1 (b)]{Jo21}\footnote{This assumption introduces a quotient $K\big(\Coh(X)\big)$ of $K_0\big(\Coh(X)\big)$  with $C\big(\Coh(X)\big)\subset K\big(\Coh(X)\big)$ containing elements represented by non-zero sheaves. The wall-crossing is then expressed in terms of $\alpha$'s contained in $C(X)$. In our case, we directly define $C\big(\Coh(X)\big) =C(X)$.}: there is a morphism 
$$
K_{0}\big(\Coh(X)\big)\to K^0(X)
$$
from the Grothendieck group of $\Coh(X)$ by smoothness and projectivity of $X$ and only the zero coherent sheaf has trivial topological $K$-theory class. The restriction of this morphism to the semigroup of elements of $K_{0}\big(\Coh(X)\big)$ represented by a non-zero sheaf maps onto $C(X)$ by its definition. We will not mention that $\alpha$ lies in $C(X)$ from now on. 

Recall that $\mu$ is defined by
\[\mu(F)=\frac{\deg(F)}{r(F)}\in \BQ\sqcup\{+\infty\}\]
where $\deg(F), r(F)$ are (normalized) coefficients of the Hilbert polynomial $P_F(z)$; denoting by $[z^k]P(z)$ the $z^k$ coefficient of a polynomial $P$,
\[\deg(F)=(d-1)![z^{d-1}]P_F(z) \quad \textup{ and } r(F)=d![z^{d}]P_F(z).\]
When $d=m$, the number $r(F)$ is (up to multiplication by a constant) the rank of $F$. In general, we regard the number $r(F)$ as a generalized rank; it is a non-negative integer for every sheaf of dimension at most $d$. The cases (1), (2), (3) in Theorem~\ref{thm: main} correspond, respectively, to $(m,d)=(1,1), (2,2), (2,1)$.

Twisting by $\CO_X(H)$ induces isomorphisms
\[M_\alpha\cong M_{\alpha(H)}.\]
It is a standard fact that, given a fixed $\alpha$, for large $n$ all the $\mu$-semistable sheaves with topological type $\alpha(nH)$ are globally generated and have vanishing higher cohomology (e.g. \cite[Corollary 1.7.7]{HL}). Replacing $\alpha$ by $\alpha(nH)$ we shall often assume this to be the case.
\begin{assumption}\label{ass: alphabig}
All the $\mu$-semistable sheaves $F$ of topological type $\alpha$ are globally generated and have vanishing higher cohomology $H^{>0}(F)=0$. 
\end{assumption}

\subsection{Bradlow pairs}
\label{sec: Bradlowpairs}
We now describe the notion of Bradlow stability,  depending on a parameter $t\in \BR_{>0}$, on pairs $(F, s)$ where $s\colon \CO_X\to F$ is a section.

\begin{definition}\label{def: bradlow}
Let $t> 0$. A pair $s\colon \CO_X\to F$ is $\mu^{t}$-(semi)stable if it is non-zero and:
\begin{enumerate}
    \item For every subsheaf $G\hookrightarrow F$ we have 
    \[\mu(G)(\leq)\mu(F)+\frac{t}{r(F)}.\]
    \item For every proper subsheaf $G\hookrightarrow F$ through which the section $s$ factors \[s\colon \CO_X\to G\to F\,,\] we have 
    \[\mu(G)+\frac{t}{r(G)}(\leq) \mu(F)+\frac{t}{r(F)}\,.\]
\end{enumerate}
The symbol $(\leq)$ stands for $\leq$ in the semistable case and $<$ in the stable case.
\end{definition}
 We denote by $P_\alpha^t$ the moduli space of $\mu^t$-semistable pairs $\CO_X\to F$ with topological type $\llbracket F\rrbracket=\alpha$. These are often called moduli spaces of Bradlow pairs and constructed by \cite[Section 1]{thaddeus} and \cite{LeP,lin} for curves and higher dimensions, respectively.\footnote{Strictly speaking, these moduli spaces are constructed for Gieseker type Bradlow stability as opposed to the slope type Bradlow stability in Definition \ref{def: bradlow}. However, we need the construction of the moduli space only when there are no strictly $\mu^t$-semistable pairs in which case the two stability notions coincide. When there exist strictly $\mu^t$-semistable pairs, Joyce's theory \cite{Jo21} uses only the moduli stacks rather than the moduli spaces.} If $t$ is such that $t\notin \frac{1}{r(\alpha)!}\BZ$ then the moduli spaces $P_{\alpha}^t$ have no strictly semistable objects. When this is the case, $P_\alpha^t$ is a fine moduli space with a universal pair \[\CO_{P_\alpha^t\times X}\to \BF\,,\] admitting a virtual class $[P_\alpha^t]^\vir$ by the standard argument as recorded below.

\begin{lemma}
\label{Lem:obs}
Assume that there are no strictly $\mu^t$-semistable pairs in $P^t_\alpha$. Then the moduli space $P^t_\alpha$ has a natural 2-term perfect obstruction theory given by $\RHom([\CO_{P^t_\alpha\times X}\to \BF], \BF)$ when $(m,d) = (1,1),(2,2),(2,1)$.
\end{lemma}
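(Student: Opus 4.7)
The approach is to adapt the well-known construction of a perfect obstruction theory for moduli of pairs due to Huybrechts-Thomas and Pandharipande-Thomas to the Bradlow setting. Let $\pi\colon P_\alpha^t \times X \to P_\alpha^t$ denote the projection and let $\BI^\bullet = [\CO_{P_\alpha^t \times X} \xrightarrow{s} \BF]$ denote the universal pair-complex, placed in degrees $0$ and $1$. One forms the complex
$$E^\bullet \coloneqq \big(R\pi_\ast \RHom_{\pi}(\BI^\bullet, \BF)\big)^\vee[-1]$$
on $P_\alpha^t$; the truncated Atiyah class of $\BI^\bullet$, together with the fact that the source $\CO_X$ is not being deformed, produces a morphism $E^\bullet \to \tau^{\geq -1} L_{P_\alpha^t}$, and the standard argument shows that it is an obstruction theory in the sense of Behrend-Fantechi. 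By cohomology and base change, the fiber of $E^\bullet[1]$ at a closed point $[s\colon \CO_X \to F]$ computes $\RHom([\CO_X \to F], F)$.

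The heart of the proof is thus to establish that $E^\bullet$ has perfect amplitude $[-1,0]$, equivalently that at every such closed point one has
$$\Ext^i([\CO_X \to F], F) = 0 \quad \textnormal{for all } i \geq 2\,.$$
My plan is to use the distinguished triangle $F[-1] \to [\CO_X \to F] \to \CO_X$, which upon applying $\RHom(-, F)$ yields the long exact sequence
$$\cdots \to \Ext^i(\CO_X, F) \to \Ext^i([\CO_X \to F], F) \to \Ext^{i+1}(F, F) \to \Ext^{i+1}(\CO_X, F) \to \cdots\,,$$
reducing the problem to the vanishings $H^i(F) = 0$ and $\Ext^{i+1}(F, F) = 0$ for $i \geq 2$.

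In the cases $(m,d) = (1,1)$ and $(2,1)$ both vanishings are automatic on dimensional grounds: $F$ is supported in dimension at most $1$, so $H^{\geq 2}(F) = 0$, while $\Ext^{\geq 3}(F, F) = 0$ because either $\dim X = 1$ (case $(1,1)$) or the support of $F$ is one-dimensional on a surface (case $(2,1)$). The subtle case is $(m,d) = (2,2)$, where $\Ext^{\geq 3}(F, F) = 0$ is still automatic but $H^2(F) \cong \Hom(F, K_S)^\ast$ need not vanish a priori, and this is the main obstacle. I expect to handle it by exploiting Bradlow $\mu^t$-semistability together with Assumption~\ref{ass: alphabig}: Definition~\ref{def: bradlow}(1) forces every quotient $Q$ of $F$ to satisfy $\mu(Q) \geq \mu(F) - t/r(F)$, so for $t$ in a bounded range and $\alpha$ replaced by $\alpha(nH)$ with $n \gg 0$, the slope $\mu(F)$ can be made to exceed $\mu(K_S)$ by any prescribed margin. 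A nonzero map $F \to K_S$ would then produce a quotient of $F$ violating this slope bound, yielding the required vanishing.
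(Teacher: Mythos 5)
Your reduction of perfectness to the single condition $\Ext^{i}([\CO_X\to F],F)=0$ for $i\geq 2$ is where the argument breaks: the fibers of your $E^\bullet$ compute \emph{all} the groups $\Ext^{i}([\CO_X\to F],F)$, and while the vanishing in degrees $\leq -2$ is free, the group $\Ext^{-1}([\CO_X\to F],F)=\ker\big(\Hom(F,F)\xrightarrow{-\circ s}H^0(F)\big)$ is not automatically zero — for an unstable pair such as $s=(1,0)\colon\CO_X\to\CO_X\oplus L$ it is nonzero — and if it survives, $E^\bullet$ is not concentrated in $[-1,0]$ and you do not get a two-term obstruction theory with $\textup{Tan}=\Ext^0$, $\textup{Obs}=\Ext^1$. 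This is precisely the point where the paper's proof uses $\mu^t$-stability: given $0\neq\phi\in\Hom(F,F)$ with $\phi\circ s=0$, the kernel of $\phi$ contains $\mathrm{im}(s)$ and hence is tested by condition (2) of Definition~\ref{def: bradlow}, while the image of $\phi$ is a subsheaf of $F$ tested by condition (1); the two slope inequalities contradict each other. Your proposal omits this "simpleness of stable pairs" step entirely, and it is needed in all three cases $(1,1),(2,2),(2,1)$.

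The second problem is your treatment of $(2,2)$. Aiming at $H^2(F)=0$ both overshoots and undershoots the target. It overshoots because the long exact sequence only requires the vanishing of $\coker\big(\Ext^2(F,F)\xrightarrow{-\circ s}H^2(F)\big)=\Ext^2([\CO_X\to F],F)$, and by Serre duality this group is dual to $\{\phi\colon F\to K_S \ :\ (s\otimes\id_{K_S})\circ\phi=0\}$, which vanishes for \emph{every} nonzero pair with $F$ torsion-free because $s$ is then injective; no positivity of $\mu(F)$ is needed (this is the content of Mochizuki's Lemma 6.1.14, which the paper simply cites, together with Joyce, for the cases $(1,1),(2,2)$). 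It undershoots because the lemma is asserted for all $t>0$ and all $\alpha$: arbitrarily large $t$ genuinely occurs in the wall-crossing (e.g. $P^\infty_\alpha$), the destabilizing margin $t/r(F)$ grows with $t$ so your slope bound fails there, twisting by $\CO_X(nH)$ does \emph{not} identify Bradlow pair moduli spaces (the section lands in $F$, unlike the sheaf moduli where $M_\alpha\cong M_{\alpha(H)}$), and the sheaves underlying $\mu^t$-stable pairs need not be $\mu$-semistable, so Assumption~\ref{ass: alphabig} does not control them; indeed $H^2(F)\neq 0$ does occur on $P^t_\alpha$ (e.g. rank-one pairs $\CO_S\to I_Z(D)$ with $D$ small on a surface with $K_S$ big), even though $\Ext^2([\CO_X\to F],F)=0$ still holds. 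So for $(2,2)$ you must argue on the pair group $\Ext^2([\CO_X\to F],F)$ directly (or cite Mochizuki/Joyce as the paper does) rather than on $H^2(F)$. The remaining parts of your proposal — the Atiyah-class construction of the relative theory and the dimension arguments settling $i\geq 2$ in the cases $(1,1)$ and $(2,1)$ — do agree with the paper's route.
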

\begin{proof}
The construction of a morphism from the dual of $\RHom([\CO_{P^t_\alpha \times X}\to \BF], \BF)$ to the cotangent complex is standard, see for example \cite[Section 5.3]{mochizuki}.

Using the long exact sequence 
\[\Ext^{i}(F, F)\to H^{i}(F)\to\Ext^{i}([\CO_X\to F], F)\to \Ext^{i+1}(F, F)\to H^{i+1}(F)\,.\]
Mochizuki \cite[Lemma 6.1.14]{mochizuki} (see also Joyce \cite[Section 8.3.2]{Jo21}) showed the vanishing of $\Ext^i$ for $(m,d) = (1,1),(2,2)$ and $i\neq  0,1$ . When $(m,d)$ is such that $d\leq 1$, then the terms vanish immediately for $i\neq-1,0,1$ because $H^2(F) = 0$. 
The vanishing for $i=-1$ would follow from the injectivity of
$$\Ext^0(F,F)\xrightarrow{\,-\,\circ\, s\,} H^0(F)\,.
$$
Suppose for the contradiction that there exists a non-zero morphism $\phi\in \Ext^0(F,F)$ such that $\phi\circ s=0$. Consider the induced short exact sequence
$$0\rightarrow F_1\rightarrow F\rightarrow F_2\rightarrow 0\,,
$$
where $F_1=\ker(\phi)$ and $F_2=\coker(\phi)\subsetneq F$. By $\mu^t$-stability of $\CO_X\xrightarrow{s} F$, we have 
$$\mu(F_1)+\frac{t}{r(F_1)}<\mu(F)+\frac{t}{r(F)}\,,\quad \mu(F_2)<\mu(F)+\frac{t}{r(F)}\,.
$$
Using the usual arithmetic of ratios, this gives the contradiction.
\end{proof}

These moduli spaces fit in the framework of Section 8 of \cite{Jo21}. There, Joyce considers the abelian category $\grave{\CA}$ of pairs $U\otimes \CO_X\to F$ where $U$ is a $\BC$-vector space and $F\in \Coh(X)$, and introduces the stability function on such pairs given by
\[\mu^t(U\otimes \CO_X\to F)=\frac{\deg(F)+t \dim(U)}{r(F)}.\]
The $\mu^t$-(semi)stable pairs with $U=\BC$ are precisely the $\mu^t$-(semi)stable pairs in Definition \ref{def: bradlow}. Conditions (1) and (2) in Definition \ref{def: bradlow} correspond to looking for destabilizing subpairs of the form $0\to G$ and $\CO_X\to G$, respectively. 

The stack parametrizing objects in the category $\grave \CA$ is the stack $\CN_X$ from Remark~\ref{rem: stackNX}. Hence the moduli spaces $P_\alpha^t$ admit a map
\[P_\alpha^t\hookrightarrow \CN_X^\rig\rightarrow \CP_X^\rig\,,\]
and thus define a class $[P_\alpha^t]^\vir$ in the Lie algebras $H_\bullet(\CN_X^\rig)$ or $ H_\bullet(\CP_X^\rig)$ by pushing forward $[P_\alpha^t]^\vir\in H_\bullet(P_\alpha^t)$ along this map. Moreover, since we have a universal pair $\CO\to\BF$ there is actually a lift of this map to the non-rigidified stacks
\[f_{(\CO,\BF)}\colon P_\alpha^t\to \CN_X \rightarrow \CP_X\,.\]
This defines a lift of the class $[P_\alpha^t]^\vir$
\[[P_\alpha^t]^\vir_{(\CO,\BF)}\coloneqq (f_{(\CO,\BF)})_\ast [P_\alpha^t]^\vir\]
to the vertex algebras $H_\bullet(\CN_X)$ and $ H_\bullet(\CP_X)=V_\bullet^{\pa}$ when $P_\alpha^t$ does not contain strictly semistable pairs. This class is in the connected component $\CP_{(\llbracket \CO_X\rrbracket, \alpha)}$; to alleviate notation, we will write 
\[\CP_{(1, \alpha)}\coloneqq \CP_{(\llbracket \CO_X\rrbracket, \alpha)}\,.\]

\subsection{Limits $t\rightarrow 0$ and $t\to \infty$}
\label{sec: limits}
Our rank reduction argument will be based on using the wall-crossing formula to compare the $\mu^t$ Bradlow pairs with $t$ small and $t$ large. We now identify the moduli spaces $P_\alpha^t$ in these two limits.

\begin{proposition}[{\cite[Theorem 8.13, Ex. 5.6]{Jo21}}]

\label{prop: limitt0}
Let $0<t<1/r(\alpha)!$ and $F$ be a sheaf of topological type $\alpha$. Then  $s\colon \CO_X\to F$ is $\mu^{t}$-semistable if and only if it is $\mu^{t}$-stable if and only if the following three conditions hold:
\begin{enumerate}
    \item $F$ is semistable with respect to $\mu$;
    \item $s\neq 0$;
    \item there is no $0\neq G\subsetneq F$ with $\mu(G)=\mu(F)$ such that $\mathrm{im}(s)\subseteq G$.
\end{enumerate}
\end{proposition}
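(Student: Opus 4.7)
The strategy is to translate Bradlow stability into Joyce's formalism on the category $\grave{\CA}$ of pairs $U\otimes\CO_X\to F$, and then exploit the fact that $t < 1/r(\alpha)!$ makes the section-perturbation $t/r$ numerically negligible compared with any possible slope gap. A proper subpair of $(\CO_X\xrightarrow{s} F)$ in $\grave{\CA}$ is of one of two types:
\begin{enumerate}[label=(\alph*)]
\item $(0\to G)\hookrightarrow(\CO_X\to F)$ with $G\subseteq F$, whose $\mu^t$-slope is $\mu(G)$;
\item $(\CO_X\to G)\hookrightarrow(\CO_X\to F)$ with $G\subseteq F$ and $s$ factoring through $G$, whose $\mu^t$-slope is $\mu(G)+t/r(G)$.
\end{enumerate}
The $\mu^t$-slope of the full pair is $\mu(F)+t/r(F)$.

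The key numerical input is the following denominator bound: every slope $\mu(G)$ appearing belongs to $\frac{1}{r(G)}\BZ$ with $r(G)\leq r(\alpha)$, so any nonzero slope difference $\mu(G)-\mu(G')$ is a rational number with denominator dividing $\mathrm{lcm}(1,\dots,r(\alpha))$, which divides $r(\alpha)!$. Therefore if $\mu(G)\neq \mu(G')$ then $|\mu(G)-\mu(G')|\geq 1/r(\alpha)! > t$. Also $t/r \leq t<1/r(\alpha)!$ for any $r\geq 1$.

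For the implication $(1)+(2)+(3)\Rightarrow \mu^t$-stability, check each type of proper subpair. For type (a), condition (1) gives $\mu(G)\leq \mu(F)<\mu(F)+t/r(F)$, which is strict. For type (b), if $\mu(G)<\mu(F)$ then by the denominator bound $\mu(G)\leq\mu(F)-1/r(\alpha)!$, so
\[
\mu(G)+\tfrac{t}{r(G)}\;\leq\;\mu(F)-\tfrac{1}{r(\alpha)!}+t\;<\;\mu(F)\;<\;\mu(F)+\tfrac{t}{r(F)}.
\]
If instead $\mu(G)=\mu(F)$ with $G\subsetneq F$, condition (3) rules this out entirely, so the only type (b) subobject with $\mu(G)=\mu(F)$ would be $G=F$, which is not proper.

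For the converse, (2) is the nonvanishing of $s$ built into Definition \ref{def: bradlow}. For (1), any $G\subseteq F$ with $\mu(G)>\mu(F)$ satisfies $\mu(G)-\mu(F)\geq 1/r(\alpha)!>t/r(F)$ by the denominator bound, so the type (a) subpair violates $\mu^t$-semistability. For (3), a $G\subsetneq F$ with $\mu(G)=\mu(F)$ and $s$ factoring gives $\mu^t$-slope $\mu(F)+t/r(G)\geq\mu(F)+t/r(F)$ (since $r(G)\leq r(F)$), which is already enough to violate $\mu^t$-stability. Finally, the $\mu^t$-semistable$\;\Leftrightarrow\;\mu^t$-stable dichotomy follows from the same denominator bound: the equation expressing strict semistability of a type (b) subpair with $r(G)<r(F)$ reads $\mu(F)-\mu(G)=t(r(F)-r(G))/(r(G)r(F))$, whose right-hand side is strictly smaller than $1/r(\alpha)!$ while the left-hand side is either $0$ or at least $1/r(\alpha)!$; and the residual case $r(G)=r(F)$ with $\mu(G)=\mu(F)$ is ruled out by the Hilbert polynomial constraints in the dimensions $(m,d)=(1,1),(2,1),(2,2)$ we consider (so that (3) itself excludes it).

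The main technical point is the denominator bound together with the careful decoupling of $\mu$-semistability of $F$ from the section constraint; once these are set up, the proof is a short case analysis. The only genuinely delicate point is the handling of the borderline $r(G)=r(F)$ case for the equivalence of semistability and stability, where one uses the explicit low-dimensional nature of $X$ (appealing to Joyce \cite[Thm.~8.13, Ex.~5.6]{Jo21}).
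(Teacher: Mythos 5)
The paper itself gives no argument for this proposition — it is quoted directly from Joyce [Jo21, Thm.~8.13, Ex.~5.6] — so your direct ``no walls below $1/r(\alpha)!$'' argument is the natural thing to try. Most of it is fine, but the step you yourself flag as delicate is where the proof actually breaks, and in the main case of the theorem. You claim that a proper subsheaf $G\subsetneq F$ with $r(G)=r(F)$, $\mu(G)=\mu(F)$ and $\mathrm{im}(s)\subseteq G$ is ``ruled out by the Hilbert polynomial constraints'' in all three cases $(m,d)=(1,1),(2,1),(2,2)$. That is true for $(1,1)$ and $(2,1)$, where the paper's $\deg$ is the Euler characteristic, so $r(G)=r(F)$ and $\deg(G)=\deg(F)$ force $F/G=0$. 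It is false for $(2,2)$: there $\deg$ only records $c_1\cdot H$ (up to the rank term), so any surjection $F\twoheadrightarrow \CO_p$ onto a point sheaf killing $s$ — which exists whenever $\mathrm{rk}(F)\geq 2$, since one can choose a functional on $F\otimes k(p)$ vanishing on $s(p)$ — produces a proper same-rank subsheaf $G=\ker(F\to\CO_p)$ with $\mu(G)=\mu(F)$ containing $\mathrm{im}(s)$. For such $G$ one has $\mu(G)+t/r(G)=\mu(F)+t/r(F)$ for every $t$, so under the literal slope reading of Definition \ref{def: bradlow} the pair is never $\mu^t$-stable and condition (3) is never satisfiable in rank $\geq 2$ on a surface; your case analysis for ``semistable $\Rightarrow$ (3)'' and for ``(1)+(2)+(3) $\Rightarrow$ stable'' both collapse at exactly this configuration. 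The actual resolution is that the equal-slope comparison must be taken in the refined (Gieseker-type) sense, or equivalently with the conventions of Joyce's setup (cf.\ the paper's footnote that the argument can be run with Gieseker stability) — a $0$-dimensional quotient strictly decreases the constant term of the Hilbert polynomial, so such a $G$ does not destabilize. In other words, the appeal to [Jo21] at the end of your proof is not tidying a corner case; it is supplying the one piece of content your argument is missing, so the proof as written is not self-contained precisely where it needed to be.

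Two smaller points. First, your denominator bound is stated too strongly: with the paper's normalization $\deg(F)=(d-1)![z^{d-1}]P_F(z)$, in the $(2,2)$ case $\deg$ contains the term $-\tfrac{\mathrm{rk}(F)}{2}K_S\cdot H$ and need not be an integer, so $\mu(G)\notin\frac{1}{r(G)}\BZ$ in general; the conclusion you need (nonzero differences $\mu(G)-\mu(F)$ for $G\subseteq F$ are bounded below by $1/r(\alpha)!$) still holds because the half-integer contributions cancel in $r(F)\deg(G)-r(G)\deg(F)$, but the argument has to be phrased in terms of these cross-differences rather than integrality of individual slopes. Second, in the type (a) and type (b) analyses you should note the subsheaves with $r(G)=0$ (slope $+\infty$), which are handled automatically by the convention $\mu\in\BQ\sqcup\{+\infty\}$ but are what make ``$F$ is $\mu$-semistable'' include purity; this is a gloss, not a gap.
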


Since the stable objects do not change for such small $t$ we denote by $P_\alpha^{0+}=P_\alpha^t$ the moduli of $\mu^t$-stable pairs for $0<t\ll 1$ and by $\mu^{0+}$ the limit stability $\mu^t$ with $t\rightarrow 0$. The limit stability can be explicitly defined by
\[\mu^{0+}(U\otimes \CO_X\to F)=(\mu(F), \dim(U))\in (-\infty, \infty]\times \BZ_{\geq 0}\]
where $(-\infty, \infty]\times \BZ_{\geq 0}$ is given the lexicographic order.

Note in particular that if $M_\alpha$ has no strictly semistable sheaves then condition $(3)$ is vacuous, so $P_\alpha^{0+}$ parametrizes stable sheaves $[F]\in M_\alpha$ together with a non-vanishing section $s\in H^0(F)$ (up to scaling of the section). Assuming \ref{ass: alphabig}, 
\[P_\alpha^{0+}=\BP_{M_\alpha}(p_\ast \BG)\]
 is a projective bundle over $M_\alpha$ with fiber $\BP(H^0(F))\cong \BP^{\chi(\alpha)-1}$ over $[F]\in M_\alpha$. 

When $M_\alpha$ has strictly semistable sheaves, $P_\alpha^{0+}$ plays a crucial role in Joyce's definition of the classes $[M_\alpha]^{\mathrm{inv}}\in \widecheck{H}_\bullet(\CM_X)$ in \cite[Section $9.1$]{Jo21}, as we will recall next. This idea is also present in Mochizuki's work \cite{mochizuki}. 

\begin{proposition}[{\cite[Lemma 1.3]{PT}}]
\label{prop: limittinfty}
Let $t\gg 0$ be large enough. Then $s\colon \CO_X\to F$ is $\mu^{t}$-semistable if and only if it is $\mu^t$-stable if and only if $F$ is pure of dimension $d$ and $\coker(s)$ is supported in dimension at most $d-1$.
\end{proposition}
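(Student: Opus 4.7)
The plan is to verify both directions of the equivalence by directly checking the two conditions of Definition \ref{def: bradlow} in the limit $t \to \infty$, following the classical Bradlow--Thaddeus--Pandharipande--Thomas analysis. I would also record separately why $\mu^t$-semistability already implies $\mu^t$-stability for large $t$.

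For the direction $(\Leftarrow)$, assume $F$ is pure of dimension $d$ and $\dim \coker(s) \leq d-1$. Condition (1) is checked as follows: purity forces every nonzero $G \subseteq F$ to have dimension $d$ and hence $r(G) > 0$, while the set of slopes $\mu(G)$ over all subsheaves of $F$ is bounded above by some constant $C$ depending only on $F$ (boundedness of subsheaves, \cite[Lemma~1.7.9]{HL}); choosing $t$ with $t/r(F) > C - \mu(F)$ gives strict inequality for proper $G$. Condition (2) is checked by noting that a proper $G \subsetneq F$ through which $s$ factors has $F/G$ a nonzero quotient of $\coker(s)$, hence of dimension at most $d-1$; this forces $r(G) = r(F)$ and $\deg(G) \leq \deg(F)$. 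In Cases (1) and (3) of Theorem \ref{thm: main} one has $d-1 = 0$, so $F/G$ is automatically nonzero zero-dimensional, giving $\deg(G) < \deg(F)$ and strict inequality in (2); Case (2) requires refining the comparison via the reduced Hilbert polynomial in lower degree, as in \cite[Section~8.3]{Jo21} and \cite{mochizuki}.

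For the direction $(\Rightarrow)$, I argue by contrapositive. If $\dim \coker(s) = d$, then $G := \mathrm{im}(s) \subsetneq F$ has $r(G) < r(F)$, and condition (2) would force
\[\mu(G) - \mu(F) \;\leq\; -\,t\cdot\tfrac{r(F)-r(G)}{r(F)\,r(G)}\,,\]
which is impossible for $t$ large, so $\dim \coker(s) \leq d-1$. If $F$ is not pure, let $T \subseteq F$ be the maximal torsion subsheaf. When $\dim T = d-1$ we have $r(T) = 0$ and $\deg(T) > 0$, giving $\mu(T) = +\infty$ and violating (1); when $\dim T < d-1$ the analogous refinement on the reduced Hilbert polynomial closes the case. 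Finally, strictly $\mu^t$-semistable pairs of class $\alpha$ occur only at a discrete set of walls in $t$ bounded in terms of $\alpha$, so past the largest such wall $\mu^t$-semistability and $\mu^t$-stability coincide.

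The main subtlety is the $(m,d) = (2,2)$ case: here $d-1=1$, so $\coker(s)$ can a priori have zero-dimensional torsion, and such torsion produces proper subsheaves $G \subsetneq F$ with $F/G$ zero-dimensional, giving equality (rather than strict inequality) in condition (2) from the naive slope comparison. Handling this requires upgrading the slope comparison to a comparison of reduced Hilbert polynomials in lower degree exactly as done by Mochizuki \cite{mochizuki} and Joyce \cite[Section~8.3]{Jo21}; since this is a standard argument, I would cite those sources rather than reproduce it.
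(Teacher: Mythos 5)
The paper itself gives no argument for this proposition; it is imported verbatim from \cite{PT}, Lemma 1.3, so there is no internal proof to compare against, and your direct check of the two conditions of Definition \ref{def: bradlow} follows the same classical line as that cited lemma. Your case analysis is essentially right where it is elementary: in cases $(m,d)=(1,1),(2,1)$ a proper $G\supseteq \mathrm{im}(s)$ has $F/G$ nonzero zero-dimensional, so $\deg(G)<\deg(F)$ and condition (2) is strict, and your diagnosis of the $(2,2)$ subtlety is correct and is in fact sharper than you present it. With Definition \ref{def: bradlow} read literally, a pair such as $\CO_S\to I_Z(E)$ admits the subsheaf $G=I_{Z'}(E)$ with $Z\subsetneq Z'\subset E$, through which $s$ factors, with $r(G)=r(F)$ and $\deg(G)=\deg(F)$; so the pair is strictly $\mu^t$-semistable for \emph{every} $t$ and the ``semistable iff stable'' half of the statement simply fails for slope stability. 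The fix is not a technical lemma one can outsource to \cite{mochizuki} or \cite{Jo21}: it is a reinterpretation of the statement through the Gieseker-type refinement (this is the stability \cite{PT} actually use, and the paper's own footnote allows the whole argument to be run with Gieseker stability). You should say explicitly that this is the sense in which you prove the proposition, rather than presenting the refinement as a routine step.

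The genuine gap is uniformity of $t$. You choose $t$ so that $t/r(F)>C-\mu(F)$ with $C=C(F)$, and in the converse direction you declare the displayed inequality ``impossible for $t$ large''; both thresholds depend on the individual pair, whereas the proposition requires a single $t_0(\alpha)$ valid for all pairs of class $\alpha$ -- this is the entire content of ``$t\gg 0$'' and is where the proof of \cite{PT}, Lemma 1.3 does its work. To close it you need boundedness: for $(\Leftarrow)$, that the pure $F$ of class $\alpha$ admitting a section with $\dim\coker(s)\leq d-1$ form a bounded family (in the relevant rank-one cases this follows from purity and the fixed class, and in case $(2,1)$ from $\mathrm{im}(s)=\CO_V$ with $V$ a divisor of bounded class plus a finite-length quotient), so that $\mu_{\max}(F)$, and hence your $C$, is uniform; and for $(\Rightarrow)$, that $\mu(\mathrm{im}(s))$ is uniformly bounded below. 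Three smaller points: when $r(\mathrm{im}(s))=0$ your displayed inequality divides by zero, but then $\mu(\mathrm{im}(s))=+\infty$ and semistability fails outright; with the convention $\mu=+\infty$ whenever $r=0$ (which the codomain $\BQ\sqcup\{+\infty\}$ forces, and which the proposition needs), any nonzero subsheaf of dimension $\leq d-1$ already violates condition (1), so your ``$\dim T<d-1$'' case requires no refinement at all; and your closing remark about finitely many walls is redundant, since the chain (condition $\Rightarrow$ stable $\Rightarrow$ semistable $\Rightarrow$ condition) already yields stable $=$ semistable, and its justification as stated (``walls bounded in terms of $\alpha$'') is again exactly the boundedness you have not supplied.
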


We denote by $P_\alpha^\infty$ the moduli of such pairs and by $\mu^\infty$ the limit stability. We proceed now to identify this moduli space in the three cases of interest to us, $(m,d)=(1,1), (2,2), (2,1)$.
\begin{enumerate}
\item Suppose that $(m,d)=(1,1)$ and $\rk(\alpha)>1$. Then $P_\alpha^\infty=\emptyset$ since \[\rk(\coker(s))\geq \rk(F)-\rk(\CO_X)>0\] for any $s\colon \CO_X\to F$ with $\rk(F)>1$.  Suppose now that $\rk(\alpha)=1$; then the elements of $P_\alpha^\infty$ are non-zero pairs $s\colon \CO_X\to F$ such that $F$ is a torsion-free rank 1 sheaf. A torsion-free rank 1 sheaf on a smooth curve $C$ with a non-zero section is given by an effective divisor. When $\ch(\alpha)=1+n \cdot \pt$ for $n\geq 0$ this implies that
\[P^\infty_{\alpha}=\{\CO_X\to \CO_{X}(E)\textup{ such that }|E|=n\}=C^{[n]}\]
is the $n$-th symmetric power of $C$.

\item Suppose that $(m,d)=(2,2)$. As before, $P_\alpha^\infty=\emptyset$ whenever $\rk(\alpha)>1$. Given a torsion-free rank 1 sheaf $F$ on a surface $S$, we get an embedding into its double dual $F\hookrightarrow F^{\vee\vee}$, which must be a line bundle $\CO_S(E)$ \cite[Example 1.1.16]{HL}. Twisting by $-E$ gives 
\begin{center}\begin{tikzcd}
\CO_S(-E)\arrow[r,"s(-E)"]&
F(-E)\arrow[r, hookrightarrow]&
\CO_S
\end{tikzcd}\end{center}
so $F(-E)=I_Z$ is the ideal sheaf of a zero-dimensional subscheme $Z\subseteq E$. Thus, the moduli space $P_{\alpha}^\infty$ is isomorphic to the nested Hilbert scheme $S_\beta^{[0,n]}$ in \cite{gsy} parametrizing a pair $(E,Z)$ of a divisor $E$ in class $\beta\in H_2(X, \BZ)$ and a zero-dimensional subscheme $Z\subseteq E$ of length $n$, where 
\[\ch(\alpha)=1+\beta+\frac{\beta^2}{2}-n\cdot \pt\,.\]
The isomorphism is given by sending
\[(E,Z)\mapsto \big(\CO_S\to I_Z(E)\big)\,.\]
The obstruction theory, and hence virtual fundamental class, of $P_\alpha^\infty$ is easily seen to match the ones defined for $S_\beta^{[0,n]}$ in \cite{gsy}. Indeed, the obstruction theory of the latter at $(E,Z)$ (see Proposition 2.2 and the proof of Proposition 3.1 in loc. cit.) is given by
\begin{align*}\textrm{Cone}\big(\RHom(\CO_S(-E), I_Z)&\to \RHom(I_Z, I_Z)\big)\\
&=\RHom(\CO_S\to I_Z(E), I_Z(E))\,.\end{align*}

\item  If $(m,d)=(2,1)$ then $P_{\alpha}^\infty$ is shown in \cite[Proposition 3.1.5]{gsy} to also be isomorphic to the nested Hilbert scheme $S_\beta^{[0,n]}$ with $\ch(\alpha)=\beta-\frac{\beta^2}{2}+n\cdot \pt$. The isomorphism sends 
\[(E, Z)\mapsto \big(\CO_S\to \CO_E(Z)\big)\,.\] 
The virtual fundamental classes of $P^\infty_\alpha$ and $S_\beta^{[0,n]}$ are also shown to agree.
\end{enumerate}

\subsection{Invariant classes $[M]^\inva$}
\label{sec: joyceclasses}
When there are strictly semistable sheaves in $M_\alpha$, we cannot obtain a class in $\widecheck V_\bullet$ by simply pushing forward a virtual fundamental class from $H_\bullet(M_\alpha)$. However, Joyce constructs classes $[M_\alpha]^{\inva}$ for every $\alpha$ such that $[M_\alpha]^{\inva}=[M_\alpha]^\vir$ when there are no strictly semistable sheaves. The classes $[M_\alpha]^{\inva}$ appear when one writes down wall-crossing formulae. We will now summarize -- and slightly reformulate -- the construction of these classes in \cite[Theorem 5.7, Section 9]{Jo21} when $(m,d)$ is one of our three cases.

First, we observe that it is enough to define the classes $[M_\alpha]^{\inva}$ when $\alpha$ satisfies Assumption \ref{ass: alphabig}. The definition extends to all $\alpha$ by requiring that $[M_{\alpha(H)}]^\inva$ is obtained from $[M_{\alpha}]^\inva$ via the map $H_\bullet(\CM_\alpha^\rig)\to H_\bullet(\CM_{\alpha(H)}^\rig)$ induced by tensoring with $\CO_X(H)$. The argument that this definition is consistent is one of the core arguments in Joyce's theory proved in \cite[Proposition 9.12]{Jo21}. 

Let $\Pi\colon \CP_X\to \CM_X^\rig$ be the composition of the projection $\CP_X\to \CM_X$ onto the second component with the rigidification map $\CM_X\to \CM_X^\rig$. We define the $K$-theory class $\T^{\rel}$ in $\CP_X$ by
\[\T^{\rel}=Rp_\ast \CF-\CO_{\CP_X}\,,\]
where $p\colon \CP_X\times X\to \CP_X$ is the projection. Then, one defines the class
\begin{equation}\label{eq: upsilonalpha}
\Upsilon_\alpha=\Pi_\ast\left( c_{\chi(\alpha)-1}(\T^{\rel})\cap [P_\alpha^{0+}]^\vir_{(\CO,\BF)} \right)\in H^\bullet(\M_\alpha^\rig)\subseteq \widecheck V_\bullet\,.
\end{equation}
Alternatively, note that if we define $\Pi_\alpha$ as the composition
\begin{center} \begin{tikzcd}
\Pi_\alpha\colon P_\alpha^{0+}\arrow[r,"f_{(\CO,\BF)}"]&
\CP_X\arrow[r, "\Pi"]&
\CM_X^\rig
\end{tikzcd}\end{center}
then by Assumption \ref{ass: alphabig} the relative tangent bundle $\T_{\Pi_\alpha}$, which has fibers 
\[H^0(F)/\BC\cdot s\textup{ over a pair }[s\colon \CO_X\to F]\in P_\alpha^{0+}\,,\] is a vector bundle of rank $\chi(\alpha)-1$ such that \[\T_{\Pi_\alpha}=f_{(\CO,\BF)}^\ast \T^{\rel}\,\]
in $K$-theory. Equation \eqref{eq: upsilonalpha} can be rewritten as 
\begin{equation}\label{eq: upsilonalpha2}
\Upsilon_\alpha=(\Pi_\alpha)_\ast\left( c_{\tp}(T_{\Pi_\alpha})\cap [P_\alpha^{0+}]^\vir \right)\in \widecheck V_\bullet\,,
\end{equation}
which is the form in \cite[(5.29)]{Jo21}.

Before we begin discussing wall-crossing, we set some notation. We will use $\underline{\alpha} = (\alpha_1,\cdots,\alpha_l)$ for the vector of $K$-theory classes and denote by $\underline{\alpha}\vdash \alpha$ the fact that it is a partition of $\alpha$, i.e., $\alpha_1+\cdots +\alpha_l = \alpha$, where $l$ always denotes the length of $\alpha$. When we write $\sum_{\underline{\alpha}\vdash \alpha}$ we mean a sum over all $\alpha_1, \ldots, \alpha_l$ such that $\alpha_1+\ldots+ \alpha_l=\alpha$.

The classes $[M_\alpha]^\inva\in \widecheck V^\bullet$ are now defined by
\begin{equation}
\Upsilon_\alpha=\sum_{\substack{\underline{\alpha}\vdash \alpha\\\mu(\alpha_i)=\mu(\alpha)}}\frac{(-1)^{l+1}\chi(\alpha_1)}{l!}\big[\big[\ldots  \big[ [M_{\alpha_1}]^\inva,[M_{\alpha_2}]^\inva \big],\ldots \big ],[M_{\alpha_l}]^\inva \big]\label{eq: upsilonwallcrossing}\,.
\end{equation}
Equation \eqref{eq: upsilonwallcrossing} provides an inductive definition of the classes $[M_{\alpha}]^\inva$ by comparing
\[\Upsilon_\alpha=\chi(\alpha)[M_\alpha]^\inva+\ldots\]
where $\ldots$ is expressed in terms of classes $[M_{\alpha_i}]^\inva$ such that $r(\alpha_i)<r(\alpha).$ Note that Assumption \ref{ass: alphabig} implies $\chi(\alpha)>0$.

\begin{remark}\label{rem: projectivebundle}
When $M_\alpha^{\textup{ss}}=M_\alpha^{\textup{s}}$ there are no decompositions $\alpha=\alpha_1+\ldots+\alpha_l$ with $l>1$, $\mu(\alpha_i)=\mu(\alpha)$ and $M_{\alpha_i}\neq \emptyset$. Hence the right hand side of \eqref{eq: upsilonwallcrossing} has only one non-zero term, so
\[\Upsilon_\alpha=\chi(\alpha)[M_\alpha]^\inva. \]
Recall that without strictly semistable sheaves $f\colon P_\alpha^{0+}\to M_\alpha$ is a projective bundle with fibers $\BP(H^0(F))\cong \BP^{\chi(\alpha)-1}$ over $[F]\in M_\alpha$ by Proposition \ref{prop: limitt0} and the discussion following; moreover, $[M_\alpha]^\inva=[M_\alpha]^\vir$ is actually the (pushforward to $H_\bullet(\M_X^\rig)$ of the) virtual fundamental class. Indeed, we have
\[f_\ast\left(c_\tp(\T_f)\cap [P_\alpha^{0+}]^\vir\right)=\chi(\alpha)[M_\alpha]^\vir\]
by the virtual pullback formula \cite[Theorem 4.7]{manolache}.
The following heuristic is useful to keep in mind: the class $\Upsilon_\alpha$ is constructed so that the intersection theories against $[P_\alpha^{0+}]^\vir$ and $\Upsilon_\alpha$ are related in a way similar to a projective bundle; the wall-crossing type formula \eqref{eq: upsilonwallcrossing} defines $[M_\alpha]^\inva$ by ``correcting'' $\Upsilon_\alpha$.
\end{remark}

Above we only discussed the case of slope stability, but Joyce defines the classes $[M_{\alpha}(\tau)]^{\inva}$ for any stability condition $\tau$ satisfying his list of assumptions \cite[Assumption 5.1 and 5.2]{Jo21}. In particular, this holds when $(m,d) = (2,2)$ and $\tau$ is the Gieseker stability, by \cite[Section 7]{Jo21}; note that Gieseker stability coincides with slope stability when $d=1$, so only the $(2,2)$ case is new. 

It is well-known that slope stability $\mu$ \textit{dominates} Gieseker stability $\tau$ in the sense of \cite[Definition 3.8]{Jo21}.  In other words -- if $p_E(z)\leq p_F(z)$ for any two sheaves $E,F$ and any $z\gg 0$, then $\mu(E)\leq \mu(F)$. The dominant wall-crossing formula stated in \cite[Theorem 5.8, Theorem 7.64]{Jo21} applies precisely to this scenario and takes the form 
$$
[M_{\alpha}(\tau)]^\inva=\sum_{\underline{\alpha}\vdash \alpha
}U(\underline{\alpha};\mu,\tau)
\big[\big[\ldots  \big[ [M_{\alpha_1}(\mu)]^\inva,[M_{\alpha_2}(\mu)]^\inva \big],\ldots \big ],[M_{\alpha_l}(\mu)]^\inva \big]
$$
where $U(\underline{\alpha};\mu,\tau)$ are the coefficients defined in  \cite[Section 3.2]{Jo21}.  Applying the compatibility of Virasoro constraints with the Lie bracket that follows from Theorem \ref{thm: virasorophysical} and Proposition \ref{prop: wallcrossingcompatibility1}, we reduce Virasoro constraints for Gieseker stability to slope stability.
\begin{corollary}
\label{cor:slopetogies}
    If Virasoro conjecture holds for $[M_{\alpha}(\mu)]^{\inva}$ for any $\alpha$, i.e., $[M_{\alpha}(\mu)]^{\inva}\in \widecheck{P}_0$, then it is also satisfied by $[M_{\alpha}(\tau)]^{\inva}$ where $\tau$ is the Gieseker stability.
\end{corollary}




\subsection{Wall-crossing formula for Bradlow stability}
\label{sec: WCformula}
\sloppy When working with pair wall-crossing formulae, we will include $\alpha_0$ into the partition $\underline{\alpha}\vdash \alpha$ by $\underline{\alpha}=(\alpha_0,\alpha_1,\cdots,\alpha_l)$ and $\alpha_0+\alpha_1+\cdots +\alpha_l = \alpha$. Joyce's wall-crossing formula (Theorem 5.9 in \cite{Jo21}) between Bradlow stability $\mu^{t_-}$ and $\mu^{t_+}$-stability is
\begin{align}[P_{\alpha}^{t_{-}}]^\vir=\sum_{\underline{\alpha} \vdash \alpha\label{eq: generalbradlowwc}
}U(&\underline{\alpha}; \mu^{t_-}, \mu^{t_+})\\\nonumber 
&\big[[M_{\alpha_1}]^\inva,\big[[M_{\alpha_2}]^\inva,\ldots, \big[[M_{\alpha_l}]^\inva,[P_{\alpha_0}^{t+}]^\vir \big],\ldots \big]\big]\label{eq: generalbradlowwc}
\end{align}
where
\[U(\underline{\alpha}; \mu^{t_-}, \mu^{t_+})=U\big((0,\alpha_1), \ldots, (0, \alpha_l), (1, \alpha_0); \mu^{t_-},  \mu^{t_+}\big)\in \BQ\]
are combinatorial coefficients defined in \cite[Section 3.2]{Jo21}. In particular we have a wall-crossing formula between the (limit) stability conditions $\mu^{0+}$ and $\mu^\infty$ 
\begin{equation}
\label{eq: rankreductionwallcrossing}[P_{\alpha}^{0+}]^\vir=\sum_{\underline{\alpha}\vdash \alpha
}U(\underline{\alpha})
\big[[M_{\alpha_1}]^\inva,\big[[M_{\alpha_2}]^\inva,\ldots, \big[[M_{\alpha_l}]^\inva,[P_{\alpha_0}^\infty]^\vir \big]\ldots \big]\big]
\end{equation}
where
\[U(\underline{\alpha})=U\big((0,\alpha_1), \ldots, (0, \alpha_l), (1, \alpha_0); \mu^{0+}, \mu^\infty\big)\in \BQ\]
Equations \eqref{eq: generalbradlowwc} and \eqref{eq: rankreductionwallcrossing} are proven as equalities in the Lie algebra $\widecheck H_\bullet(\CN_X)$ under some technical assumptions (Assumptions 5.1-5.3 in loc. cit.) on the category $\grave{\CA}$ and on a set of permissible classes $C_{\textup{pe}}(\grave{\CA})$. The necessary assumptions are all verified in Section 8 of loc. cit. for the cases $(m,d)=(1,1), (2,2)$. The case $(m,d)=(2,1)$ is not treated, but the first author plans to address this in a separate work focusing on proving them for all pair theories relevant to Joyce's wall-crossing. 

\begin{assumption}
\label{ass:WCpair}
Let $\grave{\CA}$ be the abelian category of pairs $U\otimes \CO_X\to F$, where $F$ is a sheaf with $\dim(F)\leq 1$ and $U$ a vector space.\footnote{As opposed to $\acute{\CA}$ which was used in \cite{Jo21} to denote the category of all pairs without the restriction on dimension.} 
Then the assumptions in \cite[Assumption 5.1-5.3]{Jo21} hold for this category and a surface $S$ such that $h^{0,2}(S)=0$ with
\begin{align*}
C_{\textup{pe}}(\grave{\CA})&=\{(e,\llbracket F\rrbracket)\colon e=0,1\textup{ and }\dim(F)= 1\}\subseteq \BZ\times K_\sst^0(S)\,,\\
\mathscr{S}&=\{\mu^t\colon t\in \BR_{>0}\}\,.
\end{align*}
\end{assumption}
\begin{remark}Note that the data used in \cite[Assumption 4.4]{Jo21} was constructed in Definition~\ref{def: pairVOA}. Most of the assumptions 5.1 - 5.3 are satisfied by a simple adaptation of the arguments in \cite[Sections 8.2 and 8.3]{Jo21}. New ideas are needed only for the assumptions 5.2(b) and 5.2(h)  where one needs to show that the stacks of semistable pairs are finite type and the moduli spaces of stable quiver-pairs defined in \cite[Def. 5.5]{Jo21} are proper.\end{remark}

An important point in the rank reduction induction that we will use is that there are no contributions of rank 0 objects in the wall-crossing formula above.

\begin{lemma}\label{lem: norank0}
Let $\alpha$ be such that $r(\alpha)>0$. If the coefficient $U(\underline{\alpha})$ is non-zero, then $r(\alpha_i)>0$ for each $i=0, 1, \ldots, l$. 
\end{lemma}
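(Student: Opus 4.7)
The strategy is to unpack Joyce's definition of $U(\underline{\alpha})$ from \cite[Def.~3.10]{Jo21} and use the fact that non-vanishing of $U$ forces the partition $\underline{\alpha}$ to correspond to a genuine sequence of wall-crossings in the one-parameter family $\{\mu^t\}_{t>0}$. Concretely, by the standard non-vanishing criterion for Joyce's $U$-coefficients (see \cite[Thm.~3.11]{Jo21}), $U(\underline{\alpha})\neq 0$ implies the existence of intermediate stability parameters $0<t_1\leq t_2\leq\cdots\leq t_l<\infty$ at which the subclasses that are being split off must have matching Bradlow slopes with the corresponding partial pair-classes. I would first formulate this carefully by tracking which partial sums $(1,\alpha_0+\alpha_1+\cdots+\alpha_k)$ and $(0,\alpha_{k+1})$ have coinciding $\mu^{t_{k+1}}$-slopes at a wall.

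The heart of the argument is that at any wall $t=t^*\in(0,\infty)$, matching slopes between a pair-class $(1,\beta)$ and a pure-sheaf class $(0,\gamma)$ is the equation
\[
\frac{\deg(\beta)+t^*}{r(\beta)}=\frac{\deg(\gamma)}{r(\gamma)},
\]
whose finiteness forces both $r(\beta)>0$ and $r(\gamma)>0$. Applying this to the $k$-th wall of the decomposition, with $\beta=\alpha_0+\alpha_1+\cdots+\alpha_{k-1}$ and $\gamma=\alpha_k$, one obtains $r(\alpha_k)>0$ for every $k\geq 1$ and simultaneously $r(\alpha_0+\alpha_1+\cdots+\alpha_{k-1})>0$ for every $1\leq k\leq l$. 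Taking $k=1$ in the latter gives $r(\alpha_0)>0$, and the former handles $\alpha_1,\ldots,\alpha_l$.

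The only subtle point is to extract the above slope-matching statement cleanly from the combinatorial formula for $U$. I would do this in two possible ways: either (i) by directly interpreting Joyce's signed sum as a product of one-wall-at-a-time contributions (using the recursive structure in \cite[\S 3.2]{Jo21}), or (ii) by showing that if any $r(\alpha_i)=0$ then $\mu^t((0,\alpha_i))=+\infty$ for all $t>0$, while $\mu^t((1,\alpha_0+\cdots))$ is finite at every wall (given that the total class satisfies $r(\alpha)>0$, so the relevant partial sum of pair-type has positive rank), making the chain-conditions in the definition of $U$ incompatible. The main obstacle is therefore essentially combinatorial: one must cleanly reduce the non-vanishing of $U(\underline{\alpha})$ to a sequence of two-term wall conditions, after which the rank-positivity conclusion is immediate from the finiteness of $\mu^{t^*}$-slopes on both sides of each wall.
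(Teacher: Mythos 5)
Your proposal is correct and follows essentially the same route as the paper: decompose the $0+/\infty$ wall-crossing into single-wall crossings between adjacent chambers, invoke the non-vanishing criterion of \cite[Theorem 3.11]{Jo21}, and conclude from finiteness of the pair slope (which uses $r(\alpha)>0$) that every class in the partition has positive rank. The only difference is that the paper resolves the subtlety you flag by noting that the wall stability $\mu^t$ dominates both adjacent-chamber stabilities, so Theorem 3.11 applies directly at each single wall and forces \emph{all} slopes $\mu^t(1,\alpha_0),\mu(\alpha_1),\ldots,\mu(\alpha_l)$ to equal the finite value $\mu^t(1,\alpha)$, with no need for your sequential two-term reduction of the $U$-coefficients.
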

\begin{proof}
Let $t$ be a wall and let $t_-<t<t_+$ define stability conditions on the two chambers adjacent to the wall. We have the wall-crossing formula between $\mu^{t_-}$-stability and $\mu^{t_+}$-stability given by \eqref{eq: generalbradlowwc}. To prove the Lemma it is enough to show that the coefficients $U(\underline{\alpha}; \mu^{t_-}, \mu^{t_+})$ vanish unless $r(\alpha_i)> 0$ for every $i$, since \eqref{eq: rankreductionwallcrossing} can be obtained by putting together the $\mu^{t_-}/\mu^{t_+}$ wall-crossing formulae; in other words, \cite[Theorem 3.11]{Jo21} can be applied iteratively to compute $U(\underline{\alpha})$ in terms of $U(\underline{\alpha}; \mu^{t_-}, \mu^{t_+})$ for $t_-, t_+$ in adjecent chambers. 

Since $\mu^{t_-}$, $\mu^{t_+}$ are in the adjacent chambers to the wall defined by $t$, the stability $\mu^t$ dominates (cf. \cite[Definition 3.8]{Jo21}) both $\mu^{t_-}$ and $\mu^{t_+}$. By \cite[Theorem 3.11]{Jo21}, $U(\underline{\alpha}; \mu^{t_-}, \mu^{t_+})=0$ unless
\[\mu^t(1, \alpha_0)=\mu(\alpha_1)=\ldots=\mu(\alpha_l)=\mu^t(1, \alpha).\]
Since $r(\alpha)>0$, $\mu^t(1, \alpha)<\infty$ so it follows that $r(\alpha_i)>0$ for each $i=0, 1, \ldots, l$. \qedhere
\end{proof}

Formula \eqref{eq: rankreductionwallcrossing} holds a priori in the Lie algebra $\widecheck{H}_\bullet(\CN_X)$. As explained in Remark \ref{rem: stackNX}, we can pushforward this identity to the Lie algebra $\widecheck{H}_\bullet(\CP_X)$ or, equivalently, $\widecheck V_\bullet^{\pa}$ (recall Lemma \ref{Lem: DXinv}). However, our formulation of the Virasoro constraints for the pair moduli spaces $P_\alpha^t$ is not in terms of the class $[P_\alpha^t]^\vir\in \widecheck V_\bullet^{\pa}$, but instead of its lift $[P_\alpha^t]^\vir_{(\CO,\BF)}\in V_\bullet^{\pa}$ to the vertex algebra (see Theorem \ref{thm: virasorophysical}). Hence it is desirable to lift the formula \eqref{eq: rankreductionwallcrossing} to the vertex algebra. We use the following Lemma to do so: 

\begin{lemma}\label{lem: liftvertexalgebra}
\begin{enumerate}
\item[a)]
Suppose that $\overline{u}\in \widecheck V_\bullet\subseteq \widecheck V_\bullet^{\pa}$ and $v\in V_\bullet^{\pa}$ is such that \[\ch_1^\CV(\pt)\cap v=0\,.\] Then
\[\ch_1^\CV(\pt)\cap [\overline u, v]=0\]
where the bracket is the partial lift to the vertex algebra from Lemma \ref{partial lift}.
\item[b)] Let $u,v\in V_{\bullet, (\alpha_1, \alpha_2)}^{\pa}$ with $\rk(\alpha_1)>0$ be such that 
\[\ch_1^\CV(\pt)\cap u=0=\ch_1^\CV(\pt)\cap v\,\]
and $\overline u=\overline v$ in $\widecheck V_\bullet^{\pa}$. Then $u=v$ in $V_\bullet^{\pa}$. 
\end{enumerate}
\end{lemma}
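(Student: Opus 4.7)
The plan is to exploit the fact that the operator $\partial\coloneqq \ch_1^{\CV}(\pt)\cap$ admits a very concrete description on the lattice vertex algebra side: under the isomorphism $V_\bullet^{\pa}\cong \BC[\Lambda^{\pa}_\sst]\otimes \BD_{\Lambda^{\pa}}$ of Theorem \ref{thm: gross}, formula \eqref{eq: capdescendentspairs} shows that $\partial$ is a first-order derivation on $\BD_{\Lambda^{\pa}}$ of the shape
\[\partial=\sum_{e\in B_K}\mathrm{rk}(e)\,\frac{\partial}{\partial(e^{\CV})_{-1}}\,,\]
where $B_K$ is a basis of $K^\bullet(X)$. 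In particular $\partial$ is locally nilpotent on $V_\bullet^{\pa}$ and annihilates the constants $e^{\alpha^{\pa}}$. Moreover, the operator $\partial$ coincides with $v_{(1)}$ for the unique $v\in\Lambda^{\pa}_{\overline 0}$ determined by $\chi^{\pa}_\sym(v,w^{\pa})=\langle \pt^{\CV},w^{\pa}\rangle$, and a short computation (either directly in the lattice VOA, or dually via $[\bR_{-1}^{\pa},\,\ch_1^{\CV}(\pt)\cdot]=\ch_0^{\CV}(\pt)=\mathrm{rk}(\alpha_1)$ on $\BD^{X,\pa}_{(\alpha_1,\alpha_2)}$ combined with Lemma \ref{lem: translationdualr-1}) yields on the component $V_\bullet^{\pa}|_{(\alpha_1,\alpha_2)}$
\[v_{(0)}=\mathrm{rk}(\alpha_1)\cdot\id\,, \qquad [T,\partial]=-\mathrm{rk}(\alpha_1)\cdot\id\,.\]

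For part (a), let $u\in V_\bullet\subseteq V_\bullet^{\pa}$ be a representative of $\overline u$, so that $u$ lives on components with $\alpha_1=0$. There $\ch_1^{\CV}(\pt)$ pulls back to zero from $\CP_{(0,\alpha_2)}$ (since the universal $\CV$ is trivial), hence $\partial u=v_{(1)}u=0$; similarly $v_{(0)}u=0$. Applying the vertex algebra commutator formula
\[[v_{(1)},u_{(0)}]=(v_{(0)}u)_{(1)}+(v_{(1)}u)_{(0)}\]
gives $[\partial,u_{(0)}]=0$, so $\partial[\overline u,v]=\partial\,u_{(0)}v=u_{(0)}\partial v=0$. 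Alternatively, one can work geometrically: by the projection formula and $\pi_1\circ\Sigma^{\pa}=\Sigma\circ(\pi_1\times\pi_1)$ with $\Sigma^*\CG=\CG_{1,3}\oplus\CG_{2,3}$, one obtains $(\Sigma^{\pa})^*\ch_1^{\CV}(\pt)=\ch_1^{\CV(1)}(\pt)+\ch_1^{\CV(2)}(\pt)$; the first summand restricts to zero on the factor $\CP_{(0,\alpha_2)}$ carrying $u$, while the second produces $u\boxtimes\partial v=0$ when inserted into \eqref{eq: joycefieldspa}.

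For part (b), since $\overline u=\overline v$ we may write $u-v=Tw$ for some $w\in V_\bullet^{\pa}$; because $T$ preserves $\Lambda_\sst^{\pa}$-components, $w$ lives on $(\alpha_1,\alpha_2)$. Setting $r=\mathrm{rk}(\alpha_1)>0$, the commutator relation rearranges to $\partial T=T\partial+r$, so $\partial(Tw)=T\partial w+rw=0$ forces $T\partial w=-rw$. Iterating this identity yields by induction
\[T\partial^n w=-n\,r\,\partial^{n-1}w\,.\]
Local nilpotency of $\partial$ gives $\partial^N w=0$ for some $N\gg 0$; plugging into the formula with $n=N$ and dividing by $Nr\neq 0$ yields $\partial^{N-1}w=0$, and a backward induction down to $n=1$ concludes $w=0$, hence $u=v$.

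The main obstacle I anticipate is the bookkeeping behind the commutator $[T,\partial]=-\mathrm{rk}(\alpha_1)$: one must carefully distinguish the translation operator $T$ on $V_\bullet^{\pa}$ (which involves both the derivation on $\BD_{\Lambda^{\pa}}$ and the shift $T(e^{\alpha^{\pa}})=e^{\alpha^{\pa}}\otimes\alpha^{\pa}_{-1}$) from the analogous operator on $\BD_{\Lambda^{\pa}}$ alone, where the commutator vanishes. Once this identity is in hand, the nilpotency argument is completely formal, and part (a) reduces to a single application of the vertex algebra commutator identity.
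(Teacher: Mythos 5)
Your proposal is correct. Part a) is essentially the paper's own argument: the paper also identifies $\ch_1^\CV(\pt)\cap(-)$ with $w_{(1)}$ for the unique even $w$ with $\chi^{\pa}_{\sym}(w,-)=\langle\pt^\CV,-\rangle$, applies the commutator consequence of \eqref{Eq: skewjacobi} to $w_{(1)}(u_{(0)}v)$, and kills the correction terms using $w_{(0)}u=w_{(1)}u=0$ on the sheaf component where the universal $\CV$ vanishes — exactly your reasoning (your ``geometric'' alternative is only a sketch, since one still has to commute the cap past $e^{zT}$ and $c_{z^{-1}}(\Theta^{\pa})$, but it is not needed). Part b) is where you genuinely add something: the paper disposes of it in one line, writing $\ch_1^\CV(\pt)\cap T(x)=(\bR_{-1}\ch_1^\CV(\pt))\cap x=\rk(\alpha_1)x$, which as stated drops the term $T\big(\ch_1^\CV(\pt)\cap x\big)$; the correct relation coming from ``$T$ dual to the derivation $\bR_{-1}$'' is $\ch_1^\CV(\pt)\cap T(x)=T\big(\ch_1^\CV(\pt)\cap x\big)+\rk(\alpha_1)\,x$, i.e. your commutator identity $[\partial,T]=\rk(\alpha_1)\cdot\id$ on the $(\alpha_1,\alpha_2)$-component. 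Your subsequent induction $T\partial^{n}x=-n\,\rk(\alpha_1)\,\partial^{n-1}x$ together with local nilpotency of $\partial$ (which holds since $\partial$ is a degree-lowering derivation on $\BC[\Lambda^{\pa}_\sst]\otimes\BD_{\Lambda^{\pa}}$) is precisely what is needed to conclude $x=0$, so your version of b) is more careful than, and effectively repairs, the paper's abbreviated computation; the only cosmetic issues are the clash between your auxiliary $v\in\Lambda^{\pa}_{\overline 0}$ and the $v$ of the lemma, and the harmless use of $\rk(e)$ for odd $e$, where the corresponding terms simply vanish.
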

\begin{proof}
Since $\chi_\sym^{\pa}$ is non-degenerate by Lemma \ref{Lem: nondegenerate}, there is $w\in K^\bullet(X)^{\oplus 2}$ such that \[\chi_\sym^{\pa}(w, -)=\langle \pt^\CV, -\rangle\,.\]
Comparing \eqref{Eq: explicitfield} and \eqref{eq: capdescendentspairs} it follows that $\ch_1^\CV(\pt)\cap -=w_{(1)}$. Using the identity \eqref{Eq: skewjacobi} we have
\[\ch_1^\CV(\pt)\cap [\overline u, v]=w_{(1)}(u_{(0)}v)=u_{(0)}(w_{(1)} v)+(w_{(0)} u)_{(1)} v-(w_{(1)} u)_{(0)} v.\]
By hypothesis $w_{(1)} v=0$. Since $u\in V_\bullet$ both $w_{(0)} u=w_{(1)} u=0$ as the pullbacks of $\ch_1^\CV(\pt), \ch_0^\CV(\pt)$ to $H^\bullet(\CM_X)$ both vanish.

For the second part, suppose that $u-v=T(x)$. Consider the operator
\[\eta^\dagger=\sum_{j\geq 0}\frac{(-1)^j}{j!\rk(\alpha_1)^j}T^j\circ \left(\ch_1^\CV(\pt)^j\cap -\right)\]
on $V_{\bullet, (\alpha_1, \alpha_2)}^{\pa}$; note that this is dual to the operator $\eta$ from Remark \ref{rem: rationaldeltanormalized}. It follows from Lemma \ref{lem: translationdualr-1} that
\[[\ch_1^\CV(\pt)\cap -, T]=\ch_0^\CV(\pt)\cap -=\rk(\alpha_1)\,,\]
from which it is easy to show that $\eta^\dagger\circ T=0$. Thus, by the assumption,
\[u-v=\eta^\dagger(u-v)=\eta^\dagger(T(x))=0\,.\qedhere\]


\end{proof}

We can use the lemma to lift the previous wall-crossing formula to an equality
\begin{align}[P_{\alpha}^{0+}]^\vir_{(\CO,\BF)}=\sum_{\underline{\alpha} \vdash \alpha
}&U(\underline{\alpha})
&\big[[M_{\alpha_1}]^\inva,\big[[M_{\alpha_2}]^\inva,\ldots, \big[[M_{\alpha_l}]^\inva,[P_{\alpha_0}^\infty]^\vir_{(\CO,\BF)}  \big],\ldots \big]\big]
\label{eq: wallcrossinglift}
\end{align}
holding in $V_\bullet^{\pa}$, where all the brackets on the right hand side are the partial lift of the Lie bracket to the vertex algebra. To deduce this from the lemma we note that
\[\ch_1^\CV(\pt)\cap [P_{\alpha_0}^t]^\vir_{(\CO,\BF)}=0\]
since the pullback of $\ch_1^\CV(\pt)$ to $P_{\alpha_0}^t$ is $\xi_\CO(\ch_1(\pt))=0.$ By part a) of the lemma the right hand side is also annihilated by $\ch_1^\CV(\pt)$ and by part b) we must have equality -- both sides live in $V^{\pa}_{\bullet, (1, \alpha)}$ and their classes in $\widecheck V_\bullet^{\pa}$ agree by \eqref{eq: rankreductionwallcrossing}.

\subsection{Rank reduction of Virasoro}

We can now explain the rank reduction argument for proving Virasoro on $M_\alpha$ assuming that it holds for the stable pair moduli space $P_\alpha^\infty$. We described these spaces explicitly in Section \ref{sec: limits}.

\begin{theorem}\label{thm: rankreduction}

Working with $(m,d)= (1,1),(2,2)$, suppose that the pair Virasoro constraints (Conjecture \ref{conj: pairvirasoro}) hold for $P_\alpha^\infty$ for every $\alpha$ with $r(\alpha)> 0$.
Then the Virasoro conjecture holds for $M_\alpha$, $P^t_{\alpha}$ for every $\alpha$ with $r(\alpha)>0$ and $t\in [0+, \infty]$, i.e.,
\[[M_\alpha]^\inva \in \widecheck P_0\quad \textup{ and }\quad [P_\alpha^t]^\inva \in P_0^{\pa}\,.\]
If the Assumption \ref{ass:WCpair} holds, then the above is true also for $(m,d) =(2,1)$. 
\end{theorem}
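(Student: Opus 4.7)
The plan is to induct on the generalized rank $r(\alpha)$, establishing $[M_\alpha]^\inva\in \widecheck P_0$ and $[P_\alpha^t]^\inva\in P_0^{\pa}$ for every generic $t\in[0+,\infty]$ simultaneously. Assuming the statement for every $\beta$ with $0<r(\beta)<r(\alpha)$, I will prove it for $\alpha$; the case $r(\alpha)=1$ is handled by the same argument, since Lemma \ref{lem: norank0} then forces every partition appearing in the relevant wall-crossing formulae to be trivial and the discussion below collapses accordingly.

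The first task is to show $[P_\alpha^{0+}]^\vir_{(\CO,\BF)}\in P_0^{\pa}$ via the lifted wall-crossing formula \eqref{eq: wallcrossinglift}. By Lemma \ref{lem: norank0} every $\alpha_i$ appearing has $r(\alpha_i)>0$; for any non-trivial partition ($l\geq 1$) all these ranks are strictly below $r(\alpha)$, so the inductive hypothesis places each $[M_{\alpha_i}]^\inva$ in $\widecheck P_0$ and each $[P_{\alpha_0}^\infty]^\vir_{(\CO,\BF)}$ in $P_0^{\pa}$; the trivial partition $l=0$ contributes a scalar multiple of $[P_\alpha^\infty]^\vir_{(\CO,\BF)}$, primary by the hypothesis of the theorem. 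Propositions \ref{prop: wallcrossingcompatibility1} and \ref{prop: wallcrossingcompatibility2} then ensure that every nested Lie bracket stays in $P_0^{\pa}$, so the sum lies in $P_0^{\pa}$. The analogous wall-crossing formula across each individual Bradlow wall in $t$, combined with an induction on the walls traversed starting from $t=\infty$, extends the conclusion to every generic $t\in[0+,\infty]$.

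For the sheaf class, I will solve the defining relation \eqref{eq: upsilonwallcrossing} for the top term,
\begin{equation*}
\chi(\alpha)\,[M_\alpha]^\inva \;=\; \Upsilon_\alpha \;-\; \sum_{\substack{\underline{\alpha}\vdash\alpha,\ l\geq 2\\ \mu(\alpha_i)=\mu(\alpha)}}\frac{(-1)^{l+1}\chi(\alpha_1)}{l!}\,\big[\big[\ldots\big[[M_{\alpha_1}]^\inva,[M_{\alpha_2}]^\inva\big],\ldots\big],[M_{\alpha_l}]^\inva\big].
\end{equation*}
For $l\geq 2$ every summand involves classes $[M_{\alpha_i}]^\inva$ with $r(\alpha_i)<r(\alpha)$, so by the inductive hypothesis and Proposition \ref{prop: wallcrossingcompatibility1} each correction bracket already lies in $\widecheck P_0$. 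Since $\chi(\alpha)\neq 0$ by Assumption \ref{ass: alphabig}, the conclusion reduces to showing $\Upsilon_\alpha\in\widecheck P_0$. Recalling $\Upsilon_\alpha=\Pi_\ast\bigl(c_{\chi(\alpha)-1}(\T^{\rel})\cap[P_\alpha^{0+}]^\vir_{(\CO,\BF)}\bigr)$, this will follow from the first step via the projective-bundle-style compatibility of Theorem \ref{lem: projectivebundle}, designed to guarantee precisely that the operation $\Pi_\ast\circ\bigl(c_{\chi(\alpha)-1}(\T^{\rel})\cap-\bigr)\colon V_\bullet^{\pa}\to \widecheck V_\bullet$ sends pair primary states to sheaf primary states.

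The hardest ingredient will be this projective-bundle compatibility, Theorem \ref{lem: projectivebundle}. Neither the pushforward along $\Pi$ nor the cap product with $c_{\chi(\alpha)-1}(\T^{\rel})$ respects primariness on its own; the proof will combine the fact that $\T^{\rel}$ is pulled back from the $\CF$-factor via $\Pi$ with the explicit form of the Virasoro fields on $V_\bullet^{\pa}$ from Theorem \ref{thm: duality} to show that this particular combination does send $P_0^{\pa}$ into $\widecheck P_0$. Once this compatibility is in place, everything else reduces to formal manipulations inside the Lie algebra of primary states governed by Propositions \ref{prop: wallcrossingcompatibility1} and \ref{prop: wallcrossingcompatibility2}.
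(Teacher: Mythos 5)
Your proposal is correct and follows essentially the same route as the paper: induction on $r(\alpha)$, the chain $[P_\alpha^\infty]^\vir_{(\CO,\BF)}\in P_0^{\pa}\Rightarrow[P_\alpha^{t}]^\vir_{(\CO,\BF)}\in P_0^{\pa}\Rightarrow\Upsilon_\alpha\in\widecheck P_0\Rightarrow[M_\alpha]^\inva\in\widecheck P_0$ via the lifted wall-crossing formula \eqref{eq: wallcrossinglift}, Lemma \ref{lem: norank0}, Propositions \ref{prop: wallcrossingcompatibility1}--\ref{prop: wallcrossingcompatibility2}, the defining relation \eqref{eq: upsilonwallcrossing}, and the projective-bundle compatibility of Theorem \ref{lem: projectivebundle}, which the paper likewise proves separately and assumes at this point. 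The only detail you gloss over is that Assumption \ref{ass: alphabig} (hence $\chi(\alpha)>0$ and the validity of \eqref{eq: upsilonwallcrossing}) does not hold for arbitrary $\alpha$; the paper first replaces $\alpha$ by $\alpha(mH)$ and uses Lemma \ref{lem: twist} to transport the primary-state property back, a reduction you should state explicitly.
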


The strategy of the proof is quite simple: we will argue by induction on $r(\alpha)$ and we will prove (assuming the induction hypothesis) that
\[[P_\alpha^\infty]^\vir_{(\CO, \BF)}\in P_0^{\pa}\overset{(\textup{I})}{\Longrightarrow} [P_\alpha^{0+}]^\vir_{(\CO, \BF)}\in P_0^{\pa}\overset{(\textup{II})}{\Longrightarrow} \Upsilon_\alpha\in \widecheck P_0\overset{(\textup{III})}{\Longrightarrow} [M_\alpha]^{\inva}\in  \widecheck P_0\]
Implications $(\textup{I})$ and $(\textup{III})$ will follow from \eqref{eq: wallcrossinglift}, \eqref{eq: upsilonwallcrossing} and the compatibility between wall-crossing and Virasoro constraints proven in Propositions \ref{prop: wallcrossingcompatibility1} and \ref{prop: wallcrossingcompatibility2}.

The implication $(\textup{II})$ is a projective bundle compatibility. We will postpone its proof until the next section, see Theorem \ref{lem: projectivebundle}, and prove Theorem \ref{thm: rankreduction} assuming it.

\begin{proof}[Proof of Theorem \ref{thm: rankreduction}]
We argue by induction on $r(\alpha)$. The base case is when $r(\alpha)>0$ is minimal and is dealt essentially in the same way as the induction step. Assume then that $[M_{\alpha'}]^\inva \in \widecheck P_0$ for every $\alpha'$ such that $0<r(\alpha')<r(\alpha)$; note in particular that this holds vacuously if $r(\alpha)$ is minimal. 

To prove implication $(\textup{I})$ we consider the wall-crossing formula \eqref{eq: wallcrossinglift} and we look at each individual summand 
\[\big[[M_{\alpha_1}]^\inva,\big[[M_{\alpha_2}]^\inva,\ldots, \big[[M_{\alpha_l}]^\inva,[P_{\alpha_0}^\infty]^\vir_{(\CO,\BF)}  \big],\ldots \big]\big]\]
with $\sum_{i=0}^l \alpha_i=\alpha$ and non-vanishing coefficient $U(\underline{\alpha})\neq 0$. By hypothesis, $[P_{\alpha_0}^\infty]^\vir_{(\CO, \BF)} \in P_{0}^{\pa}$. Moreover by Lemma \ref{lem: norank0} we have for $i=1, \ldots, l$ that
\[0<r(\alpha_i)<r(\alpha_i)+r(\alpha_0)\leq r(\alpha)\,.\]
So the induction hypothesis applies and
\[[M_{\alpha_i}]^{\mathrm{inv}}\in \widecheck P_0\quad \textup{ for}\quad i=1, \ldots, l\,.\]
By Propositions \ref{prop: wallcrossingcompatibility1} and \ref{prop: wallcrossingcompatibility2} we have 
\[\big[[M_{\alpha_1}]^\inva,\big[[M_{\alpha_2}]^\inva,\ldots, \big[[M_{\alpha_l}]^\inva,[P_{\alpha_0}^\infty]^\vir_{(\CO,\BF)}  \big],\ldots \big]\big]\in P_{0}^{\pa}\]
so $[P_\alpha^{0+}]^\vir_{(\CO, \BF)} \in P_{0}^{\pa}$. The exact same argument shows that $[P_\alpha^{t}]^\vir_{(\CO, \BF)} \in P_{0}^{\pa}$ for any $t>0$: just replace the ${0+}/\infty$ wall-crossing formula \eqref{eq: wallcrossinglift} by the $t/\infty$ wall-crossing. 

For both implications $(\textup{II})$ and $(\textup{III})$ we will assume that $\alpha$ satisfies Assumption~\ref{ass: alphabig}. This is enough to prove the result for every $\alpha$ since we may replace $\alpha$ by $\alpha(mH)$ for large enough $m$ so that $\alpha(mH)$ satisfies the assumption. As explained in Section~\ref{sec: joyceclasses}, Joyce classes $[M_\alpha]^\inva, [M_{\alpha(mH)}]^\inva$ are related by the automorphism $H_\bullet(\CM_X)~\to~H_\bullet(\CM_X)$ induced by $-\otimes \CO_X(mH)$. This automorphism preserves physical states by Lemma \ref{lem: twist}. 

The implication $(\textup{II})$ is precisely Theorem \ref{lem: projectivebundle}. So we are left with implication $(\textup{III})$. For that, we use \eqref{eq: upsilonwallcrossing} and induction as in $(\textup{I})$. The left hand side of \eqref{eq: upsilonwallcrossing} is $\Upsilon_\alpha\in \widecheck P_0$. The right hand side is the sum of the leading term $\chi(\alpha)[M_\alpha]^\inva$ with terms of the form
\[\big[\big[\ldots  \big[ [M_{\alpha_1}]^\inva,[M_{\alpha_2}]^\inva \big],\ldots \big ],[M_{\alpha_l}]^\inva \big]\]
with $l\geq 2$, $\underline{\alpha}\vdash \alpha$ and $\mu(\alpha_i)=\mu(\alpha)$. The latter condition implies that $r(\alpha_i)>0$, and since $l\geq 2$ it follows that $0<r(\alpha_i)<r(\alpha)$. Thus the induction hypothesis guarantees that $[M_{\alpha_i}]^\inva\in \widecheck P_0$, so by Proposition \ref{prop: wallcrossingcompatibility1} we get 
\[\big[\big[\ldots  \big[ [M_{\alpha_1}]^\inva,[M_{\alpha_2}]^\inva \big],\ldots \big ],[M_{\alpha_l}]^\inva \big]\in \widecheck P_0\,.\]
Finally this implies that the leading term also satisfies Virasoro, i.e., \[[M_\alpha]^\inva\in \widecheck P_0\]
since $\chi(\alpha)>0$ by Assumption \ref{ass: alphabig}. \qedhere
\end{proof}

\subsection{Projective bundle compatibility}
 We recall the reader of the notation $\Pi, \Pi_\alpha, \T^{\rel}, \T_{\Pi_\alpha}$ introduced in Section \ref{sec: joyceclasses}. For ease of notation, we omit the superscript ``pa'' from now on when it is clear from the context that we use the pair Virasoro operators.


\begin{theorem}\label{lem: projectivebundle}
Let $\alpha$ be a class satisfying Assumption \ref{ass: alphabig}. Suppose that $P^{0+}_\alpha$ satisfies the pair Virasoro constraints (Conjecture~\ref{conj: pairvirasoro}), i.e., $[P^{0+}_\alpha]^\vir_{(\CO, \BF)}\in P^{\pa}_{0}$. Then 
\[\Upsilon_\alpha=\Pi_\ast\big(c_{\chi(\alpha)-1}(\T^{\rel})\cap [P^{0+}_\alpha]^\vir_{(\CO, \BF)}\big)\in \widecheck P_0\]
satisfies the sheaf Virasoro constraints.
\end{theorem}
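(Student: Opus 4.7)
The plan is to translate both the conclusion and the hypothesis into descendent integral identities via Corollary~\ref{cor: virasorophysical}, apply the projection formula along $\Pi_\alpha\colon P^{0+}_\alpha\to \CM_X^\rig$, and reduce the problem to an algebraic identity in $\BD^X$. By Corollary~\ref{cor: virasorophysical}(1), $\Upsilon_\alpha\in\widecheck P_0$ amounts to $\int_{\Upsilon_\alpha}\xi_\CG(\bL_\inv(D))=0$ for all $D\in\BD^X$, while by Corollary~\ref{cor: virasorophysical}(2) the hypothesis unpacks as $\int_{[P^{0+}_\alpha]^\vir}\xi_\BF(\bL_k^{\CO_X}(D'))=0$ for all $k\geq 0$ and $D'\in\BD^X$. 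Combining $\Upsilon_\alpha=(\Pi_\alpha)_*\bigl(c_{\chi(\alpha)-1}(\T_{\Pi_\alpha})\cap [P^{0+}_\alpha]^\vir\bigr)$, the identity $\Pi_\alpha^*\xi_\CG=\xi_\BF$ on weight-$0$ descendents, and the projection formula rewrite the target integral as
\[\int_{[P^{0+}_\alpha]^\vir}c_{\chi(\alpha)-1}(\T_{\Pi_\alpha})\cdot\xi_\BF(\bL_\inv(D)).\]

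Next I would express the Chern-class factor as a descendent realization. Under Assumption~\ref{ass: alphabig}, $\T_{\Pi_\alpha}=p_*\BF-\CO$ is an honest vector bundle of rank $\chi(\alpha)-1$. Grothendieck--Riemann--Roch gives $\ch(p_*\BF)=\xi_\BF\bigl(\sum_i\chh_i(\td(X))\bigr)$, and the Newton--Girard formulas produce an explicit $P\in\BD^X$ with $\xi_\BF(P)=c_{\chi(\alpha)-1}(\T_{\Pi_\alpha})$. The target integral thus becomes $\int_{[P^{0+}_\alpha]^\vir}\xi_\BF\bigl(P\cdot\bL_\inv(D)\bigr)$.

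The main step is the algebraic identity
\[P\cdot\bL_\inv(D)\equiv \sum_{k\geq 0}\bL_k^{\CO_X}(D'_k)\]
modulo elements integrating to zero against $[P^{0+}_\alpha]^\vir$, for suitable $D_k'\in\BD^X$; together with the hypothesis this completes the proof. Expanding $\bL_\inv=\sum_{j\geq -1}\frac{(-1)^j}{(j+1)!}\bL_j\bR_{-1}^{j+1}$ and substituting $\bL_j=\bL_j^{\CO_X}+j!\,\chh_j(\td(X))$ splits the left-hand side into a piece manifestly of the correct form plus a Todd correction $P\cdot \sum_{j\geq -1}\frac{(-1)^j}{j+1}\chh_j(\td(X))\bR_{-1}^{j+1}(D)$ which must be absorbed. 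This absorption reflects the fact that $P$ is itself built from descendents of $\td(X)$; the cancellation should follow from a generating-function argument relating $P$ to $\log c(\T_{\Pi_\alpha})$.

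The main obstacle is the combinatorial bookkeeping for this identity between $P$ and the Todd correction. A more conceptual alternative is to lift the operation $v\mapsto (\pi_2)_*(c\cap v)$ to a residue of a vertex operator built from $e^{(-1,0)}$, which intertwines the $\CV=[\CO_X]$-component of $V_\bullet^{\pa}$ with the $\CV=0$-component identifiable with $V_\bullet$. Commutation with $L^{\pa}_n$ for $n\geq 1$ would then follow directly from the Borcherds/Jacobi identity \eqref{Eq: skewjacobi}, transporting primarity of $[P^{0+}_\alpha]^\vir_{(\CO,\BF)}$ to that of a canonical lift of $\Upsilon_\alpha$ in $V_\bullet\subseteq V_\bullet^{\pa}$.
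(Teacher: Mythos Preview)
Your overall strategy is the same as the paper's: reduce to showing $\int_{[P^{0+}_\alpha]^\vir}\xi_\BF\bigl(P\cdot \bL_\inv(D)\bigr)=0$ with $P=c_{\chi(\alpha)-1}$ written as a descendent, then expand $\bL_\inv$ and match against the pair constraints $\bL_j^{\CO_X}$. However, your sketch contains a genuine gap at the point you label ``manifestly of the correct form''. After substituting $\bL_j=\bL_j^{\CO_X}+j!\,\chh_j(\td(X))$, the first piece is
\[
\sum_{j\geq -1}\frac{(-1)^j}{(j+1)!}\,P\cdot \bL_j^{\CO_X}\bigl(\bR_{-1}^{j+1}D\bigr),
\]
and this is \emph{not} of the form $\sum_k \bL_k^{\CO_X}(D'_k)$: the multiplication by $P$ sits outside the operator. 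Pulling $P$ inside via Leibniz produces an extra correction $-\bR_j(P)\cdot \bR_{-1}^{j+1}D$, and it is \emph{this} term, not the Todd correction alone, that must be controlled. The real content of the argument is the cancellation
\[
\xi_\BF\bigl(\bR_j(c_{\chi(\alpha)-1})\bigr)=j!\,\xi_\BF\bigl(\chh_j(\td(X))\,c_{\chi(\alpha)-1}\bigr)\qquad(j\geq 1),
\]
which the paper obtains from Newton's identity $\sum_{a+b=N}(-1)^a a!\,\ch_a(E)c_b(E)=0$ for $N>\rk E$, applied to $E=\T_{\Pi_\alpha}$. This step uses essentially that $\T_{\Pi_\alpha}$ is an honest vector bundle of rank $\chi(\alpha)-1$ (so $c_b=0$ for $b\geq \chi(\alpha)$); your sketch never invokes this, and the vague appeal to ``$P$ being built from $\td(X)$'' does not supply it.

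There is a second, smaller gap: the pair constraints are only available for $k\geq 0$, so the $j=-1$ summand $P\cdot \bR_{-1}(D)$ cannot be absorbed into any $\bL_k^{\CO_X}(D'_k)$. The paper handles $j=-1$ and $j=0$ together by a pure degree argument, using that after realization $\chh_0(\td(X))=\chi(\alpha)$ so that $\bL_0(-)-\id-(\chi(\alpha)-1)\id=\bL_0^{\CO_X}$ on elements of the relevant degree. Your alternative vertex-operator route is too schematic to assess; as written it does not substitute for the two missing computations above.
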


\begin{proof}
We ought to show that $\int_{\Upsilon_\alpha}\Linv(D)=0$ for any $D\in \BD^X_{\alpha}$ where we use the suggestive integral notation to denote the pairing between a homology class $\Upsilon_\alpha~\in~H_\bullet(\M^\rig_\alpha)$ and a cohomology class $\Linv(D)\in \BD^X_{\inv, \alpha}\cong H^\bullet(\M^\rig_\alpha).$ 

We have the following commutative diagram:
\begin{center}
\begin{tikzcd}[row sep=large, column sep=large]
H^\bullet(P_\alpha^{0+})&
H^\bullet(\CP_{(1,\alpha)})\arrow[l,swap, "(f_{(\CO,\BF)})^\ast"]&
H^{\bullet}(\CM_\alpha^{\rig})\arrow[l, swap, "\Pi^\ast"]\arrow[ll, bend right, swap, "\Pi_\alpha^\ast"]\\
\,&
\BD_{(1, \alpha)}^{X,\pa}\arrow[u, "\sim"'{rotate=90, above},swap, "\xi_{(\CV,\CF)}"]\arrow[ul, "\xi_{(\CO,\BF)}"] \arrow[r, hookleftarrow]&
\BD_{\inv, \alpha}^X\arrow[llu, bend left=50, "\xi_\BF"]\arrow[u, "\sim"'{rotate=90, above},swap, "\xi_{\CF}"]
\end{tikzcd}
\end{center}

Hence we can compute:
\begin{align}
\nonumber \int_{\Upsilon_\alpha}\Linv(D)&=\int_{(\Pi_\alpha)_\ast\left(c_{\tp}(\T_{\Pi_\alpha})\cap [P_\alpha^{0+}]^\vir\right)}\Linv(D)\\
\nonumber &
=\int_{[P_\alpha^{0+}]^\vir}\Pi^\ast_\alpha \big(\Linv(D)\big)c_{\tp}(\T_{\Pi_\alpha})\\
 &=\int_{[P_\alpha^{0+}]^\vir}\xi_\BF \left(\Linv(D)c_{\chi(\alpha)-1}\right)\nonumber\\
 &=\sum_{j\geq -1}\frac{(-1)^j}{(j+1)!}\int_{[P_\alpha^{0+}]^\vir}\xi_\BF \left(\bL_j(\bR_{-1}^{j+1}D)c_{\chi(\alpha)-1}\right) 
 \label{eq: integralUpsilon}
\end{align}

We use $c_{\chi(\alpha)-1}\in \BD_\alpha^X$ to denote the element in the algebra of descendents
\[c_{\chi(\alpha)-1}\coloneqq c_{\chi(\alpha)-1}(Rp_\ast \CF)\in H^\bullet(\CM_\alpha)\cong \BD_\alpha^X\,.\]
The penultimate equality is using that 
\[\T_{\Pi_\alpha}=(f_{(\CO,\BF)})^\ast \T^{\rel} =(f_{(\CO,\BF)})^\ast\big(Rp_\ast \CF-\CO\big)=Rp_\ast \BF-\CO\,.\]

We can calculate $c_{\chi(\alpha)-1}$ more explicitly: by Grothendieck-Riemann-Roch we have
\[\ch\big(Rp_\ast \CF\big)=p_\ast\big(\ch(\CF)\td(X)\big)=\xi_\CF\big(\ch_\bullet(\td(X))\big)\,;\]
and by Newton identities it follows that
\[c(Rp_\ast \CF)=\xi_\CF\left(\exp\left(\sum_{\ell\geq 1}(-1)^{\ell-1}(\ell-1)!\ch_\ell(\td(X))\right)\right).\]

We denote by $c$ the corresponding element in the algebra of descendents and we let $c_i$ be the degree $i$ part of $c$, i.e.,
\[\sum_{i\geq 0}c_i=c=\exp\left(\sum_{\ell\geq 1}(-1)^{\ell-1}(\ell-1)!\ch_\ell(\td(X))\right)\,.\]

Let $n=\chi(\alpha)-1$. We shall now argue that the integral at the end of \eqref{eq: integralUpsilon} vanishes assuming that $P_\alpha^{0+}$ satisfies the pair Virasoro constraints. For convenience, from now on we leave implicit the geometric realization map $\xi_\BF$ in all the integrals against $[P_\alpha^{0+}]^\vir$.

We start with the $j=-1$ and $j=0$ terms in the last line of \eqref{eq: integralUpsilon}, which we treat together.  Their sum vanishes by simple degree considerations:
\begin{align}\nonumber \int_{[P^{0+}_\alpha]^\vir} \big(\bL_0 \bR_{-1}(D)-\bR_{-1}(D)\big) c_{n}&= \int_{[P^{0+}_\alpha]^\vir} \big(\bL_0-\id-n\id)(c_n\bR_{-1}(D))\\
&=\int_{[P^{0+}_\alpha]^\vir} \bL_0^\CO(c_n\bR_{-1}(D))=0\,.\label{eq: j0j-1}
\end{align}
Note that we used
\[\xi_{\BF}(\ch_0^\HH(\td(X)))=\int_X \ch(\alpha)\td(X)=\chi(\alpha)=n+1\,.\]

Consider now $j\geq 1$. By the Virasoro constraints on $P^{0+}_\alpha$ we have
\begin{align}\nonumber 0=&\int_{[P_\alpha^{0+}]^\vir}\bL_j^{\CO}\big(\bR_{-1}^{j+1}(D)c_{n}\big)=\int_{[P_\alpha^{0+}]^\vir}\bL_j\big(\bR_{-1}^{j+1}(D)\big)c_{n}\\
&+\int_{[P_\alpha^{0+}]^\vir}\bR_{-1}^{j+1}(D)\bR_j\big(c_{n}\big)-j!\int_{[P_\alpha^{0+}]^\vir} \ch_j(\td(X))\bR_{-1}^{j+1}(D)c_{n}. \label{eq: largejI}
\end{align}
We analyze the term where the derivation $\bR_j$ applies to $c_{n}$; we may do so using the interaction between an exponential and a derivation:
\[\bR_j(c)=\left(\sum_{\ell \geq 1}(-1)^{\ell-1} (\ell+j)! \ch_{\ell+j}(\td(X))\right)c\,,\]
so
\[\bR_j(c_{n})=\sum_{\substack{a\geq j+1, b\geq 0\\
a+b=n+j}} (-1)^{a-j-1} a!\ch_{a}(\td(X))c_{b}.\]
Using the Newton identities and the fact that $\T_{\Pi_\alpha}$ is a vector bundle of rank $n$, the geometric realization of the above is
\begin{align}\nonumber \label{eq: Trelvectorbundle}\xi_\BF(\bR_j(c_{n}))&=\sum_{\substack{a\geq j+1, b\geq 0\\a+b=n+j}} (-1)^{a-j-1} a!\ch_{a}(\T_{\Pi_\alpha})c_{b}(\T_{\Pi_\alpha})=j!\ch_{j}(\T_{\Pi_\alpha})c_{n}(\T_{\Pi_\alpha})\\
&=\xi_\BF\big(j! \ch_j(\td(X))c_n\big)\,.
\end{align}
It follows that the two last terms in \eqref{eq: largejI} cancel out and we are left with 
\begin{equation}
\label{eq: largejII} \int_{[P_\alpha^{0+}]^\vir}\bL_j\big(R_{-1}^{j+1}(D)\big)c_{n}=0
\end{equation}
for $j\geq 1$. Using \eqref{eq: j0j-1} and \eqref{eq: largejII} for every $j\geq 1$ we have shown that the last integral in \eqref{eq: integralUpsilon} vanishes, and we are done. \qedhere
\end{proof}

\begin{remark}\label{rem: abstractprojective}
We can formulate the previous Theorem more abstractly as follows: let $u\in V_{\bullet, (1, \alpha)}^{\pa}$ be such that 
\begin{enumerate}
\item $\ch_i^\CV(\gamma)\cap u=0$ for $i>0, \gamma\in H^\bullet(X)$;
\item $c_{b}(Rp_\ast \CF)\cap u=0$ for $b\geq \chi(\alpha)$.
\end{enumerate}
Then
\[u\in P_0^{\pa}\Rightarrow \Pi_\ast\big(c_{\chi(\alpha)-1}(\T^{\rel})\cap u\big)\in \widecheck P_0\,.\]
The first condition is used when formulating the pair Virasoro constraints in terms of $\bL_k^\CO$ as in Conjecture \ref{conj: pairvirasoro}. Condition (2) was used in \eqref{eq: Trelvectorbundle}; in the setting of the Lemma, it is a consequence of the fact that $\T_{\Pi_\alpha}$ is a vector bundle of rank $\chi(\alpha)-1$.
\end{remark}

\section{Virasoro for $P_\alpha^\infty$}
\label{sec:lowrank}

In this section, we finish the proof of the main result of this paper:

\begin{theorem}\label{thm: main result of the paper}
    Let $X$ be a curve or a surface with $h^{1,0}=h^{2,0}=0$. Then the moduli space $M$ of slope or Gieseker semistable torsion-free sheaves on $X$ satisfies the Virasoro constraints, i.e., 
    \[[M]^{\inva}\in \widecheck{P}_0\,.\]
    Under Assumption \ref{ass:WCpair}, the same statement holds for the moduli spaces of slope semistable one-dimensional sheaves on such surfaces.
\end{theorem}
When there are no strictly semistable sheaves, the above statement proves Conjecture \ref{conj: virasorosheaves} in the cases $(m,d) = (1,1), (2,2)$ and $(2,1)$ under Assumption \ref{ass:WCpair} as shown in Section \ref{subsec: virasoroprimary}. More generally, this theorem provides strong evidence for Conjecture \ref{conj:actualinvvir} even when there exist strictly semistable sheaves.

In order to prove Theorem \ref{thm: main result of the paper}, we are left to prove the Virasoro constraints for $P_\alpha^\infty$ by Theorem \ref{thm: rankreduction}. This is what we now proceed to do. Recall that we explained in Section \ref{sec: limits} that these moduli spaces are: 
\begin{enumerate}
    \item symmetric powers $C^{[n]}$ for curves,
    \item nested Hilbert scheme $S_\beta^{[0,n]}$ (both in the torsion-free and torsion cases).
\end{enumerate}
In the case of the symmetric power of curves, we give an elementary and direct proof in Proposition \ref{prop: symmetricpowers}.

Recall from Section \ref{sec: limits} that when $\rk(\alpha)=1$, there is an identification $$P_\alpha^\infty=P_\alpha^{0+}$$
so it is enough to prove Virasoro constraints for the latter moduli space in this case. Under Assumption \ref{ass: alphabig}, the natural map $P_{\alpha}^{0+}\to M_{\alpha}$ is a projective bundle -- see Remark \ref{rem: projectivebundle}. For a general $\alpha$ the map is a virtual projective bundle, see Section~\ref{sec: virtualprojectivebundle}. This structure can be used to show the equivalence between Virasoro constraints on $P_\alpha^{0+}$ and $M_\alpha$. Indeed, the implication from $P_\alpha^{0+}$ to $M_\alpha$ was already used in the general rank reduction argument (Theorem \ref{lem: projectivebundle}). We will prove the implication from $M_\alpha$ to $P_{\alpha}^{0+}$ in Corollary \ref{cor: MimpliesP}. It will follow from the formula \eqref{eq: projectivebundlebracket}, expressing the class of $P_{\alpha}^{0+}$ in terms of $M_\alpha$ using Joyce's Lie bracket. Although this formula can be interpreted as a wall-crossing formula (see Remark \ref{rem: JSprojectivebundle}) we will give a direct proof from the definition of the Lie bracket.

Concretely, for surfaces, this virtual projective bundle is identified with the map
\[S_\beta^{[0,n]}\to S^{[n]}\]
that forgets the divisor $E$ and remembers the $0$-dimensional subscheme $Z$. Virasoro constraints were proven for Hilbert schemes of surfaces with $h^{0,1}=0$ in \cite{moreira,moop}. In Section \ref{sec: nestedhilbertscheme} we use this to prove the constraints for nested Hilbert schemes and finish the proof of Theorem \ref{thm: main} $(2),(3)$. 

Finally, we point out that some of the results in this section can alternatively be shown using Joyce-Song wall-crossing, which we discuss in the Appendix. In particular, Proposition \ref{prop: symmetricpowers} is a special case of the more general Theorem \ref{prop: quot}.

\subsection{Symmetric powers of curves}
\label{sec: symmetricpowers}
Symmetric powers $C^{[n]}$ parametrize divisors $E\subseteq C$ of degree $n$ or, equivalently, non-zero maps of the form $\CO_C\to \CO_C(E)$. They come equipped with a universal pair
\[\CO_{C^{[n]}\times C}\to \CO_{C^{[n]}\times C}( \CE)\]
where $\CE\subseteq C^{[n]}\times C$ is the universal divisor.

Let 
\[\{e_j\}_{j=1}^g\subseteq H^{0,1}(C)\,,\quad \{f_j\}_{j=1}^g\subseteq H^{1,0}(C)\]
be basis such that
\[\int_X f_je_i=-\int_X e_if_j=\delta_{ij}.\]
\begin{proposition}
\label{prop: symmetricpowers}
Let $C$ be a curve and $n\geq 1$. Then the pair Virasoro conjecture (cf. Conjecture \ref{conj: pairvirasoro}) holds for the symmetric powers $C^{[n]}$, i.e., $\big[C^{[n]}\big]_{(\CO, \CO(\CE))}\in V_\bullet^{\pa}$.
\end{proposition}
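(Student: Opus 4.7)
The plan is to verify the Virasoro constraints directly, bypassing the wall-crossing diagrams of \eqref{Eq:pairsheafWC}. Because the source sheaf $V = \CO_C$ is trivial, the realization $\xi_{(\CO, \CO(\CE))}$ annihilates $\chh_i(\gamma^{\CV})$ for $i>0$ and sends $\chh_0(\gamma^{\CV})$ to $\int_C \gamma$, so the pair Virasoro operator collapses, via the diagram of Section \ref{sec: virasoropairs}, to the descendent operator
\[
\bL_k^{\CO} = \bR_k + \bT_k - k!\,\chh_k(\td(C))\colon \BD^C \to \BD^C
\]
(with $\bT_k,\chh_k(\td(C))$ acting by multiplication). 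The task thus reduces to showing
\[
\int_{C^{[n]}} \xi_{\CO(\CE)}\big(\bL_k^{\CO}(D)\big) = 0\qquad \text{for every } k \geq 0,\ D \in \BD^C.
\]

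The first step is to make the realization explicit through the Künneth decomposition
\[
[\CE] = \eta \otimes 1_C + \sum_{j=1}^g\big(\xi_j \otimes e_j + \hat\xi_j \otimes f_j\big) + n\cdot 1_{C^{[n]}} \otimes \pt_C \ \in H^2(C^{[n]}\times C),
\]
where $\eta\in H^2(C^{[n]})$ and $\xi_j,\hat\xi_j\in H^1(C^{[n]})$ are the classical Macdonald generators of $H^\bullet(C^{[n]})$. Since $\ch(\CO(\CE)) = e^{[\CE]}$, the class $\xi_{\CO(\CE)}(\chh_i(\gamma)) = p_\ast\big(e^{[\CE]}\,q^\ast\gamma\big)$ is then an explicit polynomial in $\eta, \xi_j, \hat\xi_j$. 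Combining with $\td(C) = 1 - \tfrac{K_C}{2}$ and the Künneth expansion of $\Delta_\ast(\td(C))$ in the symplectic basis $\{1, \pt, e_j, f_j\}$, one writes $\xi_{\CO(\CE)}(\bL_k^{\CO}(D))$ as an explicit element of $H^\bullet(C^{[n]})$.

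The second step is to integrate using Macdonald's closed formulas for $\int_{C^{[n]}} \eta^a \prod_j (\xi_j\hat\xi_j)^{b_j}$. Introducing a formal variable $z$ to track the Virasoro index, the vanishing should follow from a single generating-series identity: the derivation $\bR_k$ translates to an operator of $z\partial_z$ type; multiplication by $\bT_k$ is produced by differentiating $e^{[\CE]}$ against itself; and the Todd correction $-k!\,\chh_k(\td(C))$ matches the discrepancy between the full diagonal contribution and the rank-one piece of $\CO(\CE)$. Structurally, the three pieces of $\bL_k^{\CO}$ correspond to the infinitesimal reparametrisation of the universal divisor, its Chern character, and the shift by the Euler characteristic of $\CO_C$, and one expects them to cancel term-by-term in the generating series.

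The main obstacle will be to carry out this combinatorial cancellation cleanly across the $g$-dependent odd-degree terms coming from the Jacobian directions: one needs the contributions of the symplectic pairs $(e_j, f_j)$ to assemble into the correct compensating terms in the Macdonald ring. In practice I expect this to reduce to a single residue identity in $z$, in the same spirit as the generating-function arguments used in \cite{moreira} for Hilbert schemes of points on surfaces; if that residue identity can be extracted, the Proposition follows uniformly in $n$ and $g$.
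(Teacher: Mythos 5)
Your reduction of the pair operator to $\bL_k^{\CO}=\bR_k+\bT_k-k!\,\chh_k(\td(C))$ is correct, and the strategy of realizing descendents through the K\"unneth decomposition of $[\CE]$ is in principle workable. But as written the proposal has a genuine gap: the entire content of the statement is the cancellation you defer. You say the vanishing ``should follow from a single generating-series identity,'' that the three pieces of $\bL_k^{\CO}$ are ``expected'' to cancel term-by-term, and you explicitly flag the $g$-dependent odd-degree contributions from the pairs $(e_j,f_j)$ as ``the main obstacle.'' That obstacle is not a routine bookkeeping step to be extracted later; it is precisely where the proof lives. Until the residue/combinatorial identity is actually exhibited and checked against Macdonald's integrals (including the odd generators $\xi_j,\hat\xi_j$, whose products against $\eta$-powers carry the $(g-a)$-type factors), nothing has been proved.

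For comparison, the paper avoids Macdonald's closed formulas altogether by pulling back along the degree-$n!$ \'etale map $C^n\to C^{[n]}$, where the universal divisor becomes $\sum_i\Delta_i$ and every descendent is an explicit polynomial in $\eta=\chh_1(\pt)$, $\chh_0(e_j)$, $\chh_1(f_j)$ and $\theta=\sum_j\chh_1(f_j)\chh_0(e_j)$. The realization then factors through a small polynomial ring on which $\bL_k^{\CO}$ descends, and after discarding terms that vanish for degree reasons the constraint reduces to the single identity $(g-a)\int_{C^n}\eta^kD=\int_{C^n}\eta^{k-1}\theta D$, verified by the elementary fact that the only nonvanishing monomial integrals on $C^n$ are $\int_{C^n}\eta^{k+\ell}\prod_{j\in J}\chh_1(f_j)\chh_0(e_j)=n!$ with $|J|+k+\ell=n$. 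This is exactly the $g$-dependent cancellation you left open, so your route would in effect have to reprove it (Macdonald's formulas are themselves derived from this covering). To repair your proposal, either carry out that identity explicitly in the Macdonald ring, or pass to $C^n$ as the paper does, where it becomes a two-line computation.
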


\begin{proof}

Let $f\colon C^{n}\to C^{[n]}$ be the projection from the $n$-fold product $C^n=C^{\times n}$ to the symmetric product; $f$ is a finite morphism of degree $n!$. The pullback via $f$ of the universal pair \[\CO_{C\times C^{[n]}}\to \CO_{C\times C^{[n]}}(\mathcal E)\]
to $C\times C^n$ is
\[\CO_{C^{n}\times C}\to \CO_{C^{n}\times C}( \Delta)\]
where $\Delta=\sum_{i=1}^n \Delta_i$ and $\Delta_i\subseteq C^n \times C$ is the pullback of the class of the diagonal $C\subseteq C\times C$ via the projection onto coordinates $i$ and~$n+1$. By the push-pull formula we have
\[\int_{C^n} \xi_{\CO(\Delta)}(D)=n!\int_{C^{[n]}}\xi_{\CO(\mathcal E)}(D).\]

Thus Virasoro for symmetric powers may be formulated entirely as a relation among integrals in $C^n$. Let us denote by $\alpha_i\in H^\bullet(C^n)$ the pullback of a class $\alpha\in H^\bullet(C)$ via projection onto the $i$-th coordinate with $i=1,\ldots, n$. We compute descendents in $H^\bullet(C^n)$; in the formulae below and from now on, we omit the geometric realization morphism $\xi_{\CO(\Delta)}$:
\begin{align*}
\ch_k^\HH(\pt)&=\sum_{I\subseteq [n]\,, \,|I|=k}\prod_{i\in I}\pt_i=\frac{1}{k!}\eta^k\\
\ch_k^\HH(1)&=n\ch_k^\HH(\pt)-\theta \ch_{k-1}^\HH(\pt)=n\frac{ \eta^k}{k!}-\frac{\theta\eta^{k-1}}{(k-1)!}\\
\ch_k^\HH(e_j)&=\ch_0^\HH(e_j)\ch_k^\HH(\pt)=\ch_0^\HH(e_j)\frac{\eta^k}{k!}\\
\ch_k^\HH(f_j)&=\ch_1^\HH(f_j)\ch_{k-1}^\HH(\pt)=\ch_1^\HH(f_j)\frac{\eta^{k-1}}{(k-1)!}
\end{align*}
where 
\begin{align*}
    \ch_0^\HH(e_j)=\sum_{i=1}^n e_{ji}\,&, \quad
    \ch_1^\HH(f_j)=\sum_{i=1}^n f_{ji}\, , \\
   \theta=\sum_{j=1}^g\ch_1^\HH(f_j)\ch_0^\HH(e_j)\, &, \quad\eta=\ch_1^\HH(\pt)=\sum_{i=1}^n \pt_i\,. 
\end{align*}

The formulae above show that the geometric realization map factors through the ring 
\[\widetilde\BD^C=\BC[\eta, \{\ch_0^\HH(e_j)\}_{j=1}^g, \{\ch_1^\HH(f_j)\}_{j=1}^g],\]
formally generated by symbols $\eta, \ch_0^\HH(e_j), \ch_1^\HH(f_j)$. Moreover the Virasoro operators are well defined on $\widetilde\BD^C$. Indeed, define
\[\widetilde \bL_k=\widetilde \bR_k+\widetilde \bT_k^\CO\colon \widetilde\BD^C\to \widetilde\BD^C\]
as follows:
\begin{enumerate}
    \item $\widetilde \bR_k$ is a derivation on $\widetilde \BD^C$ defined on generators by
\[
        \widetilde \bR_k(\eta)=\eta^{k+1}\, , \quad     \widetilde \bR_k(\ch_0^\HH(e_j))=0\, , \quad \widetilde \bR_k(\ch_1^\HH(f_j))=(k+1)\eta^k\ch_1^\HH(f_j)\,.
        \]
    \item $\widetilde \bT_k^\CO$ is multiplication by the element
    \[(1-g)k\eta^k-n \eta^k+k \theta \eta^{k-1}\in \widetilde \BD^C\,.\]
    \end{enumerate}

    \begin{claim}
    The following square commutes:
    \begin{center}
    \begin{tikzcd}
    \BD^C\arrow[r]\arrow[d, "\bL_k^\CO"]&
    \widetilde \BD^C \arrow[d, "\widetilde \bL_k^\CO"]\\
    \BD^C\arrow[r]&
    \widetilde \BD^C\arrow[r]&
    H^\bullet(C^n)
    \end{tikzcd}
    \end{center}
    \end{claim}
    \begin{proof}
    The proof is a straightforward computation.\qedhere
    \end{proof}
    
    We now take an element
    \[D=\eta^\ell \prod_{j=1}^g\ch_1^\HH(f_j)^{a_j} \ch_0^\HH(e_j)^{b_j}\in \widetilde \BD^C\,.\]
Then we have
\begin{equation}
\label{eq: symvirasorooutput}
    \widetilde \bL_k^\CO(D)=(\ell+(k+1)a)\eta^k \nonumber D+(1-g)k \eta^k D-m \eta^k D+k \theta \eta^{k-1}D
\end{equation}
where $a=\sum_{j=1}^g a_j$. By degree reasons, the integral of $\bL_k^\CO(D)$ vanishes unless $k+a+\ell=n$; when that is the case, it simplifies to
\[\widetilde \bL_k^\CO(D)=
k(a-g)\eta^k D+k \theta\eta^{k-1}D.\]
To finish the proof we are required to show that 
\begin{equation}
(g-a)\int_{C^n} \eta^k D=\int_{C^n} \eta^{k-1}\theta D.    \label{eq: finishsymmetric}
\end{equation}
We use the following easy claim:
\begin{claim}
The integral 
\[\int_{C^n} \eta^{k+\ell} \prod_{i=1}^g\ch_1^\HH(f_j)^{a_j} \ch_0^\HH(e_j)^{b_j}\]
vanishes unless $a_j=b_j\in \{0,1\}$ for every $j=1, \ldots, g$ and $k+\ell+\sum_{j=1}^g a_j=n$. In that case, the integral is equal to 
\[\int_{C^n}\eta^n=n!\]
\end{claim}
By the claim we may assume that $a_j=b_j\in \{0,1\}$, otherwise both sides of \eqref{eq: finishsymmetric} vanish. Letting $J=\{1\leq j\leq g\colon a_j=1\}$ we have
\begin{align*}
\int_{C^n}\eta^{k+\ell-1}&\theta \prod_{j\in J}\ch_1^\HH(f_j)\ch_0^\HH(e_j)\\
&=\sum_{t=1}^g \int_{C^n}\eta^{k+\ell-1}\ch_1^\HH(f_t)\ch_0^\HH(e_t)\prod_{j\in J}\ch_1^\HH(f_j)\ch_0^\HH(e_j)\\
&=\sum_{t\in [g]\setminus J} \int_{C^n}\eta^{k+\ell-1}\ch_1^\HH(f_t)\ch_0^\HH(e_t)\prod_{j\in J}\ch_1^\HH(f_j)\ch_0^\HH(e_j)\\
&=(g-|J|)n!=(g-a)\int_{C^n} \eta^{k+\ell} \prod_{j\in J}\ch_1^\HH(f_j)\ch_0^\HH(e_j)
\end{align*}
showing \eqref{eq: symvirasorooutput} and concluding the proof.\qedhere
\end{proof}



\subsection{Virtual projective bundle compatibility}
\label{sec: virtualprojectivebundle}

Let $M=M_\alpha$ be a moduli space with a universal sheaf $\BG$ as in Section \ref{sec: modulisheavespairs}, without strictly semistable sheaves. If $H^{\geq 1}(G)=0$ for every sheaf $[G]\in M$, then $Rp_\ast \BG=p_\ast \BG$ is a vector bundle of rank~$\chi(\alpha)$ and we may form the projective bundle
\[f\colon P=\BP_M(Rp_\ast \BG)\to M\,.\]
This projective bundle is naturally a moduli space of pairs: it parametrizes non-zero pairs of the form $\CO_X\to F$ such that $[F]\in M$. 

More generally, H. Park considers in \cite[Section 4]{park} the situation in which $H^{\geq 2}(G)~=~0$. In this case, we have a \textit{virtual projective bundle} $f\colon P\to M$
where 
\[P=\BP_M(Rp_\ast\BG)\coloneqq \textup{Proj Sym}^\bullet h^0((Rp_\ast \BG)^\vee)\,.\]
The morphism $f$ comes equipped with a natural relative perfect obstruction theory. By \cite{manolache}, there is a virtual pullback $f^!\colon A_\bullet(M)\to A_\bullet(P)$ between Chow groups. It is easily seen that the sheaf obstruction theory on $M$, the pair obstruction theory on $P$ and the relative obstruction theory on $f$ are compatible in the sense of \cite[Corollary 4.9]{manolache}, hence
\[[P]^\vir=f^![M]^\vir\,.\]
The moduli space $P$ comes equipped with a unique universal pair
\[\CO_{P\times X}\to \BF\coloneqq f^\ast\BG(1)\,.\]
Note that $\BF$ does not depend on the choice of $\BG$.

The virtual pullback relation between the virtual fundamental classes can be translated to Joyce's vertex algebra framework as follows:
\begin{proposition}\label{thm: projectivebundleformula}
Let $f\colon P\to M$ be a virtual projective bundle as described before. Then we have:
\begin{align}
\label{eq: projectivebundlebracket}[P]^\vir_{(\CO,\BF)}&=\big[[M]^\vir, e^{(1,0)}\big]\,,\\
\label{eq: projectivebundleupsilon}\chi(\alpha)[M]^\vir&=\Pi_\ast\left(c_{\chi(\alpha)-1}(\T^{\rel})\cap [P]^\vir_{(\CO,\BF)}\right)\,.
\end{align}
In the first formula, the bracket is the partial lift to the vertex algebra in Lemma \ref{partial lift} and $e^{(1,0)}$ is the class of the point $\{(\CO_X, 0)\}$ in $H_0(\CP_{(1, 0)})\subseteq V_\bullet^{\pa}$.
\end{proposition}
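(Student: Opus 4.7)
I would prove \eqref{eq: projectivebundleupsilon} first, using the virtual pullback formalism, and then derive \eqref{eq: projectivebundlebracket} by a direct computation of the vertex algebra bracket combined with Lemma \ref{lem: liftvertexalgebra}.

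\textbf{Proof of \eqref{eq: projectivebundleupsilon}.} Since the sheaf, pair, and relative perfect obstruction theories for $f\colon P = \BP_M(Rp_\ast\BG) \to M$ are compatible in the sense of \cite[Corollary 4.9]{manolache}, virtual pullback gives $[P]^\vir = f^![M]^\vir$. Because $\BF = f^\ast\BG \otimes \CO_P(1)$, one checks in $K$-theory that $\T^{\rel}|_P = Rp_\ast\BF - \CO_P = f^\ast(Rp_\ast\BG)(1) - \CO_P = \T_f$, the relative virtual tangent bundle of $f$, of rank $\chi(\alpha)-1$. The classical identity $f_\ast c_{n-1}(\T_{\BP^{n-1}}) = n$, applied virtually, combined with the projection formula yields
\begin{equation*}
f_\ast\big(c_{\chi(\alpha)-1}(\T^{\rel}) \cap [P]^\vir\big) = \chi(\alpha)\cdot[M]^\vir.
\end{equation*}
Post-composing with rigidification and using $\Pi_\alpha = \pi^\rig \circ \iota \circ f$ and $\T_{\Pi_\alpha} = f^\ast_{(\CO,\BF)}\T^{\rel}$ delivers \eqref{eq: projectivebundleupsilon}.

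\textbf{Proof of \eqref{eq: projectivebundlebracket}.} Both sides lie in $V^{\pa}_{\bullet,(1,\alpha)}$ with $\rk(\llbracket\CO_X\rrbracket) = 1$, so Lemma \ref{lem: liftvertexalgebra}(b) applies provided two conditions hold. First, both sides are annihilated by $\ch_1^\CV(\pt)\cap(-)$: for the LHS because the pullback of $\ch_1^\CV(\pt)$ to $P$ is $p_\ast(\ch_1(\CO_{P\times X}) \cdot q^\ast\pt) = 0$; for the RHS by Lemma \ref{lem: liftvertexalgebra}(a), since $[M]^\vir \in \widecheck V_\bullet$ and $\ch_1^\CV(\pt) \cap e^{(1,0)} = 0$. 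It then suffices to check equality in $\widecheck V_\bullet^{\pa}$. To compute the bracket, I would extract the $z^{-1}$-coefficient of \eqref{eq: joycefieldspa} with $u = [M]^\vir$ and $v = e^{(1,0)}$. Restricted to $\CM_\alpha \times \{(\CO_X, 0)\}$, the class $\Ext^{\pa}$ vanishes while $\sigma^\ast\Ext^{\pa}$ equals $-Rp_\ast\CG_\alpha$, so $\Theta^{\pa}$ restricts to $-Rp_\ast\CG_\alpha$; using $c_{z^{-1}}(-Rp_\ast\CG_\alpha) = \sum_{j \geq 0} s_j(Rp_\ast\CG_\alpha)z^{-j}$ and expanding $e^{zT}[M]^\vir$, the bracket becomes an explicit sum of pushforwards of Segre class caps $s_j(Rp_\ast\CG_\alpha) \cap T^{j+\chi(\alpha)-1}[M]^\vir$ via the inclusion $\CM_\alpha \hookrightarrow \CP_{(1,\alpha)}$. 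In parallel, $[P]^\vir_{(\CO,\BF)}$ pushed forward to $\CP_{(1,\alpha)}$ can be written using $[P]^\vir = f^![M]^\vir$ together with the Segre identity $f_\ast\zeta^{k} = s_{k - \chi(\alpha)+1}(Rp_\ast\BG)$ for $\zeta = c_1(\CO_P(1))$, where the twist by $\CO_P(1)$ on $\BF$ translates (via Lemma \ref{lem: translationdualr-1}) into the action of $T$ on the vertex algebra. A term-by-term comparison closes the argument.

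\textbf{Main obstacle.} The delicate part is step (ii) of the second proof: matching the vertex algebra expansion against the projective bundle expansion requires careful tracking of signs in $c_{z^{-1}}(-Rp_\ast\CG_\alpha)$, in the definition of $\Sigma^{\pa}_\ast$, and in the identification between the translation operator $T$ and twisting of the universal sheaf by $\CO_P(1)$. An alternative would be to appeal directly to the general Joyce-Song wall-crossing formula in \cite[Theorem A.4]{Bo21.5}, of which \eqref{eq: projectivebundlebracket} is the special case where $M_\alpha$ has no strictly semistable objects and the wall-crossing reduces to a single term.
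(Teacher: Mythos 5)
Your proposal is correct and follows essentially the same route as the paper: \eqref{eq: projectivebundleupsilon} via the compatibility of obstruction theories and virtual pullback, and \eqref{eq: projectivebundlebracket} by computing Joyce's field on $\CM_\alpha\times\{(\CO_X,0)\}$ (where $\Theta^{\pa}$ restricts to $-Rp_\ast\CG_\alpha$), using the Segre-class identity $f_\ast\big(c_1(\CO(1))^j\cap f^![M]^\vir\big)=s_{j-\chi(\alpha)+1}(Rp_\ast\BG)\cap[M]^\vir$ and the twist/translation correspondence, exactly as in the paper's term-by-term comparison of pairings against descendents. The one caveat is that for a genuinely virtual projective bundle the fibers need not be honest projective spaces, so the top-Chern-class pushforward in \eqref{eq: projectivebundleupsilon} is not just the classical identity ``applied virtually'' but is precisely \cite[Theorem 0.5(2)]{park} (equivalently, the same Segre-class computation), which is what the paper cites.
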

\begin{proof}
The second statement \eqref{eq: projectivebundleupsilon} is a consequence of \cite[Theorem 0.5 (2)]{park}. For the first formul we give a direct proof straight from the definition of the bracket. We do so by evaluating both sides against descendents \[D\in \BD^{X, \pa}_{(1, \alpha)}\cong H^\bullet(\CP_{(1,\alpha)})\,.\] By a similar argument to the one in Lemma \ref{lem: liftvertexalgebra} a) it is enough to consider $D\in \BD^X_\alpha\subseteq \BD^{X, \pa}_{(1, \alpha)}$ since otherwise both sides would vanish. 

We start with the left side:
\begin{align}\nonumber
\int_{[P]^\vir_{(\CO,\BF)}}D&=\int_{[P]^\vir}\xi_{\BF}(D)=\int_{f^![M]^\vir}\xi_{f^\ast\BG(1)}(D)=\deg\Big(f_\ast\big(\xi_{f^\ast \BG(1)}(D)\cap f^![M]^\vir)\big)\Big)\,.
\end{align}

By Lemma \ref{lem: changeuniversalsheaf} we have
\[\xi_{f^\ast \BG(1)}(D)=\sum_{j\geq 0}\frac{1}{j!}f^\ast\xi_{\BG}(\bR_{-1}^j D)c_1(\CO(1))^j\]
and the argument in the proof of Proposition 4.2 in \cite{park} shows that
\[f_\ast\big(c_1(\CO(1))^j \cap f^![M]^\vir\big)=s_{j-\chi(\alpha)+1}(Rp_\ast\BG)\cap [M]^\vir\,,\]
where $s_{i}(Rp_\ast \BG)=c_i(-Rp_\ast \BG)$ are the Segre classes of $Rp_\ast \BG$. Putting everything together, we find
\begin{equation} 
\label{eq: projleft}
\int_{[P]^\vir_{(\CO,\BF)}}D=\sum_{j\geq 0}\frac{1}{j!} \int_{[M]^\vir} \xi_{\BG}(\bR_{-1}^jD)s_{j-\chi(\alpha)+1}(Rp_\ast \BG)\,.
\end{equation}

The analogous formula for the pairing with the right hand side can be deduced directly from Joyce's definition of the fields \eqref{eq: joycefields}. Since $[M]^\vir_\BG$ is a lift of $[M]^\vir$ we can compute the bracket by
\[\big[[M]^\vir, e^{(1,0)}\big]=\Res_{z=0}Y([M]^\vir_\BG, z)e^{(1,0)}\,.\]
Recall that $\bR_{-1}$ is dual to $T$ and note that the pullback of $\Theta^{\pa}$ to $M$ via the map 
\[M\cong  M\times \{(\CO_X, 0)\}\to \CP_{(0,\alpha)}\times \CP_{(1,0)}\]
is precisely $-Rp_\ast \BG$. Using these two facts one checks that
\begin{align}\label{eq: projright}
\int_{Y([M]^\vir_\BG, z)e^{(1,0)}}D&=\sum_{j,i\geq 0}\frac{z^{j-i-\chi(\alpha)}}{j!} \int_{[M]^\vir} \xi_{\BG}(\bR_{-1}^jD)c_{i}(-Rp_\ast \BG)\,.
\end{align}
Clearly taking the residue in \eqref{eq: projright} gives \eqref{eq: projleft}, finishing the proof of \eqref{eq: projectivebundlebracket}.
\end{proof}

As a result, we get compatibility of the Virasoro constraints with respect to (virtual) projective bundles.

\begin{corollary}\label{cor: MimpliesP}
Let $f\colon P\to M$ be a virtual projective bundle as described before. Then the sheaf Virasoro constraints on $M$ imply the pair Virasoro constraints on $P$, i.e.,
\[[M]^\vir\in \widecheck V_0\Rightarrow [P]^\vir_{(\CO,\BF)}\in  V_0^{\pa}\,.\]
If $f\colon P\to M$ is actually a smooth projective bundle (i.e., $R^1p_\ast \BG=0$) we have the converse implication
\[ [P]^\vir_{(\CO,\BF)}\in  V_0^{\pa} \Rightarrow[M]^\vir\in \widecheck V_0\,.\]
\end{corollary}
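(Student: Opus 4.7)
The plan is to prove both implications as direct consequences of Proposition \ref{thm: projectivebundleformula}, which provides two complementary formulas linking $[P]^\vir_{(\CO,\BF)}$ and $[M]^\vir$. Implication (1) will use the bracket formula \eqref{eq: projectivebundlebracket}, while implication (2) will use the virtual projective bundle pushforward formula \eqref{eq: projectivebundleupsilon}.

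For the forward implication, the plan is to check that $e^{(1,0)}\in P_0^{\pa}$ and then apply Proposition \ref{prop: wallcrossingcompatibility2} to conclude. For the first step, I would observe that in the lattice vertex algebra description of $V_\bullet^{\pa}$, the vector $e^{(1,0)}$ is a pure basis element with no descendent insertions, and compute directly using the explicit form of the Virasoro operators recalled in Section \ref{sec: Kacconformal}: all annihilation operators $v^\omega_{(k)}$ with $k\geq 1$ vanish on $e^{(1,0)}$, and $L_0(e^{(1,0)})=\frac{1}{2}\chi^{\pa}_\sym\big((1,0),(1,0)\big)\,e^{(1,0)}=\chi(-1,0)\,e^{(1,0)}=0$, so $e^{(1,0)}\in P_0^{\pa}$. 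With this in hand, the hypothesis $[M]^\vir\in \widecheck P_0$ combined with Proposition \ref{prop: wallcrossingcompatibility2} (applied with $i=0$) gives $[[M]^\vir, e^{(1,0)}]\in P_0^{\pa}$, which by \eqref{eq: projectivebundlebracket} is exactly $[P]^\vir_{(\CO,\BF)}$.

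For the converse in the smooth projective bundle case, the strategy is to invoke the argument behind Theorem \ref{lem: projectivebundle} through its abstract reformulation in Remark \ref{rem: abstractprojective} applied to $u=[P]^\vir_{(\CO,\BF)}$. I would verify the two hypotheses: condition (1) holds since $\xi_\CO(\ch_i(\gamma))=0$ for $i>0$ forces $\ch_i^\CV(\gamma)\cap [P]^\vir_{(\CO,\BF)}=0$; condition (2) holds because, by smoothness of the projective bundle, $\T_{\Pi}= Rp_\ast\BF - \CO_P$ is an honest vector bundle of rank $\chi(\alpha)-1$, so $c_b(Rp_\ast\BF)=c_b(\T_\Pi)$ vanishes on $P$ for $b\geq \chi(\alpha)$. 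The conclusion of Theorem \ref{lem: projectivebundle} then gives $\Pi_\ast(c_{\chi(\alpha)-1}(\T^{\rel})\cap [P]^\vir_{(\CO,\BF)})\in \widecheck P_0$, which by \eqref{eq: projectivebundleupsilon} equals $\chi(\alpha)[M]^\vir$. Since $\chi(\alpha)>0$ (as the projective bundle is nonempty), dividing yields $[M]^\vir\in \widecheck P_0$.

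The only mild obstacle I anticipate is keeping track of the bookkeeping that identifies sheaf-primary states inside pair-primary states under the inclusion $\widecheck V_\bullet \subseteq \widecheck V_\bullet^{\pa}$, and ensuring that the bracket in \eqref{eq: projectivebundlebracket} is really the partial lift from Lemma \ref{partial lift} in the right degrees. Both are already addressed by Corollary \ref{cor: virasorophysical} and the construction in Proposition \ref{thm: projectivebundleformula}, so after carefully recording these compatibilities the proof reduces to the two formal applications sketched above.
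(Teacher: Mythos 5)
Your proposal is correct and follows essentially the same route as the paper's proof: the forward implication via the bracket formula \eqref{eq: projectivebundlebracket}, the fact that $e^{(1,0)}\in P_0^{\pa}$, and Proposition \ref{prop: wallcrossingcompatibility2}; the converse via Theorem \ref{lem: projectivebundle} (in its abstract form, Remark \ref{rem: abstractprojective}) combined with \eqref{eq: projectivebundleupsilon} and $\chi(\alpha)>0$. You simply spell out the verifications the paper calls ``straightforward'' (the $L_0$-eigenvalue computation for $e^{(1,0)}$ and conditions (1)--(2) of the remark), which is fine.
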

\begin{proof}
The first implication follows from the first formula in Theorem \ref{thm: projectivebundleformula}, Proposition \ref{prop: wallcrossingcompatibility2} and the straightforward fact that $e^{(1,0)}\in P_0^{\pa}$. The converse implication follows from Theorem \ref{lem: projectivebundle}.\qedhere
\end{proof}

\begin{remark}
For the second implication, we really need $f$ to be a smooth projective bundle. This is due to the fact that in the proof of Theorem \ref{lem: projectivebundle} we used that $\T_{\Pi_\alpha}$ is a vector bundle of rank $\chi(\alpha)-1$, see Remark \ref{rem: abstractprojective}. Indeed, if the moduli space $M$ is such that $p_\ast \BG=0$ (e.g. if $M=M_{\alpha(mH)}$ for $m$ sufficiently negative) then $P$ is empty but the Virasoro constraints on $M$ are non-trivial.
\end{remark}

\subsection{Nested Hilbert scheme}
\label{sec: nestedhilbertscheme}
We now treat the base cases for parts (2), (3) of Theorem \ref{thm: main}. Let $S$ be a surface with $h^{0,1}=h^{0,2}=0$. Let $S_{\beta}^{[0,n]}$ be the nested Hilbert scheme as in \cite{gsy}. It parametrizes a pair of subschemes 
\[Z\subseteq E\subseteq S\]
where $E$ is a divisor in class $\beta$ and $Z\subseteq E$ is a 0 dimensional subscheme of length~$n$.
We have universal subschemes
\[\CZ\subseteq \CE\subseteq\CS\,,\] 
where we use $\CS=S\times S_\beta^{[0,n]}$. As explained in Section \ref{sec: limits}, the nested Hilbert scheme $S_{\beta}^{[0,n]}$ can be seen as a moduli of Bradlow pairs in 2 ways, by looking at a point $(E, Z)\in S_\beta^{[0,n]}$ either as
\[\CO_S\to I_Z(E)\quad\textup{ or}\quad\CO_S\to \CO_E(Z)\,.\]
That is,
\begin{align*}S_{\beta}^{[0,n]}&\cong P_{\left(1,\beta, -n+\beta^2/2\right)}^\infty\\
&\cong P_{\left(0,\beta, n-\beta^2/2\right)}^\infty\,.
\end{align*}
Each description comes with a natural universal pair, namely
\[\CO_{\CS}\to I_\CZ(\CE)\quad\textup{and}\quad\CO_{\CS}\to \CO_\CE(\CZ)\,.\]
The first description allows us to describe $S_\beta^{[0,n]}$ as a virtual projective bundle over the Hilbert scheme of points on $S$. Let $\alpha$ be such that $\ch(\alpha)=(1, \beta, -n+\beta^2/2)$. Since $\alpha$ does not decompose as $\alpha_1+\alpha_2$ with $r(\alpha_i)>0$, a pair $\CO_S\to F$ is $\mu^t$-(semi)stable if and only if $F$ is torsion free if and only if $F$ is stable, so the moduli space $P^t_\alpha$ does not change with $t$ and we have a map
\[f\colon P_\alpha^\infty=P_\alpha^{0+}\to M_\alpha\,.\]
Since $h^{0,1}=0$ there exists a unique line bundle $L_\beta$ with $c_1(L_\beta)=\beta$. Hence,
\[M_\alpha=\{I_Z\otimes L_\beta\colon Z\subseteq X\textup{ is }0\textup{ dimensional of length }n\}\cong M_{(1,0,-n)}=S^{[n]}\,.\]

Note that the deformation theory of $M_{\alpha}$ is smooth (see e.g. \cite[Proposition 2.2]{EGL}), so under the isomorphism above the virtual fundamental class $[M_\alpha]$ coincides with the usual fundamental class $[S^{[n]}]$.

We claim that if $S_{\beta}^{[0,n]}$ is not empty then the map $f$ is a virtual projective bundle as described in the previous section. For this we need to show that if $F=I_Z\otimes L_\beta\in M_\alpha$ then $H^2(F)=0$. If $S_{\beta}^{[0,n]}$ is not empty then there must exist a divisor $E\subseteq S$ in class $\beta$. Considering the long exact sequence on cohomology obtained from
\[0\to \CO_S\to \CO_S(E)\cong L_\beta\to \CO_E(E)\to 0\]
and using that $H^2(\CO_S)=0$ it follows that $H^2(L_\beta)=0$. Then the long exact sequence on cohomology associated to 
\[0\to I_Z\otimes L_\beta\to L_\beta\to \CO_Z\otimes L_\beta\to 0\]
shows that $H^2(I_Z\otimes L_\beta)=0$.

\begin{proposition}\label{prop: nestedhilb}
The nested Hilbert scheme $S_\beta^{[0,n]}$ satisfies the Virasoro constraints with either of the two descriptions as a pair moduli space, that is, 
\[ \Big[S_\beta^{[0,n]}\Big]^\vir_{(\CO, I_{\CZ}(\CE))}\in P_0^{\pa}\quad\textup{ and } \quad \Big[S_\beta^{[0,n]}\Big]^\vir_{(\CO, \CO_\CE(\CZ))}\in P_0^{\pa}\,.\]
\end{proposition}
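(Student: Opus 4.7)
The plan is a two-step strategy, treating the two pair descriptions separately. Description (1) will follow directly from the virtual projective bundle structure over $S^{[n]}$ combined with known Virasoro for Hilbert schemes; description (2) will be reduced to description (1) via the natural short exact sequence on $\CS=S_\beta^{[0,n]}\times S$ relating the two universal pairs, in analogy with the curve case mentioned after Proposition \ref{prop: symmetricpowers}.

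For description (1), I would use Proposition \ref{thm: projectivebundleformula}. Taking $\BG=I_{\CZ_n}\otimes L_\beta$ as universal sheaf on $S^{[n]}$ in the topological type $(1,\beta,-n+\beta^2/2)$, the preamble of the section identifies $S_\beta^{[0,n]}$ with the virtual projective bundle $\mathbb{P}_{S^{[n]}}(Rp_*\BG)$, and a direct check shows the tautological line bundle can be normalized so that $f^*\BG(1)\cong I_\CZ(\CE)$. Proposition \ref{thm: projectivebundleformula} then yields
\[
[S_\beta^{[0,n]}]^\vir_{(\CO,I_\CZ(\CE))}=\big[[S^{[n]}]^\vir_{I_{\CZ_n}\otimes L_\beta},\, e^{(1,0)}\big]
\]
in $V_\bullet^{\pa}$. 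Since $h^{0,1}(S)=h^{0,2}(S)=0$, Virasoro for $S^{[n]}$ is the main result of \cite{moreira}, which via Corollary \ref{cor: virasorophysical} places $[S^{[n]}]^\vir\in\widecheck P_0$. A direct check in the lattice vertex algebra (using the formulas from Section \ref{sec: Kacconformal}) shows $e^{(1,0)}\in P_0^{\pa}$, as it is a vacuum-like element annihilated by all positive Virasoro modes. Proposition \ref{prop: wallcrossingcompatibility2} then places the bracket in $P_0^{\pa}$, giving description (1).

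For description (2), I would parallel the curve case. The universal section $\CO_\CS\to I_\CZ(\CE)$ is fiberwise injective, and its cokernel $\CG$ fits into a short exact sequence $0\to\CO_\CS\to I_\CZ(\CE)\to\CG\to 0$ on $\CS$. A snake-lemma analysis identifies $\CG$ with the sheaf encoding the pair description (2), up to the natural twist $\CO_\CE(\CE-\CZ)\cong\CO_\CE(\CZ)$ mediated by the universal divisor class on $\CE$. In the pair vertex algebra, this extension of $\CG$ by $\CO_\CS$ should produce a bracket identity in the spirit of \eqref{eq: projectivebundlebracket}, namely
\[
[S_\beta^{[0,n]}]^\vir_{(\CO,I_\CZ(\CE))}=\big[e^{(0,\llbracket\CO_S\rrbracket)},\, [S_\beta^{[0,n]}]^\vir_{(\CO,\CG)}\big]
\]
via the partial lift of Lemma \ref{partial lift}. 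Since $e^{(0,\llbracket\CO_S\rrbracket)}$ is primary and the bracket operation can be inverted in the relevant component (by non-degeneracy of the pairing analogous to the argument in Proposition \ref{prop: primaryomegabracket}), Part (1) transfers to description (2).

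The hard step will be establishing the bracket identity above directly from the universal short exact sequence, and, in the surface case, showing that the cokernel $\CO_\CE(\CE-\CZ)$ can be identified with the paper's $\CO_\CE(\CZ)$ within the same class in $V_\bullet^{\pa}$ — in contrast with the curve case, where the cokernel literally \emph{is} description (2)'s universal sheaf. I expect the bracket identity itself to go through by the same Joyce state-field machinery \eqref{eq: joycefieldspa} that powers Proposition \ref{thm: projectivebundleformula}, but the passage from cokernel to $\CO_\CE(\CZ)$ requires either recognizing the difference as a vertex-algebra-trivial twist on the relative curve $\CE\to S_\beta^{[0,n]}$, or routing the argument through a duality on $\CE$ that preserves primary states.
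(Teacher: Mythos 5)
Your treatment of the first description is essentially the paper's: the paper also identifies $S_\beta^{[0,n]}$ with the virtual projective bundle over $S^{[n]}\cong M_\alpha$, invokes \cite{moreira} (together with Lemma \ref{lem: twist} to pass from $M_{(1,0,-n)}$ to $M_\alpha$ with $\ch(\alpha)=(1,\beta,-n+\beta^2/2)$ -- a step you elide but which is needed, since the two realizations differ by the twist by $L_\beta$), and then applies the projective bundle bracket formula plus Proposition \ref{prop: wallcrossingcompatibility2}; this is exactly Corollary \ref{cor: MimpliesP}. You should also note the hypothesis $H^2(I_Z\otimes L_\beta)=0$ (verified in the paper when $S_\beta^{[0,n]}\neq\emptyset$) which is what makes the virtual projective bundle setup apply.

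Your second step, however, has a genuine gap. The cokernel of the universal section $\CO_\CS\to I_\CZ(\CE)$ is $I_{\CZ\subset\CE}\otimes\CO_\CE(\CE)$, whose Chern character is $\beta+\tfrac{\beta^2}{2}-n\,\pt$, whereas $\CO_\CE(\CZ)$ has Chern character $\beta-\tfrac{\beta^2}{2}+n\,\pt$. These are different topological types, lying in different connected components of $\CM_X$, so there is no identification ``$\CO_\CE(\CE-\CZ)\cong\CO_\CE(\CZ)$'' and no twist by a line bundle pulled back from $S_\beta^{[0,n]}$ (the only vertex-algebra-trivial modification of a universal object, cf.\ Lemma \ref{lem: changeuniversalsheaf}) that can bridge them; a twist along the relative curve $\CE$ genuinely changes the descendent realizations. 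Moreover, the proposed bracket identity $[S_\beta^{[0,n]}]^\vir_{(\CO,I_\CZ(\CE))}=\big[e^{(0,\llbracket\CO_S\rrbracket)},[S_\beta^{[0,n]}]^\vir_{(\CO,\CG)}\big]$ is unsupported: the Lie bracket with a point class does not implement ``adding/removing $\CO_S$ from the universal sheaf'' -- by \eqref{eq: joycefieldspa} the $z^{-1}$-coefficient of the field involves cap products with Chern classes of $\Theta^{\pa}$ between the point and the family, and formula \eqref{eq: projectivebundlebracket} holds because $P$ is an actual moduli of pairs fibering over $M$, not because of a short exact sequence of universal sheaves. The paper's route to the second description avoids all of this: one computes the $K$-theoretic derived dual $(\CO_\CE(\CZ)-\CO_\CS)^\vee=-I_\CZ(\CE)$, defines the explicit involution $\bI$ of $\BD^{X,\pa}$ with $\xi_{(\CO,\CO_\CE(\CZ))}=\xi_{(\CO,I_\CZ(\CE))}\circ\bI$ and $\bI\circ\bL_k^{\pa}=(-1)^k\,\bL_k^{\pa}\circ\bI$, and transfers the constraints from the first description directly, with no wall-crossing or bracket identity needed. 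If you want to salvage your approach, you would have to replace the cokernel construction by this duality (or some equivalent symmetry of the pair Virasoro operators), since the two pair descriptions are related by dualization, not by an extension.
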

\begin{proof}
We begin with the first statement. It was proven in \cite[Theorem 5]{moreira} that the Hilbert scheme $S^{[n]}\cong M_{(1,0, -n)}$ satisfies Virasoro constraints; see Remark \ref{rem: comparingnotation} and Proposition \ref{prop: normalizedvsinv} for a comparison between the formulation in loc. cit. and ours. By Lemma \ref{lem: twist}, it follows that Virasoro constraints hold for $M_\alpha$ for any $\alpha$ of rank~1. By Corollary \ref{cor:  MimpliesP} and the discussion preceeding this Proposition,
\[\Big[S_\beta^{[0,n]}\Big]^\vir_{(\CO, I_{\CZ}(\CE))}=[P_\alpha^\infty]^\vir_{(\CO,\BF)}=[P_\alpha^{0+}]^\vir_{(\CO,\BF)} \in P_0^{\pa}\,.\]
We now deduce the second statement from the first. The dual of $\CO_\CE(\CZ)$ in $K$-theory can be computed to be
\[\CO_\CE(\CZ)^\vee=-\CO_\CE(-\CZ)\otimes \CO_\CS(\CE)=-I_\CZ(\CE)+\CO_\CS\,.\]
In the first equality we used \cite[Example 3.41]{huyFM} and in the second we used
\[\CO_\CE(-\CZ)=\CO_\CE-\CO_\CZ=-(\CO_\CS(-\CE)-I_\CZ)\,.\]
As a consequence,
\[(\CO_\CE(\CZ)-\CO_\CS)^\vee=-I_\CZ(\CE)\,.\]

Define the involution $\bI\colon \BD^{X, \pa}\to \BD^{X, \pa}$ by
\begin{align*}
\bI\big(\ch_i^{\HH, \CF}(\gamma)\big)&=-(-1)^{i-p}\ch_i^{\HH, \CF-\CV}(\gamma)\,,\\
\bI\big(\ch_i^{\HH, \CF-\CV}(\gamma)\big)&=-(-1)^{i-p}\ch_i^{\HH, \CF}(\gamma)\,.
\end{align*}
By the previous computation of duals, we have
\[\xi_{(\CO, \CO_\CE(\CZ))}=\xi_{(\CO, I_\CZ(\CE))}\circ \bI\,.\]
We can see straight from the definition of the pair Virasoro operators $\bL_k^{\pa}$ (see Section \ref{sec: virasoropairs}) that
\[\bI\circ \bL_k^{\pa}=(-1)^k \bL_k^{\pa}\circ \bI\,.\]
With all these observations, the equivalence between the two statements becomes clear. Indeed,
\begin{align*}\int_{\big[S_\beta^{[0,n]}\big]^\vir_{(\CO, \CO_\CE(\CZ))}} \bL_k^{\pa}(D)&=\int_{\big[S_\beta^{[0,n]}\big]^\vir} \xi_{(\CO, \CO_\CE(\CZ))}(\bL_k^{\pa}(D))\\
&=\int_{\big[S_\beta^{[0,n]}\big]^\vir} \xi_{(\CO, I_\CZ(\CE))}( \bI(\bL_k^{\pa}(D)))\\
&=(-1)^k \int_{\big[S_\beta^{[0,n]}\big]^\vir} \xi_{(\CO, I_\CZ(\CE))}( \bL_k^{\pa}(\bI(D)))\\
&=(-1)^k \int_{\big[S_\beta^{[0,n]}\big]^\vir_{(\CO, I_\CZ(\CE))}}\bL_k^{\pa}(\bI(D))=0\,.
\end{align*}
This shows that $\Big[S_\beta^{[0,n]}\Big]^\vir_{(\CO, \CO_\CE(\CZ))}\in P_0^{\pa}$ as well and finishes the proof.
\qedhere
\end{proof}

\begin{appendices}
    \section{Joyce-Song wall-crossing and punctual Quot schemes}
\label{app:A}
In Sections \ref{sec: rankreduction} and \ref{sec:lowrank} we have explored equivalences between Virasoro constraints in different moduli spaces that ultimately led to the proof of Theorem \ref{thm: main}. Some of these equivalences are shown in the following diagram:
\begin{equation}
\label{Eq:pairsheafWC}
    \begin{tikzcd}
    \, & \arrow[ld,bend right=10, "\textup{JS}", swap] M_{n} &\, &\,& \, \arrow[ld,bend right=10, "\textup{JS}", swap] M_{\beta, n}\\
    C^{[n]}\arrow[ru, bend right=10, "\textup{RR}", swap]\arrow[rd, bend right=10, "\textup{PB}", swap]& \,&\, & \, S_\beta^{[0,n]} \arrow[ru, bend right=10, "\textup{RR}", swap]\arrow[rd, bend right=10, "\textup{PB}", swap] &\, \\
    \, & \Jac(C)  \arrow[lu,bend right=10, "\textup{PB}", swap]  &\, &\, & \, S^{[n]}\arrow[lu,bend right=10, "\textup{PB}", swap] 
    \end{tikzcd}
\end{equation}
where $M_n$ and $M_{\beta,n}$ denote the moduli of zero-dimensional and one-dimensional sheaves, respectively. The labels RR, JS and PB stand for rank reduction (Theorem \ref{thm: rankreduction}), Joyce-Song wall-crossing and projective bundle compatibility, respectively. The rank reduction argument was the main content of Section \ref{sec: rankreduction}. The projective bundle compatibility was explained in Section \ref{sec: virtualprojectivebundle} and used in Section \ref{sec: nestedhilbertscheme} to show that the Virasoro constraints hold for nested Hilbert schemes. This appendix concerns the remaining arrow: Joyce-Song wall-crossing expresses the virtual fundamental class of moduli of pairs in terms of the classes of moduli of sheaves. 

 A general formulation of Joyce-Song wall-crossing formula is proved in \cite[Theorem A.4]{Bo21.5}, and we recall it below, see \eqref{Eq: PJS}; in the case of moduli of vector bundles on curves it appears already in \cite[Theorem 2.8]{Bu22}. For the symmetric power of curves the consequence of \eqref{Eq: PJS}  is
\begin{equation}\label{eq: JSsymmetricpowers}
\big[C^{[n]}\big]_{(1,\CO_\CE)}=\sum_{\substack{ \underline{n}\vdash n}}\frac{1}{l!}\, \big[[M_{n_1}]^\inva, \big[\ldots, \big[[M_{n_l}]^\inva, e^{(1,0)}\big]\ldots\big]\big]\end{equation}
where $e^{(1,0)}\in V_\bullet^{\pa}$ is the class of a point $\{(\CO_X, 0)\}$ in the component $\CP_{(1,0)}$ and $M_n$ is the moduli space of 0-dimensional sheaves of length $n$. 

In particular, this formula together with Lemma \ref{Lem: Mnp}, which shows that $[M_{n}]^\inva\in \widecheck{P}_0$, gives an alternative proof of the Virasoro constraints for the symmetric power of a curve (Proposition \ref{prop: symmetricpowers}). More generally, the same approach proves the Virasoro constraints for punctual Quot schemes $\Quot_X(V,n)$ on curves and surfaces, which are of independent interest and for which we do not know a direct proof without wall-crossing techniques. We also remark that the first part of Proposition \ref{thm: projectivebundleformula} is a special case of the Joyce-Song wall-crossing.

\subsection{Joyce-Song pairs and wall-crossing}

\label{sec: joycesong}
Joyce-Song stable pairs are named after their first appearance in the work of Joyce and Song \cite[Section 12.1]{JS12}. Let $X$ be a curve or a surface, $\alpha\in C(X)$ and let $V$ be a fixed torsion-free sheaf on $X$ such that 
\begin{equation}\label{eq: JSextvanish}
\Ext^{\geq 2}(V,F)=0\,.
\end{equation}
Recall that $p_F(z)$ denotes the reduced Gieseker polynomial of a sheaf $F$ where we will also use the notation $p_{\alpha}(z):=p_F(z)$. Joyce-Song pairs consist of a sheaf $F$ and a morphism
$$
V \xrightarrow{s} F
$$
with the following stability condition:
\begin{enumerate}
    \item $F$ is Gieseker-semistable;
    \item $s\neq 0$; 
    \item there is no $0\neq G\subsetneq F$ with $p_F(G)=p_F(z)$ such that $\mathrm{im}(s)\subseteq G$.
\end{enumerate}

We denote the resulting moduli space by $P^{\JS}_{V,\alpha}$. It has an obstruction theory at each $[V\to F]\in P^{\JS}_{V,\alpha}$ given by
$$
\RHom([V\to F], F)\,,
$$
which is 2-term by \eqref{eq: JSextvanish}. Denote by
$[P^{\JS}_{V,\alpha}]^\vir$ the resulting virtual fundamental class.

When $V=\CO_X$, this stability condition is essentially the one in Proposition \ref{prop: limitt0}, except there stability is given in terms of $\mu$-stability and we now work with Gieseker stability because of a technicality explained in \cite[Rem. A.3]{Bo21.5}. This makes no difference in our applications, because we either work with sheaves supported in dimension $\leq 1$ or ideal sheaves, in which cases $P^{\JS}_{\CO_X, \alpha}$ coincides with the previously defined $P^{\JS}_{1,\alpha}$. Note also that Assumption \ref{ass: alphabig} implies the vanishing $\Ext^{\geq 2}(\CO_X,F)=0$. As explained in \cite{Bo21.5} (see Definition A.1 and the discussion afterwards), one can construct a 1-parameter family of stability conditions that connects the Joyce-Song stability to a stability condition in which the stable objects are $U\otimes V\to 0$ for some vector space $U$ and $0\to F$ for some Gieseker stable $F$. 

The author then used Joyce's general theory to show the following wall-crossing formula for any $X=C,S$ (cf.  \cite[Theorem A.4]{Bo21.5}):
 $$[P^{\JS}_{V,\alpha}]^\vir= \sum_{\substack{\underline{\alpha} \vdash \alpha\\ p_{\alpha_i}=p_{\alpha}}}\frac{1}{l!}\, \big[[M_{\alpha_1}]^\inva, \big[\ldots, \big[[M_{\alpha_l}]^\inva, e^{(\llbracket V\rrbracket,0)}\big]\ldots\big]\big]\,,$$
 where $e^{(\llbracket V\rrbracket,0)}\in V_\bullet^{\pa}$ is the class of a point in the component $\CP_{(\llbracket V\rrbracket ,0)}$. This formula holds in $\widecheck V_\bullet^{\pa}$, but using Lemma \ref{lem: liftvertexalgebra} and the same argument that appears after the Lemma, we can lift it to $V_\bullet^{\pa}$:
\begin{equation}
\label{Eq: PJS} [P^{\JS}_{V,\alpha}]^\vir_{(q^*V,\BF)}= \sum_{\substack{\underline{\alpha} \vdash \alpha\\ p_{\alpha_i}=p_{\alpha}}}\frac{1}{l!}\, \big[[M_{\alpha_1}]^\inva, \big[\ldots, \big[[M_{\alpha_l}]^\inva, e^{(\llbracket V\rrbracket,0)}\big]\ldots\big]\big]\,.
\end{equation}
We recall the argument: clearly $\ch_1^{\CV}(\pt)\cap$ annihilates both  $[P^{\JS}_{V,\alpha}]^\vir_{(q^*V,\BF)}$ and $e^{(\llbracket V\rrbracket,0)}$, so by Lemma \ref{lem: liftvertexalgebra} a) it annihilates both sides of \eqref{Eq: PJS} and by part b) of the same Lemma (together with the original wall-crossing formula in $\widecheck V_\bullet^{\pa}$), we conclude \eqref{Eq: PJS}.

The necessary assumptions for wall-crossing formulae to hold, similar to the ones alluded to in Assumption \ref{ass:WCpair}, were explicitly checked in \cite[Appendix]{Bo21.5}.

\begin{remark}\label{rem: JSprojectivebundle}
    Suppose that $\alpha$ is such that there are no Gieseker strictly semistable sheaves of type $\alpha$. Then condition $(3)$ in the definition of Joyce-Song pairs is vacuous and $P^{\JS}_{V,\alpha}$ admits a description as a virtual projective bundle (cf. Section \ref{sec: virtualprojectivebundle})
    \[P^{\JS}_{V,\alpha}=\BP_{M_\alpha}\big(Rp_\ast\big(q^\ast V^\vee\otimes \BG\big)\big)\,.\]
    
   Under this assumption, there are no decompositions $\alpha=\alpha_1+\ldots+\alpha_l$ with $p_{\alpha_i}=p_{\alpha}$, $M_{\alpha_i}\neq \emptyset$ and $l>1$. In this case, \eqref{Eq: PJS} becomes
    \[[P^{\JS}_{V,\alpha}]^\vir_{(q^*V,\BF)}=\big[[M_\alpha]^\inva, e^{(\llbracket V\rrbracket,0)}\big]\,.\]
When $V=\CO_X$ this is precisely the formula \eqref{eq: projectivebundlebracket}, for which we gave a direct proof. 
\end{remark}

\subsection{Application to Quot schemes}

One special case of Joyce-Song pairs are punctual Quot schemes, and we can use \eqref{Eq: PJS} to give a proof of the Virasoro constraints for them. We currently do not have a direct proof of these without wall-crossing techniques. 

For $X=C,S$ and a fixed torsion-free sheaf $V$ on $X$, the Quot scheme $\Quot_X(V,n)$ for $X=C,S$ parameterizes equivalence classes of surjective morphisms $V\to F$ from $V$ to a zero-dimensional sheaf $F$. When $V$ is a vector bundle, the virtual fundamental classes 
$$
\big[\Quot_X(V,n)\big]^\vir
$$
were constructed by Marian-Oprea-Pandharipande \cite[Lemma 1.1]{MOP1} (see Stark \cite[Proposition 5]{stark2} for a more detailed proof). It was remarked in \cite[Section 1.1]{Bo21.5} that the same obstruction theory given at each $[V\to F]\in \Quot_X(V,n)$ by
$$
\RHom([V\to F], F)
$$
is perfect of tor-amplitude $[-1,0]$ whenever $V$ is more generally torsion-free. To apply \eqref{Eq: PJS}, we use the identifications of moduli spaces and virtual fundamental classes
$$
P^{\JS}_{V,n} = \Quot_X(V,n)\,,\qquad [P^{\JS}_{V,n}]^\vir = \big[\Quot_X(V,n)\big]^\vir
$$
following immediately from their descriptions above; note that we are writing $n$ for the $K$-theory class of the sheaf $\CO_{\pt}^{\oplus n}$.
To prove the Virasoro constraints for Quot schemes, we first show that the invariant classes counting zero-dimensional sheaves satisfy them. Note that $M_n$ has virtual dimension 1, which makes a direct proof accessible. 

\begin{lemma}
\label{Lem: Mnp}
    For any $n>0$ and any $X=C,S$, the class $[M_{n}]^\inva$ is primary, i.e.,
$$
[M_n]^\inva\in \widecheck{P}_0\,.
$$
\end{lemma}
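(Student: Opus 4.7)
The plan is induction on $n$, driven by the Joyce–Song wall-crossing formula~\eqref{Eq: PJS} with $V=\CO_X$. The Joyce–Song moduli $P^{\JS}_{\CO_X,n}$ is naturally identified with the Hilbert scheme $X^{[n]}$ (surjections $\CO_X\twoheadrightarrow \CO_Z$ correspond bijectively to subschemes $Z$), equipped with the universal pair $\CO_{P\times X}\to \CO_\CZ$. Formula~\eqref{Eq: PJS} reads
\begin{align*}
[P^{\JS}_{\CO_X,n}]^\vir_{(p^*\CO_X,\BF)}=\sum_{\underline n\vdash n}\frac{1}{l!}\,\big[[M_{n_1}]^\inva,\big[\ldots,\big[[M_{n_l}]^\inva,e^{(\llbracket \CO_X\rrbracket,0)}\big]\ldots\big]\big].
\end{align*}

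First I would verify that the left-hand side lies in $P_0^{\pa}$. On a curve $C$, the scheme $P^{\JS}_{\CO_C,n}\cong C^{[n]}$ carries the JS pair structure $(\CO,\CO_\CZ)$, which is related to the Bradlow pair structure $(\CO,\CO(\CZ))$ treated in Proposition~\ref{prop: symmetricpowers} by the involution/duality used in the proof of Proposition~\ref{prop: nestedhilb}. On a surface $S$ with $h^{0,1}=h^{0,2}=0$, one uses that $S^{[n]}$ is also the moduli of rank $1$ ideal sheaves, for which Virasoro was established in~\cite{moreira}; Corollary~\ref{cor: MimpliesP} then converts sheaf Virasoro to pair Virasoro, and a duality identical to that of Proposition~\ref{prop: nestedhilb} moves from $(\CO,I_\CZ)$ to $(\CO,\CO_\CZ)$.

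Next, I would run the induction. The base $n=1$ is immediate since $M_1\cong X$ is smooth with trivial obstruction theory and $[M_1]^\inva=[X]^\vir$ is trivially primary. For $n\geq 2$, assume $[M_k]^\inva\in \widecheck P_0$ for all $k<n$. The class $e^{(\llbracket \CO_X\rrbracket,0)}$ is a point class, hence lies in $P_0^{\pa}$. Proposition~\ref{prop: wallcrossingcompatibility2} (applied iteratively) then shows that every iterated bracket on the right with $l\geq 2$ belongs to $P_0^{\pa}$, since those terms only involve $[M_{n_i}]^\inva$ with $n_i<n$. Combining with the previous step, subtraction yields
\begin{align*}
\big[[M_n]^\inva,e^{(\llbracket \CO_X\rrbracket,0)}\big]\in P_0^{\pa}.
\end{align*}

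The last and hardest step is to extract $[M_n]^\inva\in\widecheck P_0$ from this. The subtlety is a mismatch of conformal weights: $e^{(\llbracket \CO_X\rrbracket,0)}$ has conformal weight $0$ rather than $1$, so Proposition~\ref{prop: primaryomegabracket} does not apply directly. The saving fact is the non-degenerate pairing
\[
\chi^{\pa}_{\sym}\big((\llbracket \CO_X\rrbracket,0),(0,\alpha_n)\big)=-n\neq 0.
\]
Using this, one adapts the argument of Proposition~\ref{prop: primaryomegabracket}: take the translate $b\coloneqq-\tfrac{1}{n}\,e^{0}\otimes (\llbracket \CO_X\rrbracket,0)_{-1}\in V^{\pa,\omega}_{1}$, which is primary of weight $1$ and satisfies $Q^\omega\big((0,\alpha_n),b\big)=1$; then $\eta_b(\overline{[M_n]^\inva})=-[M_n]^\inva_{(0)}b$ is a lift of $[M_n]^\inva$ in $V^{\pa,\omega}_1$. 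The bracket $[[M_n]^\inva,e^{(\llbracket \CO_X\rrbracket,0)}]\in P_0^{\pa}$ plays the role of the hypothesis $[M_n]^\inva\in \widecheck K_0$ by allowing us to show $\omega_{(k+1)}([M_n]^\inva_{(0)}b)=0$ for $k\geq 1$ via a Borcherds--Jacobi computation that relates it to $L_k$-annihilation of $[[M_n]^\inva,e^{(\llbracket \CO_X\rrbracket,0)}]$ and the translation $Tb$. This exhibits a primary lift in $P_1$, proving $[M_n]^\inva\in\widecheck P_0$. Carrying out this Jacobi identity computation carefully is the main technical obstacle of the proof.
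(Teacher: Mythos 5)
Your strategy is genuinely different from the paper's: the paper proves Lemma \ref{Lem: Mnp} by a short direct computation (reduce via Proposition \ref{prop: primaryomegabracket} to $\int_{[M_n]^{\overline{\inva}}}(\bL_k-\delta_{k,0})(D)=0$, handle $k=0$ by conformal weight, kill the derivation part by degree reasons, and check $\int\bT_1=0$ using the vanishing of the realized $\ch_0^\HH$'s and, on surfaces, $\ch_1(\CF)=0$ on the stack of zero-dimensional sheaves), and then uses the lemma as \emph{input} to \eqref{Eq: PJS} to deduce pair Virasoro for $\Quot_X(V,n)$ (Theorem \ref{prop: quot}). You propose to run this implication backwards, and as written there are two genuine gaps. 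First, your input on the pair side is not independently available in the surface case: $[S^{[n]}]^\vir_{(\CO,\CO_\CZ)}\in P_0^{\pa}$ is precisely Theorem \ref{prop: quot} for $V=\CO_S$, whose proof uses Lemma \ref{Lem: Mnp}, so quoting it would be circular; and your substitute does not work, since Corollary \ref{cor: MimpliesP} applied to $S^{[n]}$ with universal sheaf $I_\CZ$ yields the empty pair moduli ($H^0(I_Z)=0$ for $n\geq 1$), while with a twist it yields the nested Hilbert scheme and the pairs $(\CO,I_\CZ(\CE))$, $(\CO,\CO_\CE(\CZ))$ — there is no analogue of the K-theoretic duality $(\CO_\CE(\CZ)-\CO)^\vee=-I_\CZ(\CE)$ relating $(\CO,I_\CZ)$ to $(\CO,\CO_\CZ)$. (The curve case is fine: Proposition \ref{prop: symmetricpowers} plus the involution argument of Proposition \ref{prop: nestedhilb} does give $[C^{[n]}]_{(\CO,\CO_\CZ)}\in P_0^{\pa}$.)

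Second, the extraction step is not merely technically demanding — the proposed adaptation of Proposition \ref{prop: primaryomegabracket} fails. In that proof the lift $-a_{(0)}b$ is shown primary using the hypothesis $a_{(0)}\omega=0$: for $b$ primary of weight $1$ one has $L_k(a_{(0)}b)=a_{(0)}(L_k b)-(a_{(0)}\omega)_{(k+1)}b=-(a_{(0)}\omega)_{(k+1)}b$ for $k\geq 1$. Your hypothesis is instead that $a_{(0)}e^{(1,0)}$ is primary, which by the same commutator identity (with $e^{(1,0)}$ primary of weight $0$) only gives $(a_{(0)}\omega)_{(k+1)}e^{(1,0)}=0$; this does not imply $(a_{(0)}\omega)_{(k+1)}b=0$ for your chosen $b=-\tfrac1n e^0\otimes(\llbracket\CO_X\rrbracket,0)_{-1}$, since an operator annihilating one vector need not annihilate another. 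No Jacobi-identity manipulation closes this on purely formal grounds: in the paper the implication ``bracket with the point class is primary $\Rightarrow$ the sheaf class is primary'' is only obtained geometrically, via Theorem \ref{lem: projectivebundle} in the abstract form of Remark \ref{rem: abstractprojective} (applicable here because $p_\ast\CO_\CZ$ is an honest rank-$n$ bundle, so $c_b(Rp_\ast\CF)\cap u=0$ for $b\geq n$), producing $\Upsilon_{n\pt}\in\widecheck P_0$, followed by the inductive defining relation \eqref{eq: upsilonwallcrossing} whose leading coefficient $\chi(n\pt)=n>0$ extracts $[M_n]^\inva$ — and you would additionally have to justify that \eqref{eq: upsilonwallcrossing} is indeed how $[M_n]^\inva$ is defined/characterized for these dimension-zero classes. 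Repairing the proposal along these lines amounts to re-running Theorem \ref{thm: rankreduction} in the dimension-zero setting, which is viable but much longer than the paper's direct check. A minor further point: your base case is misstated, since on a surface $M_1\cong S$ has $\Ext^2(\CO_x,\CO_x)\cong\BC$, so its obstruction theory is not trivial (though with the corrected induction no separate base case is needed).
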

\begin{proof}
By Proposition \ref{prop: primaryomegabracket}, it is sufficient to prove that 
$$
\int_{[M_{n}]^{\overline{\inva}}} (\bL_k - \delta_{k,0})(D) =0\quad \textnormal{for all}\quad k\geq 0,\ D\in \BD^X\,,
$$
where $[M_{n}]^{\overline{\inva}}\in V_\bullet$ is the unique lift of $[M_n]^\inva\in \widecheck V_\bullet$ satisfying $\ch^\HH_1(1)\cap [M_{n}]^{\overline{\inva}}$. Since $\bL_0$ acts as the multiplication by the conformal degree 1, the case $k=0$ follows. On the other hand $\bR_{1}(D)$ annihilates $[M_{n}]^{\overline{\inva}}$ by degree reasons. In conclusion, it suffices to prove that $\int_{[M_{n}]^{\overline{\inva}}} \bT_{1}=0$. 

Recall the definition of $\bT_1$ from Section \ref{sec: Virarosorops} leading to
\begin{align*}
    \bT_1=\sum_s (-1)^{\dim(X)}\Big[(-1)^{p_s^L}+(-1)^{p_s^R}\Big]\,\ch_0^\HH(\gamma_s^L)\ch_1^\HH(\gamma_s^R)
\end{align*}
where $\Delta_*(\td(X))=\sum_s \gamma_s^L\otimes \gamma_s^R$. Therefore it suffices to consider the Kunneth components satisfying $|p_s^L|=|p_s^R|$ which is further restricted by $p_s^L+p_s^R\geq \dim(X)$. On the other hand, $\ch_0^\HH(-)$ has a property that (after realization) 
$$\ch_0^\HH(\gamma_s^L)=\begin{cases}
\int_X \gamma_s^L\cup n\pt &\textnormal{if}\ \  p_s^L=q_s^L=0\,,\\
0&\textnormal{if}\ \ p_s^L=q_s^L>0 \ \textnormal{ or } \ p_s^L>q_s^R\,.
\end{cases}
$$
These vanishing properties are enough to prove that $\int_{[M_{n}]^{\overline{\inva}}} \bT_{1}=0$ when $X=C$. 

When $X=S$, we additionally need that 
\begin{equation}\label{eq: extra vanishing}
    \ch_0^\HH(\gamma^{1,2})\cap [M_{n}]^{\overline{\inva}} = \ch_1^\HH(\gamma^{2,\bullet})\cap [M_{n}]^{\overline{\inva}}=0
\end{equation}
for all $\gamma^{1,2}\in H^{1,2}(X)$ and $\gamma^{2,\bullet}\in H^{2,\bullet}(X)$. This follows from \cite[Lemma 4.2]{Bo21.5}, but we make the argument used to prove it explicit in terms of descendents. Note that both $\ch_0^\HH(\gamma^{1,2})$ and $\ch_1^\HH(\gamma^{2,\bullet})$ use the first Chern character $\ch_1(\CF)$ of the universal complex over $\CM_{n\pt}\times X$.\footnote{Recall that $\CM_{n\pt}$ is the stack of all perfect complexes with class $n\pt$.} By the construction of invariant classes, they lie in the image of the pushforward map 
$$\iota_*:H_\bullet(\mathcal{N}_{n\pt})\rightarrow H_\bullet(\CM_{n\pt})\,,
$$
where $\iota:\mathcal{N}_{n\pt}=\mathcal{N}_{(0,n\pt)}\hookrightarrow \CM_{n\pt}$ denotes the open immersion from the stack of zero-dimensional coherent sheaves of length $n$. Then the extra vanishings \eqref{eq: extra vanishing} follow from the geometric fact that $(\iota\times \id_X)^*\mathcal{F}$ is the universal zero-dimensional sheaf on $\mathcal{N}_{n\pt}\times X$ hence $(\iota\times \id_X)^*\ch_1(\mathcal{F})=0$. 
\end{proof}
We now conclude the precise version of Theorem \ref{thm: punctualquot}. 
Because of
$$
\Quot_C(\CO_C,n)=C^{[n]}\,,
$$
Proposition \ref{prop: symmetricpowers} is a consequence of this more general result.
\begin{theorem}
\label{prop: quot}
If $X=C$ or $X=S$ with $h^{2,0}(S)=0$, punctual Quot schemes $\Quot_X(V,n)$ satisfy pair Virasoro constraints, i.e.,
$$\big[\Quot_X(V,n)\big]^\vir_{(p^*V,\BF)}\in P_0^{\pa}\,.
$$
\end{theorem}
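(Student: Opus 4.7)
The plan is to apply the Joyce--Song wall-crossing formula \eqref{Eq: PJS} to the Quot scheme, reducing the desired statement to checking that the two types of building blocks appearing on the right-hand side are primary states. First I would invoke the identification $\Quot_X(V, n) = P^{\JS}_{V, n}$ with matching virtual classes recalled just above, so that \eqref{Eq: PJS} specializes to
\[
\big[\Quot_X(V, n)\big]^\vir_{(p^\ast V, \BF)} = \sum_{\underline{n}\vdash n} \frac{1}{l!}\,\big[[M_{n_1}]^\inva, \big[\ldots, \big[[M_{n_l}]^\inva, e^{(\llbracket V\rrbracket, 0)}\big]\ldots\big]\big],
\]
where the sum runs over ordered partitions $\underline{n} = (n_1, \ldots, n_l)$ of $n$; the constraint $r_{\alpha_i} = r_{n\pt}$ from \eqref{Eq: PJS} is automatic since all zero-dimensional classes share the same reduced Hilbert polynomial.

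Next I would verify the two primary-state inputs. For each $n_i > 0$, the class $[M_{n_i}]^\inva \in \widecheck{P}_0$ is provided directly by Lemma \ref{Lem: Mnp}. For the class $e^{(\llbracket V\rrbracket, 0)}$, I would first compute, using Definition \ref{def: pairVOA}, that
\[
\chi^{\pa}_{\sym}\big((\llbracket V\rrbracket, 0), (\llbracket V\rrbracket, 0)\big) = 2\chi(-\llbracket V\rrbracket, 0) = 0,
\]
so this class sits in conformal degree zero under Kac's grading. To conclude $e^{(\llbracket V\rrbracket, 0)} \in P_0^{\pa}$, I would invoke the standard fact that a pure lattice vector $e^{\alpha}\otimes 1$ is annihilated by $L_n$ for every $n \geq 1$: in each normal-ordered summand of \eqref{eq: operatorsomegashift}, at least one factor is an annihilation operator $v^\omega_{(k)}$ with $k\geq 1$, which kills $e^{(\llbracket V\rrbracket, 0)}$ by \eqref{Eq: explicitfield}.

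The final step combines these observations via Proposition \ref{prop: wallcrossingcompatibility2}, which provides the partial action $[-, -]\colon \widecheck{P}_0 \times P_0^{\pa} \to P_0^{\pa}$. Iterating this action starting from $e^{(\llbracket V\rrbracket, 0)} \in P_0^{\pa}$ keeps every bracketed summand on the right-hand side of \eqref{Eq: PJS} inside $P_0^{\pa}$, and summing over partitions preserves membership in $P_0^{\pa}$. By Corollary \ref{cor: virasorophysical}, this is exactly the pair Virasoro constraint (Conjecture \ref{conj: pairvirasoro}) for $\Quot_X(V, n)$. The main obstacle is not internal to this argument but lies in its two external inputs: the Joyce--Song formula \eqref{Eq: PJS} itself, which is the delicate geometric backbone borrowed from \cite{Bo21.5}, and Lemma \ref{Lem: Mnp}, whose proof hinges on the descendent vanishings $\ch_1(\CF)\big|_{\CN_{n\pt}\times X} = 0$ via the open immersion $\CN_{n\pt} \hookrightarrow \CM_{n\pt}$. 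Once both are granted, the argument above is purely formal Lie-algebraic bookkeeping.
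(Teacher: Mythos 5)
Your argument is correct and is essentially the paper's own proof: specialize the Joyce--Song wall-crossing formula \eqref{Eq: PJS} to $\Quot_X(V,n)=P^{\JS}_{V,n}$, feed in Lemma \ref{Lem: Mnp} for the classes $[M_{n_i}]^\inva$, and close the brackets inside $P_0^{\pa}$ via Proposition \ref{prop: wallcrossingcompatibility2}. Your explicit check that $e^{(\llbracket V\rrbracket,0)}\in P_0^{\pa}$ (conformal weight $0$ since $\chi^{\pa}_{\sym}((\llbracket V\rrbracket,0),(\llbracket V\rrbracket,0))=0$, and $e^\alpha\otimes 1$ is a highest-weight vector killed by $L_n$, $n\geq 1$) is a standard fact the paper leaves implicit, exactly as it does for $e^{(1,0)}$ in Corollary \ref{cor: MimpliesP}.
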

\begin{proof}
As a corollary of \eqref{Eq: PJS}, we obtain the wall-crossing formula for Quot scheme involving invariant classes of zero-dimensional sheaves:
$$
    \big[\Quot_X(V,n)\big]^\vir_{(q^*V,\BF)} = \sum_{\begin{subarray}a \underline{n} \vdash n\end{subarray}}\frac{1}{l!}\, \big[[M_{n_1}]^\inva, \big[\ldots, \big[[M_{n_l}]^\inva, e^{(\llbracket V\rrbracket,0)}\big]\ldots\big]\big]\,. 
$$
Using Lemma \ref{Lem: Mnp} and Proposition \ref{prop: wallcrossingcompatibility2}, we conclude the result.
\end{proof}

    \end{appendices}

\bibliographystyle{mybstfile.bst}
\bibliography{refs.bib} 
\end{document}